 \theoremstyle{plain}
 \newtheorem{thm}{Theorem}[section]
 \newtheorem{cor}[thm]{Corollary}
 \newtheorem{lem}[thm]{Lemma}
 \newtheorem{prop}[thm]{Proposition}
\theoremstyle{definition}
 \newtheorem{defn}[thm]{Definition}
 \newtheorem{hyp}[thm]{Hypothesis}
\theoremstyle{remark}
 \newtheorem{rem}[thm]{Remark}
 \newtheorem{nota}[thm]{Notation}
 \newtheorem{conv}[thm]{Convention}
 \numberwithin{equation}{section}
\DeclareMathOperator{\VF}{VF}
\DeclareMathOperator{\ACVF}{ACVF}
\DeclareMathOperator{\RV}{RV}
\DeclareMathOperator{\MM}{\mathcal{M}}
\DeclareMathOperator{\OO}{\mathcal{O}}
 \DeclareMathOperator{\ran}{ran}
 \DeclareMathOperator{\dom}{dom}
 \DeclareMathOperator{\id}{id}
 \DeclareMathOperator{\lh}{lh}
 \DeclareMathOperator{\aut}{Aut}
 \DeclareMathOperator{\td}{tr\,deg}
 \DeclareMathOperator{\cha}{char}
 \DeclareMathOperator{\ac}{\overline{ac}}
 \DeclareMathOperator{\acl}{acl}
 \DeclareMathOperator{\dcl}{dcl}
 \DeclareMathOperator{\pr}{pr}
 \DeclareMathOperator{\alg}{ac}
 \DeclareMathOperator{\mgl}{GL}
\DeclareMathOperator{\jcb}{Jcb}
\DeclareMathOperator{\K}{\overline{K}}
\DeclareMathOperator{\tbk}{tbk}
\def\Xint#1{\mathchoice
{\XXint\displaystyle\textstyle{#1}}%
{\XXint\textstyle\scriptstyle{#1}}%
{\XXint\scriptstyle\scriptscriptstyle{#1}}%
{\XXint\scriptscriptstyle\scriptscriptstyle{#1}}%
\!\int}
\def\XXint#1#2#3{{\setbox0=\hbox{$#1{#2#3}{\int}$}
\vcenter{\hbox{$#2#3$}}\kern-.5\wd0}}
\newcommand{\Z}{\mathds{Z}}
\newcommand{\F}{\mathds{F}}
\newcommand{\Q}{\mathds{Q}}
\newcommand{\N}{\mathds{N}}
\newcommand{\R}{\mathds{R}}
\newcommand{\p}{$p$\nobreakdash}
\newcommand{\omin}{$o$\nobreakdash}
\newcommand{\gC}{\mathfrak{C}}
\newcommand{\gL}{\mathfrak{L}}
\newcommand{\gO}{\mathfrak{O}}
\newcommand{\gc}{\mathfrak{c}}
\newcommand{\go}{\mathfrak{o}}
\newcommand{\gp}{\mathfrak{p}}
\newcommand{\gq}{\mathfrak{q}}
\newcommand{\gr}{\mathfrak{r}}
\newcommand{\0}{\emptyset}
\DeclareMathAlphabet{\mathpzc}{OT1}{pzc}{m}{it}
 \newcommand{\abs}[1]{\left\vert#1\right\vert}
 \newcommand{\set}[1]{\left\{#1\right\}}
 \newcommand{\norm}[1]{\left\Vert#1\right\Vert}
 \newcommand{\wt}[1]{\widetilde{#1}}
 \newcommand{\wh}[1]{\widehat{#1}}
 \newcommand{\lan}[1]{\mathcal{L}_{\textup{#1}}}
 \newcommand{\dlbr}{( \! (}
 \newcommand{\drbr}{) \! )}
\newcommand{\mdl}[1]{\mathcal{#1}}  % model; e.g. M
\newcommand{\bb}[1]{\mathbb{#1}}
\newcommand{\ex}[1]{\exists #1 \;} % exists x ...
\newcommand{\fa}[1]{\forall #1 \;} % forall x ...
\newcommand{\rest}{\upharpoonright}
\newcommand{\lbar}{\vec}
\newcommand{\fun}{\longrightarrow}
\newcommand{\efun}{\longmapsto}
\newcommand{\sub}{\subseteq}
\newcommand{\mi}{\smallsetminus}
\newcommand{\colim}[1]{\underset{#1}{\text{colim} \,}}  %direct limit or colimit
\DeclareMathOperator{\mVF}{\mu \! \VF}
\DeclareMathOperator{\mgVF}{\mu_{\Gamma} \! \VF}
\DeclareMathOperator{\mRV}{\mu \! \RV}
\DeclareMathOperator{\mgRV}{\mu_{\Gamma} \! \RV}
\DeclareMathOperator{\mG}{\mu \Gamma}
\DeclareMathOperator{\mRES}{\mu RES}
\DeclareMathOperator{\rv}{rv}
\DeclareMathOperator{\sn}{sn}
\DeclareMathOperator{\csn}{csn}
\DeclareMathOperator{\rcsn}{\overline {csn}}
\DeclareMathOperator{\vv}{val}
\DeclareMathOperator{\RES}{RES}
\DeclareMathOperator{\gsk}{\mathbf{K}_+}
\DeclareMathOperator{\ggk}{\mathbf{K}}
\DeclareMathOperator{\ob}{Ob}
\DeclareMathOperator{\fn}{FN}
\DeclareMathOperator{\fib}{fib}
\DeclareMathOperator{\vol}{vol}
\DeclareMathOperator{\isp}{I_{sp}}
\DeclareMathOperator{\misp}{\mu I_{sp}}
\DeclareMathOperator{\vrv}{vrv}
\DeclareMathOperator{\RVH}{RVH}
\DeclareMathOperator{\can}{\mathbf{c}}
\DeclareMathOperator{\pvf}{pvf}
\DeclareMathOperator{\prv}{prv}
\DeclareMathOperator{\der}{d}
\DeclareMathOperator{\KRC}{\mathbf{K} \R}
\DeclareMathOperator{\vtp}{\gsk VTP}
\DeclareMathOperator{\mvtp}{\gsk \mu \! VTP}
\DeclareMathOperator{\mgvtp}{\gsk \mu_{\Gamma} \! VTP}
\DeclareMathOperator{\gvtp}{\ggk VTP}
\DeclareMathOperator{\res}{res}
\begin{document}

\title[Integration in $\ACVF$ with sections]{Integration in algebraically closed valued fields with sections}

\author[Y. Yin]{Yimu Yin}

\thanks{I would like to thank Udi Hrushovski and Fran\c{c}ois Loeser for their guidance. I would also like to thank the anonymous referee whose thorough reports have led to vast improvements of the paper. The research reported in this paper has been partially supported by the ERC Advanced Grant NMNAG}

%\keywords{}
%\subjclass[2000]{}

\address{Institut Math\'{e}matique de Jussieu, 4 place Jussieu, 75252 Paris Cedex 05, France }

\email{yin@math.jussieu.fr}

\begin{abstract}
We construct Hrushovski-Kazhdan style motivic integration in certain expansions of $\ACVF$. Such an expansion is typically obtained by adding a full section or a cross-section from the $\RV$-sort into the $\VF$-sort and some (arbitrary) extra structure in the $\RV$-sort. The construction of integration, that is, the inverse of the lifting map $\bb L$, is rather straightforward. What is a bit surprising is that the kernel of $\bb L$ is still generated by one element, exactly as in the case of integration in $\ACVF$. The overall construction is more or less parallel to the main construction of~\cite{hrushovski:kazhdan:integration:vf}, as presented in~\cite{Yin:special:trans, Yin:int:acvf}. As an application, we show uniform rationality of Igusa zeta functions for non-archimedean local fields with unbounded ramification degrees.
\end{abstract}

\maketitle

\tableofcontents

\section{Introduction}

We have presented the main construction of the Hrushovski-Kazhdan integration theory~\cite{hrushovski:kazhdan:integration:vf} in~\cite{Yin:special:trans, Yin:int:acvf}. The integration constructed there is ``unrefined'' in the sense that, although the kernel of the lifting map $\bb L$, that is, the congruence relation $\isp$, is surprisingly simple, being generated by a single element, and the whole theory is structurally sound, satisfying, among other things, a Fubini-type theorem and a change of variables formula, computation of most integrals appear to be too complicated or utterly intractable. This is so even without volume forms and when only simple geometrical objects are involved, such as an open ball with one closed hole and a closed ball with two open holes, computing the standard contractions of which, according to~\cite[Proposition~6.18]{Yin:special:trans}, would tell us whether there is a definable bijection of the two in $\ACVF$. Refinement may proceed in several directions, for example, see~\cite[\S 10]{hrushovski:kazhdan:integration:vf} and~\cite{hru:kazh:val:ring:2006}, all of which involve manipulations of the Grothendieck (semi)rings that provide values for motivic integrals, such as groupifying, coarsening (usually by way of introducing external algebraic structures), and decomposing into tensor product. This last manipulation makes computation of certain integrals much more transparent, especially when integrating functions with one variable, such as the one mentioned above.

In this paper we shall first construct ``unrefined'' motivic integration maps in certain expansions of algebraically closed valued fields and then refine the target semirings of these maps by decomposing them into tensor products in a canonical way. Such an expansion of algebraically closed valued fields is typically obtained in two independent steps: adding a full section (an $\RV$-section) or a cross-section from the $\RV$-sort into the $\VF$-sort and then adding arbitrary relations and functions in the $\RV$-sort. Expansions with extra structure in the $\RV$-sort has been considered in~\cite[\S 12]{hrushovski:kazhdan:integration:vf}, where a homomorphism between Grothendieck semirings is obtained more or less along the line of the main construction, in particular, the congruence relation $\isp$ retains the same degree of simplicity. Expansions with a section from the residue field into the valued field (a $\K$-section) has been considered in~\cite{hru:kazh:2009}. This is in the context of adelic structures over curves, where an integration in the style of \cite{hrushovski:kazhdan:integration:vf} is not needed and hence is not developed.

Our motivation for extending the Hrushovski-Kazhdan theory to such expansions is twofold. Firstly, this is to prepare the ground for a plausible theory of motivic characters, especially multiplicative ones, which is something we should have if we are to further the (already far-reaching) application of the theory of motivic integration to, say, geometry and representation theory, as demonstrated, for example, in~\cite{CCGS:computable:char, CGH:inv.dis:unram, cluckers:hales:loeser:transfer, hru:kazh:2009, hru:loe:lef}. The use of characters in constructing representations in function spaces is beautifully expounded in the (perhaps a bit old-fashioned but still tremendously insightful) work~\cite{ggps:1990}. Secondly, motivic integration in real closed fields is alluded to in the introduction of \cite{hrushovski:kazhdan:integration:vf} as a hope. We shall realize this hope in a future paper \cite{Yin:int:tcvf}. The framework for doing so calls for a cross-section and its technical aspects closely resemble those of this paper.

The construction in this paper is entirely modeled on and heavily relies on the (auxiliary) results of the construction presented in~\cite{Yin:special:trans, Yin:int:acvf}. In particular, we still adhere to the three-step procedure as laid out in the introduction of \cite{Yin:int:acvf}. For clarity, let us repeat it once again. Let $\bb T$ be an expansion of $\ACVF$, which includes an $\RV$-section $\sn : \RV \fun \VF$ or a cross-section $\csn : \Gamma \fun \VF$ or both. Let $\VF_*$ and $\RV[*]$ be two suitable categories of definable sets that are respectively associated with the $\VF$-sort and the $\RV$-sort. To construct a canonical
homomorphism from the Grothendieck semigroup $\gsk \VF_*$ to the Grothendieck semigroup $\gsk \RV[*] / \isp$, where $\isp$ is a suitable semigroup congruence relation, we proceed as follows:
\begin{itemize}
 \item\emph{Step~1.} There is a natural lifting map $\bb L$ from the set of objects of $\RV[*]$ into the set of objects of $\VF_*$. We show that $\bb L$ hits every isomorphism class of $\VF_*$.

 \item\emph{Step~2.} We show that $\bb L$ induces a semigroup homomorphism from $\gsk \RV[*]$ into $\gsk \VF_*$, which is also denoted by $\bb L$.

 \item\emph{Step~3.} In order to obtain a precise description of the semigroup congruence relation on $\gsk \RV[*]$ induced by $\bb L$, that is, the kernel of $\bb L$, we introduce two operations: special bijection in the $\VF$-sort and blowup in the $\RV$-sort. In a sense these two operations mirror each other. Using this correlation we show that, for any objects $\mathbf{U}_1$, $\mathbf{U}_2$ in $\RV[*]$, there are isomorphic blowups $\mathbf{U}_1^{\sharp}$, $\mathbf{U}_2^{\sharp}$ of $\mathbf{U}_1$, $\mathbf{U}_2$ if and only if $\bb L(\mathbf{U}_1)$, $\bb L(\mathbf{U}_2)$ are isomorphic.
\end{itemize}
Through certain standard algebraic manipulations, the inverse of $\bb L$ gives rise to various ring homomorphisms and module homomorphisms. These are understood as generalized Euler characteristic or, if volume forms are present, integration. Note that, in principle, the construction is already completed in Step~2 (See \S\ref{section:dim}). However, to facilitate computation in future applications, it seems much more satisfying to have a precise description of the semigroup congruence relation as obtained in Step~3 (See \S\ref{section:ker}). Perhaps a bit surprisingly, this kernel of $\bb L$ is still generated by one element, exactly as in the case of integration in $\ACVF$.

There is really just one new (nontechnical) idea in this paper, which is very straightforward. For every $\bb T$-definable set $A$ we seek a definable function $\pi : A \fun \RV^m$ such that each fiber $\pi^{-1}(\lbar t)$ is $\sn(\lbar t)$-definable in $\ACVF$, similarly if the $\RV$-section $\sn$ is replaced by the cross-section $\csn$ (we have to work with $\csn$ instead of $\sn$ in the situation with volume forms). Such a function is called an $\RV$- or a $\Gamma$-partition of $A$. If it exists then we may assign a volume to $A$ by first computing the volumes of the fibers, using the results for $\ACVF$, and then sum them up more or less formally. In fact such a partition always exists for a definable set. Conceptually, the few foregoing sentences capture the gist of this paper so well that it is actually tempting to end the discussion right here. But that is probably not very convincing for someone who is not already familiar with the intricate working of the Hrushovski-Kazhdan theory, especially when highly nontrivial modifications of certain technical results are called for. So, we opt for spelling out more details in a few pages. Inevitably, the writing will repeat (variations of) some things that have already been said in~\cite{Yin:special:trans, Yin:int:acvf}.

In~\cite{Yin:QE:ACVF:min} we have compared expansions with $\RV$-section and expansions with $\K$-section in terms of minimality conditions. It is not hard to see that our method here also works for expansions of $\ACVF$ with $\K$-section.

We now describe an application to local zeta functions. Let $f(\lbar X) \in \Q_p[X_1, \ldots, X_n]$, $\kappa$ be a positive real number, and $L$ be a finite extension of $\Q_p$. The norm of $a \in L$ is denoted by $|a|_L$ and the Haar measure on $L$ is denoted by $|\der \lbar X|_L$. Suppose that $A \sub L^n$ is bounded and is $\Q_p$-definable in the language with a cross-section. Note that here the parameters used to define $f$ and $A$ are allowed to vary in a suitable way as $p$ and $L$ vary, for example, the ramification degree of $L$ may be a defining parameter for $A$. Consider the Igusa local zeta function
\[
\zeta(A, L, \kappa) = \int_A |f(\lbar X)|_L^{\kappa} |\der \lbar X|_L.
\]
Following the specialization procedure in \cite{hrushovski:kazhdan:integration:vf}, we can show that $\zeta(A, L, \kappa)$ is uniformly rational for all \p-adic fields (see Definition~\ref{def:uni} for the precise meaning of uniformity). This can also be derived using the Denef-Pas method in \cite{Denef1984, denef:85, Pa89, Pas:1990}.

The paper is organized as follows. In \S\ref{section:prelim} we first introduce the class of expansions of $\ACVF$ that shall be considered. Obviously not much can be done without quantifier elimination, which is derived immediately. Other basic structural properties are also collected in this section, which shall be used throughout the rest of the paper. In \S\ref{section:G:RV} categories associated with the $\RV$-sort are introduced and their Grothendieck semigroups are studied. Here the reader should notice that, by having a cross-section, the various target semirings of the Grothendieck homomorphisms actually become simpler than those in \cite{hrushovski:kazhdan:integration:vf}. The main result of this section is the expression of these semirings as certain tensor products. This essentially repeats some of the work in \cite[\S9-10]{hrushovski:kazhdan:integration:vf}. However, as in~\cite{Yin:special:trans, Yin:int:acvf}, we give much simpler and more direct proofs. In \S\ref{section:dim} we begin with an investigation of dimension in the $\VF$-sort and other related notions, such as the Jacobian. Then the categories associated with the $\VF$-sort are introduced. This is parallel to the corresponding discussion in~\cite{Yin:special:trans} and the modifications are all very natural for the current setting. The first two steps of the three-step procedure described above are completed in \S\ref{section:dim}. In order to obtain a precise description of the kernel of the lifting map $\bb L$, we need an analog of~\cite[Theorem~5.4]{Yin:special:trans}, which guarantees, after modification using only special bijections, contractibility of an arbitrary function. This is also done in \S\ref{section:dim}, which is the most technical part of the construction and is needed for the application to local zeta functions. In \S\ref{section:ker} we study blowups in the $\RV$-categories and then describe the kernel of $\bb L$. Subsequently various Grothendieck homomorphisms are constructed. These follow very closely the corresponding discussion in~\cite{Yin:int:acvf}.  In the last section, we specialize some of the results to non-archimedean local fields, which is more or less automatic by compactness, and derive the uniform rationality of local zeta functions described above.

\section{Preliminaries and some basic structural properties}\label{section:prelim}

The reader is referred to~\cite{Yin:QE:ACVF:min, Yin:int:acvf, Yin:special:trans} for notation and terminology. For example, the various notational conventions concerning coordinate projection maps in~\cite[Notation~2.10]{Yin:special:trans} shall be used frequently:

\begin{nota}\label{indexing}
Let $A \sub \VF^n \times \RV^m$. For any $n \in \N$, let $I_n
= \set{1, \ldots, n}$. Let $I = I_n \uplus I_m$, $E \sub I$, and $\tilde{E} = I \mi E$. If $E$ is a
singleton $\set{i}$ then we always write $E$ as $i$ and
$\tilde{E}$ as $\tilde{i}$. We write $\pr_E(A)$ for the projection
of $A$ to the coordinates in $E$. For any $\lbar a \in
\pr_{\tilde{E}} (A)$, the fiber $\{\lbar b : (\lbar b, \lbar a)
\in A \}$ is denoted by $\fib(A, \lbar a)$. Note that we shall often tacitly identify the two subsets $\fib(A, \lbar a)$ and $\fib(A, \lbar a) \times \set{\lbar
a}$. Also, it is often more convenient to use simple descriptions
as subscripts. For example, if $E = \set{1, \ldots, k}$ etc.\ then
we may write $\pr_{\leq k}$ etc. If $E$ contains exactly the
$\VF$-indices (respectively $\RV$-indices) then $\pr_E$ is written
as $\pvf$ (respectively $\prv$). If $E'$ is a subset of the coordinates of $\pr_E (A)$ then the composition $\pr_{E'} \circ \pr_E$ is written as $\pr_{E, E'}$. Naturally
$\pr_{E'} \circ \pvf$ and $\pr_{E'} \circ \prv$ are written as $\pvf_{E'}$ and $\prv_{E'}$, respectively.
\end{nota}

%For simplicity, by ``a punctured ball $\gb$'' we shall mean that $\gb$ is a ball that is possibly not punctured at all (or punctured by the ``empty ball''). So by ``a ball'' we shall always mean an unpunctured ball.

We shall work with certain expansions of the $\lan{RV}$-theory $\ACVF$ (see~\cite[Definitions~2.1, 2.2]{Yin:special:trans}). Recall that the $\RV$-sort contains an element $\infty = \rv(0)$. It also serves as the element $0$ in the residue field $\K$. For psychological reasons, we shall write it as $0$ when $\K$ is concerned (also see Convention~\ref{conv:imag}).

The expansions of $\ACVF$ that we shall consider are obtained in two steps: we first add a section of $\RV$ and a cross-section of $\Gamma$ (see below), and then arbitrary relations and functions in the $\RV$-sort.

\begin{defn}\label{defn:sn}
A function $\sn : \RV \fun \VF$ is a \emph{section} of $\RV$ if
\begin{enumerate}
  \item $\sn \rest \RV^{\times}$ is a homomorphism of multiplicative groups and $\sn(\infty) = 0$,
  \item $\sn(t) \in t$ for every $t \in \RV$,
  \item $\sn(\K^{\times}) \cup \set{0}$ is a subfield of $\OO$.
\end{enumerate}
Similarly, $\sn$ is a \emph{section} of $\K$ if it is the restriction of a section of $\RV$ to $\K^{\times}$ augmented by $\sn(0) = 0$.
\end{defn}

\begin{rem}\label{rem:loc}
Any non-archimedean local field of positive characteristic carries a natural section. However, a non-archimedean local field of characteristic $0$ is only equipped with a natural \emph{weak section}, that is, a function $\sn : \RV \fun \VF$ that only satisfies the first two conditions in Definition~\ref{defn:sn}, which is given by the nonzero Teichm\"{u}ller representatives and a choice of uniformizer.
\end{rem}

\begin{defn}
A \emph{cross-section} of $\Gamma$ is a group homomorphism $\csn : \Gamma \fun \VF^{\times}$ such that $\vv \circ \csn = \id$. The \emph{reduced cross-section} of $\Gamma$ is the function $\rcsn = \rv \circ \csn : \Gamma \fun \RV^{\times}$. Set $\csn(\infty) = 0$ and $\rcsn(\infty) = \infty$. The \emph{twistback} function $\tbk : \RV \fun \K$ is given by $u \efun u / \rcsn(\vrv(u))$, where we set $\infty / \infty = 0$.
\end{defn}

The expansions of $\lan{RV}$ with the function symbols $\sn$, $\csn$ are respectively denoted by $\lan{RV}^{1}$, $\lan{RV}^{2}$. The expansion of $\lan{RV}^{1}$ with the function symbol $\rcsn$ is denoted by $\lan{RV}^{3}$. The theories $\ACVF^{1}$ in $\lan{RV}^{1}$, $\ACVF^{2}$ in $\lan{RV}^{2}$, and $\ACVF^{3}$ in $\lan{RV}^{3}$ state that, in addition to the axioms of $\ACVF$, $\sn$ is a section of $\RV$, $\csn$ is a cross-section of $\Gamma$, and $\rcsn$ is a reduced cross-section of $\Gamma$. If the characteristics are specified then we write $\ACVF^{1}(0, p)$ etc.

\begin{conv}\label{conv:imag}
Let $\textup{res} : \RV \fun \K$ be the function given by $\textup{res} \rest \K^{\times} = \id$ and $\textup{res}(t) = 0$ for all $t \notin \K^{\times}$. Technically speaking, $+ : \K^2 \fun \K$ is a function symbol only in the imaginary sort $\K$, which, as in \cite{hrushovski:kazhdan:integration:vf, Yin:QE:ACVF:min, Yin:special:trans, Yin:int:acvf}, is subsumed into the $\RV$-sort. Terms that appear potentially ill-formed should be interpreted accordingly. For example, in the term $\sn(\tau + \tau')$, the symbol $\sn$ should be understood as a section of $\K$ and $\tau$, $\tau'$ should be replaced by $\textup{res}(\tau)$, $\textup{res}(\tau')$.
\end{conv}

\begin{thm}[]\label{thm:dag:qe}
The theories $\ACVF$, $\ACVF^{1}$, $\ACVF^{2}$, and $\ACVF^{3}$ all admit quantifier elimination. Consequently, if the characteristics are specified then these theories are complete.
\end{thm}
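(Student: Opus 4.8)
The plan is to reduce the quantifier elimination statements for $\ACVF^{1}$, $\ACVF^{2}$, $\ACVF^{3}$ to the already-known quantifier elimination for $\ACVF$ in the three-sorted $\lan{RV}$-language. Recall that $\ACVF$ in $\lan{RV}$ eliminates quantifiers (this is the content of~\cite[Definitions~2.1, 2.2]{Yin:special:trans} and the standard Haskell--Hrushovski--Macpherson elimination machinery); so the only new function symbols to worry about are $\sn$, $\csn$, and $\rcsn$, each of which goes \emph{from} the $\RV$-sort (or $\Gamma$) \emph{into} the $\VF$-sort. The key observation is that a term built using $\sn$ (respectively $\csn$, $\rcsn$) with an $\RV$-variable or $\Gamma$-variable input produces a $\VF$-element that is, in a uniform definable way, pinned down inside its $\RV$-class; so the extra structure is ``one-directional'' and cannot be used to collapse $\VF$-quantifiers in an essential way.

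Concretely, I would argue by the standard back-and-forth / substructure-completion criterion: it suffices to show that if $\gM \models \ACVF^{i}$ is $|T|^{+}$-saturated, $\gA, \gB \sub \gM$ are substructures in the expanded language, and $\varphi : \gA \fun \gB$ is an $\lan{RV}^{i}$-isomorphism, then $\varphi$ extends. First I would note that a substructure of a model of $\ACVF^{i}$ closed under $\sn$ (resp.\ $\csn$) has the property that its $\RV$-part, together with the section, determines a canonical lift into the $\VF$-part; in particular the $\VF$-part of $\gA$ contains $\sn(\RV(\gA))$ (resp.\ $\csn(\Gamma(\gA))$), and these are precisely the ``section values''. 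Then the extension problem splits into two independent tasks: extending over a new $\RV$- or $\Gamma$-element, and extending over a new $\VF$-element with prescribed $\RV$-data. For the $\RV$/$\Gamma$ side one uses the ordinary quantifier elimination for the $\RV$-sort of $\ACVF$ (the $\RV$-sort is, as the excerpt notes, a structure whose theory is already understood — it carries an ordered abelian group $\Gamma$ and an algebraically closed residue field $\K$ glued along the valuation sequence) together with arbitrary extra relations and functions, which are transferred verbatim by $\varphi$; adding a new $\RV$- or $\Gamma$-element forces a unique new value of $\sn$ or $\csn$ (by the multiplicativity axioms in Definition~\ref{defn:sn} and $\vv \circ \csn = \id$), so no freedom is lost. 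For the $\VF$ side one uses ordinary quantifier elimination for $\ACVF$ itself, applied over the base field generated by the section values; the point is that $\sn$ and $\csn$ applied to elements already in $\gA$ add nothing new to the $\VF$-part beyond what $\ACVF$'s own elimination handles.

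The genuinely delicate point, and the one I expect to be the main obstacle to write cleanly, is Convention~\ref{conv:imag} together with condition~(3) of Definition~\ref{defn:sn}: the section $\sn$ restricted to $\K^{\times}$ (augmented by $0$) must be a \emph{field} embedding of $\K$ into $\OO$, i.e.\ it must respect the residue-field addition $+ : \K^{2} \fun \K$, which lives only in the imaginary $\K$-sort subsumed into $\RV$. So the ``one-directional'' heuristic is not quite literally true: $\sn(\tau) + \sn(\tau')$ (computed in $\VF$) must equal $\sn(\tau \oplus \tau')$ where $\oplus$ is residue addition, and this is an additive constraint linking $\VF$ and $\RV$. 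One must check that this constraint is already forced by the axioms rather than being a source of new definable sets: the subfield $k_{0} = \sn(\K^{\times}) \cup \set{0}$ is a set of representatives for $\K$ inside $\OO$ which is a lift of $\K$ as a field, so any $\lan{RV}^{i}$-isomorphism $\varphi$ automatically carries $k_{0}(\gA)$ to $k_{0}(\gB)$ compatibly, and extending the residue field (as part of the $\RV$-data) forces a unique compatible extension of this lift by the field structure. Thus the additive coherence is pinned down and contributes no quantifier-essential content. Once QE is established for each $\ACVF^{i}$, completeness under specification of the characteristics is immediate: with characteristics $(0,0)$, $(0,p)$, or $(p,p)$ fixed, the prime substructure is determined up to the choice of section on the prime field (which, being the algebraic closure of $\Q$ or $\F_{p}$ inside the residue field, admits a unique section value by multiplicativity together with the field-lift condition), so all models share a common substructure and QE yields completeness.
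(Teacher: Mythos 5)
Your overall strategy --- the saturated-model embedding test, extending the partial isomorphism first on the $\RV$-side together with the section values and only then invoking $\ACVF$'s own quantifier elimination on the $\VF$-side --- is the route the paper takes: it cites \cite[Theorems~3.10, 3.14]{Yin:QE:ACVF:min} for $\ACVF$ and $\ACVF^{1}$ and defers the adaptation for $\ACVF^{2}$, $\ACVF^{3}$ to the proof of Proposition~\ref{T:qe}. Your observation about the additive constraint hidden in condition (3) of Definition~\ref{defn:sn} is correct (the residue map restricted to the subfield $\sn(\K^{\times}) \cup \set{0}$ is a field isomorphism, so $\sn$ is automatically additive on $\K$), and the completeness argument via a common prime substructure is fine.

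There is, however, a genuine gap at the crux. The sentence ``adding a new $\RV$- or $\Gamma$-element forces a unique new value of $\sn$ or $\csn$ \dots\ so no freedom is lost'' is both inaccurate and beside the point: for a new residue element $t$ there are many units of $\OO$ reducing to $t$ and many ways to extend a partial multiplicative section, and uniqueness is not what is needed. What must be verified is that the assignment $\sn(t) \efun \sn(g(t))$ (with $g$ the already-constructed $\RV$-sort isomorphism) extends the partial isomorphism of \emph{valued fields}, i.e.\ that the quantifier-free $\VF$-type of $\sn(t)$ over the base is determined by the $\RV$-data. This is exactly where the work of Proposition~\ref{T:qe} lies: the degree computation $[\VF(\dot S)(\sn(t)) : \VF(\dot S)] = [\K(\dot S)(t) : \K(\dot S)]$ for residue-algebraic $t$, the ramification computation $[\Gamma(\dcl^{1}(\dot S \cup \sn(t))) : \Gamma(\dot S)] = n$ when $t^{n} \in \RV(\dot S)$ with $n$ minimal (using $\sn(t)^{n} = \sn(t^{n})$, and $\csn(\gamma)^{n} = \csn(n\gamma)$ in the cross-section case), and an immediate-extension argument for the remaining $\RV$-elements. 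Only after the substructure has been saturated with all these section values does the residual $\VF$-side extension reduce to plain $\ACVF$. Without these verifications the ``$\RV$-side'' half of your back-and-forth is unjustified; you should either carry them out or cite \cite[Theorem~3.14]{Yin:QE:ACVF:min} for them.
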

\begin{proof}
For $\ACVF$ and $\ACVF^{1}$ quantifier elimination is proved in \cite[Theorem~3.10, Theorem~3.14]{Yin:QE:ACVF:min}. It is easy to adapt the proof there for $\ACVF^{2}$ and $\ACVF^{3}$ (also see Proposition~\ref{T:qe} and Remark~\ref{rem:T:qe} below). Completeness is clear by inspecting the quantifier-free sentences.
\end{proof}

Let $\mathbb{T}$ be an expansion of $\ACVF^{1}$ in a language $\mdl L_{\bb T}$. We assume that the language $\mdl L_{\bb T}$ contains additional relation and function symbols only in the $\RV$-sort, for example, a cross-section, a Denef-Pas angular component map, or a subfield of the residue field. After Proposition~\ref{T:qe} below we shall work exclusively with such expansions of $\ACVF^{3}$. But, before that, there is no need to require the presence of a cross-section. Note that if $\bb T$ does expand $\ACVF^{3}$ then it makes sense to speak of the $\mdl L_{\wt{\bb T}}$-reduct $\wt{\bb T}$ of $\bb T$, where $\mdl L_{\wt{\bb T}}$ is the language obtained from $\mdl L_{\bb T}$ by replacing the functions $\sn$ and $\rcsn$ with the function $\csn$. Also note that since, for example, $\ACVF^{1}(0, p)$ is complete, every model of it embeds into a sufficiently saturated model of $\bb T(0, p)$. By adding more primitives, without changing the class of definable sets, we also assume that the reduct of $\bb T$ to the $\RV$-sort eliminates quantifiers.

Let $M_{\bb T} \models \bb T$ and $A \sub M_{\bb T}$. Let $M_1$, $M$ be the $\lan{RV}^{1}$-, $\lan{RV}$-reducts of $M_{\bb T}$, respectively. We shall write $\dcl^{\bb T}(A)$, $\acl^{\bb T}(A)$ for the definable and the (model-theoretic) algebraic closures of $A$ in $M_{\bb T}$, $\dcl^{1}(A)$, $\acl^{1}(A)$ for those of $A$ in $M_1$, etc. Note that, in general, $\dcl(A)$ is not closed under $\sn$ and hence cannot be expanded to a substructure of $M_1$ without changing the underlying set. If $\dcl(A)$ is closed under $\sn$ then it may be identified with $\dcl^{1}(A)$. In this case we shall write $\dcl(A) = \dcl^{1}(A)$. The same convention applies when the operators $\dcl^{\bb T}$, $\acl^{1}$, etc.\ are involved. For example, we have
\[
\acl(\sn(\RV(M))) = \acl^{1}(\sn(\RV(M))) = \acl^{1}(\RV(M)).
\]

The proof of \cite[Theorem~3.14]{Yin:QE:ACVF:min} works more or less for the following proposition. For clarity, we give some details.

\begin{prop}\label{T:qe}
The theories $\bb T$ and $\wt{\bb T}$ eliminate all quantifiers.
\end{prop}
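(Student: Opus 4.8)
The plan is to verify the usual embedding test for quantifier elimination, imitating the proof of~\cite[Theorem~3.14]{Yin:QE:ACVF:min}. Fix $M \models \bb T$ and a $|M|^{+}$-saturated $N \models \bb T$; it suffices to show that any $\mdl L_{\bb T}$-embedding $f \colon A \fun N$ from an $\mdl L_{\bb T}$-substructure $A \sub M$ extends to an embedding $M \fun N$, and by the standard Zorn argument this reduces to extending $f$ to an embedding of $\langle A, a \rangle$ into $N$ for a single element $a \in M$. As $A$ is a substructure it is closed under every symbol of $\mdl L_{\bb T}$; in particular $\VF(A)$ is a valued subring (which, passing to its fraction field, we may take to be a field) closed under $\sn \circ \rv$, and $\RV(A)$ is closed under all the $\RV$-operations of $\mdl L_{\bb T}$. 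Since the only primitives of $\mdl L_{\bb T}$ not already present in $\ACVF^{1}$ live in the $\RV$-sort, and the $\RV$-reduct of $\bb T$ eliminates quantifiers by assumption, we split into the cases $a \in \VF(M)$ and $a \in \RV(M)$.

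If $a \in \VF(M)$, the substructure $\langle A, a\rangle$ is obtained from $A$ by alternately adjoining $\VF$-elements arising as polynomial values over the current $\VF$-part and $\RV$-elements of the form $\rv(g(\,\cdot\,))$ together with their $\sn$-images. One extends $f$ by interleaving two moves: a \emph{$\VF$-move}, which is exactly the one used in the proof of quantifier elimination for $\ACVF$ (equivalently, the description of $1$-types over algebraically closed valued fields in terms of the $\RV$-valued invariants $g \mapsto \rv(g(a))$ and pseudo-Cauchy data), and an \emph{$\RV$-move}, handled by quantifier elimination for the $\RV$-reduct of $\bb T$ together with saturation of $N$. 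The extra $\RV$-structure enters only through this last step.

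If $a \in \RV(M)$, then $\langle A, a\rangle$ additionally contains $\sn(a) \in \VF$, hence the $\RV$-elements $\rv(g(\sn(a)))$, and so on, so again one interleaves $\VF$- and $\RV$-moves; the new feature is that the first $\RV$-move (the image of $a$) and the forced $\VF$-move (the image of $\sn(a)$) must be chosen compatibly. The key point is that the quantifier-free $\ACVF$-type of $\sn(a)$ over $\VF(A)$ is determined by the quantifier-free $\RV$-type of $a$ over $\RV(A)$: by multiplicativity of $\sn$ and $\sn(t) \in t$ everything reduces to the behaviour of $\sn$ on $\K^{\times}$ and to the choice of a representative of $\vrv(a)$, the former being rigid because $\sn(\K^{\times}) \cup \set{0}$ is a subfield of $\OO$ (Definition~\ref{defn:sn}) and the latter imposing only a value constraint. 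Granting this determinacy, quantifier elimination for the $\RV$-reduct yields $b \in \RV(N)$ with the correct $\RV$-type over $f(\RV(A))$; then $\sn(b)$ automatically sits correctly over $f(\VF(A))$, and $f \cup \set{a \mapsto b}$ extends to an embedding.

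For $\wt{\bb T}$ the same scheme applies with $\ACVF^{2}$ (Theorem~\ref{thm:dag:qe}) in place of $\ACVF^{1}$ and the cross-section $\csn$ in place of $\sn$; note that $\rcsn = \rv \circ \csn$ remains definable, so the relevant $\RV$-reduct still eliminates quantifiers, and the $\RV$-case is if anything easier, since adjoining $a \in \RV$ now forces at most the single new $\VF$-element $\csn(\vrv(a))$, with $\csn$ a homomorphism out of $\Gamma$ and hence even more rigid than $\sn$. I expect the main obstacle to be precisely the compatibility in the $a \in \RV$ case — the determinacy of the $\ACVF$-type of $\sn(a)$, resp.\ $\csn(\vrv(a))$, from $\RV$-data — since this is the one place where the argument genuinely goes beyond a black-box citation of quantifier elimination for $\ACVF$.
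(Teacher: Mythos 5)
Your overall strategy is the same as the paper's: run the embedding test, use quantifier elimination in the $\RV$-sort (plus saturation) to handle the extra $\RV$-structure, and reduce everything to showing that the section can be aligned with the resulting $\RV$-embedding. You have also correctly located the crux: that the assignment $\sn(t) \mapsto \sn(g(t))$ (for $g$ the $\RV$-embedding) must induce a valued-field isomorphism, i.e.\ that the quantifier-free $\ACVF$-type of $\sn(t)$ over the current $\VF$-part is determined by $\RV$-data. But you do not prove this; you ``grant this determinacy'' and offer only the remark that $\sn \rest \K^{\times}$ is ``rigid'' because $\sn(\K^{\times}) \cup \set{0}$ is a subfield of $\OO$. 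That is not enough: the subfield condition only says $\sn \rest \K$ is a ring embedding, and there are many lifts of $\K$ into $\OO$; what has to be checked is that the field-theoretic relations of $\sn(t)$ over $\VF(\dot S)$ are controlled. The paper does this in three separate steps, each with its own argument: for $t \in \K(M)$ algebraic over $\K(\dot S)$ one needs the degree equality $[\VF(\dot S)(\sn(t)) : \VF(\dot S)] = [\K(\dot S)(t) : \K(\dot S)]$ (using henselianity of $\dot S$); for $t$ with $t^n \in \RV(\dot S)$ one uses the Kummer relation $\sn(t)^n = \sn(t^n)$; and for the remaining $t$ one invokes \cite[Lemma~3.13]{Yin:QE:ACVF:min}, which is precisely the nontrivial transcendental case. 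None of these is a formal consequence of the axioms for a section, so as written your proof has a genuine gap exactly at the step you yourself flag as the main obstacle.

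Two smaller points. First, the paper avoids your interleaving bookkeeping by extending $f \rest \RV(S)$ to a monomorphism $g : \RV(M) \fun \RV(N)$ \emph{once and for all} at the outset (this is where $\RV$-quantifier elimination and saturation are used), and only then building up the $\VF$-part compatibly with the fixed $g$; after the section is aligned over all of $\RV(M)$, any further extension is a pure $\ACVF^{1}$-extension and Theorem~\ref{thm:dag:qe} finishes. Your back-and-forth on a single element would force you to revisit the $\RV$-extension and the section-alignment at every stage, and in the case $a \in \VF(M)$ the forced elements $\sn(\rv(P(a)))$ reintroduce exactly the same compatibility problem, which your sketch does not address there. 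Second, your reduction of the $a \in \RV$ case to ``a value constraint'' on the representative of $\vrv(a)$ ignores that adjoining $\sn(t)$ can genuinely extend the value group or the residue field of the substructure, which is why the paper treats those two cases before the generic one.
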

\begin{proof}
We shall only be concerned with $\bb T$, since the argument for $\wt{\bb T}$ is similar. Let $M, N \models \bb T$ such that $N$ is $\norm{M}^+$-saturated. Let $S$ be a substructure of $M$ and $f : S \fun N$ a monomorphism. It is enough to extend $f$ to a monomorphism $M \fun N$.

Since $\bb T$ eliminates quantifiers in the $\RV$-sort, there is a monomorphism $g : \RV(M) \fun \RV(N)$ extending $f \rest \RV(S)$. Since the henselization of $S$ is an immediate extension (in the sense of valuation theory), we may extend $f$ accordingly and hence may just assume that $S$ is henselian.

Let $\dot f : \dot S \fun N$ be the $\lan{RV}^{1}$-reduct of $f$. Let $t \in \K(M)$ be algebraic over $\K(\dot S)$ in the field theoretic sense. We have
\[
[\VF(\dot S)(\sn(t)) : \VF(\dot S)] = [\K(\dot S)(t) : \K(\dot S)]
\]
and hence $\Gamma(\dcl^{1}(\dot  S \cup {\sn(t)})) = \Gamma(\dot  S)$. Since the fields $\K(\dot S)(t)$ and $\K(f(\dot S))(g(t))$ are isomorphic via $g$, we may extend $\dot f$ to an $\lan{RV}^{1}$-monomorphism $\dot f_t : \dcl^{1}(\dot  S \cup {\sn(t)}) \fun N$ by $\sn(t) \efun \sn(g(t))$. Clearly $\dot f_t$ is compatible with $g$. Repeating this procedure, we may assume that $\K(\dot S)$ is algebraically closed. Next, let $t \in \RV(M) \mi \K(M)$ such that $t^n \in \RV(\dot S)$ for some $n > 0$ and $n$ is minimal with respect to this condition. We have
\[
[\Gamma(\dcl^{1}(\dot  S \cup {\sn(t)})) : \Gamma(\dot  S)] = n, \quad \K(\dcl^{1}(\dot  S \cup {\sn(t)})) = \K(\dot  S).
\]
Since $\sn(t)^n = \sn(t^n)$ and $\sn(g(t))^n = \dot f(\sn(t^n))$, as above, by setting $\sn(t) \efun \sn(g(t))$, we obtain an extension of $\dot f$ that is compatible with $g$. So we may assume that $\Gamma(\dot S)$ is divisible. Now, $\acl^{1}(\dot S)$ is a model of $\ACVF^{1}$ and $\RV(\acl^{1}(\dot S)) = \RV(\dot S)$, by Theorem~\ref{thm:dag:qe}, we may assume $\dot S = \acl^{1}(\dot S)$.

For any $t \in \RV(M) \mi \dot S$, the proof of~\cite[Lemma~3.13]{Yin:QE:ACVF:min} goes through with the choice $\sn(t) \efun \sn(g(t))$, which yields an extension of $\dot f$ that is compatible with $g$. Repeating the whole process thus far, we eventually obtain an extension $\dot f_1$ of $\dot f$ that includes the $\lan{RV}^{1}$-reduct of $g$. Hence $f_1 = \dot f_1 \cup g$ is an $\mdl L_{\bb T}$-monomorphism. At this point, any $\lan{RV}^{1}$-extension of $\dot f_1$ induces an obvious $\mdl L_{\bb T}$-extension of $f_1$, so we are done by Theorem~\ref{thm:dag:qe}.
\end{proof}

\begin{rem}\label{rem:T:qe}
If $\bb T$ is the expansion $\ACVF^{3}$ of $\ACVF^1$ or the one with a reduced angular component map $\ac : \RV^{\times} \fun \K^{\times}$ then $\bb T$ eliminates quantifiers in the $\RV$-sort and hence Proposition~\ref{T:qe} holds for $\bb T$. The proofs are routine and are left to the reader. Since there is a section of $\RV$, we can always define an angular component map from a cross-section and vice versa.

Quantifier elimination still holds if $\bb T$ is an expansion of $\ACVF$ (in the $\RV$-sort only). This follows from a simpler version of the above proof,  or from standard syntactical manipulations that reduces it to the case of $\ACVF$.
\end{rem}

From now on we assume that $\bb T$ expands $\ACVF^{3}$. We fix a sufficiently saturated model $\gC^{\bb T} \models \bb T$ of pure characteristic $0$. The (imaginary) sort of value group is denoted by $\Gamma$. The $\lan{RV}$-reduct (resp.\ $\lan{RV}^{1}$-reduct, etc.) of $\gC^{\bb T}$ is denoted by $\gC$ (resp.\ $\gC^1$, etc.).

\begin{conv}\label{conv:s}
Except in the last section, for convenience and without loss of generality, by a substructure we shall always mean a substructure that is equal to its definable closure. Let $S$ be a small substructure of $\gC^{\bb T}$. Note that any reduct of $S$ is $\VF$-generated. For simplicity, all the reducts of $S$ shall simply be denoted by $S$ if there is no danger of confusion. The corresponding expanded languages (with constants in $S$) are still referred to as $\lan{RV}$, $\lan{RV}^{1}$, etc. Parameters from $S$ are allowed and they will not be specified unless it is necessary. So in effect we shall be working with the complete theories $\ACVF(S)$, $\ACVF^{1}(S)$, etc and by an $\lan{RV}$-definable (resp.\ $\lan{RV}^{1}$-definable, etc.) subset we mean an $S$-$\lan{RV}$-definable (resp.\ $S$-$\lan{RV}^{1}$-definable, etc.) subset. In general, by a definable subset we mean an $\mdl L_{\bb T}$-definable subset, unless indicated otherwise in context. Parameters from sources other than $S$ will be specified in context.
\end{conv}

\begin{nota}\label{nota:ac}
If $A \sub \VF$ then the field generated by $A$ over $\VF(S)$ is denoted as usual by $\VF(S)(A)$ and the field-theoretic algebraic closure of $A \cup \VF(S)$ is denoted by $A^{\alg}$.
\end{nota}

\begin{lem}\label{sub:rel}
For any $U \sub \RV$, the $\lan{RV}$-reduct of $\dcl^1(U)$ is $\dcl(\sn(U))$ and hence $\RV(\dcl^1(U))$ is equal to $\RV(\dcl(\sn(U))) = \RV(\dcl(U))$.
\end{lem}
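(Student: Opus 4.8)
The containment $\dcl(\sn(U)) \subseteq \dcl^1(U)$ is immediate: $\sn(U) \subseteq \dcl^1(U)$, and the $\lan{RV}$-reduct of $\dcl^1(U)$ is an $\lan{RV}$-substructure. The real content is the reverse containment, and the plan is to prove the (a priori stronger) assertion that $\dcl(\sn(U))$, regarded as an $\lan{RV}$-substructure, is closed under $\sn$. Granting this, $\dcl(\sn(U))$ is the universe of an $\lan{RV}^1$-substructure which contains $U$ — since $t = \rv(\sn(t)) \in \RV(\dcl(\sn(U)))$ for each $t \in U$ — hence contains $\dcl^1(U)$; this gives $\dcl^1(U) = \dcl(\sn(U))$. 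The remaining assertions follow: $\RV(\dcl^1(U)) = \RV(\dcl(\sn(U)))$ is then immediate, and $\RV(\dcl(\sn(U))) = \RV(\dcl(U))$ (the inclusion $\supseteq$ being clear from $U \subseteq \dcl(\sn(U))$) will be visible from the argument below.

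By the finite character of $\dcl$ we may assume $U$ finite and induct on $\abs{U}$. In the inductive step, writing $U = U_0 \cup \set{t}$ and passing to the base $\dcl^1(U_0)$, which by the inductive hypothesis equals $\dcl(\sn(U_0))$ and is in particular $\sn$-closed, we reduce to the single-generator case: for a small $\sn$-closed substructure $B$ and $t \in \RV$, show that the $\lan{RV}$-substructure $\dcl(B \cup \set{\sn(t)})$ is closed under $\sn$. Since in $\ACVF$ the $\VF$-part of $\dcl(B \cup \set{\sn(t)})$ is the henselization of the valued field $\VF(B)(\sn(t))$, and the $\RV$-, $\K$- and $\Gamma$-parts are its images under $\rv$, $\res$ and $\vv$, this comes down to identifying the residue field and value group of $\VF(B)(\sn(t))$. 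One runs through the same case distinction as in the proof of Proposition~\ref{T:qe} (equivalently, \cite[Lemma~3.13, Theorem~3.14]{Yin:QE:ACVF:min}): according to whether $\vrv(t)$ lies in $\Gamma(B)$, lies only in its divisible hull, or is value-transcendental over $\Gamma(B)$, and — when $\vrv(t) = 0$ — whether $t$ is algebraic or transcendental over $\K(B)$. Using that $\sn \rest \RV^{\times}$ is a homomorphism and, crucially, that $\sn(\K^{\times}) \cup \set{0}$ is a subfield of $\OO$, one finds in each case that $\VF(B)(\sn(t))$ has exactly the residue field and value group dictated by $t$; equivalently, $\RV(\dcl(B \cup \set{\sn(t)})) = \RV(\dcl(B \cup \set{t}))$, the $\RV$-substructure generated by $\RV(B)$ and $t$ with its $\K$-part closed under addition.

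Finally, $\sn$ maps this $\RV$-substructure back into $\dcl(B \cup \set{\sn(t)})$: since $\sn \rest \RV^{\times}$ is a group homomorphism and $\sn \rest \K$ is a field isomorphism onto $\sn(\K^{\times}) \cup \set{0}$, the $\sn$-image of the $\RV$-substructure generated by $\RV(B) \cup \set{t}$ is the subfield of $\VF$ generated by the $\sn$-images of these generators, all of which — namely $\sn(\RV(B)) \subseteq \VF(B)$ (as $B$ is $\sn$-closed) and $\sn(t)$ — already lie in $\dcl(B \cup \set{\sn(t)})$. Hence $\dcl(B \cup \set{\sn(t)})$ is $\sn$-closed, completing the induction, and the $\RV$-part equalities fall out of the per-step identity above. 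The main obstacle is the single-generator case distinction, and within it the appeal to condition~(3) of Definition~\ref{defn:sn}: it is this condition that forces $\sn(t)$ to be algebraic over $\VF(B)$ of the expected degree (respectively, to carry the Gauss valuation) and thereby prevents $\VF(B)(\sn(t))$ from acquiring spurious value-group or residue-field elements — the failure that would occur for an arbitrary lift of $t$ from $\RV$ into $\VF$.
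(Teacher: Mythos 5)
Your argument is correct in substance but takes a genuinely different route from the paper's. The paper's proof is a compact automorphism-group argument: it sets $M=\acl^1(U)$ and $N=\acl(U\cup\sn(U))$, reads off from the embedding construction behind quantifier elimination for $\ACVF^{1}$ that $\VF(M)=\VF(N)=\sn(U)^{\alg}$, and concludes that $\sigma\in\aut_{\dcl^1(U)}(M)$ if and only if $\sigma\in\aut_{\dcl(\sn(U))}(N)$, whence the two definable closures coincide by Galois correspondence. You instead unpack the same field-theoretic content directly: induction on the number of generators, reduction to the single extension $\VF(B)(\sn(t))$ over an $\sn$-closed base, the case distinction on $\vrv(t)$ relative to $\Gamma(B)$ and on $t$ relative to $\K(B)$, and the use of condition~(3) of Definition~\ref{defn:sn} (equivalently, that $\sn\rest\K$ is a field embedding inverse to $\res$ on its image) to pin down the residue field and value group. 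Both arguments ultimately rest on the same computation from the quantifier-elimination proof; the paper's packaging via $\acl$ is shorter, while yours makes explicit why each axiom for a section is needed and produces the identity $\RV(\dcl(\sn(U)))=\RV(\dcl(U))$ step by step rather than as a corollary.

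One step you should shore up: from ``$\dcl(\sn(U))$ is the universe of an $\lan{RV}^{1}$-substructure containing $U$'' it does not follow that it contains $\dcl^1(U)$ --- a substructure is closed under terms, not under definable functions (the henselization already illustrates the difference). What you actually need is the identification, asserted in the paper's preliminaries, that an $\lan{RV}$-definably closed set which is closed under $\sn$ is $\lan{RV}^{1}$-definably closed; equivalently, an upper bound on $\dcl^1(U)$ such as $\dcl^1(U)\sub\acl^1(U)$ together with $\VF(\acl^1(U))=\sn(U)^{\alg}$, which is precisely how the paper closes this loop. With that citation supplied, your proof goes through.
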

\begin{proof}
Let $M = \acl^1(U) \models \ACVF^{1}(S)$ and $N = \acl(U \cup \sn(U)) \models \ACVF(S)$. It is clear from the proof of \cite[Theorem~3.14]{Yin:QE:ACVF:min} that $\VF(M) = \VF(N) = \sn(U)^{\alg}$. Hence $\sigma \in \aut_{\dcl^1(U)}(M)$ if and only if $\sigma \in \aut_{\dcl(\sn(U))}(N)$. The claim follows.
\end{proof}

Since a $\VF$-sort equality can be equivalently expressed as an $\RV$-sort equality, we may and shall assume that an $\mdl L_{\bb T}$-formula contains no $\VF$-sort equalities at all.

\begin{defn}
Let $M, N \sub \gC$ be substructures and $\sigma : M \fun N$ be an $\lan{RV}$-isomorphism. We say that $\sigma$ is an \emph{immediate isomorphism} if $\sigma(t) = t$ for all $t \in \RV(M)$.
\end{defn}

%\begin{lem}\label{imm:iso}
%Let $M, N \sub \gC$ be submodels and $\sigma : M \fun N$ be an immediate isomorphism. Let $a, b \in \VF \mi (\VF(M) \cup \VF(N))$ such that $\rv(a - c) = \rv(b -\sigma(c))$ for all $c \in \VF(M)$. Then $\sigma$ may be extended to an immediate isomorphism $\bar \sigma : \dcl( M, a) \fun \dcl( N, b )$ with $\bar \sigma(a) = b$.
%\end{lem}
%\begin{proof}
%The field-theoretic isomorphism determined by the assignment $a \efun b$ works. To see this, we can consider two cases. Since $\rv(a) = \rv(b)$ and $\RV(M) = \RV(N)$, if $\rv(a) \notin \RV(M)$ then this follows easily from the dimension inequality of valuation theory. Now suppose that $\rv(a) \in \RV(M)$. Since $\VF(M)$ is algebraically closed, if $\Gamma(\dcl( M, a)) \neq \Gamma(M)$ then there is a $c \in \VF(M)$ such that $\vv(a - c) \in \Gamma(\dcl( M, a)) \mi \Gamma(M)$. Similarly, if $\K(\dcl( M, a)) \neq \K(M)$ then there are $c, d \in \VF(M)$ such that $\vv((a - c)/d) \in \K(\dcl( M, a)) \mi \K(M)$. So, by the dimension inequality again, we are back in the first case.
%\end{proof}

\begin{lem}\label{imm:ext}
Let $s_1, s_2 : \RV \fun \VF^{\times}$ be two full sections. Then any immediate isomorphism $\sigma : M \fun N$ such that $\sigma(s_1(t)) = s_2(t)$ for all $t \in \RV(M)$ may be extended to an immediate automorphism $\bar \sigma \in \aut_S(\gC)$ such that $\sigma(s_1(t)) = s_2(t)$ for all $t \in \RV$.
\end{lem}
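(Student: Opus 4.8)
The plan is to build $\bar\sigma$ by transfinite recursion, extending the immediate isomorphism one element of $\VF$ at a time while maintaining the two invariants that it remains immediate (fixes every element of $\RV$) and that it carries $s_1$ to $s_2$. By a standard back-and-forth/saturation argument it suffices to show: given $M \esub N'$ as in the statement and any $a \in \VF(\gC) \mi \VF(M)$, the isomorphism $\sigma$ extends to an immediate $\lan{RV}$-isomorphism on $\dcl^1(M \cup \{a\})$ still intertwining $s_1$ and $s_2$ (and symmetrically for elements of $N$); iterating and taking unions over a chain exhausting $\gC$ produces the desired $\bar\sigma$. The key point is that since $\sigma$ is immediate, it fixes $\RV$ pointwise, so $\RV(\dcl^1(M\cup\{a\})) = \RV(\dcl^1(M\cup\{\sigma a\}))$ pointwise — there is no room for the $\RV$-data to move — and the only genuine content is matching the two sections on the new $\RV$-elements that may appear.

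First I would reduce to the case where $\RV(M)$ is already ``closed'' in the relevant sense: by Lemma~\ref{sub:rel}, the $\lan{RV}$-reduct of $\dcl^1(U)$ for $U \sub \RV$ is $\dcl(\sn(U))$, so adjoining a $\VF$-element $a$ can enlarge $\RV(M)$ only through the valuation $\vv(a)$ and the residues of ratios $a/s_i(t)$, and these new $\RV$-elements $u$ must be fixed by $\sigma$. Thus what has to be checked is that $\sigma$ can be arranged to send $s_1(u)$ to $s_2(u)$ for each such new $u$; here I would invoke quantifier elimination for $\ACVF^1$ (Theorem~\ref{thm:dag:qe}) together with the fact that $s_1(u), s_2(u)$ lie in the same $\RV$-class $u$ and satisfy the same multiplicative relations, so $s_1(u)/s_2(u)$ is a $1$-unit and the two choices are conjugate by an immediate $\ACVF$-automorphism. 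Concretely: the structure $\dcl^1(M\cup\{a\})$ in the language $\lan{RV}^1$ with the section $s_1$ and the structure $\dcl^1(N'\cup\{\sigma a\})$ with the section $s_2$ are isomorphic as abstract $\lan{RV}^1$-structures via a map extending $\sigma$, because the defining relations of a section (Definition~\ref{defn:sn}) are preserved and the residue field / value group data agree; then saturation of $\gC$ realizes this extension inside $\gC^1$.

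The main obstacle will be Step~one of the recursion in the case where $a$ is a generic element of $\VF$ over $M$ whose valuation $\vv(a)$ is a new point of $\Gamma$: there one must ensure the extension of $\sigma$ remains immediate yet still respects both sections at the new torsion-free $\Gamma$-element, which forces a choice of $s_i(\vv(a))$-compatible uniformizer and is exactly the step where the argument of~\cite[Lemma~3.13]{Yin:QE:ACVF:min} (used in the proof of Proposition~\ref{T:qe}) has to be re-run with the two sections in parallel rather than one. Once that lemma is adapted — it goes through essentially verbatim with the dual bookkeeping $s_1(t) \mapsto s_2(t)$ in place of $\sn(t)\mapsto\sn(g(t))$, since being immediate makes $g$ the identity on $\RV$ — the residue-field case and the immediate-extension case are routine, and the recursion closes. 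Taking the union of the resulting chain of immediate isomorphisms gives $\bar\sigma \in \aut_S(\gC)$ with $\bar\sigma(s_1(t)) = s_2(t)$ for all $t \in \RV$, as required.
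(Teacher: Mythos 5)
Your proposal is correct and is essentially the paper's own argument: the paper's proof consists of the single sentence that the back-and-forth proving quantifier elimination for $\ACVF$ (\cite[Theorem~3.10]{Yin:QE:ACVF:min}) goes through "with extra bookkeeping," and your transfinite recursion with the two invariants (immediacy, and intertwining $s_1$ with $s_2$) is exactly that bookkeeping spelled out, including the correct identification of the value-group step handled by \cite[Lemma~3.13]{Yin:QE:ACVF:min} as the only delicate point. No gap.
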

\begin{proof}
With extra bookkeeping, the proof of \cite[Theorem~3.10]{Yin:QE:ACVF:min} works.
\end{proof}

\begin{lem}\label{auto:decom}
Let $U \sub \RV$ and $\sigma$ be an automorphism of $\gC$ over $\dcl(U)$. Then there is an automorphism $\rho$ of $\gC^{1}$ over $\dcl^1(U)$ and an immediate automorphism $\bar \sigma \in \aut_S(\gC)$ such that $\sigma = \bar \sigma \circ \rho$.
\end{lem}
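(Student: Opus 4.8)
The plan is to manufacture the "section-correcting" factor $\bar\sigma$ first and then read off $\rho$. Fix the given section $\sn$ of $\gC^{\bb T}$ (part of the data of $\ACVF^3$), and consider its $\sigma$-conjugate $\sn^{\sigma} := \sigma \circ \sn \circ \sigma^{-1}$. Since $\sigma$ fixes $\dcl(U)$ pointwise, in particular it fixes $\RV(\dcl(U))$, and since $\sn$ is $\emptyset$-definable in $\gC^{\bb T}$ the map $\sn^{\sigma}$ is again a full section of $\RV$ in the sense of Definition~\ref{defn:sn}. Moreover $\sigma$ itself is an $\lan{RV}$-isomorphism of $\gC$ onto itself that fixes $\RV(M)$ setwise; restricting attention to the relevant substructures, we want to exhibit an immediate automorphism $\bar\sigma \in \aut_S(\gC)$ with $\bar\sigma(\sn(t)) = \sn^{\sigma}(t)$ for all $t \in \RV$, equivalently $\bar\sigma \circ \sn = \sigma \circ \sn \circ \sigma^{-1}$ on $\RV$. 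Applying this identity and using that $\sigma$ fixes $\RV$ pointwise on the relevant set, one gets $\bar\sigma^{-1} \circ \sigma$ fixing every $\sn(t)$, which is exactly the statement that $\rho := \bar\sigma^{-1}\circ\sigma$ lifts to an automorphism of $\gC^1$; and $\rho$ fixes $\dcl^1(U)$ because both $\sigma$ and $\bar\sigma$ fix $\RV$ pointwise (hence fix $\sn(U)$, hence fix $\dcl^1(U) = \dcl(\sn(U))$ by Lemma~\ref{sub:rel}, using that $\bar\sigma$ is immediate and $S$-fixing). Then $\sigma = \bar\sigma \circ \rho$ is the desired decomposition.

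The key steps, in order, are: (1) verify that $\sn^{\sigma}$ is a full section — this is immediate from the fact that each clause of Definition~\ref{defn:sn} is preserved by the $\lan{RV}$-automorphism $\sigma$ (the homomorphism condition, $\sn^{\sigma}(t) \in t$ since $\sigma$ fixes $\RV$, and $\sn^{\sigma}(\K^{\times})\cup\{0\}$ being a subfield of $\OO$); (2) set up an immediate isomorphism between two small substructures carrying $\sn$ and $\sn^{\sigma}$ respectively and invoke Lemma~\ref{imm:ext} to extend it to a global immediate automorphism $\bar\sigma$ intertwining $\sn$ and $\sn^{\sigma}$ on all of $\RV$; (3) define $\rho = \bar\sigma^{-1}\circ\sigma$ and check it is $\sn$-preserving, hence an $\lan{RV}^1$-automorphism of $\gC^1$; (4) check $\rho$ fixes $\dcl^1(U)$ using Lemma~\ref{sub:rel}. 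Step (2) is where Lemma~\ref{imm:ext} does the real work: it is stated precisely for "two full sections $s_1, s_2$" and an immediate isomorphism intertwining them, so one takes $s_1 = \sn$, $s_2 = \sn^{\sigma}$, and for $\sigma : M \fun N$ one can take (say) $M = N = \dcl(\RV) = \acl^1(\RV)$-type substructures, or more carefully a pair of small substructures where the intertwining already holds trivially, e.g. starting from $\dcl(\emptyset)$ or $\dcl(U)$ where $\sigma$ acts as the identity on $\RV$.

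The main obstacle I expect is bookkeeping in step (2): one must ensure that the hypothesis of Lemma~\ref{imm:ext} — an immediate isomorphism $\sigma: M\fun N$ with $\sigma(s_1(t)) = s_2(t)$ for $t \in \RV(M)$ — is actually met by some concrete choice before extending. The clean way is to take the identity on a suitable small $S$-substructure generated by $\sn(U)$: there $\sn$ and $\sn^{\sigma}$ agree on $\RV(U)$ because $\sigma$ fixes $U$ and $\sn$ is $S$-definable, so the identity map qualifies as the required immediate isomorphism, and Lemma~\ref{imm:ext} extends it to $\bar\sigma \in \aut_S(\gC)$ with $\bar\sigma\circ\sn = \sn^{\sigma}$ everywhere. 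A secondary point to be careful about is that $\bar\sigma$ being immediate and $S$-fixing guarantees it fixes $\RV$ pointwise and hence $\sn(U)$ pointwise, which is what makes the final check that $\rho$ fixes $\dcl^1(U)$ go through via Lemma~\ref{sub:rel}; everything else is formal.
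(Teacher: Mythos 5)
Your overall architecture is the same as the paper's: form the conjugate section $\sn^{\sigma}=\sigma\circ\sn\circ\sigma^{-1}$, use Lemma~\ref{imm:ext} to produce an immediate $\bar\sigma\in\aut_S(\gC)$ intertwining $\sn$ and $\sn^{\sigma}$, and set $\rho=\bar\sigma^{-1}\circ\sigma$. But the way you instantiate Lemma~\ref{imm:ext} rests on a false premise. You claim that on a substructure generated by $\sn(U)$ ``$\sn$ and $\sn^{\sigma}$ agree on $\RV(U)$ because $\sigma$ fixes $U$ and $\sn$ is $S$-definable,'' so that the identity map satisfies the hypothesis of Lemma~\ref{imm:ext}. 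This is wrong: $\sn$ is definable only in $\lan{RV}^1$, not in $\lan{RV}$, and $\sigma$ is merely an $\lan{RV}$-automorphism. For $t\in U$ one has $\sn^{\sigma}(t)=\sigma(\sn(t))$, and $\sn(t)$ is a $\VF$-point that in general does \emph{not} lie in $\dcl(U)$ (the paper warns explicitly that $\dcl(A)$ need not be closed under $\sn$), so $\sigma$ is free to move it; if $\sigma$ did fix every $\sn(t)$ there would be nothing to prove. The same error reappears in your step (4), where you assert that $\bar\sigma$, being immediate and $S$-fixing, ``fixes $\sn(U)$ pointwise'': an immediate automorphism fixes each $\rv$-fiber only setwise, and indeed $\bar\sigma$ is constructed precisely so that $\bar\sigma(\sn(t))=\sn^{\sigma}(t)\neq\sn(t)$ in general.

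The correct instantiation — and what the paper does — is to feed Lemma~\ref{imm:ext} not the identity but the restriction of $\sigma$ itself to the $\lan{RV}$-reduct of $\dcl^1(U)$, which equals $\dcl(\sn(U))$ by Lemma~\ref{sub:rel}. This restriction is immediate because $\RV(\dcl^1(U))=\RV(\dcl(U))$ is fixed pointwise by $\sigma$, and it tautologically satisfies $\sigma(\sn(t))=\sn^{\sigma}(t)$ on that set; Lemma~\ref{imm:ext} then extends it to the desired $\bar\sigma$. (Alternatively, starting from the identity on $S=\dcl(\0)$ alone would also meet the hypothesis, since $\sn(\RV(S))\sub\VF(S)$ is genuinely fixed by $\sigma$; but not on anything containing $\sn(U)$.) The verification that $\rho$ fixes $\dcl^1(U)$ then goes through not because $\bar\sigma$ fixes $\sn(U)$, but because $\bar\sigma$ and $\sigma$ \emph{agree} on $\sn(U)$, so $\rho(\sn(t))=\bar\sigma^{-1}(\sigma(\sn(t)))=\sn(t)$ for $t\in U$.
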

\begin{proof}
First note that $\sigma(\sn(\RV))$ induces a full section $\sn^* : \RV \fun \VF^{\times}$. By Lemma~\ref{sub:rel}, $\RV(\dcl^1(U)) = \RV(\dcl(U))$ and hence the restriction of $\sigma$ to the $\lan{RV}$-reduct of $\dcl^1(U)$ is an immediate automorphism with $\sigma(\sn(t)) = \sn^*(t)$ for all $t \in \RV(\dcl(U))$. By Lemma~\ref{imm:ext}, this restriction of $\sigma$ may be extended to an immediate automorphism $\bar \sigma$ of $\gC$ with $\bar \sigma(\sn(t)) = \sn^*(t)$ for all $t \in \RV$. Now set $\rho = \bar \sigma^{-1} \circ \sigma$.
\end{proof}

\begin{cor}\label{lrvdag:same:lrv}
Let $\lbar t \in \RV$. If $A \sub \RV^m$ is parametrically $\lan{RV}$-definable and is also $\lbar t$-$\lan{RV}^{1}$-definable then it is $\lbar t$-$\lan{RV}$-definable.
\end{cor}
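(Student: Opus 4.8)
The plan is to combine the standard Galois-theoretic criterion for definability with the factorization of $\lan{RV}$-automorphisms provided by Lemma~\ref{auto:decom}. Since $A$ is parametrically $\lan{RV}$-definable and $\gC$ is sufficiently saturated and homogeneous, it is enough to show that $\sigma(A) = A$ for every $\sigma \in \aut_S(\gC)$ that fixes $\lbar t$; by the usual criterion this places the canonical parameter of $A$ in $\dcl^{\mathrm{eq}}(\lbar t)$ and hence shows that $A$ is $\lbar t$-$\lan{RV}$-definable.

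So I would fix such a $\sigma$. Because $\lbar t \in \RV$, the automorphism $\sigma$ fixes $\dcl(\lbar t)$ pointwise, so Lemma~\ref{auto:decom}, applied with $U = \lbar t$, produces a factorization $\sigma = \bar\sigma \circ \rho$ in which $\rho$ is an $\lan{RV}^{1}$-automorphism of $\gC^{1}$ over $\dcl^{1}(\lbar t)$ and $\bar\sigma \in \aut_S(\gC)$ is immediate, i.e.\ $\bar\sigma \rest \RV(\gC) = \id$. Then $\rho(A) = A$, since $A$ is $\lbar t$-$\lan{RV}^{1}$-definable and $\rho$ is an $\lan{RV}^{1}$-automorphism fixing $\lbar t$; and $\bar\sigma(A) = A$ for the trivial reason that $A \sub \RV^{m}$ while $\bar\sigma$ fixes every element of $\RV(\gC)$, hence every element of $\RV^{m}$. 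Combining, $\sigma(A) = \bar\sigma(\rho(A)) = \bar\sigma(A) = A$, which is what was needed.

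Essentially all of the work is done by Lemma~\ref{auto:decom}; once the factorization $\sigma = \bar\sigma \circ \rho$ is in hand there is no genuine obstacle, the two factors preserving $A$ for completely unrelated reasons — $\rho$ because it respects the section $\sn$ and fixes $\lbar t$, and $\bar\sigma$ because it does not move anything in $\RV$ at all. The only things to watch are the bookkeeping of which language each automorphism belongs to and, if one wants the conclusion with parameters literally from $\lbar t$ rather than from $\dcl^{\mathrm{eq}}(\lbar t)$, the routine appeal to elimination of imaginaries in the $\RV$-sort.
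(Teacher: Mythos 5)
Your argument is exactly the paper's: apply Lemma~\ref{auto:decom} to factor an arbitrary automorphism over $\dcl(\lbar t)$ as an immediate automorphism composed with an $\lan{RV}^{1}$-automorphism over $\dcl^{1}(\lbar t)$, and note that each factor fixes $A$ setwise for its own reason. The proposal is correct and matches the paper's proof.
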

\begin{proof}
We only need to show that any automorphism of $\gC$ over $\dcl(\lbar t)$ fixes $A$ setwise. This is immediate by Lemma~\ref{auto:decom}, since $A$ is trivially invariant under immediate automorphisms.
\end{proof}

\begin{defn}\label{defn:complex}
Let $\tau$ be an $\mdl L_{\bb T}$-term. For any variable $X$, the \emph{$X$-complexity $\abs{\tau}_{X} \in \N$} of $\tau$ is defined inductively as follows.
\begin{enumerate}
  \item If either $X$ does not occur in $\tau$ or $\tau$ is an $\lan{RV}$-term then $\abs{\tau}_{X} = 0$.
  \item If $X$ occurs in $\tau$ and $\tau$ is of the form $\sn(\sigma)$ then $\abs{\tau}_{X} = \abs{\sigma}_{X} + 1$.
  \item If $\tau$ is not of the form $\sn(\sigma)$ then $\abs{\tau}_{X}$ is the maximum of the $X$-complexities of the proper subterms of $\tau$.
\end{enumerate}
The \emph{complexity $\abs{\tau}$} of $\tau$ is the maximum of all $X$-complexities of $\tau$.

Let $\phi(\lbar X, \lbar Y)$ be an $\mdl L_{\bb T}$-formula, where $\lbar X = (X_1, \ldots, X_n)$ are the occurring $\VF$-sort variables and $\lbar Y = (Y_1, \ldots, Y_m)$ are the occurring $\RV$-sort variables. The \emph{$X_i$-complexity $\abs{\phi}_{X_i}$} of $\phi$ is the maximal $X_i$-complexity of the terms occurring in $\phi$; the $Y_i$-complexity $\abs{\phi}_{Y_i}$ of $\phi$ is defined similarly. Let $\abs{\phi}_{\VF}$ be the maximum of the $X_i$-complexities of $\phi$; similarly for $\abs{\phi}_{\RV}$. Lastly set $\abs{\phi} = \max \{ \abs{\phi}_{\VF}, \abs{\phi}_{\RV} \}$.

Let $\phi$ be an $\mdl L_{\bb T}$-term or a quantifier-free $\mdl L_{\bb T}$-formula. If a term $F$ occurs in $\phi$ in the form $\rv(F)$ (respectively $\sn(F)$) then $F$ is said to be an \emph{occurring $\VF$-term} (respectively \emph{occurring $\RV$-term}) of $\phi$. Note that if $F$ is an occurring $\VF$-term of $\phi$ with $| F | = 0$ then it is called an occurring polynomial of $\phi$ in~\cite{Yin:int:acvf, Yin:special:trans}. We shall keep this terminology. Obviously if $\abs{\phi} > 0$ then we have
\[
\abs{\phi} = \max \{| F | : F \text{ is an occurring $\VF$-term of } \phi \} = \max \{| F | : F \text{ is an occurring $\RV$-term of } \phi \} + 1.
\]
If F is an occurring $\VF$-term of $\phi$ that is not a subterm of an occurring $\VF$-term of a higher complexity then $F$ is a \emph{top occurring $\VF$-term} of $\phi$; similarly for a \emph{top occurring $\RV$-term} of $\phi$.
\end{defn}

\begin{lem}\label{lrvdag:lrv:1}
If $A \sub \RV$ is $\lan{RV}^1$-definable then it is $\lan{RV}$-definable.
\end{lem}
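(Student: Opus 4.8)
The statement asserts that $\lan{RV}^1$-definable subsets of the single $\RV$-sort (with parameters from $S$) are already $\lan{RV}$-definable, i.e. adding the section $\sn$ does not enlarge the family of definable subsets of $\RV$ itself. The plan is to deduce this from the Galois-theoretic characterization of definability: a parametrically $\lan{RV}$-definable set is $\lan{RV}$-definable over $S$ iff it is setwise fixed by every $\sigma \in \aut_S(\gC)$, and it suffices to check invariance under such automorphisms. So I would take $A \sub \RV$ that is $\lan{RV}^1$-definable over $S$ (equivalently $\lan{RV}^1$-definable over $\dcl^1(\0) = \dcl^1(S)$, using Convention~\ref{conv:s}), and show $\sigma(A) = A$ for every $\sigma \in \aut_S(\gC)$.

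The key step is to invoke Lemma~\ref{auto:decom} with $U = \0$ (or $U = S \cap \RV$, which lies in $\dcl(\0)$ under our conventions): any $\sigma \in \aut_S(\gC)$ factors as $\sigma = \bar\sigma \circ \rho$, where $\rho$ is an automorphism of $\gC^1$ over $\dcl^1(\0)$ and $\bar\sigma$ is an \emph{immediate} automorphism of $\gC$ — that is, $\bar\sigma$ fixes $\RV$ pointwise. Now $A$ is $\lan{RV}^1$-definable over $\dcl^1(\0)$, so $\rho(A) = A$; and $\bar\sigma$ fixes all of $\RV$ pointwise, hence a fortiori fixes $A$ setwise. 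Therefore $\sigma(A) = \bar\sigma(\rho(A)) = \bar\sigma(A) = A$. Since $\sigma$ was arbitrary in $\aut_S(\gC)$, $A$ is $\lan{RV}$-definable over $S$, as wanted. (This is exactly the mechanism already used to prove Corollary~\ref{lrvdag:same:lrv}; the present lemma is essentially the special case $m = 1$ with no extra parameter $\lbar t$, but it is isolated here because it is the base case needed to bootstrap the more general transfer results.)

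The one point requiring a little care — and the place I'd expect the only genuine obstacle — is the passage from ``$\lan{RV}^1$-definable (with arbitrary parameters, possibly $\VF$-parameters)'' to ``$\lan{RV}^1$-definable over a set closed under $\sn$.'' Here we use that $\bb T$, and hence its $\lan{RV}^1$-reduct, eliminates quantifiers (Theorem~\ref{thm:dag:qe}, Proposition~\ref{T:qe}), together with the standing Convention~\ref{conv:s} that $S = \dcl(S)$; a quantifier-free $\lan{RV}^1$-formula defining a subset of $\RV$ can only involve its parameters through $\RV$-sort terms and through $\sn$ applied to $\RV$-sort terms, so after absorbing those values into $\dcl^1(S)$ we may assume $A$ is $\0$-$\lan{RV}^1$-definable in the language with constants from $S$. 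Once this normalization is in place, the factorization argument above finishes the proof with no computation. Concretely:

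\begin{proof}
By Theorem~\ref{thm:dag:qe} and Proposition~\ref{T:qe}, the $\lan{RV}^1$-reduct of $\bb T$ eliminates quantifiers, so $A$ is defined by a quantifier-free $\lan{RV}^1$-formula; since $S = \dcl(S)$, all parameters needed may be taken in $\dcl^1(S)$, and we may assume $A$ is $\lan{RV}^1$-definable over $\0$ in the language with constants from $S$. By Corollary~\ref{lrvdag:same:lrv} (or directly), it suffices to show that every $\sigma \in \aut_S(\gC)$ fixes $A$ setwise. By Lemma~\ref{auto:decom} applied with $U = \0$, write $\sigma = \bar\sigma \circ \rho$ where $\rho$ is an automorphism of $\gC^1$ over $\dcl^1(\0)$ and $\bar\sigma$ is an immediate automorphism of $\gC$. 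Then $\rho(A) = A$ because $A$ is $\lan{RV}^1$-definable over $\dcl^1(\0)$, and $\bar\sigma(A) = A$ because $\bar\sigma$ fixes $\RV$ pointwise and $A \sub \RV$. Hence $\sigma(A) = A$, and $A$ is $\lan{RV}$-definable.
\end{proof}
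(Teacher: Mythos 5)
Your overall strategy (factor an arbitrary $\sigma \in \aut_S(\gC)$ through Lemma~\ref{auto:decom} as an immediate automorphism composed with an $\lan{RV}^1$-automorphism, and conclude that $A$ is setwise invariant) is genuinely different from the paper's proof, which is syntactic: the paper inducts on the complexity of a quantifier-free defining formula and shows directly that $\rv$ of each occurring term $\sum_i a_i\sn(Y^i)$ agrees, on $A$, with $\rv$ of its sum of dominant monomials, which is expressible by an $\lan{RV}$-term; this reduces the complexity and, incidentally, is effective. The invariance part of your argument is fine. The problem is the step that converts invariance into definability.

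You write that ``by Corollary~\ref{lrvdag:same:lrv} (or directly), it suffices to show that every $\sigma \in \aut_S(\gC)$ fixes $A$ setwise.'' This is exactly where the content of the lemma lives, and as written it is circular: Corollary~\ref{lrvdag:same:lrv} carries the hypothesis that $A$ is \emph{parametrically $\lan{RV}$-definable}, which is precisely what is not yet known here (it is, in essence, what Lemma~\ref{lrvdag:lrv:1} is asserting). The ``direct'' Galois-theoretic fact you are implicitly invoking --- a set invariant under $\aut_B$ is $B$-definable --- likewise presupposes that the set is definable with parameters \emph{in the reduct} $\gC$; a subset of $\RV$ that is merely $\aut_S(\gC)$-invariant is only a union of realizations of complete $\lan{RV}$-types over $S$ and need not be $\lan{RV}$-definable at all. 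The gap is repairable, but it requires an extra compactness argument using the expansion: since $A$ and $\RV \mi A$ are both defined by $\lan{RV}^1(S)$-formulas and $\gC^1$ is sufficiently saturated, invariance gives that for each $t \in A$ the type $\tp_{\lan{RV}}(t/S)$ implies the $\lan{RV}^1$-formula defining $A$, hence by compactness some single $\lan{RV}(S)$-formula $\psi_t \in \tp_{\lan{RV}}(t/S)$ has $\psi_t(\gC) \sub A$; a second application of compactness to the covering $A = \bigcup_t \psi_t(\gC)$ extracts a finite subcover, exhibiting $A$ as a finite disjunction of $\lan{RV}(S)$-formulas. With that supplement your argument is correct (and in fact proves Lemma~\ref{lrvdag:lrv} for all $m$ at once, bypassing the paper's induction); without it, the key inference is unjustified.
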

\begin{proof}
Let $\phi(Y)$ be a quantifier-free formula that defines $A$, where $Y$ is an $\RV$-sort variables. We do induction on $\abs{\phi}$. Since the base case $\abs{\phi} = 0$ is tautological, we proceed to the inductive step directly.

Let $F_k(Y)$ enumerate the occurring $\VF$-terms of $\phi(Y)$ of complexity $1$. We may write each $F_k(Y)$ in the form $\sum_i a_{i}\sn(Y^i)$, where $a_{i} \in \VF(S)$. For each $t \in A$ and each $k$ let
\[
I_{k, t} = \{i : \vrv(\rv(a_{i}) t^i) \leq \vrv(\rv(a_{j}) t^j) \text{ for all } j\}.
\]
Then set $e_{i} = \sn(\rv(a_{i})) \in \VF(S)$ and $E_{k, t}(Y) = \sum_{i \in I_{k, t}} e_{i}\sn(Y^i)$. Across a disjunction we may assume that, for every $k$ and all $t, s \in A$, $I_{k, t} = I_{k, s}$ and hence $E_{k, t}(Y) = E_{k, s}(Y)$. Then we may write $I_k$ and $E_{k}(Y)$ instead. Note that the equality $E_{k}(Y) = 0$ is equivalent to an $\lan{RV}$-formula. Therefore we may further assume that, for every $k$, either $E_{k}(t) = 0$ for all $t \in A$ or $E_{k}(t) \neq 0$ for all $t \in A$.

If $E_{k}(t) = 0$ for some $k$ and some $t \in A$ then $A$ is finite and hence, by Corollary~\ref{lrvdag:same:lrv}, $A$ is $\lan{RV}$-definable. So we may assume that $E_{k}(t) \neq 0$ for all $k$ and all $t \in A$. Then $\rv(F_k(t)) = \rv(E_{k}(t))$ for all $k$ and all $t \in A$. Since, without loss of generality, $E_{k}(Y)$ is of the form $1 + \sum_{i} e_{i}\sn(Y^i)$, we have $\rv(E_{k}(t)) = 1 + \rv(e_i)t^i$ for all $t \in A$. This means that $\phi(Y)$ is equivalent to a formula of complexity $< \abs{\phi}$ and hence, by the inductive hypothesis, $A$ is $\lan{RV}$-definable.
\end{proof}

\begin{lem}\label{lrvdag:lrv}
If $A \sub \RV^m$ is $\lan{RV}^1$-definable then it is $\lan{RV}$-definable.
\end{lem}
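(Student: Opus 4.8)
The plan is to argue by induction on $m$. The base case $m=1$ is precisely Lemma~\ref{lrvdag:lrv:1}, so assume $m\geq 2$ and that the assertion holds for subsets of $\RV^{m-1}$. Regard $A$ as a subset of $\RV\times\RV^{m-1}$, writing a point of $A$ as $(Y,\lbar a)$, and put $B=\pr_{>1}(A)\sub\RV^{m-1}$. Since a projection of an $\lan{RV}^{1}$-definable set is $\lan{RV}^{1}$-definable, $B$ is $\lan{RV}^{1}$-definable over $S$, hence $\lan{RV}$-definable by the induction hypothesis. For each $\lbar a\in B$ the fiber $\fib(A,\lbar a)\sub\RV$ is $\lbar a$-$\lan{RV}^{1}$-definable.

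The key point is to descend each fiber to $\lan{RV}$ \emph{over $\lbar a$ alone}. All the foregoing results hold over an arbitrary small base, so applying Lemma~\ref{lrvdag:lrv:1} over $\dcl^1(S\lbar a)$ shows that $\fib(A,\lbar a)$ is $\lan{RV}$-definable over $\dcl^1(S\lbar a)$, and in particular is parametrically $\lan{RV}$-definable. As it is also $\lbar a$-$\lan{RV}^{1}$-definable, Corollary~\ref{lrvdag:same:lrv} now gives that $\fib(A,\lbar a)$ is $\lbar a$-$\lan{RV}$-definable. This routing is essential: a naive fibration would only present $\fib(A,\lbar a)$ over the parameters $\lbar a$ \emph{and} $\sn(\lbar a)$, and the occurrence of the $\VF$-element $\sn(\lbar a)$ would survive into the gluing.

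It remains to make the fiberwise descent uniform. For each $\lan{RV}$-formula $\chi(Y,\lbar Y')$ over $S$ set $B_\chi=\set{\lbar a\in B:\chi(\cdot,\lbar a)\text{ defines }\fib(A,\lbar a)}$; this is $\lan{RV}^{1}$-definable over $S$, and by the previous paragraph $B=\bigcup_\chi B_\chi$. By saturation of $\gC^{\bb T}$, finitely many $\chi_1,\dots,\chi_r$ already cover $B$; disjointifying and invoking the induction hypothesis once more, we may assume the corresponding sets $B_1,\dots,B_r\sub\RV^{m-1}$ are $\lan{RV}$-definable and partition $B$. Then
\[
A=\bigcup_{j\leq r}\set{(Y,\lbar a):\lbar a\in B_j\text{ and }\chi_j(Y,\lbar a)},
\]
which is $\lan{RV}$-definable, closing the induction.

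The only genuinely non-formal step is the combination in the second paragraph — running Lemma~\ref{lrvdag:lrv:1} through Corollary~\ref{lrvdag:same:lrv} so that the fiberwise defining formulas land in $\lan{RV}$ rather than merely in $\lan{RV}^{1}$ — together with the compactness argument that turns this into a uniform family; the remainder (quantifier elimination making $B$ and the $B_j$ definable, the freedom to change the small base of Lemma~\ref{lrvdag:lrv:1}) is bookkeeping. One can also bypass the induction: by Lemma~\ref{auto:decom} with $U=\0$ every $\lan{RV}$-automorphism of $\gC$ over $S$ is an immediate automorphism composed with an $\lan{RV}^{1}$-automorphism over $S$; the former fix $\RV^m$ pointwise and the latter fix $A$ setwise, so both $A$ and $\RV^m\mi A$ are $\aut_S(\gC)$-invariant, hence each is $\lan{RV}$-type-definable over $S$, and a compactness argument separating two disjoint $S$-type-definable sets whose union is $\RV^m$ shows $A$ is $\lan{RV}$-definable over $S$.
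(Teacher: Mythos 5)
Your main argument is, step for step, the paper's own proof: induct on $m$, handle the projection by the inductive hypothesis, run the base case Lemma~\ref{lrvdag:lrv:1} over the enlarged base $\dcl^1(\lbar a)$ so that the fiber becomes parametrically $\lan{RV}$-definable and Corollary~\ref{lrvdag:same:lrv} applies, then make the fiberwise formulas uniform via the sets $B_\chi$ and compactness. That part is correct and is essentially identical to the paper's proof.

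The appended ``bypass'' is a genuinely different route, and one remark is in order. As written, the step ``$A$ and its complement are $\aut_S(\gC)$-invariant, hence $\lan{RV}$-type-definable over $S$'' is not automatic: invariance alone only makes $A$ a union of complete $\lan{RV}$-types over $S$. What saves it is that $A$ is moreover $\lan{RV}^1(S)$-definable, so the standard reduct-descent argument applies: if $\lbar b$ satisfies every $\lan{RV}(S)$-consequence of the $\lan{RV}^1$-definition of $A$, then by saturation some $\lbar a\in A$ realizes $\tp_{\lan{RV}}(\lbar b/S)$, and strong homogeneity of the reduct $\gC$ carries $\lbar a$ to $\lbar b$, so $\lbar b\in A$; this gives type-definability of $A$ and, symmetrically, of its complement, and compactness finishes. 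This descent across languages is precisely the input the paper avoids by building the hypothesis ``parametrically $\lan{RV}$-definable'' into Corollary~\ref{lrvdag:same:lrv} (supplied there by the base case). Once you spell that step out, the bypass is legitimate and would in fact subsume Lemma~\ref{lrvdag:lrv:1} as well; the trade-off is that the paper's syntactic induction produces explicit defining formulas, a technique reused in Corollary~\ref{RV:sub:nosn} and Lemma~\ref{gam:same}.
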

\begin{proof}
We do induction on $m$. The base case $m=1$ is proved above. For the inductive step, by the inductive hypothesis, $\pr_{<m}(A)$ is $\lan{RV}$-definable. On the other hand, for every $\lbar t \in \pr_{<m}(A)$, $\fib(A, \lbar t)$ is both $\dcl^1(\lbar t)$-$\lan{RV}$-definable and $\lbar t$-$\lan{RV}^1$-definable and hence, by Corollary~\ref{lrvdag:same:lrv}, it is $\lbar t$-$\lan{RV}$-definable. For any $\lan{RV}$-formula $\phi(\lbar Y, Z)$, let $B_{\phi} \sub \pr_{<m}(A)$ be the $\lan{RV}^1$-definable subset such that $\lbar t \in B_{\phi}$ if and only if $\phi(\lbar t, Z)$ defines $\fib(A, \lbar t)$. By the inductive hypothesis again, $B_{\phi}$ is $\lan{RV}$-definable. Now the claim follows from compactness.
\end{proof}

\begin{cor}\label{RV:sub:nosn}
Any $\mdl L_{\bb T}$-definable subset $A \sub \RV^m$ may be defined by an $\mdl L_{\bb T}$-formula that does not involve $\sn$, that is, $A$ is definable in the reduct of $\gC^{\bb T}$ to the $\RV$-sort.
\end{cor}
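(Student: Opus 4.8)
The plan is to deduce this from Proposition~\ref{T:qe} and Lemma~\ref{lrvdag:lrv} by exploiting the fact that, among the symbols of $\mdl L_{\bb T}$, only the section $\sn$ can take an $\RV$-variable out of the $\RV$-sort. First I would invoke Proposition~\ref{T:qe} to write $A$ as the zero set of a quantifier-free $\mdl L_{\bb T}$-formula $\phi(\lbar Y)$ with $\lbar Y = (Y_1, \ldots, Y_m)$ of the $\RV$-sort. Since $\bb T$ expands $\mdl L_{\RV}^3 = \mdl L_{\RV} \cup \{\sn, \rcsn\}$ only by relations and functions of the $\RV$-sort (and $\rcsn$, $\tbk$ are themselves $\RV$-sort valued, while $\csn$ does \emph{not} occur in $\mdl L_{\bb T}$), every $\VF$-sort subterm of $\phi$ is a polynomial over $\VF(S)$ in terms $\sn(\sigma)$ with $\sigma$ an $\sn$-free $\mdl L_{\bb T}$-term in $\lbar Y$; in the terminology of Definition~\ref{defn:complex}, the occurring $\VF$-terms of $\phi$ are polynomials in $\sn$ applied to terms of strictly smaller complexity. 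This is exactly the shape of formula handled in the proof of Lemma~\ref{lrvdag:lrv:1}.

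Next I would rerun the induction on $\abs{\phi}$ in the proofs of Lemmas~\ref{lrvdag:lrv:1} and~\ref{lrvdag:lrv}, with $\mdl L_{\RV}^1$ replaced throughout by $\mdl L_{\bb T}$ and $\mdl L_{\RV}$ by the language obtained from $\mdl L_{\bb T}$ by deleting $\sn$. For a top occurring $\VF$-term $F_k$ of complexity $1$, using that $\sn \rest \RV^{\times}$ is a multiplicative homomorphism one writes $F_k(\lbar Y) = \sum_l a_l \sn(\mu_l(\lbar Y))$ with each $\mu_l$ an $\sn$-free term; then one partitions $A$ according to which of the monomials $a_l \sn(\mu_l(\lbar t))$ achieves the minimal valuation — a partition defined by $\sn$-free formulas — and, using $\rv(\sn(\sigma)) = \sigma$, replaces each $\rv(F_k)$ by an $\sn$-free $\mdl L_{\bb T}$-expression, producing a formula of complexity $< \abs{\phi}$ to which the inductive hypothesis applies. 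The step from $\RV$ to $\RV^m$ is the compactness argument of Lemma~\ref{lrvdag:lrv}, in which the role of Corollary~\ref{lrvdag:same:lrv} is taken by its obvious analogue over $\bb T$; that analogue is proved exactly as Corollary~\ref{lrvdag:same:lrv}, via Lemma~\ref{auto:decom}, whose statement and proof are insensitive to the extra $\RV$-sort structure. The outcome is that $\phi$ is equivalent over $\bb T$ to a formula not mentioning $\sn$, which is the assertion.

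The only point requiring care — rather than a real obstacle — is to check that the bookkeeping of Definition~\ref{defn:complex} ($X$-complexity, occurring and top occurring $\VF$-term) is undisturbed by the additional $\RV$-sort symbols; this is immediate, since such symbols contribute $0$ to every $X$-complexity and the substitutions made in the inductive step remain inside $\mdl L_{\bb T}$. I therefore expect no genuine difficulty: the substantive analysis is already contained in Lemmas~\ref{lrvdag:lrv:1} and~\ref{lrvdag:lrv}, and the corollary is their routine extension from $\ACVF^1$ to the full theory $\bb T$.
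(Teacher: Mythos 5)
Your overall strategy (induct on the complexity $\abs{\phi}$ and feed the bottom layer into Lemma~\ref{lrvdag:lrv}) is the right one, but the specific mechanism you propose for the inductive step does not go through. The leading-term analysis of Lemma~\ref{lrvdag:lrv:1} works because the terms inside $\sn$ are literally the powers $Y^i$ of a single variable, so that the degenerate piece $\{t : E_k(t) = 0\}$ is the zero set of a nonzero polynomial in $\sn(Y)$, hence \emph{finite}, and can be absorbed by Corollary~\ref{lrvdag:same:lrv}. In your setting the inner terms $\mu_l(\lbar Y)$ are arbitrary $\sn$-free $\mdl L_{\bb T}$-terms in several variables, and the locus where several monomials simultaneously achieve the minimal valuation \emph{and} their leading parts cancel (i.e. $\sum_{l \in I} \sn(\rv(a_l)\mu_l(\lbar t)) = 0$) is a perfectly arbitrary, typically infinite, definable subset of $A$. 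On that locus $\rv(F_k)$ is \emph{not} given by the leading monomials; you must pass to the next components of the coefficients $a_l \in \VF(S)$, which reproduces an expression of exactly the same shape and complexity, and since the $a_l$ may have infinite support in the Hahn-series sense this recursion does not terminate uniformly in $\lbar t$. Fibering down to one variable as in Lemma~\ref{lrvdag:lrv} does not rescue this, because even in one variable the extra $\RV$-sort structure makes the cancellation locus $\{t : E_k(t)=0\}$ infinite in general. So as written the proposal has a genuine gap precisely at the cancellation case, which you do not address.

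The paper's proof sidesteps this entirely by a substitution trick: for each top occurring $\VF$-term $F_k(\lbar Y) = \sum_i a_i \sn(\tau_{ki}(\lbar Y))$ it introduces fresh $\RV$-variables $X_{ki}$ in place of the inner terms $\tau_{ki}(\lbar Y)$ and a fresh variable $Z_k$ in place of $\rv(F_k(\lbar Y))$. The condition $Z_k = \rv\bigl(\sum_i a_i \sn(X_{ki})\bigr)$ is then a genuinely $\lan{RV}^1$-definable subset (coefficients in $\VF(S)$, no extra structure inside the $\sn$'s), to which Lemma~\ref{lrvdag:lrv} applies as a black box; the remaining conjuncts $X_{ki} = \tau_{ki}(\lbar Y)$ and the outer formula $\phi^*$ have strictly smaller complexity and are handled by the inductive hypothesis; and $A$ is recovered as the coordinate projection $\pr_{\leq m}(A^*)$, which is harmless since the corollary only asserts definability (not quantifier-free definability) in the $\RV$-reduct. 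If you replace your leading-term analysis by this substitution-and-projection step, the rest of your outline is sound.
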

\begin{proof}
Let $\phi$ be a quantifier-free formula that defines $A$. We do induction on $|\phi|$. Let $F_k(\lbar Y)$ enumerate the top occurring $\VF$-terms of $\phi$. We may write each $F_k(\lbar Y)$ in the form $\sum_i a_{i}\sn(\tau_{ki}(\lbar Y))$, where $a_{i} \in \VF(S)$. Let $F^*_k$ be the $\VF$-term obtained from $F_k(\lbar Y)$ by replacing each $\tau_{ki}(\lbar Y)$ with a new variable $X_{ki}$. Let $\phi^*$ be the formula obtained from $\phi$ by replacing each $\rv(F_k(\lbar Y))$ with a new variable $Z_k$. Let $A^*$ be the subset defined by the formula
\[
\phi^* \wedge \bigwedge_{k, i} (Z_k = \rv(F^*_k) \wedge X_{ki} = \tau_{ki}(\lbar Y)).
\]
Since $A = \pr_{\leq m}(A^*)$, the claim follows from the inductive hypothesis and Lemma~\ref{lrvdag:lrv}.
\end{proof}

Therefore, as far as the $\RV$-sort is concerned, $\bb T$ and $\wt{\bb T}$ are the same theory (in the sense that they have the same definable subsets) and there is no need to treat $\wt{\bb T}$ separately. Consequently, if $\gC^{\bb T}$ is a $\Gamma$-minimal expansion of $\gC^3$, that is, if any $\mdl L_{\bb T}$-definable subset $I \sub \Gamma^m$ is $\lan{RV}^{3}$-definable, then we may unambiguously speak of definable subsets in the $\Gamma$-sort:

\begin{defn}\label{def:normal:form}
An \emph{imaginary $\K$-term} is a term of the form $\sum_{i = 1}^k \res(\rv(F_{i}(\lbar X)) \cdot r_{i} \cdot \lbar Y^{\lbar n_i})$, where $\lbar X$ are $\VF$-sort variables, $\lbar Y$ are $\RV$-sort variables, $\lbar n_i \in \N$, $r_i \in \RV$, and $F_{i}(\lbar X)$ is a polynomial with coefficients in $\VF$. An \emph{imaginary $\Gamma$-term} is a term of the same form with $\res$ replaced by $\vrv$.

We should think of these as real terms if we work with the language $\mdl L_{\K \Gamma}$ (resp.\ $\mdl L_{\K \Gamma}^{\rcsn}$) that corresponds to the three-sorted structure of the reduct of $\gC$ (resp.\ $\gC^2$ or $\gC^3$) to the $\RV$-sort. The complexity of an $\mdl L_{\K \Gamma}^{\rcsn}$-formula with respect to $\vrv$ and $\rcsn$ is defined as in Definition~\ref{defn:complex}.
\end{defn}

\begin{lem}\label{gam:same}
Let $\Gamma_{\infty} = \Gamma \cup \{\infty\}$. If $I \sub \Gamma_{\infty}^m$ is $\lan{RV}^{3}$-definable then it is $\lan{RV}$-definable.
\end{lem}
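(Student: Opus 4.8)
The plan is to peel off the two extra primitives $\sn$ and $\rcsn$ one at a time, the removal of $\rcsn$ being the only step with any content. First I would observe that, since $I$ lives in the $\RV$-sort and the function $\rcsn$ does not involve $\sn$, the arguments of Lemma~\ref{lrvdag:lrv:1}, Lemma~\ref{lrvdag:lrv} and Corollary~\ref{RV:sub:nosn} go through with $\lan{RV}^1$ replaced by $\lan{RV}^3$ throughout: the $\RV$-subterms occurring under $\sn$ in the top occurring $\VF$-terms are now allowed to mention $\rcsn$, but this is irrelevant to the valuation bookkeeping carried out there. So one may assume $I$ is defined by a quantifier-free $\mdl L_{\K\Gamma}^{\rcsn}$-formula $\phi(\lbar Z)$ in $\Gamma$-sort variables $\lbar Z = (Z_1, \dots, Z_m)$, and it then suffices to rewrite $\phi$ as an $\mdl L_{\K\Gamma}$-formula with parameters in $S$, since any such formula is a fortiori an $\lan{RV}$-formula.

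The removal of $\rcsn$ exploits that $\rcsn$ is a group homomorphism with $\vrv\circ\rcsn=\id$, which lets one push $\rcsn$ through all $\Gamma$-arithmetic. After partitioning $\Gamma_\infty^m$ into the $2^m$ $\mdl L_{\K\Gamma}(S)$-definable pieces recording which coordinates are $\infty$ (on which $\rcsn$ acts as $\rcsn(\infty)=\infty$), I would assume no $Z_j=\infty$. Then, using $\vrv(\rcsn(\gamma))=\gamma$, every occurring $\Gamma$-term of $\phi$ collapses to a $\Z$-affine expression $a+\sum_j n_j Z_j$ with $a\in\Gamma(S)$; and, using $\rcsn\!\left(a+\sum_j n_j Z_j\right)=\rcsn(a)\prod_j\rcsn(Z_j)^{n_j}$ together with the fact that $S$ is closed under $\csn=\sn\circ\rcsn$, every occurring $\RV$-term collapses to one of the form $\rho=r\cdot\prod_j\rcsn(Z_j)^{n_j}$ with $r\in\RV(S)$. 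For such $\rho$ one has $\vrv(\rho)=\vrv(r)+\sum_j n_j Z_j$, an $\mdl L_{\K\Gamma}$-term, and writing $r=\tbk(r)\,\rcsn(\vrv(r))$ gives $\rho=\tbk(r)\,\rcsn\!\left(\vrv(r)+\sum_j n_j Z_j\right)$; hence on the $\mdl L_{\K\Gamma}(S)$-definable locus $\vrv(r)+\sum_j n_j Z_j=0$ one gets $\rho=\tbk(r)\rcsn(0)=\tbk(r)\in\K(S)$, while off that locus $\vrv(\rho)\neq 0$ so $\res(\rho)=0$; here $\tbk(r)\in\K(S)$ because $S$ is closed under $\tbk$. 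Likewise any atomic $\RV$-equality $\rho_1=\rho_2$ holds iff $\vrv(\rho_1)=\vrv(\rho_2)$ and the constant condition $\tbk(r_1)=\tbk(r_2)$ holds, since $\tbk(\rho_i)=\tbk(r_i)$. Propagating these substitutions through the residue-field additions and the $\K$- and $\Gamma$-sort atoms turns $\phi$ into a Boolean combination of $\mdl L_{\K\Gamma}(S)$-formulas, as desired.

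The main obstacle is purely one of bookkeeping: verifying that the $\sn$-elimination of Corollary~\ref{RV:sub:nosn} really is unaffected by the presence of $\rcsn$, keeping track of the finitely many case splits (the $\infty$-coordinates, and for each occurring $\RV$-term the vanishing locus of its valuation, together with the degenerate case $r=\infty$), and checking that the constants produced --- $\rcsn(a)$ for $a\in\Gamma(S)$, $\rcsn(0)=1$, and $\tbk(r)$ for $r\in\RV(S)$ --- lie in the appropriate sorts of $S$, which follows from $S$ being closed under the definable functions $\csn$, $\rcsn$, $\tbk$. Note that, by contrast with Lemma~\ref{lrvdag:lrv}, no induction on $m$ is needed here, precisely because $\rcsn$ maps out of the $\Gamma$-sort and $\vrv\circ\rcsn=\id$ trivializes its effect within that sort.
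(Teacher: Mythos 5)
Your proposal is correct and follows essentially the same route as the paper: the paper likewise first invokes Corollary~\ref{RV:sub:nosn} to pass to the reduct of $\gC^3$ to the $\RV$-sort with the language $\mdl L_{\K\Gamma}^{\rcsn}$, writes each occurring $\RV$-term as $t\,\rcsn(F(\lbar Z))$ with $t \in \RV(S)$, and then replaces $\vrv(\tau)$ by $\vrv(t)+F(\lbar Z)$, $\res(\tau)$ by $0$ or $t\rcsn(\vrv(t))^{-1}$ according to the vanishing of $\vrv(t)+F(\lbar Z)$, and the $\RV$-sort atoms by the corresponding $\Gamma$-sort conditions, finishing by induction on complexity. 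Your extra care with general equalities $\rho_1=\rho_2$ and the $\infty$-coordinates is just a more explicit version of the same case analysis.
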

\begin{proof}
By Corollary~\ref{RV:sub:nosn}, we may work in the reduct of $\gC^3$ to the $\RV$-sort and hence with the language $\mdl L_{\K \Gamma}^{\rcsn}$, where we still have quantifier elimination. Let $\phi(\lbar Z)$ be a quantifier-free formula that defines $I$. Consider any term $\tau(\lbar Z)$ that occurs in $\phi(\lbar Z)$ in one of the following ways: $\vrv(\tau(\lbar Z))$, $\textup{res}(\tau(\lbar Z))$, and $\tau(\lbar Z) \, \Box \, 1$ or $\tau(\lbar Z) \, \Box \, \infty$, where $\Box$ is either $=$ or $\neq$ in the $\RV$-sort. Then $\tau(\lbar Z)$ may be written as $t \csn(F(\lbar Z))$, where $t \in \RV(S)$. If $\vrv(\tau(\lbar Z))$ occurs then it may be replaced by $\vrv(t) + F(\lbar Z)$. If $\textup{res}(\tau(\lbar Z))$ occurs then it may be replaced by either $0$ or $t \rcsn(\vrv(t))^{-1}$. If $\tau(\lbar Z) \, \Box \, 1$ occurs then it may be replaced by $\vrv(t) + F(\lbar Z) \, \Box \, 0$ (note that this is so because if $t \neq \rcsn(\vrv(t))$ then $\fa{\lbar Z} \tau(\lbar Z) \neq 1$ is true); similarly for the case $\tau(\lbar Z) \, \Box \, \infty$. In all situations, across a disjunction, the complexity of the formula decreases. So the claim follows from a routine induction on complexity.
\end{proof}

\begin{rem}\label{rem:gam:stab}
Recall that the (imaginary) $\Gamma$-sort is stably embedded in $\gC$; that is, any parametrically $\lan{RV}$-definable subset in the $\Gamma$-sort can be parametrically defined in the reduct of $\gC$ to the $\Gamma$-sort (see the discussion preceding~\cite[Lemma~4.17]{Yin:QE:ACVF:min}). Therefore, all $\lan{RV}$-definable functions in the $\Gamma$-sort are piecewise $\Q$-linear. Here an $R$-linear map for any ring $R$ is allowed to have a constant term, unless indicated otherwise. By Lemma~\ref{gam:same}, this is also true in $\gC^{\bb T}$ if it is a $\Gamma$-minimal expansion of $\gC^3$.

There are two ways of treating an element $\gamma \in \Gamma_{\infty}$: as a point (when we study $\Gamma$ as an independent structure) or a subset of $\gC^{\bb T}$ (when we need to remain in the realm of definable subsets of $\gC^{\bb T}$). The former perspective simplifies the notation but is of course dispensable. We shall write $\vrv^{-1}(\gamma)$ when we want to emphasize that $\gamma \in \Gamma$ is a subset of $\gC^{\bb T}$.
\end{rem}

In fact, Lemma~\ref{gam:same} may be strengthened:

\begin{cor}\label{gam:3:1}
Let $\lbar t \in \RV$ and $I \sub \Gamma_{\infty}^m$ be a $\lbar t$-$\lan{RV}^{3}$-definable subset. Then $I$ is $\lbar t$-$\lan{RV}$-definable.
\end{cor}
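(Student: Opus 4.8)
The plan is to repeat the proof of Lemma~\ref{gam:same}, this time carrying the tuple $\lbar t$ along as parameters. As there, by Corollary~\ref{RV:sub:nosn} we may work in the reduct of $\gC^{3}$ to the $\RV$-sort, that is, in the language $\mdl L_{\K \Gamma}^{\rcsn}$ with quantifier elimination, and assume that $I$ is defined by a quantifier-free $\mdl L_{\K \Gamma}^{\rcsn}$-formula $\phi(\lbar Z, \lbar t)$, where $\lbar Z$ are $\Gamma_{\infty}$-sort variables and $\lbar t \in \RV$ now occurs as a parameter tuple. The first point to notice is that, since $I$ has no $\K$-sort coordinates, every $\K$-sort subterm of $\phi$ is in fact a constant, namely an element of $\K(\dcl^{3}(\lbar t))$.

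Next I would run the term-by-term rewriting of Lemma~\ref{gam:same}. Each term $\tau$ occurring in $\phi$ under $\vrv$, under $\res$, or in a comparison with $1$ or $\infty$ can be written as $\tau = c \cdot \rcsn(F(\lbar Z, \vrv(\lbar t)))$ with $c \in \RV(\dcl^{3}(\lbar t))$ and $F$ a $\Q$-linear term, and the three cases are handled exactly as in that proof: $\vrv(\tau)$ becomes $\vrv(c) + F$; $\res(\tau)$ becomes $0$ or the constant $c\,\rcsn(\vrv(c))^{-1}$; a comparison $\tau \,\Box\, 1$ becomes $\vrv(c) + F \,\Box\, 0$ (vacuously false when $c \neq \rcsn(\vrv(c))$), and similarly for $\tau \,\Box\, \infty$. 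Across a disjunction each replacement strictly lowers the complexity with respect to $\vrv$ and $\rcsn$, so a routine induction makes $\phi$ equivalent to an $\lan{RV}$-formula. It remains to identify the parameters of the resulting formula: the only $\Gamma$-sort data produced are the elements $\vrv(\lbar t)$ and $\vrv(c)$, all lying in $\Gamma(\dcl^{3}(\lbar t))$, while the $\K$-sort data produced by the $\res$-rewriting occur only inside equalities between constants, which collapse to truth values and disappear. So one is reduced to the identity $\Gamma(\dcl^{3}(\lbar t)) = \Gamma(\dcl(\lbar t))$, which holds because $\lan{RV}$ carries no function into the $\VF$-sort and, in characteristic $0$, both groups coincide with $\Gamma(S) + \Z\,\vrv(\lbar t)$; this is an $\RV$-to-$\Gamma$ analogue of Lemma~\ref{sub:rel}. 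Consequently $I$ is $\lbar t$-$\lan{RV}$-definable.

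The main obstacle is the bookkeeping in the second paragraph: one must be sure that after all the rewriting no genuinely new $\K$-sort parameter survives in the final $\lan{RV}$-formula, and that the $\Gamma$-sort byproducts really do land in $\dcl(\lbar t)$ — this rests on the elementary but needed fact that passing from $\lan{RV}$ to $\lan{RV}^{3}$ does not enlarge the $\Gamma$-part of the definable closure of a tuple from $\RV$. An alternative, less computational route is to argue, in the spirit of Corollary~\ref{lrvdag:same:lrv}, that $I$ is invariant under $\aut_{\dcl(\lbar t)}(\gC)$: any such automorphism restricts on $\Gamma_{\infty}$ to an automorphism over $\Gamma(\dcl(\lbar t)) = \Gamma(\dcl^{3}(\lbar t))$, and by stable embeddedness of $\Gamma$ in $\gC^{3}$, together with Lemma~\ref{gam:same} which says its induced structure is just the ordered $\Q$-vector space, such an automorphism is realized by an $\lan{RV}^{3}$-automorphism over $\dcl^{3}(\lbar t)$, under which $I$ is invariant. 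Making this precise, however, still requires knowing a priori that $I$ is $\lan{RV}$-definable over some small parameter set, which the term-rewriting above is the most direct way to supply.
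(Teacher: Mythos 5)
Your argument is correct in substance, but your primary route is not the one the paper takes --- in fact the paper's proof is precisely the ``alternative, less computational route'' you sketch and then half-dismiss at the end. The paper argues in two lines: by stable embeddedness of the $\Gamma$-sort (Remark~\ref{rem:gam:stab}, which via Lemma~\ref{gam:same} applies to $\gC^3$ as well as $\gC$), $I$ is definable in the reduct to the $\Gamma$-sort over parameters from $\Gamma(\dcl^3(\lbar t))$; and $\Gamma(\dcl^3(\lbar t)) = \Gamma(\dcl(\lbar t))$, so those parameters are already $\lan{RV}$-definable from $\lbar t$. Your worry that this route ``requires knowing a priori that $I$ is $\lan{RV}$-definable over some small parameter set'' is unfounded: the form of stable embeddedness invoked here already descends the parameters into the definable closure of $\lbar t$ inside the $\Gamma$-sort, so no prior $\lan{RV}$-definability of $I$ is needed. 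Your term-rewriting argument (re-running Lemma~\ref{gam:same} with $\lbar t$ carried along) is a legitimate, more self-contained substitute; its cost is the bookkeeping you describe, and its payoff is that it makes explicit where the parameters of the final $\lan{RV}$-formula come from. Note, though, that both routes bottom out in the same key identity $\Gamma(\dcl^3(\lbar t)) = \Gamma(\dcl(\lbar t))$, which the paper justifies only by an appeal to Corollary~\ref{RV:sub:nosn} and which you justify by an equally brisk appeal to an ``analogue of Lemma~\ref{sub:rel}''; neither is fully spelled out, so you have not avoided the one genuinely delicate point. One small slip: your opening claim that every $\K$-sort subterm of $\phi$ is a constant is false as stated, since terms $\res(\tau(\lbar Z))$ with $\tau$ depending on $\lbar Z$ do occur; your second paragraph handles exactly these, so the proof is unharmed, but the sentence should be deleted or restricted to subterms not of that form.
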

\begin{proof}
By stable embeddedness, $I$ is $\Gamma(\dcl^3(\lbar t))$-definable in the reduct of $\gC^3$ (or $\gC$) to the $\Gamma$-sort. On the other hand, it is not hard to see that, by Corollary~\ref{RV:sub:nosn}, $\Gamma(\dcl^3(\lbar t)) = \Gamma(\dcl(\lbar t))$, that is, the subgroup of $\Gamma$ generated by $\vrv(\lbar t)$. So $I$ is also $\lbar t$-$\lan{RV}$-definable.
\end{proof}

\begin{nota}
Given a function $f : A \fun B$, we shall often write $A_b$ for the fiber over $b \in B$ under $f$. In particular, given a definable subset $A$, we shall often write $A_{x}$ for the fiber over $x$ under a function of the form $\rv \rest A$, $\vv \rest A$, $\vrv \rest A$, etc.\  Of course which function is being considered should always be clear in context.
\end{nota}

\begin{defn}
For any subset $U \sub \RV^n$ and $\lbar \gamma \in \Gamma_{\infty}^n$, the subset $\tbk(U_{\lbar \gamma}) \sub \K^n$ is called the \emph{$\lbar \gamma$-twistback} of $U$. The subset $\bigcup_{\lbar \gamma \in \vrv(U)} \{\lbar \gamma\} \times \tbk(U_{\lbar \gamma}) \sub \Gamma_{\infty}^n \times \K^n$ is denoted by $\Omega(U)$. Conversely, $U_{\lbar \gamma}$ is called the \emph{$\lbar \gamma$-twist} of $\tbk(U_{\lbar \gamma})$. If $\tbk(U_{\lbar \gamma}) = \tbk(U_{\lbar \gamma'})$ for all $\lbar \gamma, \lbar \gamma' \in \vrv(U)$ then $U$ is called a \emph{twistoid}, in which case we simply write $\tbk(U)$ for the unique twistback.
\end{defn}

These notions of course depend on the choice of the cross-section $\csn$. Note that for a subset $W \sub \K^n$ and a $\lbar \gamma \in \Gamma_{\infty}^n$, the $\lbar \gamma$-twist $W_{\lbar \gamma}$ of $W$ is defined only if the $0$-coordinates in $W$ match the $\infty$-coordinates in $\lbar \gamma$. For $D \sub \Gamma^{n}$ we write $\Xi (W, D)$ for $\bigcup_{\lbar \gamma \in D} W_{\lbar \gamma}$.

\begin{lem}\label{shift:K}
Let $U \sub \RV^n$ be an $\lan{RV}$-definable subset and $\vrv(U) = D$. Then there is a definable finite partition $D_k$ of $D$ such that each $U_k = U \cap \vrv^{-1}(D_k)$ is a twistoid and the corresponding twistback is $\lan{RV}$-definable.
\end{lem}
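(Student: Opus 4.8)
The plan is to move the question into a product of the value group and the residue field by means of the reduced cross-section, and there to apply the orthogonality of $\Gamma$ and $\K$.

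First I would strip off the coordinates on which $\vrv$ takes the value $\infty$. For $E \sub I_n$ set $D^E = \set{\lbar\gamma \in D : \gamma_i = \infty \liff i \in E}$; since $D = \vrv(U)$ is $\lan{RV}$-definable, so is each $D^E$, and the $D^E$ partition $D$. On $U \cap \vrv^{-1}(D^E)$ every coordinate with index in $E$ is constantly $\infty$, hence contributes a constant $0$ to every twistback, so it suffices to prove the statement for the projection of $U \cap \vrv^{-1}(D^E)$ to the coordinates outside $E$. We may therefore assume $D \sub \Gamma^n$ and $U \sub (\RV^\times)^n$.

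Now $\lbar u \efun (\vrv(\lbar u), \tbk(\lbar u))$ is a definable coordinatewise bijection $(\RV^\times)^n \fun \Gamma^n \times (\K^\times)^n$ (it is the product of the group isomorphisms $\RV^\times \cong \Gamma \times \K^\times$, $u \mapsto (\vrv(u), \tbk(u))$, induced by $\rcsn$), and its image is precisely $\Omega(U)$; in particular $\Omega(U)$ is an $\lan{RV}^3$-definable subset of a product of the value group and the residue field. At this point I would invoke the orthogonality of $\Gamma$ and $\K$: every definable subset of $\Gamma^p \times \K^q$ is a finite union of rectangles $D_i \times W_i$ with $D_i \sub \Gamma^p$ and $W_i \sub \K^q$ definable. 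This is standard in $\ACVF$; passing to $\gC^3$ does not disturb it because $\rcsn$ maps $\Gamma$ into $\RV^\times$ and interacts with $\K$ only trivially ($\tbk \circ \rcsn \equiv 1$ and $\res \circ \rcsn$ is constant), so $\rcsn$ introduces no definable relation between the two sorts; moreover, by stable embeddedness of $\Gamma$ (Remark~\ref{rem:gam:stab}) and of $\K$, such a decomposition may be taken with all $D_i$ and $W_i$ already $\lan{RV}$-definable. Refining the $\Gamma$-parts to a common partition of $D$ — for a subset $J$ of the index set, intersect the $D_i$ with $i \in J$ with the complements of those with $i \notin J$ — we obtain a finite $\lan{RV}$-definable partition $\set{D_k}$ of $D$ together with $\lan{RV}$-definable sets $W_k \sub \K^n$ such that $\Omega(U) = \bigsqcup_k D_k \times W_k$.

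It remains only to read off the statement. For every $\lbar\gamma \in D_k$ the fibre of $\Omega(U)$ above $\lbar\gamma$ is $\tbk(U_{\lbar\gamma}) = W_k$, which does not depend on $\lbar\gamma$; hence $U_k = U \cap \vrv^{-1}(D_k)$ is a twistoid and its twistback is the $\lan{RV}$-definable set $W_k$. Reinstating the $\infty$-coordinates stripped off in the first step (each carrying the constant twistback $0$) yields the desired partition of the original $D$. The only non-bookkeeping ingredient is the orthogonality of $\Gamma$ and $\K$ together with the observation that $\rcsn$ preserves it; I expect the one point needing care to be the precise reduct in which orthogonality is applied and the verification that the resulting pieces descend to $\lan{RV}$-definable sets, everything else being manipulation of finite partitions and of the $\infty$-coordinates.
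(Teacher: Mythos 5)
Your proposal is correct and follows essentially the same route as the paper: both pass to $\Omega(U)\sub\Gamma_\infty^n\times\K^n$ and decompose it into $\lan{RV}$-definable rectangles $B_i\times V_i$, using Lemma~\ref{gam:same} (stable embeddedness) to descend the $\Gamma$-parts. The one step you flag as needing care — that $\rcsn$ does not spoil the orthogonality of $\Gamma$ and $\K$ — is exactly what the paper's proof carries out explicitly, by replacing each mixed term $\res(t\csn(F(\lbar Z)))$ occurring in a quantifier-free $\mdl L_{\K\Gamma}^{\rcsn}$-formula for $\Omega(U)$ by $0$ or by $t\rcsn(\vrv(t))^{-1}$ according to whether $\vrv(t)+F(\lbar Z)$ vanishes, after which each disjunct visibly splits into a $\Gamma$-conjunct and a $\K$-conjunct.
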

\begin{proof}
We work in the reduct of $\gC^3$ to the $\RV$-sort, considered as an $\mdl L_{\K \Gamma}^{\rcsn}$-structure. Let $\phi(\lbar Z, \lbar Y) = \bigvee_i \phi_i(\lbar Z, \lbar Y)$ be a quantifier-free $\mdl L_{\K \Gamma}^{\rcsn}$-formula in disjunctive normal form that defines $\Omega(U) \sub \Gamma_{\infty}^n \times \K^n$. Let $\res(t \csn(F(\lbar Z)))$ be a term that occurs in $\phi$. If $\vrv(t) + F(\lbar Z) \neq 0$ then $\res(t \csn(F(\lbar Z)))$ may be replaced by $0$, otherwise it may be replaced by $t \rcsn(\vrv(t))^{-1}$. Therefore, without loss of generality, each $\phi_i(\lbar Z, \lbar Y)$ may be written as a conjunction of $\theta_i(\lbar Z)$ and $\psi_i(\lbar Y)$, where the variables are displayed. Let $B_i \sub \Gamma_{\infty}^n$ be the subset defined by $\theta_i(\lbar Z)$ and $V_i \sub \K^n$ the subset defined by $\psi_i(\lbar Y)$.

Now we may easily translate each $\psi_i(\lbar Y)$ back into an $\lan{RV}$-formula. On the other hand, by Lemma~\ref{gam:same}, each $B_i$ is also $\lan{RV}$-definable. Let $D_k$ be the $\lan{RV}$-definable finite partition of $D$ induced by the subsets $B_i$. Clearly, for every $\lbar \gamma \in D_k$, $\tbk(U_{\lbar \gamma}) = \bigcup_{\lbar \gamma \in B_i} V_i$.
\end{proof}

\begin{defn}
Let $f : A \fun B$ be a function. If $A$, $B$ only have $\VF$- and $\RV$-coordinates then $f$ is \emph{$\rv$-contractible} if $(\rv \circ f)(\gp \cap A)$ is a singleton for every $\rv$-polydisc $\gp \sub \RVH(A)$ (see~\cite[Definition~2.4, Definition~4.21]{Yin:QE:ACVF:min}). If $A$, $B$ only have $\RV$-coordinates then $f$ is \emph{$\vrv$-contractible} if $(\vrv \circ f)(A_{\lbar \gamma})$ is a singleton for every $\lbar \gamma \in \vrv(A)$. The contractions of $f$, that is, the induced functions $\rv(A) \fun \rv(B)$, $\vrv(A) \fun \vrv(B)$, are usually denoted by $f_{\downarrow}$.
\end{defn}

In context, we shall often drop the prefixes and simply say that $f$ is a contractible function.

\begin{rem}\label{rem:div:algo}
Unlike  in \cite{hrushovski:kazhdan:integration:vf}, the conclusion of this remark is not needed for the $\Gamma$-categories below (see Defintiton~\ref{def:Ga:cat}). We present it here for the sake of comparison (see \cite[Lemma~3.28, Definition~9.1]{hrushovski:kazhdan:integration:vf}).

Obviously the composition of two $\vrv$-contractible functions is a $\vrv$-contractible function. Let $f : A \fun B$ be an $\lan{RV}$-definable $\vrv$-contractible function. For all $\lbar t\in \RV$, since the underlying substructure $S$ is $\VF$-generated (see Convention~\ref{conv:s}), it is clear that if $\gamma \in \Gamma(\dcl(\lbar t))$ then $\vrv^{-1}(\gamma) \cap \RV(\dcl(\lbar t)) \neq \0$. This implies that,  if $f_{\downarrow} : \vrv(A) \fun \vrv(B)$ is a bijection then there is a $\lbar t$-$\lan{RV}$-definable $g(\lbar t) \in \vrv^{-1}(f^{-1}_{\downarrow}(\vrv(\lbar t)))$ for every $\lbar t\in B$ and hence, by compactness, we have an $\lan{RV}$-definable function $g$ on $B$ such that $g_{\downarrow} = f^{-1}_{\downarrow}$. Let $G$ be a $\mgl(\Z)$-transformation on $\vrv(A)$, that is, a bijection of the form $T \lbar x+ \lbar \gamma$ with $T \in \mgl(\Z)$ and $\lbar \gamma \in \Gamma(S)$. Let $\lbar t \in \RV(S)$ with $\vrv(\lbar t) = \lbar \gamma$ and $G^{\uparrow}$ the $\mgl(\Z)$-transformation on $A$ given by $T \lbar x \cdot \lbar t$. Then $f_{\downarrow} \circ G^{-1}$ is the contraction of $f \circ (G^{\uparrow})^{-1}$; similarly if $G$ is a $\mgl(\Z)$-transformation on $\vrv(B)$. Lastly, let $\pr_E$ be a coordinate projection on $B$ and $\vrv(B)$. It is straightforward to check that $\pr_E \circ f_{\downarrow}$ is the contraction of $\pr_E \circ f$.

We have just shown that the class $\gO$ of contractions of $\lan{RV}$-definable $\vrv$-contractible functions is closed under composition, inversion, composition with $\mgl(\Z)$-transformations, and composition with coordinate projections.

Now suppose that $A \sub \RV^n$, $B \sub \RV$, and $(\lbar \alpha, \beta) \in \vrv(f) \cap \Gamma^{n+1}$. Let $\phi(\lbar Y, Z)$ be a quantifier-free $\mdl L_{\K \Gamma}$-formula with parameters $\lbar \alpha$, $\beta$ that defines $f_{(\lbar \alpha, \beta)} : A_{\lbar \alpha} \fun B_{\beta}$. Clearly we may assume that all $\RV$-sort literals occurring in $\phi$ are of the form $t \lbar Y^{\lbar n} Z^m \, \Box \, 1$, where $t \in \RV(S)$, $\lbar n, m \in \Z$, and $\Box$ is $=$ or $\neq$. Since $t \lbar Y^{\lbar n} Z^m = 1$ is equivalent to $\res(t \lbar Y^{\lbar n} Z^m) = 1$ and $f_{(\lbar \alpha, \beta)}$ is a function, we see that $\phi$ contains irredundant $\K$-sort equalities between sums of terms of the form $\res(t \lbar Y^{\lbar n} Z^m)$ with $\vrv(t) + \sum_i n_i \alpha_i + m \beta = 0$. Observe that
\[
\res(t \lbar Y^{\lbar n} Z^m) = (t / \rcsn(\vrv(t))) \cdot (\lbar Y / \rcsn(\lbar \alpha))^{\lbar n} \cdot (Z / \rcsn(\beta))^m.
\]
We may treat $\lbar Y / \rcsn(\lbar \alpha)$, $Z / \rcsn(\beta)$ as variables in these equalities and consequently may assume $\lbar n, m \in \N$. Applying the Euclidean algorithm, we see that, away from a $\rcsn(\lbar \alpha, \beta)$-$\lan{RV}$-definable subset of $\vrv^{-1}(\lbar \alpha)$ of $\RV$-dimension $< n$ (recall~\cite[Definition~4.9]{Yin:special:trans}), the twistback of $f_{(\lbar \alpha, \beta)}$ is given by
\[
Z / \rcsn(\beta) = \sum_i F_i(\lbar Y / \rcsn(\lbar \alpha)) \bigg / \sum_j G_j (\lbar Y / \rcsn(\lbar \alpha)),
\]
where $F_i(\lbar Y / \rcsn(\lbar \alpha))$, $G_j (\lbar Y / \rcsn(\lbar \alpha))$ are monomials such that, for any $i$, $j$ and any $\lbar t \in \vrv^{-1}(\lbar \alpha)$,
\[
\beta = \vrv(F_i(\lbar t)) - \vrv(G_j(\lbar t)).
\]
This means that there are integers $n_i \in \Z$ and a $\delta \in \Gamma(S)$ (note that $(\lbar \alpha, \beta)$ is not needed to define this $\delta$) such that $\beta = \delta + \sum_i n_i \alpha_i$.

In summary, by compactness, all functions in $\gO$ are definably piecewise $\Z$-linear (with constant terms). Moreover, if $h : I \fun J$ is a bijection in $\gO$ and $I, J \sub \Gamma^n$ then $h$ is definably a piecewise $\mgl_n(\Z)$-transformation. This follows from the next lemma, which holds in a more general setting.
\end{rem}

\begin{lem}\label{gen:mat:inv}
Let $R$ be an integral domain and $M$ be a torsion-free $R$-module, viewed as the main sort of a first-order structure of some expansion of the usual $R$-module language. Let $\gO$ be a class of definable functions in the sort $M$ such that
\begin{enumerate}
  \item $\gO$ contains all the identity functions and all functions in $\gO$ are definably piecewise $R$-linear,
  \item $\gO$ is closed under composition, inversion, composition with $\mgl(R)$-transformations, and composition with coordinate projections (in the sense described above).
\end{enumerate}
If $g : D \fun E$ is a bijection in $\gO$, where $D, E \sub M^n$, then $h$ is definably a piecewise $\mgl_n(R)$-transformation.
\end{lem}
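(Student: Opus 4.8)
The plan is to induct on $n$, with $n=0$ trivial. The key initial observation is that, by closure of $\gO$ under inversion, $g^{-1}\in\gO$ as well, so both $g$ and $g^{-1}$ are definably piecewise $R$-linear; fix finite definable partitions $D=\bigsqcup_i D_i$ and $E=\bigsqcup_j E_j$ on which $g$ and $g^{-1}$ restrict to affine maps $\lbar x\mapsto T_i\lbar x+\lbar c_i$ and $\lbar y\mapsto S_j\lbar y+\lbar d_j$ with $T_i,S_j$ square $R$-matrices. Refining to the common partition $D_{ij}:=D_i\cap g^{-1}(E_j)$, the identity $g^{-1}\circ g=\id$ gives $(S_jT_i-I)\lbar x=-(S_j\lbar c_i+\lbar d_j)$ for all $\lbar x\in D_{ij}$, and symmetrically $g\circ g^{-1}=\id$ gives $(T_iS_j-I)\lbar y=-(T_i\lbar d_j+\lbar c_i)$ for all $\lbar y\in g(D_{ij})$.

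I would then treat each nonempty $D_{ij}$ in turn. If $S_jT_i=I$, then $\det T_i$ is a unit of $R$ (as $\det S_j\det T_i=1$), so $T_i\in\mgl_n(R)$; the first displayed relation forces $S_j\lbar c_i+\lbar d_j=0$, so $g$ agrees on $D_{ij}$ with the $\mgl_n(R)$-transformation $\lbar x\mapsto T_i\lbar x+\lbar c_i$ and this piece needs no further work. Otherwise $S_jT_i\ne I$ (equivalently $T_iS_j\ne I$, since a one-sided inverse of a square matrix over a commutative integral domain is two-sided), and the displayed relations put $D_{ij}$ and $g(D_{ij})$ inside proper affine subspaces $\{\lbar p\cdot\lbar x=\beta\}$ and $\{\lbar p'\cdot\lbar y=\beta'\}$ for nonzero rows $\lbar p,\lbar p'$ of $S_jT_i-I$ and $T_iS_j-I$. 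Since $M$ is torsion-free over $R$, a coordinate projection $\pr_{\tilde a}$ omitting a coordinate $a$ with $p_a\ne0$ is injective on $D_{ij}$; pick $b$ similarly for $g(D_{ij})$. Using that $\gO$ contains $\id_{D_{ij}}$ and is closed under composition, inversion, and composition with coordinate projections, $\bar g:=\pr_{\tilde b}\circ g\circ(\pr_{\tilde a}\rest D_{ij})^{-1}$ is a bijection in $\gO$ between definable subsets of $M^{n-1}$, so by the induction hypothesis $\bar g$ is definably piecewise $\mgl_{n-1}(R)$.

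The remaining point, which I expect to be the real obstacle, is to transfer an $\mgl_{n-1}(R)$-description of $\bar g$ into an $\mgl_n(R)$-description of $g\rest D_{ij}$. When $R$ is a principal ideal domain, in particular in the intended application where $R=\Z$ and $M=\Gamma$, I would first use closure of $\gO$ under $\mgl_n(R)$-transformations on both sides to conjugate $g$ so that the affine subspace containing $D_{ij}$ becomes a coordinate hyperplane $\{x_a=\beta\}$ and the one containing $g(D_{ij})$ becomes $\{y_b=\beta'\}$ (completing the primitive parts of $\lbar p,\lbar p'$ to $R$-bases). Then on $D_{ij}$ the coordinate $x_a$ is constant and on $g(D_{ij})$ the coordinate $y_b$ is constant, so on each pullback piece where $\bar g$ equals $\bar T\bar{\lbar x}+\bar{\lbar c}$ with $\bar T\in\mgl_{n-1}(R)$ one checks that $g$ is realized there by the affine map whose matrix is the block-diagonal extension of $\bar T$ by a single $1$ in the new row of index $b$ and column of index $a$ (legitimate precisely because $x_a$ is constant on the piece), which has determinant $\pm\det\bar T\in R^\times$ and hence lies in $\mgl_n(R)$. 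Reassembling the finitely many pieces $D_{ij}$ and their sub-pieces yields the required partition of $D$. For a general integral domain $R$ one proceeds in the same spirit, exploiting that $\lbar p\cdot\lbar x-\beta$ vanishes identically on $D_{ij}$ to adjust the rows of the matrix; the crux in every case is to keep the matrices inside $\mgl_n(R)$ rather than merely $M_n(R)$ when moving between dimensions, which is exactly where the torsion-freeness of $M$ and all of the closure properties of $\gO$ are used.
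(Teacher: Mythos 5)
Your overall strategy is the paper's: induct on $n$, use torsion-freeness to produce coordinates $a$, $b$ for which $\pr_{\tilde a}\rest D_{ij}$ and $\pr_{\tilde b}\rest g(D_{ij})$ are injective, apply the inductive hypothesis to $\bar g=\pr_{\tilde b}\circ g\circ(\pr_{\tilde a}\rest D_{ij})^{-1}$, and lift the resulting $\mgl_{n-1}(R)$-description back to dimension $n$. The gap is in the lifting step, which you yourself identify as the real obstacle: your argument there is complete only when $R$ is a principal ideal domain, since it rests on completing the primitive part of $\lbar p$ to an $R$-basis so as to move $D_{ij}$ into a coordinate hyperplane by a $\mgl_n(R)$-transformation. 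Over a general integral domain no such completion exists (take $R=k[x,y]$ and $\lbar p=(x,y)$: no matrix in $\mgl_2(R)$ has $\lbar p$ as a row), and the closing sentence about ``adjusting the rows'' is not a proof: the relation $\lbar p\cdot\lbar x=\beta$ carries an arbitrary nonzero coefficient $p_a$ on the distinguished coordinate, and there is in general no way to trade it for a unit. Since the lemma is stated for arbitrary integral domains, the general case is left open, although your argument does cover the paper's only application, $R=\Z$ and $M=\Gamma$.

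The missing idea, which is how the paper closes this step, is to obtain the linear relation on $D_{ij}$ not from the matrix identity $(S_jT_i-I)\lbar x=\text{const}$ but from the closure properties of $\gO$: the function $\alpha=(\pr_a\rest D_{ij})\circ(\pr_{\tilde a}\rest D_{ij})^{-1}$, which expresses the omitted coordinate in terms of the others, lies in $\gO$ and is therefore piecewise $R$-linear, so on each piece $x_a=\lbar r\cdot\lbar x_{\tilde a}+c$ with coefficient $1$ on $x_a$. Writing the $b$th coordinate of $g(\lbar x)$ as $\lbar s\cdot\lbar x_{\tilde a}+s_ax_a+a_b$, substituting $s_ax_a=s_a(\lbar r\cdot\lbar x_{\tilde a}+c)$ and then adding the identity $0=\lbar r\cdot\lbar x_{\tilde a}+c-x_a$ yields the row $(\lbar s+s_a\lbar r+\lbar r,\,-1)$; the full matrix for $g$ on the piece is then block triangular with blocks $T\in\mgl_{n-1}(R)$ and $-1$, hence of unit determinant. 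This requires nothing of $R$ beyond integrality and no normalization of $\lbar p$; replacing your hyperplane manipulation by this observation turns your argument into the paper's proof.
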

\begin{proof}
We proceed by induction on $n$. The base case $n=1$ is clear. For the inductive step, without loss of generality, we may assume that both $g$ and $g^{-1}$ are $R$-linear, given respectively by $\lbar x \efun A \lbar x + \lbar a$ and $\lbar x \efun B \lbar x + \lbar b$. Observe that if there are distinct $\lbar x_1, \lbar x_2 \in D$ such that $\pr_{\tilde k}(\lbar x_1) = \pr_{\tilde k}(\lbar x_2)$ for some $1 \leq k \leq n$ then the $k$th column of $B A - I_n$ must be $0$; similarly for $E$ and $A B - I_n$. Therefore we are reduced to the situation where this fails for $D$, $E$ with respect to some $1 \leq k, l \leq n$. After $\mgl_n(R)$-transformations if necessary, we may assume that $k = l = n$. Then $D$ is the graph of a function $\alpha : \pr_{< n}(D) \fun \pr_{n}(D)$ and $E$ is the graph of a function $\beta : \pr_{< n}(E) \fun \pr_{n}(E)$. Note that $\alpha = (\pr_n \rest D) \circ (\pr_{<n} \rest D)^{-1}$ and $\beta = (\pr_n \rest E) \circ (\pr_{<n} \rest E)^{-1}$. Let $g^* :\pr_{< n}(D) \fun \pr_{< n}(E)$ be the bijection $(\pr_{<n} \rest E) \circ g \circ (\pr_{<n} \rest D)^{-1}$. By the assumed two conditions, $\alpha$, $\beta$, and $g^*$ are all in $\gO$.

By compactness, we may further assume that $\alpha$ is given by $\lbar x \efun \lbar r \cdot \lbar x + c$, where $\lbar r  \in R^{n-1}$, and, by the inductive hypothesis, $g^*$ is given by $\lbar x \efun T \lbar x + \lbar d$, where $T \in \mgl_{n-1}(R)$. Let $(\lbar s_n, s_n)$ be the last row of $A$ and $a_n$ the last entry of $\lbar a$. Set
\[
A^* = \begin{bmatrix}
       T & 0           \\[0.3em]
       \lbar s_n + s_n\lbar r + \lbar r & -1
     \end{bmatrix}
     \quad \text{and} \quad \lbar a^* = (\lbar d, s_n c + c + a_n).
\]
Then $g$ is given by $\lbar x \efun A^* \lbar x + \lbar a^*$, as required.
\end{proof}

\begin{lem}\label{resg:decom}
Let $A \sub \RV^k \times \Gamma^l$ be an $\lan{RV}^{3}$-definable (resp.\ $\lan{RV}$-definable) subset. Set $\pr_{\leq k}(A) = U$ and suppose that $\vrv(U)$ is finite. Then there is an $\lan{RV}^{3}$-definable (resp.\ $\lan{RV}$-definable) finite partition $U_i$ of $U$ such that, for each $i$, $\fib(A, \lbar t) = \fib(A, \lbar t')$ for all $\lbar t, \lbar t' \in U_i$.
\end{lem}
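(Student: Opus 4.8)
The plan is to slice $U$ along the value map so as to reduce to the case where $\vrv$ is constant on $U$, then to split a quantifier-free formula defining $A$ into a residue-field part (in the $\RV^{k}$-variables) and a value-group part (in the $\Gamma^{l}$-variables), exactly the kind of term rewriting carried out in the proofs of Lemmas~\ref{gam:same} and~\ref{shift:K}, and finally to read the desired partition of $U$ off the residue-field part.

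First I would use that the $\Gamma$-sort is $o$-minimal and stably embedded (Remark~\ref{rem:gam:stab}): the finite definable set $\vrv(U)$ then consists of $S$-definable points $\lbar\gamma_{1}, \ldots, \lbar\gamma_{r}$, so the sets $U_{j} = U \cap \vrv^{-1}(\lbar\gamma_{j})$ give an $\lan{RV}$- (resp.\ $\lan{RV}^{3}$-) definable finite partition of $U$. Since it suffices to partition each $U_{j}$ separately, I may replace $U$ by $U_{j}$ and $A$ by $A \cap (U_{j} \times \Gamma^{l})$, and thus assume from now on that $\vrv \rest U$ is constant with value some $\lbar\gamma \in \Gamma(S)^{k}$; in particular $A \sub \vrv^{-1}(\lbar\gamma) \times \Gamma^{l}$.

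Next I would pass to the reduct of $\gC^{3}$ (resp.\ $\gC$) to the $\RV$-sort, which eliminates quantifiers and in which $A$ is still definable without $\sn$ (Corollary~\ref{RV:sub:nosn}), and take a quantifier-free $\mdl L_{\K \Gamma}^{\rcsn}$- (resp.\ $\mdl L_{\K \Gamma}$-) formula $\phi(\lbar Y, \lbar Z)$ in disjunctive normal form defining $A$, with $\lbar Y$ ranging over $\RV^{k}$ and $\lbar Z$ over $\Gamma^{l}$. Because $\vrv(\lbar Y)$ is now pinned to the constant $\lbar\gamma$, the term rewriting from the proofs of Lemmas~\ref{gam:same} and~\ref{shift:K} applies: each occurring term $\res(t\lbar Y^{\lbar n})$ is replaced by $0$ when $\vrv(t) + \lbar n \cdot \lbar\gamma \neq 0$ and by the honest $\K$-valued monomial $t\lbar Y^{\lbar n}$ otherwise; each $\RV$-sort literal turns into a $\K$-sort literal in $\lbar Y$ or into a (vacuously true or false) $\Gamma$-sort literal according to the same dichotomy; every $\vrv$ of an occurring $\RV$-term becomes a constant in $\Gamma(S)$; and, in the $\lan{RV}^{3}$ case, every $\rcsn$-term is absorbed as in Lemma~\ref{gam:same}. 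After this I may assume, across the disjunction, that $\phi = \bigvee_{i=1}^{N} \bigl( \chi_{i}(\lbar Y) \wedge \eta_{i}(\lbar Z) \bigr)$, where $\chi_{i}$ is a conjunction of $\K$-sort literals in the coordinates of $\lbar Y$ only and $\eta_{i}$ a conjunction of $\Gamma$-sort literals in $\lbar Z$ only. Putting $C_{i} = \{ \lbar y \in \vrv^{-1}(\lbar\gamma) : \chi_{i}(\lbar y) \}$ and $D_{i} = \{ \lbar z \in \Gamma^{l} : \eta_{i}(\lbar z) \}$ --- all $S$-definable in the appropriate language --- exhibits $A$ as the finite union of rectangles $\bigcup_{i=1}^{N} C_{i} \times D_{i}$.

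Finally I would discard the indices with $D_{i} = \0$ and take $\{U_{p}\}$ to be the atoms of the finite Boolean subalgebra of the powerset of $U$ generated by the surviving $C_{i}$'s; these atoms are definable and partition $U$. On each $U_{p}$ the set $\{ i : \lbar t \in C_{i} \}$ does not vary with $\lbar t \in U_{p}$, hence neither does $\fib(A, \lbar t) = \bigcup \{ D_{i} : \lbar t \in C_{i} \}$; reassembling the partitions found for $\lbar\gamma_{1}, \ldots, \lbar\gamma_{r}$ gives the partition of the original $U$. I expect the one genuinely substantive point to be the variable-separation in the third paragraph --- in effect the orthogonality of the residue field and the value group in $\ACVF^{3}$ --- but, once $\vrv$ has been made constant on $U$, it is a direct replay of the computation in the proof of Lemma~\ref{shift:K}, so no essentially new difficulty should arise.
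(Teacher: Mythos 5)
Your proof is correct, but it takes a noticeably more hands-on route than the paper's. The paper dispatches the lemma in two sentences: by stable embeddedness of the $\Gamma$-sort and Corollary~\ref{gam:3:1}, each fiber $\fib(A, \lbar t) \sub \Gamma^l$ is $\vrv(\lbar t)$-$\lan{RV}$-definable, and since $\vrv(\lbar t)$ ranges over a finite set of definable points, compactness yields finitely many formulas $\phi$ such that the sets $B_{\phi} = \{\lbar t \in U : \phi \text{ defines } \fib(A, \lbar t)\}$ form the desired partition (the same device as in the proof of Lemma~\ref{lrvdag:lrv}). You instead make the underlying orthogonality of $\K$ and $\Gamma$ explicit: after pinning $\vrv \rest U$ to a constant $\lbar \gamma$, you rewrite a quantifier-free formula so that the $\RV^k$- and $\Gamma^l$-variables separate, exhibit $A$ as a finite union of rectangles $C_i \times D_i$, and read the partition off the atoms of the Boolean algebra generated by the $C_i$. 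This works: the rewriting is the same computation as in the proofs of Lemmas~\ref{gam:same} and~\ref{shift:K}, and the constancy of $\vrv(\lbar Y)$ is precisely what forces each $\rcsn(F(\lbar Z))$ to collapse to a constant on the relevant $\Gamma$-locus, so the separation into $\chi_i(\lbar Y) \wedge \eta_i(\lbar Z)$ goes through (and is trivial in the plain $\lan{RV}$ case, where $\lbar Z$ cannot enter $\RV$- or $\K$-terms at all). What your version buys is an explicit, compactness-free description of the partition; what it costs is redoing by hand the syntactic analysis that the paper has already packaged into Corollary~\ref{gam:3:1} and stable embeddedness. Both arguments rest on the same structural fact.
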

\begin{proof}
Since the $\Gamma$-sort is stably embedded (see Remark~\ref{rem:gam:stab}), by Lemma~\ref{gam:3:1}, $\fib(A, \lbar t)$ is $\vrv(\lbar t)$-$\lan{RV}$-definable for every $\lbar t \in U$. Since $\vrv(\lbar t)$ is definable, the lemma simply follows from compactness.
\end{proof}

\begin{rem}\label{rem:monster:imme}
We clearly have $\acl^{1}(\RV) = \acl^{\bb T}(\RV)$, which is a model of $\bb T(S)$, and $\VF(\acl^{1}(\RV)) = (\VF(S) \cup \sn(\RV))^{\alg}$. But $\acl^{1}(\RV)$, as a valued field, is not maximally complete. In fact the underlying valued field of $\gC$ may be taken to be the unique maximal completion of $\acl^{1}(\RV)$, which is isomorphic to the field $\K \dlbr \Gamma \drbr$ of generalized formal Laurent series. Each element $a \in \VF$ may be written in the form $\sum_{i \in I} \sn(t_i)$, where $I$ is a well-ordered set and if $i < i'$ then $\vrv(t_i) < \vrv(t_{i'})$. We say that $\sn(t_i)$ is the \emph{$\vrv(t_i)$-component} of $a$ and denote it by $(a)_{\vrv(t_i)}$. Observe that if $a_1, \ldots, a_n \in \VF$ are of the same value $\gamma$ then $\vv(\sum_i a_i) > \gamma$ if and only if $\sum_i (a_i)_{\gamma} = 0$.

For any consistent set $\Phi(\lbar X)$ of $\mdl L_{\bb T}$-formulas with parameters in $\acl^{1}(\RV)$, where $\lbar X$ are the free variables and are all of the $\VF$-sort, if $\Phi(\lbar X)$ is realized in an immediate extension of $\acl^{1}(\RV)$ then it is realized in $\gC$, because any immediate extension of $\acl^{1}(\RV)$ may be embedded into $\gC$.
\end{rem}

\begin{conv}\label{conv:can}
We reiterate \cite[Convention~4.20]{Yin:QE:ACVF:min} here, since this trivial looking convention is actually quite crucial for understanding the whole construction, especially the parts that involve special bijections. For a subset $A\sub \VF^n \times \RV^m$, let
\[
\can(A) = \{(\lbar a, \rv(\lbar a), \lbar t) : (\lbar a, \lbar t) \in A\}.
\]
This is called the \emph{canonical image} of $A$ and $\can: A \fun \can(A)$ is called the \emph{canonical bijection} on $A$. The convention is that we shall tacitly substitute $\can(A)$ for $A$ in the discussion below if it is necessary or is just more convenient. Whether or not this substitution has been performed should be clear in context.
\end{conv}

\section{The Grothendieck semirings of $\RV$}\label{section:G:RV}

The main purpose of this section is to express the Grothendieck semirings of $\RV$-categories as tensor products of the Grothendieck semirings of $\Gamma$-categories and $\RES$-categories, which will be defined below. This works if certain conditions are met by $\bb T$, in particular, if $\bb T = \ACVF^{3}(0, 0)$. On the other hand, it does not seem straightforward to work out these conditions and it does seem to be an unworthy distraction here to digress into that direction. It is perhaps better to deal with it on a case-by-case basis when it is called for in future applications. In Hypothesis~\ref{hyp:jac} we describe what some of these conditions might be.

Of course, at the very least we can assume that $\gC^{\bb T}$ is an $\RV$-minimal expansion of $\gC^3$, that is, all definable $\RV$-sort subsets in $\gC^{\bb T}$ are already definable in $\gC^3$. However, for concreteness, we shall work in $\gC^{3}$ throughout this section. Hence, all definable subsets in this section are $\lan{RV}^{3}$-definable, unless indicated otherwise.

\begin{defn}
Let $A \sub \RV^n$ be a definable subset. A \emph{$\Gamma$-partition} of $A$ is a definable function $\pi : A \fun \Gamma_{\infty}^l$ such that, for $\lbar \gamma \in \Gamma_{\infty}^l$, $\pi^{-1}(\lbar \gamma)$ is contained in a $(\K^{\times})^n$-coset and is $\rcsn(\lbar \gamma)$-$\lan{RV}$-definable. If $\pi$ is a $\Gamma$-partition of $A$ then the \emph{$\RV$-dimension} of $\pi$, denoted by $\dim_{\RV} (\pi)$, is the number $\max \{\dim_{\RV} (\pi^{-1}(\lbar \gamma)) : \lbar \gamma \in \Gamma_{\infty}^{l} \}$.
\end{defn}

By Corollary~\ref{lrvdag:same:lrv}, the existence of such a $\Gamma$-partition of $A$ is easily verified by straightforward syntactical manipulation of any quantifier-free formula that defines $A$. This definition can be extended to definable subsets $A \sub \RV^n \times \Gamma_{\infty}^m$ in the obvious way.

\begin{lem}\label{G:part:dim:inv}
For any two $\Gamma$-partitions $\pi_1$, $\pi_2$ of $A \sub \RV^n \times \Gamma_{\infty}^m$, we have $\dim_{\RV} (\pi_1) = \dim_{\RV} (\pi_2)$.
\end{lem}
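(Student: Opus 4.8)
The plan is to pass to a common refinement. Given $\Gamma$-partitions $\pi_1\colon A\fun\Gamma_{\infty}^{l_1}$ and $\pi_2\colon A\fun\Gamma_{\infty}^{l_2}$, I would form $\pi=(\pi_1,\pi_2)\colon A\fun\Gamma_{\infty}^{l_1+l_2}$. Each fibre $\pi^{-1}(\bar\gamma_1,\bar\gamma_2)=\pi_1^{-1}(\bar\gamma_1)\cap\pi_2^{-1}(\bar\gamma_2)$ is contained in the $(\K^{\times})^{n}$-coset containing $\pi_1^{-1}(\bar\gamma_1)$ and is $\rcsn(\bar\gamma_1,\bar\gamma_2)$-definable, so $\pi$ is again a $\Gamma$-partition, and it \emph{refines} both $\pi_1$ and $\pi_2$ in the sense that every $\pi$-fibre lies inside a $\pi_i$-fibre. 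Hence it suffices to prove the following: if a $\Gamma$-partition $\pi'$ refines a $\Gamma$-partition $\pi''$ of $A$, then $\dim_{\RV}(\pi')=\dim_{\RV}(\pi'')$. Applying this twice yields $\dim_{\RV}(\pi_1)=\dim_{\RV}(\pi)=\dim_{\RV}(\pi_2)$.

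For the inequality $\dim_{\RV}(\pi')\le\dim_{\RV}(\pi'')$ there is nothing to do: each $\pi'$-fibre $F'$ lies in some $\pi''$-fibre $F''$, both sit in a single $(\K^{\times})^{n}$-coset, and on such a coset $\dim_{\RV}$ agrees, via the twistback $\tbk$, with the algebraic dimension of the corresponding subset of $\K^{n}$ (recall~\cite[Definition~4.9]{Yin:special:trans}), which is monotone under inclusion; taking maxima gives the claim. For the reverse inequality I would fix a $\pi''$-fibre $F''$ with $\dim_{\RV}(F'')=\dim_{\RV}(\pi'')$. Since $\pi'$ refines $\pi''$, $F''$ is the disjoint union of the $\pi'$-fibres it contains, indexed by the image of the definable function $\pi'\rest F''\colon F''\fun\Gamma_{\infty}^{l'}$. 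The crucial claim is that this image is \emph{finite}. Granting it, $\tbk(F'')\subseteq\K^{n}$ is a finite disjoint union of the twistbacks $\tbk(F')$ of those $\pi'$-fibres $F'\subseteq F''$, so
\[
\dim_{\RV}(\pi'')=\dim_{\RV}(F'')=\max_{F'\subseteq F''}\dim_{\RV}(F')\le\dim_{\RV}(\pi'),
\]
using that algebraic dimension in $\K$ is the maximum over the pieces of a finite partition.

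Thus the lemma reduces to the following instance of the orthogonality of the residue field and the value group: every $\gC^{3}$-definable function from a subset of $\K^{n}$ into $\Gamma_{\infty}^{l}$ has finite image. This is where essentially all the content lies; everything else is bookkeeping with $\dim_{\RV}$ on a single $(\K^{\times})^{n}$-coset. I expect the finiteness to be the only real obstacle, and the way I would handle it is to extract it directly from the $\mdl L_{\K\Gamma}^{\rcsn}$-quantifier elimination exactly as in the proof of Lemma~\ref{shift:K}: after replacing every occurrence of $\res(t\csn(F))$ by $0$ or by $t\rcsn(\vrv(t))^{-1}$, a quantifier-free formula defining the graph splits, across a disjunction, into a conjunction of a $\K$-formula in the domain variables and a $\Gamma$-formula in the value variables; since the graph defines a function, each of the finitely many $\Gamma$-pieces whose $\K$-part is nonempty must be a single point, so the image is finite. (Alternatively one may simply invoke orthogonality of $\K$ and $\Gamma$ as a known structural feature of $\gC^{3}$, via Remark~\ref{rem:gam:stab} and~\cite{Yin:QE:ACVF:min}.)
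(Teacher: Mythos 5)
Your proof is correct in substance but takes a genuinely different route from the paper's. The paper reduces at once to the case where $\pi_1$ is constant and then uses the identification of $\RV$-dimension with algebraic (Zariski) dimension together with the observation that the algebraic dimension of $A$ does not drop when all of $\rcsn(\Gamma)$ is adjoined as parameters (the twistbacks of these parameters are all $1$, so nothing new enters the residue field); a point of $A$ generic over $\dcl(\rcsn(\Gamma))$ then lies in a fiber of $\pi_2$ of full dimension. You instead pass to the common refinement and use the finite-image property of definable maps from (twists of) subsets of $\K^n$ into $\Gamma_\infty^{l}$ to see that each fiber of the coarser partition decomposes into finitely many fibers of the finer one. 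Both arguments ultimately rest on the orthogonality of $\K$ and $\Gamma$ in $\gC^3$: the paper packages it as a genericity statement, you as a finiteness statement. Your finiteness statement is (a twisted form of) Corollary~\ref{csn:Gamma:fin}, which the paper only establishes afterwards; however, its proof (via Lemma~\ref{shift:K:csn:def} and Lemma~\ref{shift:K}) does not use the present lemma, and your direct derivation from the $\mdl L_{\K\Gamma}^{\rcsn}$-quantifier elimination is sound, so there is no circularity. One caveat: for $A\sub\RV^n\times\Gamma_{\infty}^m$ with $m>0$ your argument requires reading the ``obvious'' extension of the definition of $\Gamma$-partition as demanding that each fiber lie in a fiber of $(\vrv,\id)$, i.e.\ project to a single point of $\Gamma_{\infty}^m$; otherwise $\pi'\rest F''$ could be, say, a projection onto $\Gamma$-coordinates and would not have finite image. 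Under that (intended) reading the reduction to a function on a subset of $\K^n$ is legitimate. What your route buys is the slightly sharper fact that any two $\Gamma$-partitions admit a common refinement that is fiberwise finite over each of them; the paper's route is shorter once the genericity observation is granted.
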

\begin{proof}
We may assume that $\pi_1$ is constant. Recall~\cite[Lemma~4.10, Lemma~4.11]{Yin:special:trans}, which essentially say that the $\RV$-dimension of any $\lan{RV}$-definable subset in the $\RV$-sort equals its algebraic dimension (Zariski dimension). Now observe that the algebraic dimension of $A$ over $\dcl(\rcsn(\Gamma))$ is still $\dim_{\RV} (A)$. This implies that $\dim_{\RV} (\pi_2^{-1}(\lbar \gamma)) = \dim_{\RV} (A)$ for some $\lbar \gamma \in \ran(\pi_2)$ and hence $\dim_{\RV} (A) = \dim_{\RV} (\pi_2)$.
\end{proof}

Therefore the $\RV$-dimension $\dim_{\RV}(A)$ of a definable subset $A \sub \RV^n \times \Gamma_{\infty}^m$ may be defined as the $\RV$-dimension of any $\Gamma$-partition of $A$. Note that the proof of the above lemma shows that $\dim_{\RV}(A)$ does not depend on parameters and if $f : A \fun \RV^k \times \Gamma_{\infty}^l$ is a definable function then $\dim_{\RV}(A) \geq \dim_{\RV}(f(A))$. Hence there is a definable finite-to-one function $f : A \fun \RV^k \times \Gamma_{\infty}^l$ if and only if there is a definable function $f : A \fun \RV^k$ such that all fibers are of $\RV$-dimension $0$ if and only if $\dim_{\RV}(A) \leq k$. We say that a property holds \emph{almost everywhere} on $A$ or \emph{for almost every element} in $A$ if it holds away from a definable subset $A' \sub A$ of a smaller $\RV$-dimension. This terminology will also be used when other notions of dimension are involved.

\begin{lem}\label{shift:K:csn:def}
Let $U \sub \RV^n$ be a definable subset and $\vrv(U) = D$. Then there is a definable finite partition $D_i$ of $D$ such that each $U_i = U \cap \vrv^{-1}(D_i)$ is a definable twistoid and the corresponding twistback is $\lan{RV}$-definable.
\end{lem}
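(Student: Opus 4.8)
The plan is to run the proof of Lemma~\ref{shift:K} essentially verbatim, the only thing to check being that its hypothesis ``$U$ is $\lan{RV}$-definable'' was never actually used: what that argument needs is that $\Omega(U) \subseteq \Gamma_{\infty}^n \times \K^n$ be definable in the reduct of $\gC^3$ to the $\RV$-sort, i.e.\ in the $\mdl L_{\K \Gamma}^{\rcsn}$-structure, where quantifier elimination holds. Since $\Omega(U)$ is the image of $U$ under the definable map $u \mapsto (\vrv(u), \tbk(u))$, it is $\lan{RV}^{3}$-definable whenever $U$ is (so, a fortiori, whenever $U$ is definable in the sense of this section), hence it is cut out by some quantifier-free $\mdl L_{\K \Gamma}^{\rcsn}$-formula; from there the computation proceeds exactly as before.

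Concretely I would do the following. Put a quantifier-free formula $\phi(\lbar Z, \lbar Y) = \bigvee_i \phi_i(\lbar Z, \lbar Y)$ defining $\Omega(U)$ in disjunctive normal form, with $\lbar Z$ the $\Gamma_{\infty}$-variables and $\lbar Y$ the $\K$-variables. Then normalize the terms: every occurring term built with $\rcsn$ can be brought to the shape $\res(t\,\rcsn(F(\lbar Z)))$ times a monomial in $\lbar Y$, with $t \in \RV(S)$ and $F$ a $\Z$-linear form, exactly as in the proof of Lemma~\ref{shift:K} (and, for the $\Gamma$-part, Lemma~\ref{gam:same}); using $\vrv \circ \rcsn = \id$ one splits on whether $\vrv(t) + F(\lbar Z) = 0$, replacing $\res(t\,\rcsn(F(\lbar Z)))$ by $t\,\rcsn(\vrv(t))^{-1} \in \K^{\times}$ in the first case and by $0$ in the second (and $\res$ of a $\K$-variable is that variable). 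After this case split, across a disjunction each $\phi_i$ becomes a conjunction $\theta_i(\lbar Z) \wedge \psi_i(\lbar Y)$, so $\Omega(U)$ is a finite union of rectangles $B_i \times V_i$, where $B_i \subseteq \Gamma_{\infty}^n$ is defined by $\theta_i$ and $V_i \subseteq \K^n$ by $\psi_i$. The $V_i$ are constructible, hence $\lan{RV}$-definable, and the $B_i$ are $\lan{RV}^{3}$-definable subsets of $\Gamma_{\infty}^n$, hence $\lan{RV}$-definable by Lemma~\ref{gam:same}; likewise $D = \vrv(U)$ is $\lan{RV}$-definable by Lemma~\ref{gam:same}. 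Taking $D_i$ to be the partition of $D$ common-refining all the $B_i$, one gets a finite $\lan{RV}$-definable (in particular definable) partition such that for every $\lbar \gamma \in D_i$ one has $\tbk(U_{\lbar \gamma}) = \bigcup \{ V_j : D_i \subseteq B_j \}$, independently of $\lbar \gamma$; hence $U_i = U \cap \vrv^{-1}(D_i)$ is a twistoid whose twistback is the $\lan{RV}$-definable set $\bigcup \{ V_j : D_i \subseteq B_j \}$.

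The only genuine work is the term normalization in the middle step, and it is not new: it is the same bookkeeping already carried out in the proofs of Lemma~\ref{shift:K} and Lemma~\ref{gam:same}, resting on $\vrv \circ \rcsn = \id$ to evaluate $\vrv$ of each $\rcsn$-term and on the description of $\res$ (the identity on $\K^{\times}$, zero off it). I expect no obstacle beyond this routine manipulation, precisely because passing from ``$\lan{RV}$-definable $U$'' to ``definable (i.e.\ $\lan{RV}^{3}$-definable) $U$'' is invisible to the argument: everything is read off a quantifier-free $\mdl L_{\K \Gamma}^{\rcsn}$-formula for $\Omega(U)$, which exists in both cases by Corollary~\ref{RV:sub:nosn} and the quantifier elimination available in $\mdl L_{\K \Gamma}^{\rcsn}$.
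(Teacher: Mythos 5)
Your proof is correct, and it takes a different route from the paper's. The paper does not re-run the syntactic argument of Lemma~\ref{shift:K}: instead it fixes a $\Gamma$-partition $\pi : U \fun \Gamma_\infty^l$, uses Corollary~\ref{gam:3:1} together with compactness to package the fibers $\pi^{-1}(\lbar\gamma)$ into a single $\lan{RV}$-definable set $B \sub \RV^{n+l}$ whose fiber over $\rcsn(\lbar\gamma)$ is $\pi^{-1}(\lbar\gamma)\times\{\rcsn(\lbar\gamma)\}$, applies Lemma~\ref{shift:K} to $B$ as a black box, and then pulls the resulting partition and twistbacks back along $\pi$ by a short combinatorial argument. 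Your approach instead observes that the hypothesis ``$U$ is $\lan{RV}$-definable'' in Lemma~\ref{shift:K} is inessential: all that proof consumes is a quantifier-free $\mdl L_{\K\Gamma}^{\rcsn}$-formula for $\Omega(U)$, which Corollary~\ref{RV:sub:nosn} and quantifier elimination in the $\RV$-sort reduct supply for any definable $U$. What each buys: yours is more self-contained and makes transparent that Lemmas~\ref{shift:K} and~\ref{shift:K:csn:def} are really one statement with one proof, avoiding the encoding-plus-compactness step; the paper's reuses the already-proved lemma verbatim and rehearses the $\Gamma$-partition technique that recurs throughout the rest of the paper. The only point in your write-up deserving care is the term normalization for mixed atomic formulas (e.g.\ $\RV$-sort equalities in which $\lbar Y$, coerced into $\RV^\times$ via $\K^\times$, multiplies $\rcsn(F(\lbar Z))$), together with the observation that the constants $\tbk(t)=t\,\rcsn(\vrv(t))^{-1}$ produced by the case split lie in $\K(S)$ because $S$ is closed under definable closure; but this is exactly the bookkeeping already performed (and glossed over to the same degree) in the paper's proofs of Lemmas~\ref{gam:same} and~\ref{shift:K}, so it is not a gap.
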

\begin{proof}
Let $\pi : U \fun \Gamma_{\infty}^l$ be a $\Gamma$-partition. Without loss of generality, we may assume $\pi(U) \sub \Gamma^l$. By Corollary~\ref{gam:3:1} and compactness, there is an $\lan{RV}$-definable subset $B \sub \RV^{n+l}$ such that, for every $\lbar \gamma \in \Gamma^l$, $\pi^{-1}(\lbar \gamma) \times \{\rcsn(\lbar \gamma)\}$ is precisely the fiber of $B$ over $\rcsn(\lbar \gamma)$. Applying Lemma~\ref{shift:K} to $B$, we find an $\lan{RV}$-definable finite partition $I_i$ of $\vrv(B)$ such that each $B_i = B \cap \vrv^{-1}(I_i)$ is a twistoid with an $\lan{RV}$-definable $V_i \sub \K^{n+l}$ as its twistback. For each $\lbar \gamma \in D$ let
\[
E_{\lbar \gamma} = \{i : (\{\lbar \gamma\} \times \pi(U_{\lbar \gamma})) \cap I_i \neq \0 \}.
\]
Let $D_k$ be the definable finite partition of $D$ determined by the condition that $\lbar \gamma$, $\lbar \gamma'$ are in the same piece if and only if $E_{\lbar \gamma} = E_{\lbar \gamma'}$. Write $E_k$ for any $E_{\lbar \gamma}$ with $\lbar \gamma \in D_k$. Observe that, for any $\lbar \gamma \in D_k$, $\tbk(U_{\lbar \gamma}) = \bigcup_{i \in E_k} \fib(V_i, \lbar 1)$, where $\lbar 1 \in \K^l$. The lemma follows.
\end{proof}

The conclusion of this lemma shall be referred to as the \emph{twistoid condition}. This is a condition that should be imposed on a more general $\bb T$ (see Hypothesis~\ref{hyp:jac}). This will not interfere with the possibility of adding more structure to the residue field that expands the theory of algebraically closed fields.

\begin{cor}\label{twbk:fin}
If $U \sub \RV^n$ is a definable subset such that $\vrv(U)$ is a singleton then $U$ is $\lan{RV}$-definable.
\end{cor}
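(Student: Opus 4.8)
The proof will essentially be a corollary of Lemma~\ref{shift:K:csn:def} together with careful bookkeeping of parameters, so the plan is really just to unwind definitions. Write $\vrv(U) = \{\lbar\gamma\}$, where $\lbar\gamma = (\gamma_1, \ldots, \gamma_n) \in \Gamma_{\infty}^n$. First I would note that $\lbar\gamma \in \dcl(S) = S$: indeed $\vrv(U)$ is definable over the base $S$ and is a singleton, so its unique element lies in $\dcl(S)$, and $S = \dcl(S)$ by Convention~\ref{conv:s}. Applying the function $\rcsn$ (with the convention $\rcsn(\infty) = \infty$ for the coordinates where $\gamma_i = \infty$) and using that $S$, as a substructure, is closed under $\rcsn$, we obtain $\rcsn(\lbar\gamma) \in \RV(S)^n$.

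Next, a subset of $\RV^n$ whose image under $\vrv$ is a single point is vacuously a twistoid, so Lemma~\ref{shift:K:csn:def} applied to $U$, with the finite partition of the one-point set $\vrv(U)$ necessarily trivial, shows that the twistback $W := \tbk(U) \sub \K^n$ is $\lan{RV}$-definable over $S$. By the definitions of twist and twistback, $U$ is exactly the $\lbar\gamma$-twist of $W$:
\[
U = \bigl\{ \lbar u \in \RV^n : \vrv(\lbar u) = \lbar\gamma,\ \tbk(\lbar u) \in W \bigr\},
\]
and on the locus $\vrv(\lbar u) = \lbar\gamma$ the map $\tbk$ coincides with the coordinatewise operation $\lbar u \mapsto \lbar u / \rcsn(\lbar\gamma)$ (read via $\infty/\infty = 0$ on the coordinates with $\gamma_i = \infty$). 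Since $\vrv$ and multiplication in $\RV$ are $\lan{RV}$-symbols, $\lbar\gamma$ and $\rcsn(\lbar\gamma)$ are parameters from $S$, and $W$ is $S$-$\lan{RV}$-definable, this exhibits $U$ as an $S$-$\lan{RV}$-definable set, which is the claim.

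I do not anticipate a genuine obstacle. The only two points requiring a moment's care are the treatment of the coordinates $i$ with $\gamma_i = \infty$ (where $u_i$ is forced to be $\infty$, the corresponding twistback coordinate is $0$, and the division is interpreted through the $\infty/\infty = 0$ convention), and the verification that $\rcsn(\lbar\gamma)$ really lands in $\RV(S)$ rather than merely in $\RV$ of some larger closure --- which is precisely where one uses that $\rcsn$ is a primitive of the language, so that $S = \dcl(S)$ is closed under it. Everything of substance is already contained in Lemma~\ref{shift:K:csn:def}.
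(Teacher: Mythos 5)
Your argument is correct and is exactly the intended one: the paper states this as an immediate consequence of Lemma~\ref{shift:K:csn:def} without further proof, and your unwinding (trivial partition of the singleton $\vrv(U)$, $\lan{RV}$-definable twistback $W$, re-twisting by $\rcsn(\lbar\gamma)\in\RV(S)$ since $S=\dcl(S)$) is precisely the omitted verification, including the correct handling of the $\infty$-coordinates.
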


Therefore, for any $A \sub \RV^n$, $\vrv \rest A$ is a $\Gamma$-partition of $A$. This implies that $\dim_{\RV}(A) = k$ if and only if $\dim_{\RV}(\tbk(A_{\lbar \gamma})) = k$ for some $\lbar \gamma$.

\begin{cor}\label{csn:Gamma:fin}
Let $\lbar \gamma \in \Gamma^n$, $A \sub \vrv^{-1}(\lbar \gamma)$, and $f : A \fun \Gamma^m$ be a definable function. Then $f(A)$ is finite.
\end{cor}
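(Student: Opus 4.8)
The plan is to read this off Lemma~\ref{resg:decom} applied to the graph of $f$. Set $G = \{(\lbar t, f(\lbar t)) : \lbar t \in A\} \sub \RV^n \times \Gamma^m$, which is a definable subset. First I would observe that $\pr_{\leq n}(G) = A$ and that, since $A \sub \vrv^{-1}(\lbar\gamma)$ for the fixed tuple $\lbar\gamma \in \Gamma^n$, we have $\vrv(A) \sub \{\lbar\gamma\}$, which is finite. Thus $G$ meets the hypotheses of Lemma~\ref{resg:decom} with $k = n$, $l = m$, and $U = A$.

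Lemma~\ref{resg:decom} then furnishes a definable finite partition $A_1, \ldots, A_r$ of $A$ such that $\fib(G, \lbar t) = \fib(G, \lbar t')$ whenever $\lbar t, \lbar t'$ lie in the same block $A_i$. Since $f$ is a function, $\fib(G, \lbar t) = \{f(\lbar t)\}$ is a singleton for every $\lbar t \in A$; hence $f$ is constant on each $A_i$, and consequently $\abs{f(A)} \leq r < \infty$.

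I do not expect any genuine obstacle here: the entire content is the bookkeeping of presenting $f$ as the graph $G$, so that the single-value hypothesis $A \sub \vrv^{-1}(\lbar\gamma)$ turns into the finiteness of $\vrv(\pr_{\leq n}(G))$ demanded by Lemma~\ref{resg:decom}. One could instead route through Corollary~\ref{twbk:fin}, moving $A$ by $u \mapsto u / \rcsn(\lbar\gamma)$ to an $\lan{RV}$-definable subset $W \sub (\K^\times)^n$, pushing $f$ forward to a definable $g\colon W \fun \Gamma^m$, and then arguing that a definable function from a subset of $\K^n$ into $\Gamma^m$ has finite image via orthogonality of the residue field and the value group; but invoking Lemma~\ref{resg:decom} directly avoids having to revisit that orthogonality in the present expanded language.
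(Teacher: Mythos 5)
Your proof is correct, and it takes a genuinely different (though equally short) route from the paper's. The paper applies Lemma~\ref{shift:K:csn:def} to the $\RV$-subset $\bigcup_{\lbar \alpha} f^{-1}(\lbar \alpha) \times \{\rcsn(\lbar \alpha)\}$, i.e., it encodes the $\Gamma^m$-values via the reduced cross-section and then uses the twistoid decomposition: on each piece of the resulting finite partition the twistback of the fiber $f^{-1}(\lbar\alpha)$ is constant, and since all these fibers live in the single coset $\vrv^{-1}(\lbar\gamma)$, equal twistbacks force equal fibers, which for distinct values of a function is impossible; hence at most one value per piece. You instead apply Lemma~\ref{resg:decom} to the graph $G \sub \RV^n \times \Gamma^m$, whose hypothesis ($\vrv(\pr_{\leq n}(G))$ finite) is exactly the assumption $A \sub \vrv^{-1}(\lbar\gamma)$, and the conclusion (fibers of $G$ constant on a finite partition of $A$) is literally the statement that $f$ is piecewise constant. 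Your route is arguably more transparent: it avoids the cross-section encoding and the twistback-injectivity step that the paper's ``immediate'' leaves to the reader, at the cost of leaning on the stable embeddedness of $\Gamma$ and Corollary~\ref{gam:3:1} (which underlie Lemma~\ref{resg:decom}) rather than on the twistoid machinery. There is no circularity, since Lemma~\ref{resg:decom} is established in \S\ref{section:prelim}, before the corollary in question, and its proof does not use the corollary. Your parenthetical alternative via Corollary~\ref{twbk:fin} and orthogonality would also work but, as you say, would require re-examining orthogonality in the expanded language, so invoking Lemma~\ref{resg:decom} is the cleaner choice.
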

\begin{proof}
This is immediate by applying Lemma~\ref{shift:K:csn:def} to the subset $\bigcup_{\lbar \alpha \in \Gamma^m} f^{-1}(\lbar \alpha) \times \{\rcsn(\lbar \alpha)\}$.
\end{proof}

\begin{cor}
Let $A \sub \RV^n$, $B \sub \RV^m$, and $F \sub A \times B$ be a definable finite-to-finite correspondence. Then $\vrv(F)$ is a finite-to-finite correspondence between $\vrv(A)$ and $\vrv(B)$.
\end{cor}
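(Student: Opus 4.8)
The plan is to reduce the claim to Corollary~\ref{csn:Gamma:fin} by slicing $A$ and $B$ along the fibers of $\vrv$ and breaking up the (uniformly finite) fibers of the correspondence into finitely many definable sections. First I would make two harmless reductions. Replacing $A$, $B$, and $F$ by the finitely many pieces obtained by partitioning according to which $\RV$-coordinates are equal to $0$, we may assume $A \sub (\RV^{\times})^n$ and $B \sub (\RV^{\times})^m$, so that $\vrv$ takes values in $\Gamma$. Next, since $\gC^{\bb T}$ is saturated and $F$ is a finite-to-finite correspondence, compactness gives an $N \in \N$ with $\abs{F_{\lbar a}} \leq N$ and $\abs{F^{\lbar b}} \leq N$ for all $\lbar a \in A$ and $\lbar b \in B$, where $F_{\lbar a} = \{\lbar b : (\lbar a, \lbar b) \in F\}$ and $F^{\lbar b} = \{\lbar a : (\lbar a, \lbar b) \in F\}$. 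Finally, as $F$ is a correspondence between $A$ and $B$ we have $\pr_{\leq n}(F) = A$ and $\pr_{> n}(F) = B$, hence $\vrv(F)$ already projects onto $\vrv(A)$ and $\vrv(B)$; it remains only to bound the fibers of $\vrv(F)$.

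Fix $\lbar\alpha \in \vrv(A)$ and set $A_{\lbar\alpha} = A \cap \vrv^{-1}(\lbar\alpha) \sub \vrv^{-1}(\lbar\alpha)$, so that the fiber of $\vrv(F)$ over $\lbar\alpha$ is exactly $\bigcup_{\lbar a \in A_{\lbar\alpha}} \vrv(F_{\lbar a})$. Fix a definable total order on $\Gamma^m$, say the lexicographic order induced by the order on $\Gamma$. For $1 \leq i \leq N$ let $A_{\lbar\alpha}^{i} = \{\lbar a \in A_{\lbar\alpha} : \abs{\vrv(F_{\lbar a})} \geq i\}$ --- an $\lbar\alpha$-definable subset of $\vrv^{-1}(\lbar\alpha)$ --- and let $g_{i} : A_{\lbar\alpha}^{i} \fun \Gamma^m$ send $\lbar a$ to the $i$-th element of $\vrv(F_{\lbar a})$ in that order; this is a definable function since $N$ bounds $\abs{\vrv(F_{\lbar a})}$ uniformly. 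By Corollary~\ref{csn:Gamma:fin} each image $g_{i}(A_{\lbar\alpha}^{i})$ is finite, so the fiber $\bigcup_{i=1}^{N} g_{i}(A_{\lbar\alpha}^{i})$ of $\vrv(F)$ over $\lbar\alpha$ is finite. Running the same argument on $F^{-1} \sub B \times A$ bounds the fibers of $\vrv(F)$ over $\vrv(B)$, and we are done.

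I do not expect a genuine difficulty: the construction is short once the preliminary results are in place. The two points requiring a little attention are that the uniform bound $N$ is precisely what allows one to replace the varying finite set $\vrv(F_{\lbar a})$ by a fixed finite list of definable functions $g_{1}, \dots, g_{N}$, and that Corollary~\ref{csn:Gamma:fin} is genuinely applicable here --- which works only because we first passed to the slice $A_{\lbar\alpha} \sub \vrv^{-1}(\lbar\alpha)$, so that the first $n$ coordinates lie in a single $\vrv$-fiber, as the corollary requires.
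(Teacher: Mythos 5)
Your argument is correct, and it fleshes out in the natural way what the paper leaves as an unproved immediate consequence of Corollary~\ref{csn:Gamma:fin}: after fixing a $\vrv$-fiber, the uniform bound from saturation lets you replace the finite-to-finite correspondence by finitely many definable sections $g_1,\dots,g_N$ into $\Gamma^m$, to each of which that corollary applies (with the parameter $\lbar\alpha$, exactly as the paper itself uses it in the proof of Lemma~\ref{rem:gam:fit}). The preliminary reduction to $(\RV^{\times})^n$ so that $\vrv$ lands in $\Gamma$ rather than $\Gamma_{\infty}$ is also the right care to take, since Corollary~\ref{csn:Gamma:fin} is stated for $\Gamma$-valued data.
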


\begin{defn}
A nonempty definable subset $U \sub \RV^{n}$ is \emph{$\Gamma$-regular} if $\dim_{\RV}(U_{\lbar \gamma}) = n$ for all $\lbar \gamma \in \vrv(U)$.
\end{defn}

Note that if $U \sub \RV^{n}$ is $\Gamma$-regular then we actually have $U \sub (\RV^{\times})^{n}$. By convention, $U \sub \K^n$ is $\Gamma$-regular if $U \cap (\K^{\times})^n$ is $\Gamma$-regular.

\begin{lem}\label{RV:dim:zar}
Let $U \sub \RV^{n}$ be $\Gamma$-regular and $\vrv(U) = D$. Then $\dim_{\RV}(\vrv^{-1}(D) \mi U) < n$.
\end{lem}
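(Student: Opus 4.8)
The plan is to reduce the statement, fiber by fiber over $\Gamma$, to a standard dimension fact in algebraically closed fields, and then reassemble the fibers by means of a $\Gamma$-partition. First I would note that, since $U$ is $\Gamma$-regular, we have $U \sub (\RV^{\times})^n$, so that $D = \vrv(U) \sub \Gamma^n$ has only finite coordinates. Fix $\lbar \gamma \in D$. By the discussion following Corollary~\ref{twbk:fin}, $\vrv \rest U$ is a $\Gamma$-partition, hence $U_{\lbar \gamma}$ is $\rcsn(\lbar \gamma)$-$\lan{RV}$-definable; moreover the twistback map restricts to a $\rcsn(\lbar \gamma)$-$\lan{RV}$-definable bijection $\vrv^{-1}(\lbar \gamma) \fun (\K^{\times})^n$, $\lbar u \efun \lbar u / \rcsn(\lbar \gamma)$, which carries $U_{\lbar \gamma}$ onto $\tbk(U_{\lbar \gamma})$ and $\vrv^{-1}(\lbar \gamma) \mi U_{\lbar \gamma}$ onto $(\K^{\times})^n \mi \tbk(U_{\lbar \gamma})$. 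Keeping track of parameters here is done exactly as in Corollary~\ref{gam:3:1} and the material around Corollary~\ref{twbk:fin}.

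Next I would use that a definable bijection preserves $\RV$-dimension (from the remarks after Lemma~\ref{G:part:dim:inv}) together with the identification of $\RV$-dimension with Zariski (algebraic) dimension for $\lan{RV}$-definable subsets of the $\RV$-sort (recalled in the proof of Lemma~\ref{G:part:dim:inv}). Thus $\Gamma$-regularity gives $\dim \tbk(U_{\lbar \gamma}) = n$, where $\dim$ denotes Zariski dimension inside $(\K^{\times})^n \sub \K^n$. The one genuinely external ingredient is the elementary fact that in an algebraically closed field a constructible set $V \sub \K^n$ with $\dim V = n$ has $\dim(\K^n \mi V) < n$: writing $V$ as a finite union of locally closed sets, irreducibility of $\K^n$ forces one of these pieces to be a nonempty Zariski-open subset of $\K^n$, whose complement contains $\K^n \mi V$ and has dimension $< n$. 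Applying this to $V = \tbk(U_{\lbar \gamma})$ and transporting back through the twistback bijection yields $\dim_{\RV}(\vrv^{-1}(\lbar \gamma) \mi U_{\lbar \gamma}) < n$ for every $\lbar \gamma \in D$.

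Finally I would reassemble: the restriction of $\vrv$ to $\vrv^{-1}(D) \mi U$ is again a $\Gamma$-partition, so by Lemma~\ref{G:part:dim:inv} the $\RV$-dimension of $\vrv^{-1}(D) \mi U$ equals the maximum over $\lbar \gamma \in D$ of the $\RV$-dimensions of its fibers $\vrv^{-1}(\lbar \gamma) \mi U_{\lbar \gamma}$ (empty fibers contributing nothing). Since each such value lies in $\N$ and is $< n$, the maximum is $< n$, which is the assertion. I do not expect a serious obstacle: the mathematical content is precisely the ACF dimension fact, and the passage between fibers and the global set is exactly the bookkeeping already developed for $\Gamma$-partitions.
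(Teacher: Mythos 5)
Your proof is correct and follows essentially the same route as the paper's, which simply cites Corollary~\ref{twbk:fin} together with the identification of $\RV$-dimension with Zariski dimension from~\cite[Lemma~4.10]{Yin:special:trans}: reduce fiberwise via the twistback to the fact that a constructible subset of the irreducible variety $(\K^{\times})^n$ of full dimension has complement of smaller dimension, then reassemble over the $\Gamma$-partition $\vrv$. Your write-up just makes explicit the parameter bookkeeping and the ACF dimension fact that the paper leaves implicit.
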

\begin{proof}
This is immediate by Corollary~\ref{twbk:fin} and \cite[Lemma~4.10]{Yin:special:trans}.
\end{proof}

\begin{lem}\label{rem:gam:fit}
Let $D, E \sub \Gamma_{\infty}^n$. Let $W \sub \K^{k}$ be $\Gamma$-regular and $f : W \times D \fun W \times E$ be a definable bijection. Then there are definable subsets $A \sub W \times D$, $B \sub W \times E$ and a definable bijection $e: D \fun E$ such that, for all $\lbar \alpha \in D$, $f(\fib(A, \lbar \alpha)) = \fib(B, e(\lbar \alpha))$ and $\fib(A, \lbar \alpha)$, $\fib(B, e(\lbar \alpha))$ are $\Gamma$-regular.
\end{lem}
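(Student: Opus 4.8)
The plan is to show that $f$ carries a full-dimensional part of each slice $W \times \{\lbar\alpha\}$ into a \emph{single} slice $W \times \{\lbar\beta\}$, that the induced assignment $\lbar\alpha \mapsto \lbar\beta$ is a definable bijection $e : D \fun E$, and then to take for $\fib(A,\lbar\alpha)$ the preimage of that slice and set $B = f(A)$.

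Fix $\lbar\alpha \in D$ and put $W^{\circ} = W \cap (\K^{\times})^{k}$; by $\Gamma$-regularity of $W$ we have $\dim_{\RV} W^{\circ} = k$, and $W^{\circ} \sub \vrv^{-1}(\lbar 0)$. A definable bijection preserves $\RV$-dimension (apply the inequality recorded after Lemma~\ref{G:part:dim:inv} to $f$ and to $f^{-1}$), so $f(W^{\circ}\times\{\lbar\alpha\})$ has $\RV$-dimension $k$; by Corollary~\ref{csn:Gamma:fin} applied to $\pr_E \circ f$ on $W^{\circ}\times\{\lbar\alpha\}$ (a single $\vrv$-fibre, with $\lbar\alpha$ a parameter) it meets only finitely many slices $W\times\{\lbar\beta\}$, and its intersection with the slices $(W\mi W^{\circ})\times\{\lbar\beta\}$ has $\RV$-dimension $<k$, since $\dim_{\RV}(W\mi W^{\circ})<k$ and the $\Gamma$-factor contributes no $\RV$-dimension. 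Pulling back by the injection $f$, I obtain $W^{\circ}\times\{\lbar\alpha\}$, up to a definable set of $\RV$-dimension $<k$, as a finite disjoint union of definable sets $P_{\lbar\beta}$ ($\lbar\beta \in E$) with $\dim_{\RV} P_{\lbar\beta} = \dim_{\RV}(f(W^{\circ}\times\{\lbar\alpha\}) \cap (W^{\circ}\times\{\lbar\beta\}))$.

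The key point is that \emph{exactly one} $P_{\lbar\beta}$ has $\RV$-dimension $k$: since $\dim_{\RV}$ agrees with Zariski dimension on $\K$-sets (see~\cite[Lemma~4.10]{Yin:special:trans}) and $(\K^{\times})^{k}$ is an irreducible variety of dimension $k$, any definable $V \sub W^{\circ}$ with $\dim_{\RV} V = k$ is Zariski-dense in $(\K^{\times})^{k}$, hence contains a nonempty Zariski-open set whose complement has dimension $<k$; therefore $W^{\circ}$ admits no partition into two definable subsets of $\RV$-dimension $k$, and as $\bigcup_{\lbar\beta} P_{\lbar\beta}$ has $\RV$-dimension $k$, exactly one of them, indexed by some $e(\lbar\alpha) \in E$, does. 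Running the same argument with $f^{-1}$ in place of $f$ shows that each $\lbar\beta \in E$ has a unique dominant source in $D$; since the two relations coincide, $e : D \fun E$ is a bijection, definable because cut out by an $\RV$-dimension condition in a definable family. Now set
\[
A = \bigcup_{\lbar\alpha \in D} \big( f^{-1}(W^{\circ} \times \{e(\lbar\alpha)\}) \cap (W \times \{\lbar\alpha\}) \big), \qquad B = f(A),
\]
both definable. By construction $f(\fib(A,\lbar\alpha)) = \fib(B, e(\lbar\alpha)) \sub W^{\circ}\times\{e(\lbar\alpha)\}$, and this set contains $P_{e(\lbar\alpha)}$, so it is a dimension-$k$ subset of $(\K^{\times})^{k}\times\{e(\lbar\alpha)\}$, hence $\Gamma$-regular; applying the dimension-preserving bijection $f^{-1}$, $\fib(A,\lbar\alpha)$ has $\RV$-dimension $k$, and since its part outside $(\K^{\times})^{k}$ lies in $(W\mi W^{\circ})\times\{\lbar\alpha\}$ and has dimension $<k$, it too is $\Gamma$-regular. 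As $e$ is a bijection, every fibre of $B$ over $E$ is thereby covered and is $\Gamma$-regular, so the construction delivers what is required.

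The only genuinely non-routine step is the uniqueness of the dominant slice. What makes it work is the irreducibility of $(\K^{\times})^{k}$, which forces the a priori merely finite-to-finite correspondence $\{(\lbar\alpha,\lbar\beta) : \dim_{\RV}(f(W^{\circ}\times\{\lbar\alpha\}) \cap (W^{\circ}\times\{\lbar\beta\})) = k\}$ to be the graph of a bijection; dropping the $\Gamma$-regularity hypothesis on $W$ leaves one with only a definable finite-to-finite correspondence between $D$ and $E$, and this is precisely the point at which the hypothesis enters.
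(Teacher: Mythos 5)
Your proof is correct and follows essentially the same route as the paper's: for each $\lbar\alpha$ you isolate, via finiteness of the $\Gamma$-image on a single $\vrv$-fibre (Corollary~\ref{csn:Gamma:fin}) and the fact that a full-dimensional definable subset of $(\K^\times)^k$ has small complement, the unique $e(\lbar\alpha)\in E$ receiving a dimension-$k$ piece of the slice, then use the symmetry of the relation under $f^{-1}$ to get bijectivity and glue. The only cosmetic differences are that you rederive the ``no partition into two full-dimensional pieces'' fact from irreducibility where the paper cites Lemma~\ref{RV:dim:zar}, and you get definability of $e$ from definability of dimension in families where the paper invokes compactness.
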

\begin{proof}
By Corollary~\ref{twbk:fin}, Corollary~\ref{csn:Gamma:fin}, and Lemma~\ref{RV:dim:zar}, for each $\lbar \alpha \in D$ there is a unique $\lbar \alpha$-definable $e(\lbar \alpha) \in E$ such that $A_{\lbar \alpha} = (W \times \{\lbar \alpha\}) \cap f^{-1}(W \times \{e(\lbar \alpha)\})$ is $\rcsn(\lbar \alpha)$-$\lan{RV}$-definable and $\dim_{\RV}((W \times \{\lbar \alpha\}) \mi A_{\lbar \alpha}) < k$. Symmetrically this also holds for each $\lbar \beta \in E$. Now the assertion simply follows from compactness.
\end{proof}

Since the $\Gamma$-sort is \omin-minimal, we can use the dimension theory of \omin-minimal structures. We shall call it $\Gamma$-dimension and denote the operator by $\dim_{\Gamma}$.

\begin{defn}[$\overline \Gamma$-categories]\label{defn:oG:cat}
The objects of the category $\overline{\Gamma}[k]$ are the definable subsets with coordinates in $\Gamma_{\infty}$ of $\Gamma$-dimension $k$. Its morphisms are the definable bijections between the objects. Set $\overline{\Gamma}_* = \bigcup_k \overline{\Gamma}[k]$.
\end{defn}

\begin{defn}[$\overline \RV$- and $\overline \RES$-categories]\label{defn:oRV:cat}
The objects of the category $\overline{\RV}[k]$ are the definable subsets with coordinates in $\RV$ of $\RV$-dimension $k$. Its morphisms are the definable bijections between the objects.

The category $\overline{\RES}[k]$ is the full subcategory of $\overline{\RV}[k]$ such that $U \in \overline{\RES}[k]$ if and only if all coordinates of $U$ are in $\K$.

Set $\overline{\RV}_* = \bigcup_{k} \overline{\RV}[k]$; similarly for $\overline{\RES}_*$.
\end{defn}

By the computation in \cite{kage:fujita:2006}, $\Z[X]/(X^2 + X) = \ggk \overline{\Gamma}_*$ via the map $X \efun [(0, \infty)]$, which is much simpler than $\gsk \overline{\Gamma}_*$. On the other hand, it is well-known that $\ggk \overline{\RES}_*$ is still quite complicated (see \cite[Example~3.7]{kra:scan:2000}). Anyway, following the philosophy of \cite{hrushovski:kazhdan:integration:vf}, we shall work with Grothendieck semirings whenever possible.

We clearly have $\gsk \overline{\Gamma}[0] = \N$. By Corollary~\ref{twbk:fin}, $U \in \overline{\RES}[0]$ if and only if $U$ is finite and hence $\gsk \overline{\RES}[0]$ contains $\N$ as a proper sub-semiring.

Each $\gsk \overline{\Gamma}[k]$ is identified canonically with a sub-semigroup of $\gsk \overline{\Gamma}_*$. These sub-semigroups satisfy the conditions:
\[
\begin{cases}
  \gsk \overline{\Gamma}[k] \cap \gsk \overline{\Gamma}[l] = \{0\}, & \text{if } k \neq l,\\
  \gsk \overline{\Gamma}[k] + (\gsk \overline{\Gamma}[l] \mi \{0\}) = \gsk \overline{\Gamma}[l], & \text{if } k \leq l.
\end{cases}
\]
In this situation, we may think of $\gsk \overline{\Gamma}_*$ as a disjoint union $\biguplus_k \gsk \overline{\Gamma}[k]$; similarly $\gsk \overline{\RES}_* = \biguplus_k \gsk \overline{\RES}[k]$ and $\gsk \overline{\RV}_* = \biguplus_k \gsk \overline{\RV}[k]$. Note that $\overline{\Gamma}_*$ is equivalent via the reduced cross-section to a full subcategory of $\overline{\RV}[0]$ and hence $\gsk \overline{\Gamma}_*$ may be canonically identified with a sub-semiring of $\gsk \overline{\RV}[0]$.

For $U \in \overline{\RV}[k]$ and $I \in \overline{\Gamma}_*$ we write $U \times_{\rcsn} I$ for the object $U \times \rcsn(I) \in \overline{\RV}[k]$. The map from $\gsk \overline{\RES}_* \times \gsk \overline{\Gamma}_*$ to $\gsk \overline \RV_*$ naturally determined by the assignment $([U], [I]) \efun [U \times_{\rcsn} I]$ is clearly $\N$-bilinear. Hence it induces a semiring homomorphism:
\[
\overline{\bb D}: \gsk \overline{\RES}_* \otimes \gsk \overline{\Gamma}_* \fun \gsk \overline \RV_*.
\]

\begin{lem}\label{red:D:iso}
$\overline{\bb D}$ is a semiring isomorphism.
\end{lem}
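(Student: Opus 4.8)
The strategy is the standard one for identifying a Grothendieck semiring with a tensor product: construct an explicit inverse to $\overline{\bb D}$, or at least verify surjectivity and injectivity separately. Surjectivity is the easy half. Given $U \in \overline{\RV}[k]$, apply the twistoid decomposition (Lemma~\ref{shift:K:csn:def}) to write $U$ as a disjoint union $\biguplus_i U_i$ with each $U_i = U \cap \vrv^{-1}(D_i)$ a twistoid whose twistback $W_i = \tbk(U_i) \sub \K^n$ is $\lan{RV}$-definable. Each twistoid $U_i$ is, by construction, in definable bijection with $W_i \times_{\rcsn} D_i$ via the twistback map $u \efun (\tbk(u), \vrv(u))$, using the reduced cross-section to reassemble. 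Since $\vrv \rest U_i$ is a $\Gamma$-partition, $\dim_{\RV}(U_i) = \dim_{\RV}(W_i)$, so after reindexing by $\RV$-dimension and $\Gamma$-dimension we get $[U] = \sum_i [W_i \times_{\rcsn} D_i] = \overline{\bb D}\bigl(\sum_i [W_i] \otimes [D_i]\bigr)$. Hence $\overline{\bb D}$ is onto.

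For injectivity, I would define a candidate inverse $\overline{\bb E} : \gsk \overline{\RV}_* \fun \gsk \overline{\RES}_* \otimes \gsk \overline{\Gamma}_*$ on generators by sending $[U]$ to $\sum_i [W_i] \otimes [D_i]$, where the $U_i$, $W_i$, $D_i$ come from a twistoid decomposition as above, and then show this is well-defined (independent of the decomposition) and additive and multiplicative. Well-definedness reduces to: if $U$ admits two twistoid decompositions, the resulting elements of the tensor product agree; this follows because any two twistoid partitions have a common refinement, and on a common twistoid piece the twistback is determined (up to $\lan{RV}$-definable bijection) and the base $D_i$ is determined as a subset of $\Gamma_{\infty}^{\,l}$. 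Multiplicativity uses that the twistback of a product twistoid is the product of the twistbacks and $\vrv$ of a product is the product of the $\vrv$-images. The key point, however, is that $\overline{\bb E}$ respects morphisms, i.e.\ a definable bijection $f : U \fun U'$ in $\overline{\RV}_*$ induces the relation $\overline{\bb E}([U]) = \overline{\bb E}([U'])$. Here is where Lemma~\ref{rem:gam:fit} does the work: after refining into twistoid pieces on both sides and using that $f$ must (away from lower $\RV$-dimension, which is negligible in the Grothendieck semiring graded by dimension) carry $\vrv$-fibers to $\vrv$-fibers, Lemma~\ref{rem:gam:fit} produces a definable bijection $e$ on the $\Gamma$-bases together with $\Gamma$-regular fibers matched up by $f$; restricting $f$ to a single fiber gives an $\lan{RV}$-definable bijection of the corresponding $\RES$-objects (by Corollary~\ref{twbk:fin}, since the $\vrv$-image is a singleton), and restricting to the bases gives an isomorphism in $\overline{\Gamma}_*$. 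Compactness/piecing then yields equality $\sum_i [W_i]\otimes[D_i] = \sum_j [W_j']\otimes[D_j']$ in the tensor product.

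Finally one checks $\overline{\bb E} \circ \overline{\bb D} = \id$ and $\overline{\bb D} \circ \overline{\bb E} = \id$ on generators, which is immediate from the constructions: $\overline{\bb D}$ sends $[W]\otimes[D]$ to $[W \times_{\rcsn} D]$, whose canonical twistoid decomposition is the trivial one with twistback $W$ and base $D$, so $\overline{\bb E}$ sends it back to $[W]\otimes[D]$; the other composite was handled in the surjectivity paragraph. Since both maps are semiring homomorphisms agreeing with inverse bijections on generators, $\overline{\bb D}$ is a semiring isomorphism.

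\textbf{Main obstacle.} The genuinely delicate step is morphism-invariance of $\overline{\bb E}$, i.e.\ that a definable bijection in $\overline{\RV}_*$ does not mix the $\RES$- and $\Gamma$-directions in an uncontrollable way. This is exactly the content that Lemma~\ref{rem:gam:fit} (supported by Corollaries~\ref{twbk:fin} and~\ref{csn:Gamma:fin} and Lemma~\ref{RV:dim:zar}) is designed to handle; the work is in carefully reducing a general bijection to the $\Gamma$-regular, single-$\vrv$-fiber situation where those lemmas apply, and in book-keeping the lower-$\RV$-dimensional error sets so that the identities hold exactly in the graded Grothendieck semiring. I expect the rest (well-definedness, bilinearity, the composite identities) to be routine.
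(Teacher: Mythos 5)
Your surjectivity argument is exactly the paper's (Lemma~\ref{shift:K:csn:def} plus the twistback bijection $u \efun (\tbk(u), \vrv(u))$), and defining an inverse $\overline{\bb E}$ on twistoid decompositions is a reasonable way to organize the injectivity half. But the step you yourself flag as the crux --- morphism-invariance of $\overline{\bb E}$ --- rests on a false reduction. A definable bijection $f : U \fun U'$ need \emph{not} carry $\vrv$-fibers to $\vrv$-fibers away from a set of smaller $\RV$-dimension: by Corollary~\ref{csn:Gamma:fin} each fiber $U_{\lbar\gamma}$ maps into only finitely many fibers of $U'$, but several of the corresponding pieces of $U_{\lbar\gamma}$ can have full $\RV$-dimension (e.g.\ split $\K^{\times}\times_{\rcsn}\{\gamma_1\} \uplus \K^{\times}\times_{\rcsn}\{\gamma_2\}$ into four infinite definable pieces and redistribute them among two fibers of the target). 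Moreover Lemma~\ref{rem:gam:fit} is not the right tool here: it applies only to a bijection $W\times D \fun W\times E$ with the \emph{same} $\Gamma$-regular $W$ on both sides, which is precisely the product structure one cannot assume about a general morphism; in the paper it is used only for Corollary~\ref{gsk:gamma:inj}, which is a \emph{consequence} of Lemma~\ref{red:D:iso}, not an ingredient.

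The mechanism the paper actually uses, and which your sketch is missing, is the orthogonality of $\K$ and $\Gamma$ packaged in Lemma~\ref{resg:decom}. Given a bijection $f : \biguplus_i U_i\times_{\rcsn}I_i \fun \biguplus_j V_j\times_{\rcsn}J_j$, one forms the set $C$ of tuples $(\lbar t,\lbar s,\lbar\alpha,\lbar\beta)$ with $f(\lbar t,\rcsn(\lbar\alpha))=(\lbar s,\rcsn(\lbar\beta))$ and applies Lemma~\ref{resg:decom} to partition the $\K$-part $W=\prv(C)$ into finitely many pieces $W_k$ over which the $\Gamma$-fiber $\fib(C,(\lbar t,\lbar s))$ is constant; bijectivity of $f$ then forces each $W_k$ to be the graph of a bijection between subsets of the $U_i$ and $V_j$, and each constant fiber to be the graph of a bijection between subsets of the $I_i$ and $J_j$, which yields the equality in $\gsk\overline{\RES}_*\otimes\gsk\overline{\Gamma}_*$ directly, with no need for fiber-preservation or $\Gamma$-regularity hypotheses. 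Without this (or an equivalent splitting of the correspondence into products), your well-definedness/invariance step does not go through.
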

\begin{proof}
Surjectivity of $\overline{\bb D}$ follows immediately from Lemma~\ref{shift:K:csn:def}. For injectivity, let $U_i, V_j \in \overline{\RES}_*$, $I_i, J_j \in \overline{\Gamma}_*$, and
\[
f : A = \biguplus_i U_i \times_{\rcsn} I_i \fun B = \biguplus_j V_j \times_{\rcsn} J_j
\]
be a definable bijection. We need to show that $\sum_i [U_i] \otimes [I_i] = \sum_j [V_j] \otimes [J_j]$. Set
\[
C = \{(\lbar t, \lbar s, \lbar \alpha, \lbar \beta) : f(\lbar t, \rcsn(\lbar \alpha)) = (\lbar s, \rcsn(\lbar \beta))\}
\]
and $W = \prv(C)$. By Lemma~\ref{resg:decom}, there is a definable finite partition $W_k$ of $W$ such that
\[
\fib(C, (\lbar t, \lbar s)) = \fib(C, (\lbar t', \lbar s')) \quad \text{for all} \quad (\lbar t, \lbar s), (\lbar t', \lbar s') \in W_k.
\]
Since $f$ is a bijection, clearly each $\fib(C, (\lbar t, \lbar s))$ is the graph of a bijection and, in each $W_k$, $\lbar t = \lbar t'$ if and only if $\lbar s = \lbar s'$. So $W_k$ is also the graph of a bijection. Actually, we can form the disjoint unions $A$, $B$ in such a way (say, by tagging on both sides of the products) that $W_k \sub U_i \times V_j$ for some $i$, $j$. The desired equality follows easily from these conditions.
\end{proof}

\begin{cor}\label{gsk:gamma:inj}
For any $[U] \in \gsk \overline{\RES}_*$, if $U$ is $\Gamma$-regular then the semigroup homomorphism
\[
[U] \otimes - : \gsk \overline{\Gamma}_* \fun \gsk \overline{\RES}_* \otimes \gsk \overline{\Gamma}_*
\]
is injective.
\end{cor}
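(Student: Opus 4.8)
The plan is to transport the statement through the isomorphism $\overline{\bb D}$ of Lemma~\ref{red:D:iso} and then invoke Lemma~\ref{rem:gam:fit}. Since $\overline{\Gamma}_*$ is closed under finite disjoint union, every element of $\gsk\overline{\Gamma}_*$ is the class $[I]$ of a single object, so injectivity of $[U]\otimes-$ comes down to showing that $[U]\otimes[I] = [U]\otimes[J]$ forces $[I] = [J]$. Applying the homomorphism $\overline{\bb D}$, which carries $[U]\otimes[I]$ to $[U\times_{\rcsn}I]$, I get $[U\times_{\rcsn}I] = [U\times_{\rcsn}J]$ in $\gsk\overline{\RV}_*$; in view of the graded disjoint-union description preceding Definition~\ref{defn:oRV:cat}, $\gsk\overline{\RV}_*$ is just the monoid of isomorphism classes, so this provides an honest definable bijection $f : U\times\rcsn(I)\fun U\times\rcsn(J)$. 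Padding $I$ and $J$ with extra constant $0$-coordinates (harmless, as $\rcsn(0)=1$), I may assume $I, J\sub\Gamma_{\infty}^n$ for a common $n$.

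Next I would conjugate $f$ into the $\Gamma$-sort. Since $\vrv\circ\rcsn=\id$, the function $\rcsn$ restricts to a definable bijection of $I$ (resp.\ $J$) onto $\rcsn(I)$ (resp.\ $\rcsn(J)$), so $\tilde f = (\id_U\times\vrv)\circ f\circ(\id_U\times\rcsn)$ is a definable bijection $U\times I\fun U\times J$. As $U$ is $\Gamma$-regular, Lemma~\ref{rem:gam:fit} with $W=U$, $D=I$, $E=J$ yields a definable bijection $e : I\fun J$, hence $[I]=[J]$, which is what we want.

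There is no genuine obstacle here: the one load-bearing input is Lemma~\ref{rem:gam:fit}, and $\Gamma$-regularity of $U$ is precisely the hypothesis that makes it applicable --- it guarantees that the $\K$-fibres of $\tilde f$ over points of $I$ agree off a set of smaller $\RV$-dimension, so $\tilde f$ really does descend to a bijection on the $\Gamma$-coordinates. The points that need a little care are the two conjugations, the arity-padding, and the (routine but worth noting) observation that equality of classes in these semigroups supplies a direct definable bijection rather than only a stable one.
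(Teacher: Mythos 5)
Your argument is correct and is essentially the paper's own proof: assume $[U]\otimes[I]=[U]\otimes[J]$, use the isomorphism $\overline{\bb D}$ of Lemma~\ref{red:D:iso} to produce a definable bijection $U\times I\fun U\times J$, and then apply Lemma~\ref{rem:gam:fit} to extract a definable bijection $I\fun J$. The extra care you take (arity-padding, conjugating through $\rcsn$ and $\vrv$, noting that equality of classes yields an honest bijection) merely spells out identifications the paper leaves implicit.
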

\begin{proof}
Suppose that $[U] \otimes [I] = [U] \otimes [J]$. By Lemma~\ref{red:D:iso}, $U \times I$ is definably bijective to $U \times J$. By Lemma~\ref{rem:gam:fit}, $[I] = [J]$.
\end{proof}

Let $U \sub \RV^n \times \Gamma^m$, $V \sub \RV^{n'} \times \Gamma^{m'}$, and $C \sub U \times V$ be definable subsets. For all $((\lbar u, \lbar \alpha), (\lbar v, \lbar \beta)) \in C$, the \emph{$\Gamma$-Jacobian} of $C$ at $((\lbar u, \lbar \alpha), (\lbar v, \lbar \beta))$, written as $\jcb_{\Gamma} C((\lbar u, \lbar \alpha), (\lbar v, \lbar \beta))$, is the element
\[
- \Sigma (\vrv(\lbar u), \lbar \alpha) + \Sigma (\vrv(\lbar v), \lbar \beta) \in \Gamma,
\]
where $\Sigma (\gamma_1, \ldots, \gamma_n) = \gamma_1 + \cdots + \gamma_n$. If $U, V \sub \K^n$, $\dim_{\RV}(U) = \dim_{\RV}(V) = n$, and $C \sub U \times V$ is a finite-to-finite correspondence then, for almost all $(\lbar u, \lbar v) \in C$, the Jacobian at $(\lbar u, \lbar v)$ may be defined in the natural way (see the discussion preceding~\cite[Definition~9.14]{Yin:special:trans}), which is a $(\lbar u, \lbar v)$-definable element in $\K^{\times}$ and is denoted by $\jcb_{\K} C(\lbar u, \lbar v)$. More generally, if $U, V \sub (\RV^{\times})^n$, then, for any $\lbar \alpha, \lbar \beta \in \Gamma^n$, we may consider the $(\lbar \alpha, \lbar \beta)$-twistback $\tbk(C_{\lbar \alpha, \lbar \beta})$ of $C$:

\begin{defn}\label{defn:jac}
The \emph{Jacobian $\jcb_{\RV} C(\lbar u, \lbar v) = (\jcb_{\K} C(\lbar u, \lbar v), \jcb_{\Gamma} C(\lbar u, \lbar v))$} of $C$ at $(\lbar u, \lbar v)$ is a $(\lbar u, \lbar v)$-definable pair in $\K^{\times} \times \Gamma^{\times}$, where $\jcb_{\K} C(\lbar u, \lbar v)$, if it exists, is given by
\[
\jcb_{\K} \tbk(C_{\vrv(\lbar u), \vrv(\lbar v)})(\tbk(\lbar u), \tbk(\lbar v)).
\]
\end{defn}

It is routine to check that the Jacobian is defined for almost all $(\lbar u, \lbar v) \in C$.

\begin{hyp}\label{hyp:jac}
Here we can provide a bit more information than at the beginning of this section on what conditions a more general $\bb T$ should satisfy in order to make the construction work. The twistoid condition should hold. The $\Gamma$-sort should be \omin-minimal. There should be a notion of $\RV$-dimension that agrees with the Zariski dimension, that is, if $U \sub \K^n$ is an $\mdl L_{\bb T}$-definable subset then its $\RV$-dimension equals the Zariski dimension of its Zariski closure. Consequently, the Jacobian in the $\RV$-sort may be defined as in Definition~\ref{defn:jac}.
\end{hyp}

\begin{defn}[Coarse $\RV$-categories]\label{defn:c:RV:cat}
An object of the category $\RV[k, \cdot]$ is a definable pair $(U, f)$, where $U \in \overline{\RV}_*$ and $f : U \fun (\RV^{\times})^k$ is a function. Given two such objects $(U, f)$ and $(V, g)$, any definable bijection $F : U \fun V$ is a \emph{morphism} of $\RV[k, \cdot]$. Such a morphism $F$ induces a correspondence between $f(U)$ and $g(V)$:
\[
\{(\lbar t, \lbar s) \in f(U) \times g(V) : \ex{\lbar u \in U} (f(\lbar u) = \lbar t \wedge (g \circ F)(\lbar u) = \lbar s) \},
\]
which is denoted by $F^{\rightleftharpoons}$.

An object of the category $\mgRV[k]$ is a definable triple $(U, f, \omega_{\Gamma})$, where $(U, f) \in \RV[k, \cdot]$ and $\omega_{\Gamma} : U \fun \Gamma$ is a function, which is understood as a \emph{$\Gamma$-volume form} on $U$. A \emph{morphism} $F : (U, f, \omega_{\Gamma}) \fun (U', f', \omega'_{\Gamma})$ of $\mgRV[k]$ is an $\RV[k, \cdot]$-morphism such that, for all $(f(\lbar u), (f' \circ F)(\lbar u)) \in F^{\rightleftharpoons}$,
\[
\omega_{\Gamma}(\lbar u) = \omega'_{\Gamma}(F (\lbar u)) + \jcb_{\Gamma} F^{\rightleftharpoons}(f(\lbar u), (f' \circ F)(\lbar u)).
\]

Set $\RV[\leq k, \cdot] = \coprod_{i \leq k} \RV[i, \cdot]$ and $\RV[*, \cdot] = \coprod_k \RV[k, \cdot]$; similarly for $\mgRV[\leq k]$ and $\mgRV[*]$.
\end{defn}

\begin{rem}
The categories $\RV[k, \cdot]$ only play an auxiliary role in the construction and could have been defined in a simpler way, that is, the function $f$ may be deleted from $(U, f)$ without any real consequences. We have chosen to define them in this way so to make other definitions below more compact. In those definitions the ``presentation'' $f$ of $U$ is indeed essential.

Note that, in the above definition and other similar ones below, all morphisms are actually isomorphisms. Also, for the cases $k =0$, the reader should interpret things such as $(\RV^{\times})^0$ and how they interact with other things in a natural way. For example, $(\RV^{\times})^0$ may be treated as the empty tuple. This results in the interpretation that the requirement above on $\Gamma$-volume forms for $k=0$ is simply $\omega_{\Gamma}(\lbar u) = \omega'_{\Gamma}(F (\lbar u))$.

About the notation: $\overline{\Gamma}_*$ etc.\ suggests that the category is filtrated and the notation $\RV[*, \cdot]$ etc.\ suggests that the category is actually graded.
\end{rem}

\begin{defn}[Fine $\RV$-categories]\label{defn:f:RV:cat}
The category $\RV[k]$ is the full subcategory of $\RV[k, \cdot]$ such that $(U, f) \in \RV[k]$ if and only if $\dim_{\RV}(f^{-1}(\lbar t)) = 0$ for all $\lbar t \in (\RV^{\times})^k$. Note that, for any $\RV[k]$-morphism $F : (U, f) \fun (V, g)$, by Corollary~\ref{twbk:fin}, the correspondence $F^{\rightleftharpoons}_{\lbar \gamma}$ is finite-to-finite for all $\lbar \gamma \in \Gamma^{2k}$.

An object of the category $\mRV[k]$ is a definable triple $(U, f, \omega)$, where $(U, f) \in \RV[k]$ and $\omega : U \fun \K^{\times} \times \Gamma$ is a function, which is understood as a \emph{volume form} on $U$. We also write $\omega$ as a pair $(\omega_{\K}, \omega_{\Gamma})$. A $\mgRV[k]$-morphism $F : (U, f, \omega) \fun (U', f', \omega')$ is a \emph{pseudo-morphism} of $\mRV[k]$. If, in addition, for all $(f(\lbar u), (f' \circ F)(\lbar u)) \in F^{\rightleftharpoons}$ that are away from a subset of $F^{\rightleftharpoons}$ of $\RV$-dimension $< k$,
\[
\omega_{\K}(\lbar u) = \omega'_{\K}(F (\lbar u)) \jcb_{\K} F^{\rightleftharpoons}(f(\lbar u), (f' \circ F)(\lbar u))
\]
then $F$ is a \emph{morphism} of $\mRV[k]$.

Set $\RV[*] = \coprod_k \RV[k]$; similarly for $\mRV[*]$.
\end{defn}

Observe that if $(U, f, \omega), (V, g, \sigma) \in \mRV[k]$ and $\dim_{\RV}(f(U)) < k$ (in particular, if $\dim_{\RV}(U) < k$) then any $\mRV[k]$-pseudo-morphism between them is indeed a $\mRV[k]$-morphism.

\begin{defn}[$\RES$-categories]\label{defn:RES:cat}
The category $\RES[k, \cdot]$ is the full subcategory of $\RV[k, \cdot]$ such that $(U, f) \in \RES[k, \cdot]$ if and only if all coordinates of $U$ and $f(U)$ are in $\K$. Similarly, $\RES[k]$ is such a full subcategory of $\RV[k]$, which is also a full subcategory of $\RES[k, \cdot]$. The category $\mRES[k]$ is the full subcategory of $\mRV[k]$ such that $(U, f, \omega) \in \mRES[k]$ if and only if $(U ,f) \in \RES[k]$ and $\omega_{\Gamma} = 0$.

The category $\RES^{c}[k]$ (resp.\ $\mRES^{c}[k]$) is the smallest full subcategory of $\RES[k]$ (resp.\ $\mRES[k]$) that contains the isomorphism class of $\mathbf{T}^k = ((\K^{\times})^k, \id)$ (resp.\ $\mathbf{T}_{\mu}^k = ((\K^{\times})^k, \id, (1, 0))$) and is closed under disjoint union.

Set $\RES[*, \cdot] = \coprod_k \RES[k, \cdot]$; similarly for $\RES[*]$, $\mRES[*]$, $\RES^{c}[*]$, and $\mRES^{c}[*]$.
\end{defn}

We do not have $\RES$-categories with $\Gamma$-volume forms because, in light of Corollary \ref{csn:Gamma:fin}, there will be no need to. Also note that $\RES[k, \cdot]$ is canonically isomorphic to a full subcategory of $\mgRV[k]$ via the map $(U, f) \efun (U, f, 0)$ and $\RES^{c}[k]$ is a full subcategory of $\RES[k, \cdot]$. Also, by Corollary~\ref{twbk:fin}, every $(U, f) \in \RES[k, \cdot]$ is $\lan{RV}$-definable and hence $(U, f) \in \RES[k]$ if and only if $f$ is finite-to-one.

\begin{defn}[$\Gamma$-categories]\label{def:Ga:cat}
The objects of the category $\Gamma[k]$ are the definable pairs $(I, f)$, where $I \in \overline{\Gamma}_*$ and $f : I \fun \Gamma^k$ is a function. Given $(I, f), (J, g)\in \Gamma[k]$, any definable bijection $F : I \fun J$ is a \emph{morphism} of $\Gamma[k]$.

An object of the category $\mG[k]$ is a definable triple $(I, f, \omega)$, where $(I, f) \in \Gamma[k]$ and $\omega : I \fun \Gamma$ is a function, which is understood as a \emph{volume form} on $I$. Let $\omega_f : I \fun \Gamma$ be the function given by $\lbar \gamma \efun \Sigma f(\lbar \gamma) + \omega(\lbar \gamma)$. A \emph{morphism} $F : (I, f, \omega) \fun (I', f', \omega')$ of $\mG[k]$ is a $\Gamma[k]$-morphism such that $\omega_f(\lbar \gamma) = \omega'_{f'}(F(\lbar \gamma))$ for all $\lbar \gamma \in I$.
%The category $\Gamma^{c}[k]$ is the full subcategory of $\Gamma[k]$ whose objects are the definable finite subsets; similarly for $\mG^{c}[k]$.; similarly for $\mG[*]$, and $\mG^{c}[*]$

The category $\Gamma^{c}[k]$ is the full subcategory of $\Gamma[k]$ such that $(I, f) \in \Gamma^{c}[k]$ if and only if $I$ is finite. The category $\mG^{c}[k]$ is the full subcategory of $\mG[k]$ such that $(I, f, \omega) \in \Gamma^{c}[k]$ if and only if $I$ is finite and $\omega_f(\lbar \gamma) = 0$ for all $\lbar \gamma \in I$.

Set $\Gamma[*] = \coprod_k \Gamma[k]$; similarly for $\mG[*]$, $\Gamma^{c}[*]$, and $\mG^{c}[*]$.
\end{defn}

Obviously $\gsk \RV[*] = \bigoplus_k \gsk \RV[k]$; similarly for the other graded categories.

Note that the semigroups $\gsk \RES^{c}[k]$, $\gsk \mRES^{c}[k]$, $\gsk \Gamma^{c}[k]$, and $\gsk \mG^{c}[k]$ may be identified with $(\N, +)$ and hence the semirings $\gsk \RES^{c}[*]$, $\gsk \mRES^{c}[*]$, $\gsk \Gamma^{c}[*]$, and $\gsk \mG^{c}[*]$ may be identified with $\N[X]$, the semiring of polynomials with coefficients in $\N$. Let us abbreviate
\[
\gsk \RES[k, \cdot] \otimes \gsk \Gamma[k], \quad \gsk \RES[k, \cdot] \otimes \gsk \mG[k], \quad \gsk \mRES[k] \otimes \gsk \mG[k]
\]
as $\vtp[k]$, $\mgvtp[k]$, $\mvtp[k]$, respectively. Note that both $\vtp[k]$ and $\mgvtp[k]$ use $\gsk \RES[k, \cdot]$ as the first factor. Set $\vtp[*] = \bigoplus_k \vtp[k]$, similarly for $\mgvtp[*]$ and $\mvtp[*]$. These are graded semirings.

For $(U, f) \in \RES[k, \cdot]$ and $(I, g) \in \Gamma[k]$, let $f \times_{\rcsn} g : U \times_{\rcsn} I \fun (\RV^{\times})^k$ be the function given by
\[
(\lbar t, \rcsn(\lbar \gamma)) \efun (f(\lbar t)_i \rcsn(g(\lbar \gamma))_i).
\]
We write $(U, f) \times_{\rcsn} (I, g)$ for the object
\[
(U \times_{\rcsn} I, f \times_{\rcsn} g) \in \RV[k, \cdot].
\]
Note that if $(U, f) \in \RES[k]$ then $(U, f) \times_{\rcsn} (I, g) \in \RV[k]$. For $(I, g, \sigma) \in \mG[k]$, let $(U, f) \times_{\rcsn} (I, g, \sigma)$ be the object
\[
((U, f) \times_{\rcsn} (I, g), \sigma_{\Gamma}) \in \mgRV[k],
\]
where $\sigma_{\Gamma}$ is the volume form on $U \times_{\rcsn} I$ given by $(\lbar t, \rcsn(\lbar \gamma)) \efun \sigma(\lbar \gamma)$. Finally, for $(U, f, \omega) \in \mRES[k]$, let $(U, f, \omega) \times_{\rcsn} (I, g, \sigma)$ be the object
\[
((U, f) \times_{\rcsn} (I, g), \omega \times_{\rcsn} \sigma) \in \mRV[k],
\]
where $\omega \times_{\rcsn} \sigma$ is the volume form on $U \times_{\rcsn} I$ given by $(\lbar t, \rcsn(\lbar \gamma)) \efun (\omega(\lbar t), \sigma(\lbar \gamma))$.

The assignment $([\textbf{U}], [\textbf{I}]) \efun [\textbf{U} \times_{\rcsn} \textbf{I}]$ naturally determines a map
\[
\gsk \RES[k, \cdot] \times \gsk \Gamma[k] \fun \gsk \RV[k, \cdot],
\]
which is clearly $\N$-bilinear. Similarly there are such maps
\[
\gsk \RES[k, \cdot] \times \gsk \mG[k] \fun \gsk \mgRV[k] \quad \text{and} \quad \gsk \mRES[k] \times \gsk \mG[k] \fun \gsk \mRV[k].
\]
Hence we have three induced semigroup homomorphisms:
\[
\bb D_k: \vtp[k] \fun \gsk \RV[k, \cdot], \quad \mu_{\Gamma}  \bb D_k: \mgvtp[k] \fun \gsk \mgRV[k], \quad \mu \bb D_k: \mvtp[k] \fun \gsk \mRV[k].
\]

\begin{prop}\label{D:iso}
$\bb D_k$, $\mu_{\Gamma}  \bb D_k$, and $\mu \bb D_k$ are isomorphisms.
\end{prop}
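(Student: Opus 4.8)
All three maps can be handled by one argument refining the proof of Lemma~\ref{red:D:iso}; I will describe it for $\mu \bb D_k$, the maps $\bb D_k$ and $\mu_{\Gamma} \bb D_k$ being the degenerate cases in which the $\K$-form, resp.\ both forms, are discarded. For surjectivity, given $(U, f, \omega) \in \mRV[k]$ with $\omega = (\omega_{\K}, \omega_{\Gamma})$, I would first pass to the isomorphic object
\[
U^* = \{(\lbar u, f(\lbar u), \omega_{\K}(\lbar u) \rcsn(\omega_{\Gamma}(\lbar u))) : \lbar u \in U\} \sub \RV^{n + k + 1},
\]
presented by the projection onto the middle $k$ coordinates and carrying the volume form $(\tbk, \vrv)$ of the last coordinate; since $\omega_{\K}(\lbar u) \in \K^{\times}$ we have $\tbk(\omega_{\K}(\lbar u)\rcsn(\omega_{\Gamma}(\lbar u))) = \omega_{\K}(\lbar u)$ and $\vrv$ of it equals $\omega_{\Gamma}(\lbar u)$, so the obvious bijection $U \fun U^*$ is an $\mRV[k]$-isomorphism (the induced correspondence being the identity, with $\K$- and $\Gamma$-Jacobians $1$ and $0$). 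Then I would apply the twistoid condition (Lemma~\ref{shift:K:csn:def}) to $U^*$, obtaining a finite partition $U^* = \biguplus_l U^*_l$ with each $U^*_l$ a twistoid over $D_l = \vrv(U^*_l)$ with $\lan{RV}$-definable twistback $W_l$. Since $\tbk$ and $\vrv$ split coordinatewise, the map $\lbar u^* \efun (\tbk(\lbar u^*), \rcsn(\vrv(\lbar u^*)))$ carries $U^*_l$, with its presentation and volume form, onto $\mathbf{W}_l \times_{\rcsn} \mathbf{D}_l$, where $\mathbf{W}_l = (W_l, f_l, \omega_l) \in \mRES[k]$ has $f_l$, $\omega_l$ the projections of $W_l$ onto the middle $k$ and the last $\K$-coordinate and $\mathbf{D}_l = (D_l, g_l, \sigma_l) \in \mG[k]$ has $g_l$, $\sigma_l$ the corresponding projections of $D_l$; here $f_l$ is finite-to-one because $f$ is and twistback does not increase $\RV$-dimension. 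Hence $[(U, f, \omega)] = \sum_l \mu \bb D_k([\mathbf{W}_l] \otimes [\mathbf{D}_l])$.

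For injectivity, let $F : A = \biguplus_i \mathbf{U}_i \times_{\rcsn} \mathbf{I}_i \fun B = \biguplus_j \mathbf{V}_j \times_{\rcsn} \mathbf{J}_j$ be a morphism, with $\mathbf{U}_i = (U_i, f_i, \omega_i)$, etc. As in Lemma~\ref{red:D:iso}, put $C = \{(\lbar w, \lbar w', \lbar \delta, \lbar \delta') : F(\lbar w, \rcsn(\lbar \delta)) = (\lbar w', \rcsn(\lbar \delta'))\}$ and $W = \prv(C)$; since the $U_i, V_j$ have only $\K$-coordinates, $\vrv(W)$ is finite, so Lemma~\ref{resg:decom} gives a finite partition $W = \biguplus_l W_l$ on each piece of which the $\Gamma$-fibre of $C$ is constant. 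Because $F$ is a bijection, each $W_l$ is the graph of a definable bijection $\phi_l : P_l \fun Q_l$ and the common fibre of $C$ over $W_l$ is the graph of a definable bijection $\psi_l : R_l \fun S_l$; after tagging the disjoint unions so that $W_l \sub U_i \times V_j$ for some $i, j$, the restriction of $F$ to the $l$-th piece is $(\lbar w, \rcsn(\lbar \delta)) \efun (\phi_l(\lbar w), \rcsn(\psi_l(\lbar \delta)))$, defined on $P_l \times_{\rcsn} R_l$, and these pieces partition $A$.

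Making the definitions of $\jcb_{\Gamma}$ and $\jcb_{\K}$ on a correspondence between subsets of $(\RV^{\times})^k$ fully explicit — in our case $\vrv$ of the values of the two presentations are $\Sigma g_i$ and $\Sigma g_j$ and the twistback of $F^{\rightleftharpoons}$ is the correspondence carrying $f_i(\lbar w)$ to $f_j(\phi_l(\lbar w))$ — one sees that the $\mgRV[k]$-morphism identity for $F$ becomes exactly the $\mG[k]$-morphism condition for $\psi_l$, while the $\K$-part of the $\mRV[k]$-morphism condition becomes the $\mRES[k]$-morphism condition for $\phi_l$ (valid away from an $\RV$-dimension $< k$ subset, which is harmless by the remark following Definition~\ref{defn:f:RV:cat}). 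Thus $[\mathbf{P}_l] = [\mathbf{Q}_l]$ in $\gsk \mRES[k]$ and $[\mathbf{R}_l] = [\mathbf{S}_l]$ in $\gsk \mG[k]$, where $\mathbf{P}_l, \mathbf{Q}_l, \mathbf{R}_l, \mathbf{S}_l$ carry the evident restricted presentations and volume forms.

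It remains to check that $\sum_i [\mathbf{U}_i] \otimes [\mathbf{I}_i] = \sum_l [\mathbf{P}_l] \otimes [\mathbf{R}_l]$ (and symmetrically for $B$), whereupon $[\mathbf{P}_l]\otimes[\mathbf{R}_l] = [\mathbf{Q}_l]\otimes[\mathbf{S}_l]$ ends the proof. For a fixed $i$ the rectangles $P_l \times_{\rcsn} R_l$ with $P_l \sub U_i$ partition $U_i \times_{\rcsn} I_i$, but this need not be a product partition; to remedy this I would replace $\{P_l\}$ by the atoms $\{P'_{\mu}\}$ of the finite Boolean algebra they generate in $U_i$ and $\{R_l\}$ by the atoms $\{R'_{\nu}\}$ in $I_i$. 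Then $\{P'_{\mu} \times R'_{\nu}\}$ refines $\{P_l \times R_l\}$, so each atom-rectangle lies in a unique $P_l \times R_l$; substituting $[\mathbf{P}_l] = \sum_{P'_{\mu} \sub P_l} [\mathbf{P}'_{\mu}]$ and $[\mathbf{R}_l] = \sum_{R'_{\nu} \sub R_l} [\mathbf{R}'_{\nu}]$ and collecting gives $\sum_l [\mathbf{P}_l] \otimes [\mathbf{R}_l] = \sum_{\mu, \nu} [\mathbf{P}'_{\mu}] \otimes [\mathbf{R}'_{\nu}] = \left(\sum_{\mu} [\mathbf{P}'_{\mu}]\right) \otimes \left(\sum_{\nu} [\mathbf{R}'_{\nu}]\right) = [\mathbf{U}_i] \otimes [\mathbf{I}_i]$, as desired. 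The main difficulty is not any single deep step but the bookkeeping: verifying piece by piece that the decomposed bijection respects the volume-form structure of $\mRES[k]$ and $\mG[k]$ — which forces one to spell out the $\K$- and $\Gamma$-Jacobians of correspondences into $(\RV^{\times})^k$ and to track the ``almost everywhere'' clause — together with the atomisation trick that converts the non-product partition into an identity in the tensor product.
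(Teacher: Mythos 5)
Your proof is correct and follows essentially the same route as the paper's: surjectivity by reducing to the twistoid decomposition of Lemma~\ref{shift:K:csn:def}, and injectivity by forming the correspondence $C$, applying Lemma~\ref{resg:decom} as in Lemma~\ref{red:D:iso}, and checking that the bijections coded in the pieces of $W$ are $\mRES[k]$- and $\mG[k]$-morphisms. The only differences are in packaging: the paper first invokes Corollary~\ref{csn:Gamma:fin} to make $\vrv\circ f$ and $\omega_{\Gamma}$ factor through $\vrv$ where you instead append the volume form as an extra $\RV$-coordinate, and you spell out the Jacobian translation and the atomisation step that the paper leaves as ``straightforward to check.''
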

\begin{proof}
Since $\bb D_0 = \mu_{\Gamma}  \bb D_0 = \overline{\bb D}$ and $\mu \bb D_0$ is a restriction of $\overline{\bb D}$, let us assume $k > 0$. We shall only be concerned with $\mu \bb D_k$, since for $\bb D_k$ or $\mu_{\Gamma}  \bb D_k$ the argument is similar and simpler. In fact, the proof is more or less the same as that of Lemma~\ref{red:D:iso} and hence we shall be brief.

For any $(U, f, \omega) \in \mRV[k]$, by Corollary~\ref{csn:Gamma:fin}, there is a definable finite partition $U_i$ of $U$ such that the restrictions $(\vrv \circ f) \rest U_i$, $\omega_{\Gamma} \rest U_i$ factor through $\vrv \rest U_i$. So, without loss of generality, we may assume that $U$ has this property. By Lemma~\ref{shift:K:csn:def}, we may further assume that (the graphs of) $f$ and $\omega_{\K}$ are twistoids. Then it is clear that $(U, f, \omega)$ is isomorphic to a product in the desired form.

For injectivity, in a similar notation to that in the proof of Lemma~\ref{red:D:iso}, we are reduced to showing that the bijections coded in $W_k$ are indeed $\mRES[k]$- and $\mG[k]$-morphisms. It is straightforward to check this.
\end{proof}

\begin{cor}
$\bb D = \bigoplus_k \bb D_k$, $\mu_{\Gamma} \bb D = \bigoplus_k \mu_{\Gamma} \bb D_k$, and $\mu \bb D = \bigoplus_k \mu \bb D_k$ are isomorphisms of graded semirings.
\end{cor}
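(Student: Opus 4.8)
The plan is to read the statement off Proposition~\ref{D:iso} together with two formal observations: that every (semi)ring in sight is graded by the index $k$, and that each of $\bb D$, $\mu_{\Gamma} \bb D$, $\mu \bb D$ respects that grading and the multiplication. Since $\vtp[*] = \bigoplus_k \vtp[k]$ and $\gsk \RV[*, \cdot] = \bigoplus_k \gsk \RV[k, \cdot]$ (and likewise for the two other pairs), and since $\bb D = \bigoplus_k \bb D_k$ is by definition the direct sum of its degree‑$k$ components, Proposition~\ref{D:iso} already yields that $\bb D$ is a bijective, grading‑preserving homomorphism of the underlying additive semigroups; the same applies to $\mu_{\Gamma} \bb D$ and $\mu \bb D$. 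So the only thing left to prove is that these maps are multiplicative (preservation of the multiplicative identity being covered by Lemma~\ref{red:D:iso} in degree $0$).

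For multiplicativity, recall that the product in each of $\gsk \RES[*, \cdot]$, $\gsk \Gamma[*]$, $\gsk \RV[*, \cdot]$, and in $\vtp[*]$, is induced by the Cartesian product of objects (with coordinates concatenated), sending degrees $k$, $l$ to degree $k+l$; and that, additively, $\vtp[k]$ is generated by elementary tensors $[\mathbf{U}] \otimes [\mathbf{I}]$ with $\mathbf{U} \in \RES[k, \cdot]$ and $\mathbf{I} \in \Gamma[k]$. Since $\bb D$ is already known to be additive, it therefore suffices to exhibit, for $\mathbf{U}_1 \in \RES[k, \cdot]$, $\mathbf{U}_2 \in \RES[l, \cdot]$, $\mathbf{I}_1 \in \Gamma[k]$, $\mathbf{I}_2 \in \Gamma[l]$, an isomorphism in $\RV[k+l, \cdot]$
\[
(\mathbf{U}_1 \times \mathbf{U}_2) \times_{\rcsn} (\mathbf{I}_1 \times \mathbf{I}_2) \;\cong\; (\mathbf{U}_1 \times_{\rcsn} \mathbf{I}_1) \times (\mathbf{U}_2 \times_{\rcsn} \mathbf{I}_2).
\]
As $\rcsn$ is applied coordinatewise, $\rcsn(I_1 \times I_2) = \rcsn(I_1) \times \rcsn(I_2)$, so both sides have the same underlying set up to the definable coordinate permutation that interleaves the $\K$‑block of $\mathbf{U}_1$, the $\rcsn(\Gamma)$‑block of $\mathbf{I}_1$, the $\K$‑block of $\mathbf{U}_2$, and the $\rcsn(\Gamma)$‑block of $\mathbf{I}_2$; one checks immediately that this permutation carries one presentation to the other. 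Passing to $\gsk$ gives $\bb D_k([\mathbf{U}_1] \otimes [\mathbf{I}_1]) \cdot \bb D_l([\mathbf{U}_2] \otimes [\mathbf{I}_2]) = \bb D_{k+l}(([\mathbf{U}_1] \otimes [\mathbf{I}_1]) \cdot ([\mathbf{U}_2] \otimes [\mathbf{I}_2]))$, so $\bb D$ is a semiring homomorphism, hence an isomorphism of graded semirings.

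For $\mu_{\Gamma} \bb D$ and $\mu \bb D$ the same coordinate permutation is used, and one further checks that it respects the volume forms. This reduces to the facts that the $\Gamma$‑volume form of a $\times_{\rcsn}$‑product — and the $\jcb_{\Gamma}$ of a product correspondence — is the sum of the corresponding data of the two factors, which is immediate because $\Sigma$ is additive over concatenation of tuples, and that the $\K$‑volume form of a $\times_{\rcsn}$‑product — and the $\jcb_{\K}$ of a product correspondence — is the product of the corresponding data of the two factors, since the Jacobian matrix of a product of maps is block‑diagonal. I do not expect a genuine obstacle here: the argument is entirely formal once Proposition~\ref{D:iso} is in hand, and the only point demanding any care is the bookkeeping of the coordinate permutation and the routine verification that presentations and volume forms transform as claimed.
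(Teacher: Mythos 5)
Your proposal is correct and matches the paper's (implicit) route: the corollary is stated there without proof, as an immediate consequence of Proposition~\ref{D:iso} once one notes that $\times_{\rcsn}$ is compatible with Cartesian products up to a definable coordinate permutation respecting presentations and volume forms. Your spelled-out multiplicativity check is exactly the routine verification the paper leaves to the reader.
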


\begin{cor}\label{gsk:mu:gamma:inj}
For any $[(U, f)] \in \gsk \RES[k, \cdot]$, if $U$ is $\Gamma$-regular then the semigroup homomorphism $[(U, f)] \otimes - : \gsk \Gamma[k] \fun \vtp[k]$ is injective; similarly for the other two cases.
\end{cor}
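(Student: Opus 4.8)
The plan is to reproduce, in the graded setting, the three‑line argument for Corollary~\ref{gsk:gamma:inj}: turn an equality in the tensor product into an isomorphism of $\RV$‑objects via the isomorphism of Proposition~\ref{D:iso}, and then discard the $\RES$‑factor by appealing to the rigidity of $\Gamma$‑regular sets furnished by Lemma~\ref{rem:gam:fit}. I would spell out the case $\gsk \mRES[k] \otimes - : \gsk \mG[k] \fun \mvtp[k]$, which carries the most structure; the cases $\gsk \RES[k,\cdot] \otimes - : \gsk \Gamma[k] \fun \vtp[k]$ and $\gsk \RES[k,\cdot] \otimes - : \gsk \mG[k] \fun \mgvtp[k]$ then follow by deleting all volume‑form data, resp.\ the $\K$‑volume‑form data.

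So suppose $(U,f,\omega) \in \mRES[k]$ with $U$ $\Gamma$‑regular and $[(U,f,\omega)] \otimes [(I,g,\sigma)] = [(U,f,\omega)] \otimes [(J,h,\tau)]$ in $\mvtp[k]$. First I would apply $\mu \bb D_k$ (Proposition~\ref{D:iso}) to obtain an $\mRV[k]$‑isomorphism $F$ between $(U,f,\omega) \times_{\rcsn} (I,g,\sigma)$ and $(U,f,\omega) \times_{\rcsn} (J,h,\tau)$; its underlying map is a definable bijection $F : U \times \rcsn(I) \fun U \times \rcsn(J)$ which, since $\vrv \circ \rcsn = \id$ on $\Gamma$, I identify with a definable bijection $U \times I \fun U \times J$. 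Next, because $U$ is $\Gamma$‑regular, Lemma~\ref{rem:gam:fit} (with $W = U$) supplies definable subsets $A \sub U \times I$, $B \sub U \times J$ and a definable bijection $e : I \fun J$ with $F(\fib(A,\lbar\alpha)) = \fib(B, e(\lbar\alpha))$ and both fibers $\Gamma$‑regular, hence nonempty, for every $\lbar\alpha \in I$. For the $\vtp[k]$‑case this already concludes matters, since morphisms of $\Gamma[k]$ are just definable bijections and so $e$ witnesses $[(I,g)] = [(J,h)]$.

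For the volume‑form cases the remaining task is to verify that $e$ is an $\mG[k]$‑morphism, i.e.\ $\sigma_g = \tau_h \circ e$. The computation I have in mind: fix $\lbar\alpha \in I$ and $\lbar t \in \fib(A,\lbar\alpha)$, and write $(\lbar s, \rcsn(e(\lbar\alpha))) = F(\lbar t, \rcsn(\lbar\alpha))$. The corresponding pair of presentation values $\big((f\times_{\rcsn}g)(\lbar t, \rcsn(\lbar\alpha)),\;(f\times_{\rcsn}h)(\lbar s, \rcsn(e(\lbar\alpha)))\big)$ lies in $F^{\rightleftharpoons}$, and since the coordinates of $f(U)$ lie in $\K^{\times}$ its two components have $\vrv$‑values $g(\lbar\alpha)$ and $h(e(\lbar\alpha))$; hence $\jcb_{\Gamma} F^{\rightleftharpoons}$ at this pair is $-\Sigma g(\lbar\alpha) + \Sigma h(e(\lbar\alpha))$. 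The $\Gamma$‑component of $\omega \times_{\rcsn} \sigma$ depends only on the $I$‑coordinate and equals $\sigma(\lbar\alpha)$ there (similarly for $\tau$), so the identity that makes $F$ an $\mgRV[k]$‑morphism — which holds at \emph{every} point — reads $\sigma(\lbar\alpha) = \tau(e(\lbar\alpha)) - \Sigma g(\lbar\alpha) + \Sigma h(e(\lbar\alpha))$, i.e.\ $\sigma_g(\lbar\alpha) = \tau_h(e(\lbar\alpha))$. Since $\fib(A,\lbar\alpha) \neq \0$ for all $\lbar\alpha$, this holds on all of $I$, giving $[(I,g,\sigma)] = [(J,h,\tau)]$; the case of $\mgvtp[k]$ is identical.

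Essentially all the content sits in Lemma~\ref{rem:gam:fit} and Proposition~\ref{D:iso}; beyond invoking them there is no genuine obstacle, only two points to watch. The more delicate one is bookkeeping: keeping the $\times_{\rcsn}$‑presentations straight through the identification $\rcsn(I) \cong I$ and nailing down the $\Gamma$‑Jacobian identity above. The other is to notice that only the $\Gamma$‑part of the $\mRV[k]$‑structure of $F$ enters — the $\K$‑Jacobian condition holds merely almost everywhere, but $\mG[k]$‑morphisms impose nothing in the $\K$‑direction, so this causes no trouble. With these remarks the proof is a faithful transcription of that of Corollary~\ref{gsk:gamma:inj}.
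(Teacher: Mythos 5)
Your proposal is correct and follows exactly the paper's argument: apply the isomorphism of Proposition~\ref{D:iso} to convert the tensor equality into an $\RV$-category isomorphism, extract the bijection $e : I \fun J$ via Lemma~\ref{rem:gam:fit} using $\Gamma$-regularity of $U$, and check that $e$ is a morphism of the relevant $\Gamma$-category. The paper compresses the final verification into the phrase ``which are indeed morphisms of the corresponding categories,'' whereas you carry out the $\Gamma$-Jacobian computation explicitly; the computation is right, so nothing is missing.
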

\begin{proof}
Since we have Proposition~\ref{D:iso}, bijections as described in Lemma~\ref{rem:gam:fit} may be obtained as in the proof of Corollary~\ref{gsk:gamma:inj}, which are indeed morphisms of the corresponding categories.
\end{proof}

For $(I, f) \in \Gamma[k]$ and $(I, f, \omega) \in \mG[k]$, we define their \emph{canonical lifting} into the corresponding $\RV$-categories:
\[
\bb L_{\Gamma}(I, f) = \mathbf{T}^k \times_{\rcsn} (I, f), \quad \mu_{\Gamma} \bb L_{\Gamma}(I, f, \omega) = \mathbf{T}^k \times_{\rcsn} (I, f, \omega), \quad \mu \bb L_{\Gamma}(I, f, \omega) = \mathbf{T}_{\mu}^k \times_{\rcsn} (I, f, \omega).
\]

\begin{cor}
These lifting maps induce canonical embeddings of graded semirings:
\[
\bb L_{\Gamma} : \gsk \Gamma[*] \fun \vtp[*], \quad \mu_{\Gamma} \bb L_{\Gamma} : \gsk \mG[*] \fun \mgvtp[*], \quad \mu \bb L_{\Gamma} : \gsk \mG[*] \fun \mvtp[*],
\]
which yield the canonical identifications:
\[
(\bb D \circ \bb L_{\Gamma})(\gsk \Gamma^{c}[*]) = (\mu_{\Gamma} \bb D \circ \mu_{\Gamma} \bb L_{\Gamma})(\gsk \mG^{c}[*]) = \gsk \RES^{c}[*] \quad \text{and} \quad (\mu \bb D \circ \mu \bb L_{\Gamma})(\gsk \mG^{c}[*]) = \gsk \mRES^{c}[*].
\]
\end{cor}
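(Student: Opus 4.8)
The plan is to deduce all the assertions from Proposition~\ref{D:iso} and Corollary~\ref{gsk:mu:gamma:inj} by a short computation; essentially nothing new is needed. Since $\bb D$, $\mu_{\Gamma}\bb D$ and $\mu\bb D$ are isomorphisms of graded semirings, it suffices to establish the statement for the three grade-preserving maps $\gsk\Gamma[*]\fun\vtp[*]$, $\gsk\mG[*]\fun\mgvtp[*]$ and $\gsk\mG[*]\fun\mvtp[*]$ which send the class of an object $\mathbf{I}$ in grade $k$ to $[\mathbf{T}^k]\otimes[\mathbf{I}]$, $[\mathbf{T}^k]\otimes[\mathbf{I}]$ and $[\mathbf{T}_{\mu}^k]\otimes[\mathbf{I}]$ respectively, because applying $\bb D_k$ (resp.\ $\mu_{\Gamma}\bb D_k$, $\mu\bb D_k$) carries $[\mathbf{T}^k]\otimes[\mathbf{I}]$ (resp.\ $[\mathbf{T}_{\mu}^k]\otimes[\mathbf{I}]$) to $[\mathbf{T}^k\times_{\rcsn}\mathbf{I}]$, which is the class of the object $\bb L_{\Gamma}(\mathbf{I})$ (resp.\ $\mu_{\Gamma}\bb L_{\Gamma}(\mathbf{I})$, $\mu\bb L_{\Gamma}(\mathbf{I})$) defined above. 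Each of the three maps is well defined and $\N$-linear by the $\N$-bilinearity of $\otimes$, is multiplicative because $\mathbf{T}^{k+l}=\mathbf{T}^k\times\mathbf{T}^l$ and $\mathbf{T}_{\mu}^{k+l}=\mathbf{T}_{\mu}^k\times\mathbf{T}_{\mu}^l$, and is grade-preserving by inspection; hence $\bb L_{\Gamma}$, $\mu_{\Gamma}\bb L_{\Gamma}$ and $\mu\bb L_{\Gamma}$ are homomorphisms of graded semirings.

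For injectivity I would apply Corollary~\ref{gsk:mu:gamma:inj}. The underlying $\RV$-set of both $\mathbf{T}^k=((\K^{\times})^k,\id)$ and $\mathbf{T}_{\mu}^k=((\K^{\times})^k,\id,(1,0))$ is $(\K^{\times})^k$, whose only $\vrv$-fibre is itself, of $\RV$-dimension $k$; hence $\mathbf{T}^k$ and $\mathbf{T}_{\mu}^k$ are $\Gamma$-regular. Corollary~\ref{gsk:mu:gamma:inj} then gives that $[\mathbf{T}^k]\otimes-$ is injective on $\gsk\Gamma[k]$ and on $\gsk\mG[k]$, and that $[\mathbf{T}_{\mu}^k]\otimes-$ is injective on $\gsk\mG[k]$. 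As the three maps above respect the grading, injectivity on each homogeneous component forces injectivity overall; together with the first paragraph this establishes the asserted canonical embeddings.

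It remains to identify the images of $\gsk\Gamma^{c}[*]$ and $\gsk\mG^{c}[*]$. Since every finite object of $\Gamma^{c}[k]$ is a disjoint union of one-point objects and any two one-point objects are isomorphic, $\gsk\Gamma^{c}[*]=\bigoplus_k\N e_k$ with $e_k$ the class of the one-point object $(\{()\},f_k)$, $f_k(())=\lbar 0\in\Gamma^k$. Because $\rcsn(\lbar 0)=\lbar 1$, the lift $\mathbf{T}^k\times_{\rcsn}(\{()\},f_k)$ is isomorphic to $\mathbf{T}^k$, so $(\bb D\circ\bb L_{\Gamma})(e_k)=[\mathbf{T}^k]$, the generator of $\gsk\RES^{c}[k]$; since $\bb D\circ\bb L_{\Gamma}$ is additive and grade-preserving, its image on $\gsk\Gamma^{c}[*]$ is $\bigoplus_k\N[\mathbf{T}^k]=\gsk\RES^{c}[*]$. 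The same computation, using the representatives with $f_k(())=\lbar 0$ and $\omega=0$ (so that $\omega_f=0$), shows that $\mu_{\Gamma}\bb L_{\Gamma}$ and $\mu\bb L_{\Gamma}$ send the generator of $\gsk\mG^{c}[k]$ to $[(\mathbf{T}^k,0)]$ and $[\mathbf{T}_{\mu}^k]$, generating the copies of $\gsk\RES^{c}[k]$ and $\gsk\mRES^{c}[k]$ inside $\gsk\mgRV[k]$ and $\gsk\mRV[k]$, which gives the remaining two identifications. There is no real obstacle here: the only mildly delicate point is that the $\Gamma$-Jacobian of the correspondence induced by $\id_{\mathbf{T}^k}\times_{\rcsn}e$ over a fibre $\lbar\gamma$ equals $-\Sigma f(\lbar\gamma)+\Sigma g(e(\lbar\gamma))$ (the $\K$-coordinates carrying $\vrv=0$), so that the $\mG[k]$-morphism identity $\omega_f=\omega'_g\circ e$ coincides with the $\mgRV[k]$- and $\mRV[k]$-morphism conditions; but this is exactly what underpins the isomorphism statements for $\mu_{\Gamma}\bb D_k$ and $\mu\bb D_k$ already invoked, so nothing genuinely new is involved. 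The real work is just bookkeeping the three parallel cases and the two flavours of volume form.
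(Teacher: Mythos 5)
Your proof is correct and follows exactly the route the paper intends (the corollary is stated without proof there): identify the induced maps with $[\mathbf{I}]\mapsto[\mathbf{T}^k]\otimes[\mathbf{I}]$ (resp.\ $[\mathbf{T}_{\mu}^k]\otimes[\mathbf{I}]$), get injectivity from Corollary~\ref{gsk:mu:gamma:inj} applied to the $\Gamma$-regular set $(\K^{\times})^k$, and compute the image of the one-point generators of $\gsk\Gamma^{c}[k]$ and $\gsk\mG^{c}[k]$, where the morphism conditions reduce to $\omega_f=0$ via the $\Gamma$- and $\K$-Jacobians exactly as you note. No gaps.
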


There is an alternative description of the semiring $\gsk \mG[*]$. For that, we introduce the following notation:

\begin{nota}\label{nota:par:exp}
Let $P$ be a subset of additional parameters. If $\mdl C$ is a category of $P$-definable subsets then we shall emphasize this by writing $\mdl C_P$. Let $A$, $B$ be two subsets. We write $[A] =_P [B]$ if $A$, $B$ are isomorphic objects in $\mdl C_P$.
\end{nota}

\begin{defn}\label{def:fn:ring}
A function $f : \Gamma \fun \gsk \Gamma[k]$ is \emph{definable} if there is a definable subset $I \sub \Gamma \times \Gamma^{m+k}$ such that, for all $\alpha \in \pr_1(I)$, $\fib(I, \alpha)$ encodes naturally a representative of $f(\alpha) \in \gsk \Gamma[k]_{\alpha}$. The subset $I$ is considered as a \emph{representative} of $f$. With pointwise addition, such definable functions form a semigroup $\fn(\Gamma, \gsk \Gamma[k])$. Given another definable function $g : \Gamma \fun \gsk \Gamma[l]$ with a representative $J$, it is routine to check that their convolution product is well-defined as follows:
\[
(f * g) (\gamma) = \biggl [ \bigcup_{\alpha + \beta = \gamma}\fib(I, \alpha) \times \fib(J, \beta) \biggr ] \in \gsk \Gamma[k+l]_{\gamma}.
\]
This makes $\fn(\Gamma, \gsk \Gamma[*]) = \bigoplus_k \fn(\Gamma, \gsk \Gamma[k])$ a graded semiring.
\end{defn}

\begin{lem}
Each $\fn(\Gamma, \gsk \Gamma[k])$ is canonically isomorphic to $\gsk \mG[k]$ and hence $\fn(\Gamma, \gsk \Gamma[*])$ is canonically isomorphic to $\gsk \mG[*]$.
\end{lem}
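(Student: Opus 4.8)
The plan is to write down a canonical graded semiring homomorphism $\Phi : \gsk \mG[*] \fun \fn(\Gamma, \gsk \Gamma[*])$ and to check that it restricts to an isomorphism in each degree $k$; that degreewise statement is exactly what the lemma asks for, and the graded version follows at once. Given an object $(I, f, \omega) \in \mG[k]$, recall from Definition~\ref{def:Ga:cat} the function $\omega_f : I \fun \Gamma$, $\lbar \gamma \efun \Sigma f(\lbar \gamma) + \omega(\lbar \gamma)$. For $\gamma \in \Gamma$ put $I_\gamma = \omega_f^{-1}(\gamma)$; this is a $\gamma$-definable subset of finite $\Gamma$-dimension, so $(I_\gamma, f \rest I_\gamma)$ is an object of $\Gamma[k]_\gamma$. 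I would define
\[
\Phi[(I, f, \omega)] : \gamma \efun [(I_\gamma, f \rest I_\gamma)] \in \gsk \Gamma[k]_\gamma ,
\]
which is a definable function in the sense of Definition~\ref{def:fn:ring}: writing $I \sub \Gamma_\infty^m$, the definable set $\{(\gamma, \lbar \delta, f(\lbar \delta)) : \gamma \in \omega_f(I), \lbar \delta \in I_\gamma\}$ is a representative, its fiber over $\gamma$ being the graph of $f \rest I_\gamma$. No choices enter this definition, so $\Phi$ is canonical. For the inverse direction, given $h \in \fn(\Gamma, \gsk \Gamma[k])$ with representative $I$, so that $\fib(I, \alpha)$ is the graph of an object $(J_\alpha, h_\alpha) \in \Gamma[k]_\alpha$ for $\alpha \in \pr_1(I)$, I would take $I_0 = \{(\alpha, \lbar \delta) : \alpha \in \pr_1(I), \lbar \delta \in J_\alpha\}$, $f_0(\alpha, \lbar \delta) = h_\alpha(\lbar \delta)$, and $\omega_0(\alpha, \lbar \delta) = \alpha - \Sigma h_\alpha(\lbar \delta)$, all definable, as the natural candidate preimage.

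First I would check that $\Phi$ descends to isomorphism classes and is a homomorphism of graded semirings. If $F : (I, f, \omega) \fun (I', f', \omega')$ is an $\mG[k]$-morphism then $\omega_f = \omega'_{f'} \circ F$ by definition, so $F$ restricts to a definable bijection $I_\gamma \fun I'_\gamma$ for each $\gamma$, which is automatically a $\Gamma[k]_\gamma$-morphism; hence $[(I_\gamma, f \rest I_\gamma)] =_\gamma [(I'_\gamma, f' \rest I'_\gamma)]$, so $\Phi$ is well defined on classes. Additivity is immediate from $(I \uplus I')_\gamma = I_\gamma \uplus I'_\gamma$. For multiplicativity, the product $(I, f, \omega) \times (J, g, \sigma) \in \mG[k+l]$ has underlying set $I \times J$, presentation $(\lbar \gamma, \lbar \delta) \efun (f(\lbar \gamma), g(\lbar \delta))$, and volume form $(\lbar \gamma, \lbar \delta) \efun \omega(\lbar \gamma) + \sigma(\lbar \delta)$, so its associated function $\tau : I \times J \fun \Gamma$ satisfies $\tau(\lbar \gamma, \lbar \delta) = \omega_f(\lbar \gamma) + \sigma_g(\lbar \delta)$; therefore $\tau^{-1}(\gamma) = \bigcup_{\alpha + \beta = \gamma} I_\alpha \times J_\beta$, which is precisely the value at $\gamma$ of the convolution $\Phi[(I,f,\omega)] * \Phi[(J,g,\sigma)]$.

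Then I would establish bijectivity. For surjectivity, with $(I_0, f_0, \omega_0)$ as above one computes that its associated function is the first-coordinate projection $(\alpha, \lbar \delta) \efun \alpha$, so its level set over $\gamma$ is $\{\gamma\} \times J_\gamma$ with restricted presentation $h_\gamma$, whence $\Phi[(I_0, f_0, \omega_0)] = h$. For injectivity, suppose $\Phi[(I, f, \omega)] = \Phi[(I', f', \omega')]$, i.e.\ $[(I_\gamma, f \rest I_\gamma)] =_\gamma [(I'_\gamma, f' \rest I'_\gamma)]$ for all $\gamma$. Choosing a $\gamma$-definable bijection $F_\gamma : I_\gamma \fun I'_\gamma$ for each $\gamma$ and making the choice uniform by compactness (the routine bookkeeping familiar from Lemmas~\ref{resg:decom} and \ref{shift:K:csn:def}), the union $F = \bigcup_\gamma F_\gamma$ is an $S$-definable bijection $I \fun I'$; since $\lbar \gamma \in I_{\omega_f(\lbar \gamma)}$ forces $F(\lbar \gamma) \in I'_{\omega_f(\lbar \gamma)}$, we get $\omega'_{f'} \circ F = \omega_f$, so $F$ is an $\mG[k]$-morphism and $[(I, f, \omega)] = [(I', f', \omega')]$. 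I expect the \emph{main obstacle} to be exactly this patching step — assembling the fiberwise bijections into a single definable morphism while checking that it respects the grading by $\omega_f$ — together with pinning down precisely and uniformly what it means for $\fib(I, \alpha)$ to ``encode naturally'' a representative; both are of the same routine-but-delicate flavour as the compactness arguments earlier in this section.
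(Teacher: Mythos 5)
Your proposal is correct and follows essentially the same route as the paper: the paper defines exactly the same two maps (the level sets of $\omega_f$ with the restricted presentation in one direction, and $U_I = I$ with $f_I$ the projection and $\omega_I(\gamma, \lbar\alpha, \lbar\beta) = \gamma - \Sigma\lbar\beta$ in the other) and declares the remaining verification routine. You have merely written out those routine checks — additivity, compatibility with convolution, and the compactness patching for injectivity — in full.
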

\begin{proof}
For $(I, f, \omega) \in \mG[k]$, set $\gamma \efun [(\omega_f^{-1}(\gamma), f \rest \omega_f^{-1}(\gamma))]$ for $\gamma \in \omega_f(I)$, which is a definable function in $\fn(\Gamma, \gsk \Gamma[k])$. Conversely, for any definable function $f : \Gamma \fun \gsk \Gamma[k]$ with a representative $I$, let $U_I = I$, $f_I : U_I \fun \Gamma^k$ be the projection to the last $k$ coordinates, and $\omega_{I} : U_I \fun \Gamma$ be the function given by $(\gamma, \lbar \alpha, \lbar \beta) \efun \gamma - \Sigma \lbar \beta$. Then $(U_I, f_I, \omega_{I}) \in \mG[k]$. It is routine to check that these maps induce isomorphisms as desired.
\end{proof}

There are two Euler characteristics $\chi_g$, $\chi_b$ that can be associated to the $\Gamma$-sort (see \cite[\S 4.2]{dries:1998}, \cite{kage:fujita:2006}, and also~\cite[\S 9]{hrushovski:kazhdan:integration:vf}). They are distinguished by $\chi_g((0, \infty)) = -1$ and $\chi_b((0, \infty)) = 0$. We shall denote both of them by $\chi$ if no distinction is needed. Using these and the groupifications of the results above, we can obtain various retractions to the Grothendieck rings of the $\RES$-categories.

\begin{lem}\label{gam:euler}
There are two homomorphisms $\mdl E_g, \mdl E_b : \ggk \Gamma[*] \fun \Z[\tau]$ and two homomorphisms $ \mu\mdl E_g, \mu \mdl E_b : \ggk \mG[*] \fun \Z[\tau]$ of graded rings.
\end{lem}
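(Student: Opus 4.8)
The plan is to obtain $\mdl E_\star$ (with $\star \in \{g, b\}$) directly from the o-minimal Euler characteristic $\chi_\star$ of the $\Gamma$-sort, by sending the class of a $\Gamma[k]$-object to $\chi_\star$ of its underlying set times $\tau^k$, and then to define $\mu\mdl E_\star$ by composing with the forgetful functor that discards the volume form.

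First I would set $\mdl E_\star(I, f) = \chi_\star(I)\,\tau^k$ for $(I, f) \in \Gamma[k]$, where $\chi_\star$ is understood as the finitely additive extension of the o-minimal Euler characteristic to $\Gamma_\infty$-definable sets (stratify $I$ by which coordinates equal $\infty$ and sum the contributions of the $\Gamma$-definable strata). Since a morphism of $\Gamma[k]$ is by definition merely a definable bijection between the underlying sets, and $\chi_\star$ is invariant under definable bijections in an o-minimal structure (see \cite{dries:1998, kage:fujita:2006}), the value $\chi_\star(I)\tau^k$ depends only on the isomorphism class; the grading index $k$ is patently preserved by morphisms of $\Gamma[k]$. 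Finite additivity of $\chi_\star$ under disjoint unions makes the assignment additive, so it descends to a semigroup homomorphism $\gsk \Gamma[k] \fun (\Z\tau^k, +)$; assembling over $k$ yields $\gsk \Gamma[*] \fun \Z[\tau]$, which extends to the groupification $\ggk \Gamma[*]$ because $\Z[\tau]$ is already a ring.

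Next I would verify multiplicativity and the grading. The product of $(I, f) \in \Gamma[k]$ and $(J, g) \in \Gamma[l]$ in $\Gamma[*]$ has underlying set $I \times J$ and lands in $\Gamma[k + l]$; multiplicativity of the o-minimal Euler characteristic, $\chi_\star(I \times J) = \chi_\star(I)\chi_\star(J)$, together with $\tau^k\tau^l = \tau^{k+l}$, gives $\mdl E_\star([(I,f)]\cdot[(J,g)]) = \mdl E_\star([(I,f)])\,\mdl E_\star([(J,g)])$, and the multiplicative unit of $\gsk \Gamma[*]$ is the class of a point in $\Gamma[0]$, with $\chi_\star = 1$. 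Hence $\mdl E_\star$ is a homomorphism of graded rings carrying $\ggk \Gamma[k]$ into $\Z\tau^k$. For the volume-form version, discarding $\omega$ gives a functor $\mG[k] \fun \Gamma[k]$ — a $\mG[k]$-morphism is in particular a $\Gamma[k]$-morphism — that respects disjoint unions and products, hence induces a graded ring homomorphism $\ggk \mG[*] \fun \ggk \Gamma[*]$; then $\mu\mdl E_\star$ is its composite with $\mdl E_\star$. (Equivalently, set $\mu\mdl E_\star(I, f, \omega) = \chi_\star(I)\tau^k$ and rerun the argument of the preceding paragraph verbatim.)

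I expect no genuine obstacle: the lemma merely repackages the standard facts that the o-minimal Euler characteristic is a multiplicative, finitely additive, definable-bijection invariant, applied in the o-minimal $\Gamma$-sort. The only points requiring care are confirming that morphisms in the categories $\Gamma[k]$ and $\mG[k]$ are nothing more than definable bijections of the underlying sets — so that invariance of $\chi_\star$ is immediate and the volume-form data is irrelevant to $\mdl E_\star$ — and the minor bookkeeping of extending $\chi_\star$ to $\Gamma_\infty$-definable sets and matching the Cartesian product of objects with addition of grading degrees.
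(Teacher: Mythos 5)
Your proposal is correct and is essentially the paper's own proof: the paper also defines $\mdl E_k(I,f) = \chi(I)$ and $\mu\mdl E_k(I,f,\omega) = \chi(I)$ and appeals to the standard additivity, multiplicativity, and definable-bijection invariance of the o-minimal Euler characteristic, noting (as you do) that $\mG[k]$-morphisms are in particular $\Gamma[k]$-morphisms. Your extra care about extending $\chi$ to $\Gamma_\infty$-coordinates and matching products with the grading is sound bookkeeping that the paper leaves implicit.
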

\begin{proof}
For $(I, f) \in \Gamma[k]$ and $(I, f, \omega) \in \mG[k]$ we simply set $\mdl E_k(I, f) = \chi(I)$ and $\mu \mdl E_k(I, f, \omega) = \chi(I)$. Clearly these maps induce graded ring homomorphisms $\mdl E = \bigoplus_k \mdl E_k$ and $\mu \mdl E = \bigoplus_k \mu \mdl E_k$.
\end{proof}

\begin{nota}\label{nota:RV:ele}
Let $\RV^{> 1} = \rv(\MM)$ and $(\RV^{\times})^{> 1} = \rv(\MM \mi \{0\})$. We introduce the following shorthand for some elements of the Grothendieck semigroups and their groupifications (and closely related constructions):
\begin{gather*}
[1]_0 = [\{1\}] \in \gsk \RES[0], \quad [1]_1 = [(\{1\}, \id)] \in \gsk \RES[1], \quad [1_{\mu}]_1 = [(\{1\}, \id, \id)] \in \gsk \mRES[1],\\
[0]_0 = [\{0\}] \in \gsk \Gamma[0], \quad [0]_1 = [(\{0\}, \id)] \in \gsk \Gamma[1], \quad [0_{\mu}]_1 = [(\{0\}, \id, \id)] \in \gsk \mG[1],\\
[\mathbf{H}]_1 = [((0, \infty), \id)] \in \gsk \Gamma[1], \quad [\mathbf{H}_{\mu}]_1 = [((0, \infty), \id, 0)] \in \gsk \mG[1],\\
\mathbf{j} = [((\RV^{\times})^{> 1}, \id)] -  [1]_1 \in \ggk \RV[1], \quad \mathbf{j}_{\mu_{\Gamma}} = [((\RV^{\times})^{> 1}, \id, 0)] -  [1]_1 \in \ggk \mgRV[1], \\
\mathbf{j}_{\mu} = [((\RV^{\times})^{> 1}, \id, (1, 0))] -  [1_{\mu}]_1 \in \ggk \mRV[1],\\
\mathbf{A} = [\mathbf{T}^1] +  [1]_1 \in \ggk \RES[1], \quad \mathbf{A}_{\mu} = [\mathbf{T}_{\mu}^1] +  [1_{\mu}]_1 \in \ggk \mRES[1].
\end{gather*}
As in~\cite{hrushovski:kazhdan:integration:vf}, the elements $[1]_0 + \mathbf{j} \in \ggk \RV[*, \cdot]$, $\mathbf{j}_{\mu_{\Gamma}} \in \ggk \mgRV[*]$, and $\mathbf{j}_{\mu} \in \ggk \mRV[*]$ are instrumental in the discussions below.
\end{nota}

\begin{prop}\label{prop:eu:retr}
There are two ring homomorphisms
\[
\bb E_g : \ggk \RV[*, \cdot] \fun \ggk \RES[*, \cdot][\mathbf{A}^{-1}] \quad \text{and} \quad \bb E_b : \ggk \RV[*, \cdot] \fun \ggk \RES[*, \cdot][[1]^{-1}_1]
\]
such that
\begin{enumerate}
  \item the ranges of $\bb E_g$, $\bb E_b$ are precisely the zeroth graded pieces of the targets,
  \item $\bb E_g([1]_0 + \mathbf{j}) = \bb E_b([1]_0 + \mathbf{j}) = 0$,
  \item for $x \in \ggk \RES[k, \cdot]$, $\bb E_g (x) = x \mathbf{A}^{-k}$ and $\bb E_b(x) = x [1]_1^{-k}$.
\end{enumerate}
With volume forms, we have two pairs of homomorphisms of graded rings:
\begin{gather*}
\mu_{\Gamma}\bb E_g: \ggk \mgRV[*] \fun \ggk \RES[*, \cdot]/ (\mathbf{A}) \quad \text{and} \quad \mu_{\Gamma} \bb E_b: \ggk \mgRV[*] \fun \ggk \RES[*, \cdot] / ([1]_1)\\
\mu\bb E_g: \ggk \mRV[*] \fun \ggk \mRES[*] / (\mathbf{A}_{\mu}) \quad \text{and} \quad \mu \bb E_b: \ggk \mRV[*] \fun \ggk \mRES[*] / ([1_{\mu}]_1)
\end{gather*}
such that their restrictions to $\ggk \RES[*, \cdot]$, $\ggk \mRES[*] $ are the natural projections and
\[
\mu_{\Gamma}\bb E_g (\mathbf{j}_{\mu_{\Gamma}})  = \mu_{\Gamma}\bb E_b (\mathbf{j}_{\mu_{\Gamma}}) = 0, \quad  \mu \bb E_g (\mathbf{j}_{\mu})  = \mu \bb E_b (\mathbf{j}_{\mu}) = 0.
\]
\end{prop}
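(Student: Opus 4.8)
The plan is to obtain all six homomorphisms by transporting the Euler characteristics $\chi_g,\chi_b$ of the $\Gamma$- and $\mG$-categories (Lemma~\ref{gam:euler}) through the graded semiring isomorphisms $\bb D$, $\mu_{\Gamma}\bb D$, $\mu\bb D$ of Proposition~\ref{D:iso} and its corollary. Concretely, for the non-refined $g$-case I would first check that $([(U,f)],[(I,g)])\mapsto\chi_g(I)\,\mathbf{A}^{-k}[(U,f)]$ is an $\N$-bilinear map $\gsk\RES[k,\cdot]\times\gsk\Gamma[k]\fun\ggk\RES[*,\cdot][\mathbf{A}^{-1}]$ (bilinearity comes from additivity of $\chi_g$ on disjoint unions), hence factors through $\vtp[k]$; summing over $k$ gives a semiring homomorphism out of $\vtp[*]$, which after groupification and precomposition with $\bb D^{-1}$ yields $\bb E_g$. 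It is multiplicative and unital because $\chi_g$ is multiplicative on Cartesian products and, in $\vtp[*]$, the product carries $\vtp[k]\times\vtp[l]$ into $\vtp[k+l]$ with the $\RES$-factor landing in degree $k+l$, which the factors $\mathbf{A}^{-k}$, $\mathbf{A}^{-l}$ absorb. Replacing $\chi_g,\mathbf{A}^{-k}$ by $\chi_b,[1]_1^{-k}$ gives $\bb E_b$. For the volume-form versions one uses instead $([(U,f)],[(I,g,\omega)])\mapsto\chi_g(I)\,\overline{[(U,f)]}$ into $\ggk\RES[*,\cdot]/(\mathbf{A})$ (resp.\ $\chi_b$ into $\ggk\RES[*,\cdot]/([1]_1)$), via $\mu_{\Gamma}\bb D$, and $([(U,f,\omega)],[(I,g,\sigma)])\mapsto\chi_g(I)\,\overline{[(U,f,\omega)]}$ into $\ggk\mRES[*]/(\mathbf{A}_\mu)$ (resp.\ $/([1_\mu]_1)$), via $\mu\bb D$; here the volume form on a Cartesian product does not affect $\chi$ of the underlying set, these assignments are degree-preserving, and so they give homomorphisms of graded rings.

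For item~(3) and the assertion that the restrictions to $\ggk\RES[*,\cdot]$, $\ggk\mRES[*]$ are the stated natural maps, the point is that $\bb D^{-1}$ identifies $[(U,f)]\in\gsk\RES[k,\cdot]\sub\gsk\RV[k,\cdot]$ with $[(U,f)]\otimes[\mathbf{e}_k]$, where $\mathbf{e}_k=(\{0\},\lbar 0)\in\Gamma[k]$ is the point object: indeed $(U,f)\times_{\rcsn}\mathbf{e}_k\cong(U,f)$ since $\rcsn(0)=1$, and $\chi$ of a point is $1$. Hence $\bb E_g([(U,f)])=\mathbf{A}^{-k}[(U,f)]$, $\bb E_b([(U,f)])=[1]_1^{-k}[(U,f)]$, and in the volume-form cases one gets the projections $\overline{[(U,f)]}$, $\overline{[(U,f,\omega)]}$, using the corresponding point object of $\mG[k]$. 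The same observation shows the image of $\bb E_g$ (resp.\ $\bb E_b$) lies in the zeroth graded piece of the target, while item~(3) exhibits every element $x\mathbf{A}^{-k}$ (resp.\ $x[1]_1^{-k}$, $x\in\ggk\RES[k,\cdot]$) of that piece as a value; this is item~(1).

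For item~(2) the one genuine computation is the decomposition of $\mathbf{j}$, $\mathbf{j}_{\mu_{\Gamma}}$, $\mathbf{j}_\mu$ under the respective inverses. The key fact is that $((\RV^{\times})^{> 1},\id)\cong\mathbf{T}^1\times_{\rcsn}((0,\infty),\id)$ in $\RV[1,\cdot]$ via $(t,\rcsn(\gamma))\mapsto t\,\rcsn(\gamma)$ — each fibre $\vrv^{-1}(\gamma)$ with $\gamma>0$ being a $\K^{\times}$-torsor with twistback $\K^{\times}$, a special case of the twistoid analysis of Lemma~\ref{shift:K:csn:def} — and likewise with the constant volume form $0$. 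Granting this, $\bb D^{-1}(\mathbf{j})=[\mathbf{T}^1]\otimes[\mathbf{H}]_1-[1]_1\otimes[0]_1$, so, since $\chi_g((0,\infty))=-1$ and $\chi_g(\{0\})=1$, $\bb E_g(\mathbf{j})=-\mathbf{A}^{-1}[\mathbf{T}^1]-\mathbf{A}^{-1}[1]_1=-\mathbf{A}^{-1}([\mathbf{T}^1]+[1]_1)=-\mathbf{A}^{-1}\mathbf{A}=-1$, while $\bb E_g([1]_0)=1$, giving $\bb E_g([1]_0+\mathbf{j})=0$; for $\bb E_b$, $\chi_b((0,\infty))=0$ gives $\bb E_b(\mathbf{j})=-[1]_1^{-1}[1]_1=-1$ and again $\bb E_b([1]_0+\mathbf{j})=0$. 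In the volume-form cases the first summand of $\mathbf{j}_{\mu_{\Gamma}}$ contributes $-\overline{[\mathbf{T}^1]}$ and $[1]_1$ contributes $-\overline{[1]_1}$ under $\mu_{\Gamma}\bb E_g$, so $\mu_{\Gamma}\bb E_g(\mathbf{j}_{\mu_{\Gamma}})=-\overline{[\mathbf{T}^1]+[1]_1}=-\overline{\mathbf{A}}=0$; under $\mu_{\Gamma}\bb E_b$ the first summand vanishes and $\overline{[1]_1}=0$ in $\ggk\RES[*,\cdot]/([1]_1)$, so $\mu_{\Gamma}\bb E_b(\mathbf{j}_{\mu_{\Gamma}})=0$; the identical computation with $\mathbf{T}_\mu^1$, $[1_\mu]_1$, $\mathbf{A}_\mu$ disposes of $\mu\bb E_g(\mathbf{j}_\mu)$ and $\mu\bb E_b(\mathbf{j}_\mu)$. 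I expect the decomposition of $((\RV^{\times})^{> 1},\id)$ and the bookkeeping of presentations and volume forms through $\times_{\rcsn}$ to be the only nonformal point; everything else is manipulation of isomorphisms already in hand.
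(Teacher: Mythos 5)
Your proposal is correct and follows essentially the same route as the paper: transport $\chi_g,\chi_b$ through $\bb D^{-1}$ (resp.\ $\mu_{\Gamma}\bb D^{-1}$, $\mu\bb D^{-1}$), using the decomposition $[((\RV^{\times})^{>1},\id)]=[\mathbf{T}^1\times_{\rcsn}((0,\infty),\id)]$ to compute the image of $\mathbf{j}$ and its variants. The only (immaterial) difference is presentational — the paper builds the maps level-by-level as homomorphisms of colimit systems $\ggk\RES[n,\cdot]\xfun{\cdot\mathbf{A}}\ggk\RES[n+1,\cdot]$ and then identifies the colimit with the zeroth graded piece of the localization, whereas you map directly into the localization — and you additionally spell out the volume-form cases that the paper leaves to the reader.
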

\begin{proof}
For each $n$, let $E_{g, n}: \bigoplus_{i \leq n} \gvtp[i] \fun \ggk \RES[n, \cdot]$ and $E_{b, n}: \bigoplus_{i \leq n} \gvtp[i] \fun \ggk \RES[n, \cdot]$ be the surjective group homomorphisms given respectively by
\[
x \otimes y \efun \mdl E_{g, k}(y) x \mathbf{A}^{n-k} \quad \text{and} \quad x \otimes y \efun \mdl E_{b, k}(y) x [1]_1^{n-k},
\]
where $x \in \ggk \RES[k, \cdot]$, $y \in \ggk \Gamma[k]$, and $\mdl E_{g, k}$, $\mdl E_{b, k}$ are defined with respect to $\chi_g$, $\chi_b$ as in Lemma~\ref{gam:euler}. For each $n > 0$, we have
\begin{gather*}
(E_{g, n} \circ \bb D^{-1})([1]_0 + \mathbf{j}) = E_{g, n}([1]_0 \otimes [0]_0 + [\mathbf{T}^1] \otimes [\mathbf{H}]_1 - [1]_1 \otimes [0]_1) = \mathbf{A}^{n} - [\mathbf{T}^1] \mathbf{A}^{n-1} - [1]_1 \mathbf{A}^{n-1} = 0, \\
(E_{b, n} \circ \bb D^{-1})([1]_0 + \mathbf{j}) = E_{b, n}([1]_0 \otimes [0]_0 + [\mathbf{T}^1] \otimes [\mathbf{H}]_1 - [1]_1 \otimes [0]_1) = [1]_1^{n} + 0 - [1]_1 [1]_1^{n-1} = 0.
\end{gather*}
The group homomorphisms $g_n, b_n : \ggk \RES[n, \cdot] \fun \ggk \RES[n+1, \cdot]$ given respectively by $x \efun x \mathbf{A}$, $x \efun x [1]_1$ determine two colimit systems and the group homomorphisms $\bb E_{g, n} = E_{g, n} \circ \bb D^{-1}$, $\bb E_{b, n} = E_{b, n} \circ \bb D^{-1}$ determine two homomorphisms of colimit systems. Hence we have two surjective ring homomorphisms:
\[
\colim{n} \bb E_{g, n} : \ggk \RV[*, \cdot] \fun \colim{g_n} \ggk \RES[n, \cdot] \quad \text{and} \quad \colim{n} \bb E_{b, n} : \ggk \RV[*, \cdot] \fun \colim{b_n} \ggk \RES[n, \cdot].
\]
These yield the desired homomorphisms since the two colimits are respectively isomorphic to the zeroth graded pieces of $\ggk \RES[*, \cdot][\mathbf{A}^{-1}]$ and $\ggk \RES[*, \cdot][[1]_1^{-1}]$.

The cases with volumes forms are not very different and are left to the reader.
\end{proof}

Note that these homomorphisms are slightly different from the ones constructed in \cite[Theorem~10.5, Theorem~10.11]{hrushovski:kazhdan:integration:vf}.

\section{The Grothendieck semirings of $\VF$ and special bijections}\label{section:dim}

Let $A \sub \VF^n \times \RV^m$ be a definable subset. Recall that if $A$ equals its $\RV$-hull $\RVH(A)$ (see \cite[Definition~4.21]{Yin:QE:ACVF:min}) then $A$ is an \emph{$\RV$-pullback}. An $\rv$-polydisc $\gp \sub \VF^n \times \RV^m$ is \emph{degenerate} if $\dim_{\VF}(\gp) < n$. This happens if and only if some $\VF$-coordinate of $\gp$ is $0$, if there is one at all. An $\RV$-pullback is \emph{degenerate} if it contains a degenerate $\rv$-polydisc and is \emph{strictly degenerate} if it only contains degenerate $\rv$-polydiscs.

Let $\psi$ be a quantifier-free formula that defines $A$. By inspection of the complexity of the occurring $\VF$-terms of $\psi$, we see that there is a definable function $\pi : A \fun \RV^l$ and an $\lan{RV}$-formula $\phi$ such that each $\pi^{-1}(\lbar t)$ is contained in an $\rv$-polydisc and is defined by the formula $\phi(\sn(\lbar t))$. Hence the following definition is not empty.

\begin{defn}\label{defn:lrv}
An \emph{$\RV$-partition} of $A$ is a definable function $\pi : A \fun \RV^l$ such that, for every $\lbar t \in \ran(\pi)$, the fiber $\pi^{-1}(\lbar t)$ is $\sn(\lbar t)$-$\lan{RV}$-definable. We do not explicitly require that $\pi^{-1}(\lbar t)$ is contained in an $\rv$-polydisc, but this can always be achieved if needed.
\end{defn}

Similarly, by syntactical inspection, the twistoid condition (see Hypothesis~\ref{hyp:jac}), and compactness, the following specialization of the above definition is not empty either.

\begin{defn}
If $A$ is $\mdl L_{\wt{\bb T}}$-definable then a \emph{$\Gamma$-partition} of $A$ is an $\mdl L_{\wt{\bb T}}$-definable function $\pi : A \fun \Gamma_{\infty}^l$ such that each fiber $\pi^{-1}(\lbar \gamma)$ is contained in a coset of $(\rv^{-1}(\K^{\times}))^n \times (\K^{\times})^m$ and is $\csn(\lbar \gamma)$-$\lan{RV}$-definable.
\end{defn}

Note that if $A$ has no $\VF$-coordinates or is an $\RV$-pullback then it is $\mdl L_{\wt{\bb T}}$-definable and hence admits a $\Gamma$-partition.

If $A$ is $\lan{RV}$-definable then we may speak of the $\VF$-dimension of $A$ (see~\cite[Definition~4.1]{Yin:special:trans}). We may extend this notion of dimension to $\RV$-partitions, which is parallel to how $\RV$-dimension is extended to $\Gamma$-partitions above:

\begin{defn}
Let $\pi$ be an $\RV$-partition of $A$. The \emph{$\VF$-dimension} of $\pi$, denoted by $\dim_{\VF} (\pi)$, is the number $\max \{\dim_{\VF} (\pi^{-1}(\lbar t)) : \lbar t \in \ran(\pi) \}$.
\end{defn}

Let $B \sub \VF$ be an arbitrary subset. For any $(\lbar a, \lbar t) \in A$ let $\td_B(\lbar a, \lbar t)$ be the transcendental degree of $B^{\alg}(\lbar a)$ over $B^{\alg}$ (see Notation~\ref{nota:ac}). Let $\td_B(A) = \max \{\td_B(\lbar a, \lbar t) : (\lbar a, \lbar t) \in A \}$. If $B = \0$ then we omit it from the expression.

\begin{lem}\label{VF:dim:conserv}
Suppose that $A$ is $\lan{RV}$-definable. Then $\td(A) = \td_{\sn(\RV)}(A)$ and consequently $\dim_{\VF}(A) = \dim_{\VF} (\pi)$ for any $\RV$-partition $\pi$ of $A$.
\end{lem}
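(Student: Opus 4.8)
The plan is to derive both assertions from two ingredients: the basic dimension theory of $\lan{RV}$-definable sets in $\ACVF$ (so that the $\VF$-dimension of an $\lan{RV}$-definable set over a $\VF$-generated base equals the maximal transcendence degree of its points over that base, \cite[\S 4]{Yin:special:trans}), and the structural description of $\acl^{1}(\RV)$ in Remark~\ref{rem:monster:imme}. The inequality $\td_{\sn(\RV)}(A)\le\td(A)$ is free, since passing to a larger base field cannot raise transcendence degree; the whole content is the reverse inequality, i.e.\ producing \emph{one} point of $A$ that is $\VF$-generic over the field $(\VF(S)\cup\sn(\RV))^{\alg}=\VF(\acl^{1}(\RV))$.

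First I would set $d=\td(A)=\dim_{\VF}(A)$ and fix a coordinate projection $\rho$ onto $d$ of the $\VF$-coordinates with $\dim_{\VF}(\rho(A))=d$, so that $\rho(A)\sub\VF^{d}$ is $\lan{RV}$-definable over $S$ of full $\VF$-dimension; a point of $\rho(A)$ that is $\VF$-generic over $\VF(\acl^{1}(\RV))$ lifts back through $\rho$ to the desired point of $A$. Writing $M_{0}=\acl^{1}(\RV)=\acl^{\bb T}(\RV)$, which by Remark~\ref{rem:monster:imme} is a model of $\bb T(S)$ with $\VF(M_{0})=(\VF(S)\cup\sn(\RV))^{\alg}$, $\RV(M_{0})=\RV(\gC)$, and whose maximal completion is the valued field of $\gC$, I would consider the partial type over $M_{0}$
\[
\Phi(\lbar X)=\{\lbar X\in\rho(A)\}\cup\{q(\lbar X)\ne0 : 0\ne q\in\VF(M_{0})[\lbar X]\}.
\]
For finite satisfiability, a finite fragment of $\Phi$ has polynomial coefficients in $\VF(\acl^{1}(C))=(\VF(S)\cup\sn(C))^{\alg}$ for some finite $C\sub\RV$, and $\acl^{1}(C)$ is a \emph{small} model of $\ACVF(S)$ over which $\rho(A)$ is defined; since $\dim_{\VF}(\rho(A))=d$, the dimension theory of \cite[\S 4]{Yin:special:trans}, applied inside this small model, yields a point of $\rho(A)$ of $\VF$-transcendence degree $d$ over $\VF(\acl^{1}(C))$, which satisfies that fragment. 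Hence $\Phi$ is consistent; since $M_{0}$ and $\gC$ have the same value group and residue field, any $\VF$-generated extension of $M_{0}$ realizing $\Phi$ is immediate over $M_{0}$, so by Remark~\ref{rem:monster:imme} $\Phi$ is realized in $\gC$, which gives $\td_{\sn(\RV)}(A)\ge d$.

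For the consequence, by Definition~\ref{defn:lrv} each fiber $\pi^{-1}(\lbar t)$ is $\lan{RV}$-definable over the $\VF$-generated set $\dcl(S\cup\{\sn(\lbar t)\})$, so $\dim_{\VF}(\pi^{-1}(\lbar t))=\td_{\sn(\lbar t)}(\pi^{-1}(\lbar t))$ and, as already noted, $\dim_{\VF}(A)=\td(A)$, both by \cite[\S 4]{Yin:special:trans}. Since $\pi^{-1}(\lbar t)\sub A$, monotonicity of $\dim_{\VF}$ gives $\dim_{\VF}(\pi)\le\dim_{\VF}(A)$. Conversely, by the equality just proved I would pick $(\lbar a,\lbar b)\in A$ with $\td_{\sn(\RV)}(\lbar a,\lbar b)=d=\dim_{\VF}(A)$, put $\lbar s=\pi(\lbar a,\lbar b)$, and use $\sn(\lbar s)\sub\sn(\RV)$ (so that transcendence degree over the smaller field $\sn(\lbar s)^{\alg}=(\VF(S)\cup\sn(\lbar s))^{\alg}$, see Notation~\ref{nota:ac}, is at least that over $(\VF(S)\cup\sn(\RV))^{\alg}$) to get
\[
\dim_{\VF}(\pi^{-1}(\lbar s))=\td_{\sn(\lbar s)}(\pi^{-1}(\lbar s))\ge\td_{\sn(\lbar s)}(\lbar a,\lbar b)\ge\td_{\sn(\RV)}(\lbar a,\lbar b)=d,
\]
whence $\dim_{\VF}(\pi)\ge\dim_{\VF}(A)$, and equality follows.

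The hard part will be the realization step in the second paragraph: the base $M_{0}=\acl^{1}(\RV)$ is not small, so one cannot simply invoke saturation of $\gC$ to realize $\Phi$. What makes it go through is precisely the maximal-completeness of the valued field of $\gC$ over $M_{0}$ — equivalently, that every immediate extension of $M_{0}$ embeds into $\gC$ (Remark~\ref{rem:monster:imme}) — together with the observation $\RV(M_{0})=\RV(\gC)$, which forces every $\VF$-generated extension of $M_{0}$ to be immediate; the remaining care is to check that a realization of $\Phi$ can actually be found inside such an immediate extension, which is where the full $\VF$-dimensionality of $\rho(A)$ is used once more.
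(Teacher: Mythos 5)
Your handling of the easy inequality $\td_{\sn(\RV)}(A)\le\td(A)$ and of the ``consequently'' part is correct and essentially the paper's argument. The gap is in the realization step, which is exactly where the paper does its real work. You reduce everything to realizing in $\gC$ the partial type $\Phi$ over $M_0=\acl^{1}(\RV)$ consisting of membership in $\rho(A)$ together with all conditions $q(\lbar X)\ne 0$. But Remark~\ref{rem:monster:imme} only transfers to $\gC$ types that are realized in an \emph{immediate} extension of $M_0$; consistency alone is not enough (for instance $\{\vv(X)>\vv(c): c\in\VF(M_0)^{\times}\}\cup\{X\ne 0\}$ is finitely satisfiable over $M_0$ yet omitted in $\gC$). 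Your justification --- ``since $M_0$ and $\gC$ have the same value group and residue field, any $\VF$-generated extension of $M_0$ realizing $\Phi$ is immediate'' --- does not follow: since $M_0$ is not small you cannot realize $\Phi$ inside $\gC$ by saturation, so a realization lives a priori in a proper elementary extension of $\gC$, and there the extension $M_0\langle\lbar a\rangle$ may well enlarge the value group or residue field of $M_0$. That $\RV(M_0)=\RV(\gC)$ says nothing about extensions sitting outside $\gC$.

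What is missing is a proof that \emph{some} realization of $\Phi$ (equivalently, some generic point of $\rho(A)$ over $\VF(M_0)$) generates an immediate extension of $M_0$; you flag this yourself in the last paragraph but never supply it. The paper closes precisely this gap by first replacing $A$ with an $\RV$-pullback via \cite[Corollary~5.6]{Yin:special:trans} (transcendence degrees being preserved by definable bijections, \cite[Lemma~3.3]{Yin:special:trans}) and locating inside it a full $\rv$-polydisc $\rv^{-1}(\lbar t)$ with $\lbar t\in(\RV^{\times})^{k}$ and $k=\td(A)$ (\cite[Lemmas~4.4, 4.6]{Yin:special:trans}); for an explicit product of open balls one can build, coordinate by coordinate, elements of $\K\dlbr\Gamma\drbr$ lying in the prescribed balls and transcendental over $\VF(M_0)$, and such elements do generate an immediate extension, so Remark~\ref{rem:monster:imme} applies. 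If you want to keep your projection-plus-compactness framework you still need an analogous reduction (e.g.\ that a full-dimensional $\lan{RV}$-definable subset of $\VF^{d}$ contains an open polydisc), so this step cannot be waved away.
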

\begin{proof}
By~\cite[Corollary~5.6]{Yin:special:trans}, $A$ is $\lan{RV}$-definably bijective to an $\RV$-pullback $A'$. By~\cite[Lemma~3.3]{Yin:special:trans}, we have $\td(A) = \td(A')$ and $\td_{\sn(\RV)}(A) = \td_{\sn(\RV)}(A')$. So, for the first equality, it is enough to show that $\td(A') = \td_{\sn(\RV)}(A')$. By~\cite[Lemma~4.4, Lemma~4.6]{Yin:special:trans}, $A'$ contains an $\rv$-polydisc $\{(0, \ldots, 0)\} \times \rv^{-1}(\lbar t) \times \{\lbar s\}$, where $\lbar t \in (\RV^{\times})^k$ and $\td(A') = k$. By Remark~\ref{rem:monster:imme}, it is easy to see that there is an $\lbar a \in \rv^{-1}(\lbar t)$ that is algebraically independent over $\VF(\acl^{1}(\RV))$. Hence $\td_{\sn(\RV)}(A') = k$.

Now, let $\pi$ be an $\RV$-partition of $A$. We have $\td_{\sn(\lbar t)}(\pi^{-1}(\lbar t)) \leq \td(A)$ for every $\lbar t \in \ran(\pi)$ and, by the first equality, $\td_{\sn(\lbar t)}(\pi^{-1}(\lbar t)) = \td(A)$ for some $\lbar t \in \ran(\pi)$. Hence the second equality follows from~\cite[Lemma~4.4]{Yin:special:trans}.
\end{proof}

\begin{lem}\label{part:dim:inv}
For any two $\RV$-partitions $\pi_1$, $\pi_2$ of $A$, we have $\dim_{\VF} (\pi_1) = \dim_{\VF} (\pi_2)$.
\end{lem}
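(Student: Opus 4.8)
The plan is to pass to the common refinement $\pi = (\pi_1, \pi_2) : A \fun \RV^{l_1 + l_2}$ and reduce everything to Lemma~\ref{VF:dim:conserv}. First I would check that $\pi$ is itself an $\RV$-partition of $A$: its fiber over $(\lbar t, \lbar s)$ is $\pi_1^{-1}(\lbar t) \cap \pi_2^{-1}(\lbar s)$, the intersection of an $\sn(\lbar t)$-$\lan{RV}$-definable set with an $\sn(\lbar s)$-$\lan{RV}$-definable set, hence $\sn(\lbar t, \lbar s)$-$\lan{RV}$-definable. Since every fiber of $\pi$ lies inside a fiber of $\pi_1$ and inside a fiber of $\pi_2$, we get $\dim_{\VF}(\pi) \le \dim_{\VF}(\pi_1)$ and $\dim_{\VF}(\pi) \le \dim_{\VF}(\pi_2)$ for free; so it suffices to prove the reverse inequalities, and by symmetry only $\dim_{\VF}(\pi) \ge \dim_{\VF}(\pi_1)$ needs attention.

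For this, fix $\lbar t_0 \in \ran(\pi_1)$ with $\dim_{\VF}(\pi_1^{-1}(\lbar t_0)) = \dim_{\VF}(\pi_1)$ and put $B = \pi_1^{-1}(\lbar t_0)$. Then $B$ is $\sn(\lbar t_0)$-$\lan{RV}$-definable, and $\pi_2 \rest B$ is an $\RV$-partition of $B$: its fiber over $\lbar s$ is $B \cap \pi_2^{-1}(\lbar s)$, which is $(\sn(\lbar t_0), \sn(\lbar s))$-$\lan{RV}$-definable. Applying Lemma~\ref{VF:dim:conserv} to $B$ (with $\sn(\lbar t_0)$ adjoined to the base) yields $\dim_{\VF}(B) = \dim_{\VF}(\pi_2 \rest B) = \max_{\lbar s} \dim_{\VF}(B \cap \pi_2^{-1}(\lbar s))$. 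Choosing $\lbar s_0$ attaining this maximum and noting $B \cap \pi_2^{-1}(\lbar s_0) = \pi^{-1}(\lbar t_0, \lbar s_0)$, we conclude $\dim_{\VF}(\pi) \ge \dim_{\VF}(B) = \dim_{\VF}(\pi_1)$, hence $\dim_{\VF}(\pi) = \dim_{\VF}(\pi_1)$, and symmetrically $\dim_{\VF}(\pi) = \dim_{\VF}(\pi_2)$.

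The one step that is not purely formal — and hence the main (if modest) obstacle — is the base change in the previous paragraph: Lemma~\ref{VF:dim:conserv}, like Convention~\ref{conv:s} and the results of~\cite{Yin:special:trans} it rests on, is phrased over the fixed small base $S$, whereas I need it over $S$ enlarged by $\sn(\lbar t_0)$. This causes no trouble because $\sn(\lbar t_0) \in \VF$, so $\dcl^{\bb T}(S \cup \{\sn(\lbar t_0)\})$ is again a small $\VF$-generated substructure, over which Remark~\ref{rem:monster:imme} and all the cited lemmas hold verbatim, and $\dim_{\VF}$ of a definable set does not depend on the chosen base. I would record this relativization explicitly once; after that the argument is entirely formal and the proof is complete.
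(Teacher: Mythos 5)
Your proposal is correct and follows essentially the same route as the paper: form the common refinement $\pi = (\pi_1, \pi_2)$ and apply Lemma~\ref{VF:dim:conserv} to the fiber $\pi_1^{-1}(\lbar t)$ over the base enlarged by $\sn(\lbar t)$, then conclude by symmetry. Your explicit remarks on the trivial inequalities and on the legitimacy of the base change are just slightly more detailed versions of what the paper leaves implicit.
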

\begin{proof}
Let $\pi$ be the $\RV$-partition of $A$ given by $\lbar x \efun (\pi_1(\lbar x), \pi_2(\lbar x))$. For every $\lbar t \in \ran(\pi_1)$, since $\pi^{-1}_1(\lbar t)$ is $\sn(\lbar t)$-$\lan{RV}$-definable, by Lemma~\ref{VF:dim:conserv}, $\dim_{\VF}(\pi^{-1}_1(\lbar t)) = \dim_{\VF}(\pi \rest \pi^{-1}_1(\lbar t))$ and hence $\dim_{\VF} (\pi_1) = \dim_{\VF} (\pi)$. Since this also holds symmetrically for $\pi_2$, the lemma follows.
\end{proof}

Of course, \cite[Definition~4.1]{Yin:special:trans} still makes sense in the current context:

\begin{defn}
The \emph{$\VF$-dimension} of $A$, denoted by $\dim_{\VF}(A)$, is the smallest number $k$ such that there is a definable finite-to-one function $f: A \fun \VF^k \times \RV^l$ or, equivalently, there is a definable injection $f: A \fun \VF^k \times \RV^l$ (see~\cite[Lemma~4.2]{Yin:special:trans}).
\end{defn}

%(!!! Note that we use $\RV_{\Gamma} = \RV \cup \Gamma$ in this definition instead of $\RV$ because we have allowed $\Gamma$-parameters all along, that is, we work with $(\VF, \Gamma)$-generated $S$. This is not essential since we always have the ``algebraic way'' of defining the dimension. So, to simplify the discussion, we may dispense with $\Gamma$-parameters and drop the $\Gamma$ from this definition, especially when we add a cross-section $\Gamma^{\times} \fun \RV$)

However, the $\VF$-dimension of $A$ itself and the $\VF$-dimension of the $\RV$-partitions of $A$ are really the same thing:

\begin{lem}\label{dim:comp:fiberwise}
Let $\pi$ be an $\RV$-partition of $A$, $\dim_{\VF}(A) = k$, and $A_{\pi} = \bigcup\{\pi^{-1}(\lbar t) : \dim_{\VF} (\pi^{-1}(\lbar t)) = k \}$. Then $\dim_{\VF}(\pi) = k$ and $\dim_{\VF}(A \mi A_{\pi}) < k$.
\end{lem}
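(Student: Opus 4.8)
The plan is to reduce both assertions to the single claim that, for \emph{any} definable $A \sub \VF^n \times \RV^m$ and any $\RV$-partition $\pi$ of it, one has $\dim_{\VF}(A) = \dim_{\VF}(\pi)$. Indeed, the first assertion is precisely this claim, and the second will follow by applying the claim to the subset $A \mi A_{\pi}$ equipped with the restriction of $\pi$.

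To prove the claim, first the easy inequality $\dim_{\VF}(\pi) \leq \dim_{\VF}(A)$: if $f : A \fun \VF^{k} \times \RV^{l}$ is a definable finite-to-one map with $k = \dim_{\VF}(A)$, then for each $\lbar t \in \ran(\pi)$ the restriction $f \rest \pi^{-1}(\lbar t)$ is still finite-to-one, so $\dim_{\VF}(\pi^{-1}(\lbar t)) \leq k$ (recall that the $\VF$-dimension of an $\lan{RV}$-definable set is independent of the parameters defining it), and taking the maximum over $\lbar t$ gives $\dim_{\VF}(\pi) \leq k$. For the reverse inequality, set $d = \dim_{\VF}(\pi)$. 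Each fiber $\pi^{-1}(\lbar t)$ is $\sn(\lbar t)$-$\lan{RV}$-definable of $\VF$-dimension $\leq d$, so by the definition of $\VF$-dimension (see \cite[Lemma~4.2]{Yin:special:trans}) it admits a finite-to-one map into $\VF^{d} \times \RV^{l}$ definable over $S$ together with parameters that are terms in $\lbar t$, in particular $\sn(\lbar t)$. By compactness only finitely many formulas occur as $\lbar t$ ranges over $\ran(\pi)$; partitioning $\ran(\pi)$ according to which one applies (say, the first) and patching the corresponding maps produces a single definable $f : A \fun \VF^{d} \times \RV^{l'}$ whose restriction to each $\pi^{-1}(\lbar t)$ is finite-to-one. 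Then $\lbar x \efun (f(\lbar x), \pi(\lbar x))$ is a definable finite-to-one map on $A$, since each of its fibers lies inside a fiber of some $f \rest \pi^{-1}(\lbar t)$; hence $\dim_{\VF}(A) \leq d$. This proves the claim, so $\dim_{\VF}(\pi) = k$.

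For the second assertion, the same compactness reasoning shows that the set $T = \{\lbar t \in \ran(\pi) : \dim_{\VF}(\pi^{-1}(\lbar t)) = k\}$ is definable (for each $j$ the set $\{\lbar t : \dim_{\VF}(\pi^{-1}(\lbar t)) < j\}$ is definable, being witnessed by a finite list of formulas), so that $A_{\pi} = \pi^{-1}(T)$ and $A \mi A_{\pi} = \pi^{-1}(\ran(\pi) \mi T)$ are definable. The restriction $\pi \rest (A \mi A_{\pi})$ is an $\RV$-partition of $A \mi A_{\pi}$ all of whose fibers have $\VF$-dimension strictly less than $k$; applying the claim to $A \mi A_{\pi}$ gives $\dim_{\VF}(A \mi A_{\pi}) = \dim_{\VF}(\pi \rest (A \mi A_{\pi})) < k$.

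The only step that is not routine bookkeeping with the definition of $\VF$-dimension is the uniformization by compactness, i.e.\ turning the fiber-wise family of finite-to-one maps (and the fiber-dimension function) into genuinely definable objects. What makes this work is that $\pi^{-1}(\lbar t)$ is $\sn(\lbar t)$-$\lan{RV}$-definable, so the parameters needed for a finite-to-one map on it may be taken among $S \cup \{\lbar t, \sn(\lbar t)\}$, all of which are named by terms in $\lbar t$; compactness in $\bb T(S)$ then applies in the usual way. Everything else reduces to the observation that restrictions and composites of finite-to-one maps are again finite-to-one.
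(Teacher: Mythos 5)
Your reduction of both assertions to the single identity $\dim_{\VF}(A)=\dim_{\VF}(\pi)$, your compactness argument for $\dim_{\VF}(A)\leq\dim_{\VF}(\pi)$, and your treatment of the second assertion all match the paper and are fine. The gap is in what you call the ``easy inequality'' $\dim_{\VF}(\pi)\leq\dim_{\VF}(A)$. The witness $f:A\fun\VF^{k}\times\RV^{l}$ is only $\mdl L_{\bb T}$-definable, so its restriction $f\rest\pi^{-1}(\lbar t)$ is an $\lbar t$-$\mdl L_{\bb T}$-definable finite-to-one map: it may involve $\sn$ and the extra $\RV$-sort structure in an essential way. But $\dim_{\VF}(\pi^{-1}(\lbar t))$ is by definition the $\VF$-dimension of the $\sn(\lbar t)$-$\lan{RV}$-definable set $\pi^{-1}(\lbar t)$ in the sense of \cite[Definition~4.1]{Yin:special:trans}, i.e.\ computed in $\ACVF$, and an $\mdl L_{\bb T}$-definable finite-to-one map does not a priori bound that quantity. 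Your parenthetical appeal to parameter-independence of the $\lan{RV}$-dimension does not close this, since the problem is not which parameters the map uses but that it is not $\lan{RV}$-definable over any parameters at all. That this step is nonetheless true is exactly the substantive content of the lemma (a priori the richer language could admit finite-to-one maps that drop the dimension, e.g.\ a definable injection of an $\rv$-polydisc into $\RV^{l}$), and it requires real input: either Lemma~\ref{VF:dim:conserv} ($\td(B)=\td_{\sn(\RV)}(B)$ for $\lan{RV}$-definable $B$, proved via the structure of the monster model), or the paper's actual route, which avoids restricting $f$ altogether: take an $\RV$-partition $\rho$ of the \emph{graph} of $f$, so that each piece $\rho^{-1}(\lbar t)$ is a genuinely $\sn(\lbar t)$-$\lan{RV}$-definable finite-to-one map; this yields a second $\RV$-partition $\pi'$ of $A$ with $\dim_{\VF}(\pi')\leq k$, and Lemma~\ref{part:dim:inv} then gives $\dim_{\VF}(\pi)=\dim_{\VF}(\pi')$. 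Either repair completes your argument; as written, the step fails.
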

\begin{proof}
By compactness, obviously $k \leq \dim_{\VF}(\pi)$. For the other direction, suppose that $f: A \fun \VF^k \times \RV^l$ is a witness to $\dim_{\VF}(A) = k$. Let $\rho$ be an $\RV$-partition of (the graph of) $f$, which obviously also carries an $\RV$-partition $\pi'$ of $A$ such that $\ran(\pi') = \ran(\rho)$ and $\pi'^{-1}(\lbar t) = \dom(\rho^{-1}(\lbar t))$ for every $\lbar t \in \ran(\pi')$. By Lemma~\ref{part:dim:inv}, $\dim_{\VF}(\pi) = \dim_{\VF}(\pi') \leq k$.

The second item is a corollary of the first. Note that it makes sense since, by~\cite[Lemma~4.6]{Yin:special:trans}, $A_{\pi}$ is definable.
\end{proof}

For any definable function $f : \VF^n \fun \VF^m$, the derivative and the partial derivatives of $f$ at a point are defined exactly as in~\cite[Definition~9.6]{Yin:special:trans}. Standard properties of differentiation such as the product rule and the chain rule only depend on the valuation and hence hold regardless of the presence of a section $\sn$ and additional structure in the $\RV$-sort.

\begin{lem}\label{partial:exist:sec}
Let $f : \VF^n \fun \VF^m$ be a definable function. Then each partial derivative $\partial^{ij} f$ is defined almost everywhere.
\end{lem}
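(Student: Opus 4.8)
The plan is to reduce the statement to the corresponding fact over $\ACVF$ by means of an $\RV$-partition. Write $A \sub \VF^n$ for the domain of $f$ and set $k = \dim_{\VF}(A)$. Let $\pi_0$ be an $\RV$-partition of the graph of $f$ (it exists by the discussion preceding Definition~\ref{defn:lrv}), and let $\pi : A \fun \RV^l$ be its transport along the graph bijection $\lbar a \efun (\lbar a, f(\lbar a))$. Then both $\pi^{-1}(\lbar t)$ and $f \rest \pi^{-1}(\lbar t)$ are $\sn(\lbar t)$-$\lan{RV}$-definable for every $\lbar t \in \ran(\pi)$; in particular $f \rest \pi^{-1}(\lbar t)$ is a genuine $\ACVF$-definable function over the parameters $\VF(S) \cup \sn(\lbar t)$. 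By Lemma~\ref{dim:comp:fiberwise} we may discard the fibers of $\VF$-dimension $< k$ (their union has $\VF$-dimension $< k$), so that $\dim_{\VF}(\pi^{-1}(\lbar t)) = k$ for all $\lbar t$.

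The key point is to pass from the partial derivative of the restriction to that of $f$ itself. Let $C \sub A$ be the set of $\lbar a$ such that every neighbourhood of $\lbar a$ in the valuation topology meets a $\pi$-fiber other than $\pi^{-1}(\pi(\lbar a))$; this is a definable set. If $\lbar a \notin C$ then, since $\pi$ is total on $A$, some neighbourhood of $\lbar a$ lies inside $\pi^{-1}(\pi(\lbar a))$, so $\lbar a \in O_{\lbar t} := \operatorname{int}(\pi^{-1}(\lbar t))$ with $\lbar t = \pi(\lbar a)$, and there $\partial^{ij} f$ agrees with $\partial^{ij}(f \rest O_{\lbar t})$. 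Conversely $C \cap \pi^{-1}(\lbar t) \sub \pi^{-1}(\lbar t) \mi O_{\lbar t}$ is $\sn(\lbar t)$-$\lan{RV}$-definable of $\VF$-dimension $< k$ by the $\ACVF$ dimension theory (cf.\ \cite[Lemma~4.4, Lemma~4.6]{Yin:special:trans}); since $\pi \rest C$ is an $\RV$-partition of $C$, Lemma~\ref{dim:comp:fiberwise} gives $\dim_{\VF}(C) < k$. Now $O_{\lbar t}$ is open in $\VF^n$, $\sn(\lbar t)$-$\lan{RV}$-definable, and $f \rest O_{\lbar t}$ is $\ACVF$-definable; by the analogue of the present lemma in $\ACVF$ (a local statement, hence valid for functions with open domain; see \cite{Yin:special:trans}), $\partial^{ij}(f \rest O_{\lbar t})$ is defined off a $\sn(\lbar t)$-$\lan{RV}$-definable set $N_{\lbar t} \sub O_{\lbar t}$ with $\dim_{\VF}(N_{\lbar t}) < k$.

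It remains to assemble the exceptional set. The predicate ``$\partial^{ij} f$ is defined at $\lbar a$'' is expressed by an $\mdl L_{\bb T}$-formula (limits in the valuation topology being definable, which uses only the valuation), so the set $N$ on which $\partial^{ij} f$ is undefined is definable. By the previous paragraph $N \cap \pi^{-1}(\lbar t) \sub (C \cap \pi^{-1}(\lbar t)) \cup N_{\lbar t}$ has $\VF$-dimension $< k$ for every $\lbar t$, and $\pi \rest N$ is an $\RV$-partition of $N$, so Lemma~\ref{dim:comp:fiberwise} yields $\dim_{\VF}(N) < k = \dim_{\VF}(A)$, which is exactly the meaning of ``almost everywhere''. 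The main obstacle is precisely the reconciliation carried out in the second paragraph: the naive restriction of $f$ to a partition fiber need not have open domain, so its partial derivative is not literally a restriction of $\partial^{ij} f$ --- the remedy is that away from the small set $C$ the partition is locally constant, allowing one to replace each fiber by its ($\VF^n$-)interior, on which the two notions of partial derivative coincide. Granting this and the $\ACVF$ base case from \cite{Yin:special:trans}, everything else is routine bookkeeping with Lemma~\ref{dim:comp:fiberwise} and compactness.
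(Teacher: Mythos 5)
Your proof is correct and follows essentially the same route as the paper's: take an $\RV$-partition of (the graph of) $f$, apply the $\ACVF$ result \cite[Lemma~9.8]{Yin:special:trans} to each $\sn(\lbar t)$-$\lan{RV}$-definable fiber, and glue the exceptional sets together by compactness and Lemma~\ref{dim:comp:fiberwise}. The only difference is that you explicitly justify, via the interior/frontier argument, why $\partial^{ij}f$ agrees with the partial derivative of the restriction of $f$ to a partition fiber away from a set of smaller $\VF$-dimension --- a point the paper's proof passes over silently.
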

\begin{proof}
Let $\rho$ be an $\RV$-partition of $f$. For each $\lbar t \in \ran(\rho)$, $f_{\lbar t} = \pi^{-1}(\lbar t) \sub f$ is an $\sn(\lbar t)$-$\lan{RV}$-definable function and hence, by Lemma~\cite[Lemma~9.8]{Yin:special:trans}, there is an $\sn(\lbar t)$-$\lan{RV}$-definable subset $A_{\lbar t} \sub \dom(f_{\lbar t})$ with $\dim_{\VF}(A_{\lbar t}) < n$ such that every partial derivative $\partial^{ij} f_{\lbar t}$ is defined everywhere in $\dom(f_{\lbar t}) \mi A_{\lbar t}$. By compactness, we may take $A = \bigcup_{\lbar t} A_{\lbar t} \sub \VF^n$ to be definable, and there is an $\RV$-partition $\pi$ of $A$ such that $\ran(\rho) = \ran(\pi)$ and $\pi^{-1}(\lbar t) = A_{\lbar t}$ for every $\lbar t \in \ran(\pi)$. By Lemma~\ref{dim:comp:fiberwise}, $\dim_{\VF}(A) = \dim_{\VF}(\pi) < n$.
\end{proof}

We would like to differentiate functions between definable subsets with $\RV$-coordinates. The procedure for this is the same, with or without a section $\sn$ (or a cross-section $\csn$) and additional structure in the $\RV$-sort, as described after \cite[Corollary~9.9]{Yin:special:trans}. It follows from Lemma~\ref{partial:exist:sec} and compactness that every partial derivative of $f$ is defined almost everywhere. The \emph{Jacobian of $f$ at a point $(\lbar a, \lbar t)$} is defined in the usual way, that is, the determinant of the Jacobian matrix, and is denoted by $\jcb_{\VF} f(\lbar a, \lbar t)$. By the chain rule, we have:

\begin{lem}\label{jcb:chain}
Let $f : A \fun B$ and $g : B \fun C$ be definable functions. Then for any $\lbar x \in A$,
\[
\jcb_{\VF} ( g \circ f)(\lbar x) = \jcb_{\VF} g(f(\lbar x)) \cdot \jcb_{\VF} f(\lbar x),
\]
if both sides are defined.
\end{lem}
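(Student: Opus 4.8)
The plan is to reduce the identity to the pointwise chain rule for the Jacobian matrices of $\lan{RV}$-definable functions in $\ACVF$, which is already available from \cite[\S 9]{Yin:special:trans} (and which, as noted in the text preceding this lemma, depends only on the valuation and is therefore insensitive to the section $\sn$ and to any extra structure in the $\RV$-sort), and then to transport the conclusion back along $\RV$-partitions. Since the assertion is pointwise, I would fix $\lbar x \in A$ at which the right-hand side is defined, i.e.\ at which all the partial derivatives $\partial^{ij} f$ at $\lbar x$ and $\partial^{ij} g$ at $f(\lbar x)$ exist in the sense described after \cite[Corollary~9.9]{Yin:special:trans}.

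Next I would choose an $\RV$-partition $\pi$ of $A$ together with compatible $\RV$-partitions of $B$ and of $(g \circ f)(A)$, arranged (by refinement) so that $f$ maps each fiber of $\pi$ into a single fiber of the $\RV$-partition of $B$, $g$ maps each such fiber into a single fiber of the $\RV$-partition of $C$, and in particular the $\RV$-parts of points, of their $f$-images, and of their $(g \circ f)$-images are all constant along the fiber $\pi^{-1}(\lbar t)$ through $\lbar x$; such partitions exist by the same syntactic inspection of occurring $\VF$-terms that makes Definition~\ref{defn:lrv} non-empty, together with compactness. Restricting $f$ to $\pi^{-1}(\lbar t)$ and $g$ to the corresponding fiber through $f(\lbar x)$ yields $\sn(\lbar t)$-$\lan{RV}$-definable functions (for the appropriate parameters), hence functions to which the differentiation theory of \cite[\S 9]{Yin:special:trans} applies directly; and since the $\RV$-coordinates are locally constant along these fibers they contribute nothing to the respective Jacobian matrices, so $\jcb_{\VF}$ of each restricted map at the relevant point agrees with $\jcb_{\VF}$ of the original map there.

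On these $\lan{RV}$-definable restrictions the ordinary chain rule for partial derivatives holds, giving that the Jacobian matrix of $g \circ f$ at $\lbar x$ is the product of the Jacobian matrix of $g$ at $f(\lbar x)$ with that of $f$ at $\lbar x$; in particular the partials of $g \circ f$ at $\lbar x$ exist, so the left-hand side is defined whenever the right-hand side is. Taking determinants yields $\jcb_{\VF}(g \circ f)(\lbar x) = \jcb_{\VF} g(f(\lbar x)) \cdot \jcb_{\VF} f(\lbar x)$, and the case where one instead assumes the left-hand side defined is symmetric. I do not expect a genuine obstacle: the only point requiring care is that differentiation of functions with $\RV$-coordinates is set up fibrewise over an $\RV$-partition and therefore behaves exactly as in \cite{Yin:special:trans}; once that is granted, the argument is the same bookkeeping as for the corresponding statement there.
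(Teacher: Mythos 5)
Your argument is correct, but it takes a longer route than the paper, which disposes of the lemma in one line. The point the paper relies on is stated just before Lemma~\ref{jcb:chain}: the derivative (Definition~9.6 of \cite{Yin:special:trans}) is a limit in the valuation topology, so the chain rule is a purely local, valuation-theoretic identity at any point where the relevant derivatives exist --- it makes no reference to definability and is therefore completely insensitive to the section $\sn$ and to extra structure in the $\RV$-sort. One multiplies the Jacobian matrices and takes determinants; that is the whole proof. Your reduction via compatible $\RV$-partitions to $\sn(\lbar t)$-$\lan{RV}$-definable restrictions is sound (the $\RV$-coordinates are indeed constant along the fibers, so the restricted Jacobians agree with the original ones), but it imports machinery that is only genuinely needed for statements where definability and dimension counting enter, such as Lemma~\ref{partial:exist:sec} (``each partial derivative is defined almost everywhere''), where one must control the exceptional set fiberwise and reassemble it by compactness. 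For a pointwise identity conditional on both sides being defined, the partition is dispensable. One small caution: you assert that definedness of the right-hand side forces definedness of the left-hand side; this is true by the same chain-rule computation, but the lemma deliberately hypothesizes that both sides are defined, so nothing in the statement requires that extra claim.
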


\begin{defn}[Coarse $\VF$-categories]\label{defn:c:VF:cat}
The objects of the category $\VF[k, \cdot]$ are the definable subsets of $\VF$-dimension $\leq k$. Its morphisms are the definable bijections between the objects. Set $\VF_*[\cdot] = \bigcup_k \VF[k, \cdot]$.

An object of the category $\mgVF[k]$ is a definable pair $(A, \omega_{\Gamma})$, where $\pvf(A) \sub \VF^k$ and $\omega_{\Gamma} : A \fun \Gamma$ is a function, which is understood as a \emph{$\Gamma$-volume form} on $A$. A \emph{morphism} between two objects $(A, \omega_{\Gamma})$, $(B, \sigma_{\Gamma})$ is a definable \emph{essential bijection} $F : A \fun B$, that is, a bijection that is defined outside of definable subsets of $A$, $B$ of $\VF$-dimension $< k$, such that, for every $\lbar x \in \dom(F)$,
\[
\omega_{\Gamma}(\lbar x) = \sigma_{\Gamma}(F(\lbar x)) + \vv(\jcb_{\VF} F(\lbar x)).
\]
We also say that such an $F$ is a \emph{$\Gamma$-measure-preserving map}. Set $\mgVF[*] = \coprod_k \mgVF[k]$.
\end{defn}

Recall from~\cite[Remark~10.2]{Yin:special:trans} that conceptually $\mgVF[k]$-morphisms (and $\mVF[k]$-morphisms below) should be treated as equivalence classes so that each of them is actually an isomorphism and the Grothendieck semigroup may be constructed in the traditional way. However, this viewpoint is not essential for our purpose and, as usual, it is less cumbersome to work with representatives.

In order to avoid verbosity, below we shall more or less ignore the coarse $\VF$- and $\RV$-categories with $\Gamma$-volume forms, since the results may be modified in the obvious way to hold for them.

For any $\mathbf{U} \in \RV[k, \cdot]$, the \emph{lift} $\bb L \mathbf{U} \in \VF[k, \cdot]$ of $\mathbf{U}$ is defined following \cite[Definition~4.18]{Yin:special:trans}. For any $\RV[k, \cdot]$-morphism $F : \mathbf{U} \fun \mathbf{V}$, a \emph{lift} $F^{\uparrow} : \bb L \mathbf{U} \fun \bb L \mathbf{V}$ of $F$ is defined following \cite[Definition~7.3]{Yin:special:trans}. With the presence of $\sn$, such an $F$ can always be lifted.

\begin{prop}\label{L:sur:c}
The lifting map $\bb L : \ob \RV[\leq k, \cdot] \fun \ob \VF[k, \cdot]$ induces a surjective homomorphism, also denoted by $\bb L$, between the Grothendieck semigroups $\bb L : \gsk \RV[\leq k, \cdot] \fun \gsk \VF[k, \cdot]$.
\end{prop}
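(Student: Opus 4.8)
The plan is to carry out Steps~1 and~2 of the three-step procedure, following the corresponding arguments of \cite{Yin:special:trans} but reducing the $\VF$-side to them fibrewise via an $\RV$-partition. For the homomorphism part, note first that by \cite[Definition~4.18]{Yin:special:trans} the lift of a disjoint union is canonically the disjoint union of the lifts, so $\bb L$ is additive on objects; and that, as recalled just before the statement, the presence of $\sn$ ensures that every $\RV[\leq k,\cdot]$-isomorphism $F:\mathbf{U}_1\fun\mathbf{U}_2$ admits a lift $F^{\uparrow}:\bb L\mathbf{U}_1\fun\bb L\mathbf{U}_2$ (cf.\ \cite[Definition~7.3]{Yin:special:trans}), which is a $\VF[k,\cdot]$-isomorphism. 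Since the Grothendieck semigroup of either category is generated under disjoint union by the isomorphism classes of its objects, $\bb L$ induces a well-defined semigroup homomorphism $\gsk\RV[\leq k,\cdot]\fun\gsk\VF[k,\cdot]$.

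For surjectivity it is enough to show that an arbitrary object $A$ of $\VF[k,\cdot]$ is $\VF[k,\cdot]$-isomorphic to $\bb L\mathbf{U}$ for some $\mathbf{U}\in\RV[\leq k,\cdot]$. Fix an $\RV$-partition $\pi:A\fun\RV^l$ (Definition~\ref{defn:lrv}). Replacing $A$ by the graph of $\pi$ — a $\VF[k,\cdot]$-isomorphic copy, since adjoining $\RV$-coordinates does not change $\VF$-dimension — we may assume $A=\bigcup_{\lbar t\in\ran(\pi)}\pi^{-1}(\lbar t)\times\{\lbar t\}$. For each $\lbar t\in\ran(\pi)$ the fibre $\pi^{-1}(\lbar t)$ is $\sn(\lbar t)$-$\lan{RV}$-definable and satisfies $\dim_{\VF}(\pi^{-1}(\lbar t))\leq\dim_{\VF}(A)\leq k$; hence, by the corresponding (more elementary) result for $\ACVF$ — concretely, \cite[Corollary~5.6]{Yin:special:trans} together with \cite[Definition~4.18]{Yin:special:trans} — applied over the parameter $\sn(\lbar t)$, there is an $\sn(\lbar t)$-$\lan{RV}$-definable bijection $\beta_{\lbar t}:\pi^{-1}(\lbar t)\fun\bb L\mathbf{U}_{\lbar t}$ for some $\sn(\lbar t)$-$\lan{RV}$-definable $\mathbf{U}_{\lbar t}=(V_{\lbar t},f_{\lbar t})\in\RV[\leq k,\cdot]$. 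By Corollary~\ref{lrvdag:same:lrv}, $V_{\lbar t}$ and $f_{\lbar t}$ are in fact $\lbar t$-$\lan{RV}$-definable, while $\beta_{\lbar t}$ is $\lbar t$-$\mdl L_{\bb T}$-definable.

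It then remains to glue. By compactness, finitely many $\mdl L_{\bb T}$-formulas over $S$ suffice to realize the family $\lbar t\mapsto(V_{\lbar t},f_{\lbar t},\beta_{\lbar t})$ as $\lbar t$ runs over $\ran(\pi)$; partitioning $\ran(\pi)$ accordingly — refining so that the target length of $f_{\lbar t}$ is a constant $k'\leq k$ on each piece, and treating each piece separately and adding up — we reduce to the case of a single such definable family. Then $\mathbf{U}=(V,f)$ with $V=\bigcup_{\lbar t}V_{\lbar t}\times\{\lbar t\}$ and $f(v,\lbar t)=f_{\lbar t}(v)$ is an object of $\RV[k',\cdot]$, its lift is $\bb L\mathbf{U}=\bigcup_{\lbar t}(\bb L\mathbf{U}_{\lbar t})\times\{\lbar t\}$, and the bijections $\beta_{\lbar t}$ assemble into a definable bijection $A\fun\bb L\mathbf{U}$. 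Therefore $[A]=\bb L([\mathbf{U}])$ in $\gsk\VF[k,\cdot]$, as required.

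The genuinely new content over the $\ACVF$ construction is exactly the passage to an $\RV$-partition together with the compactness gluing in the last step; everything on each fibre is a black-box invocation of \cite{Yin:special:trans}. The one point to watch is that the fibrewise data, a priori only $\sn(\lbar t)$-definable in $\ACVF$, is genuinely $\lbar t$-definable in $\bb T$ (hence patchable), which is supplied by Corollaries~\ref{lrvdag:same:lrv} and~\ref{RV:sub:nosn}; I expect this bookkeeping, rather than any conceptual difficulty, to be the main thing to get right.
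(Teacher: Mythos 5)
Your argument is correct and is essentially the paper's own proof written out in full: the paper likewise obtains surjectivity by applying \cite[Corollary~5.6]{Yin:special:trans} piecewise over an $\RV$-partition and gluing, and observes that the presence of the section $\sn$ (so that each $\rv$-ball has a prescribed center) makes the lifting of $\RV[\leq k,\cdot]$-morphisms, and hence the homomorphism property, almost trivial. The definability bookkeeping you flag via Corollary~\ref{lrvdag:same:lrv} is exactly the right point to check and you handle it correctly.
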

\begin{proof}
Applying \cite[Corollary~5.6]{Yin:special:trans} piecewise over $\RV$-partition, it is clear that $\bb L$ hits every isomorphism class of $\VF[k, \cdot]$ (see also the discussion after \cite[Proposition~6.18]{Yin:int:acvf}). Due to the presence of a section $\sn$ and its immediate consequence that each $\rv$-ball has a prescribed center, the work in \cite[\S 6]{hrushovski:kazhdan:integration:vf} (as well as \cite[\S 7]{Yin:special:trans}) is not needed here, although it is needed below, and it is almost trivial that every isomorphism class of $\coprod_{i \leq k} \RV[i, \cdot]$ is mapped into an isomorphism class of $\VF[k, \cdot]$.
\end{proof}

\begin{defn}
For a definable subset $A \sub \VF^n \times \RV^m$, the \emph{$\RV$-fiber dimension} of $A$, written as $\dim_{\RV}^{\fib}(A)$, is the number $\max \{\dim_{\RV}(\fib(A, \lbar a)) : \lbar a \in \pvf(A)\}$.
\end{defn}

\begin{defn}[Fine $\VF$-categories]\label{defn:f:VF:cat}
The objects of the category $\VF[k]$ are the $\mdl L_{\wt{\bb T}}$-definable subsets of $\VF$-dimension $\leq k$ and $\RV$-fiber dimension $0$. Its morphisms are the $\mdl L_{\wt{\bb T}}$-definable bijections between the objects. Set $\VF_* = \bigcup_k \VF[k]$.

An object of the category $\mVF[k]$ is an $\mdl L_{\wt{\bb T}}$-definable pair $(A, \omega)$, where $\pvf(A) \sub \VF^k$, $A \in \VF[k]$, and $\omega : A \fun \K^{\times} \times \Gamma$ is a function, which is understood as a \emph{volume form} on $A$. We also write $\omega$ as a pair $(\omega_{\K}, \omega_{\Gamma})$. A $\mgVF[k]$-morphism $F : (A, \omega_{\Gamma}) \fun (A', \omega'_{\Gamma})$ is a \emph{pseudo-morphism} of $\mVF[k]$. If, in addition, $F$ is $\mdl L_{\wt{\bb T}}$-definable and, for every $\lbar x \in \dom(F)$,
\[
\omega_{\K}(\lbar x) = \sigma_{\K}(F(\lbar x)) \cdot (\tbk \circ \rv)(\jcb_{\VF}F(\lbar x))
\]
then $F$ is a \emph{morphism} of $\mVF[k]$. We also say that such an $F$ is a \emph{measure-preserving map}. Set $\mVF[*] = \coprod_k \mVF[k]$.
\end{defn}

For $\mathbf{A} = (A, \omega) \in \mVF[k]$, we shall sometimes write $\dim_{\VF}(\mathbf{A})$ for $\dim_{\VF}(A)$. Let $F : \mathbf{A} \fun \mathbf{B}$ be a $\mVF[k]$-morphism. For any $\Gamma$-partition $\pi$ of $F$ and every $\lbar \gamma \in \ran(\pi)$, $\pi^{-1}(\lbar \gamma) = F_{\lbar \gamma}$ is an $\csn(\lbar \gamma)$-$\lan{RV}$-definable $\mVF[k]$-morphism between the objects $\dom(F_{\lbar \gamma}) \sub \mathbf{A}$ and $\ran(F_{\lbar \gamma}) \sub \mathbf{B}$, which is also a morphism in the sense of \cite[Definition~10.1]{Yin:special:trans}. If $F$ is not a trivial morphism, that is, if $\dim_{\VF}(\mathbf{A}) = k$, then, by Lemma~\ref{dim:comp:fiberwise}, some $F_{\lbar \gamma}$ is a nontrivial morphism.

\begin{rem}\label{lift:V:iso:R:iso}
For any $(\mathbf{U}, \omega) \in \mRV[k]$, the \emph{lift} $\bb L(\mathbf{U}, \omega) = (\bb L \mathbf{U}, \bb L \omega) \in \mVF[k]$ of $(\mathbf{U}, \omega)$ is defined in the obvious way. Let $F : \mathbf{U} \fun \mathbf{V}$ be an $\RV[k]$-morphism and $F^{\uparrow}$ a lift of $F$. If $F^{\uparrow}$ is a $\mVF[k]$-morphism between $(\bb L \mathbf{U}, \bb L \omega)$ and $(\bb L \mathbf{V}, \bb L \sigma)$ then, under Hypothesis~\ref{hyp:jac}, we see that the proof of \cite[Lemma~9.15]{Yin:special:trans} may be easily adapted to show that $F$ is indeed a $\mRV[k]$-morphism between $(\mathbf{U}, \omega)$ and $(\mathbf{V}, \sigma)$.
\end{rem}

%The following lemma is meant to be an analog of~\cite[Theorem~9.16]{Yin:special:trans}. However, it is much simpler:
%
%\begin{lem}\label{iso:lifted:vol:dag}
%Let $F : (U, f) \fun (V, g)$ be an $\RV[k, \cdot]$-morphism and $\rho_{\K} : U \fun \K^{\times}$ be a definable function. Then there is a lift $F^{\uparrow} : \bb L(U, f) \fun \bb L (V, g)$ of $F$ such that $(\tbk \circ \rv)(\jcb_{\VF} F^{\uparrow}(\lbar a, \lbar u)) = \rho_{\K}(\lbar u)$ for all $(\lbar a, \lbar u) \in \bb L(U, f)$.
%\end{lem}
%\begin{proof}
%We may assume $f(U), g(V) \sub (\K^{\times})^{k}$. First consider the case $k=1$. Let $\lbar u \in U$, $t = f(\lbar u)$, $s = g(F(\lbar u))$, and $r = \rho_{\K}(\lbar u)$. Since we have the prescribed centers $\sn(t)$, $\sn(s)$, and $\sn(r)$, it is easily seen that there is a $\lbar u$-definable bijection $F^{\uparrow}_{\lbar u} : \rv^{-1}(t) \fun \rv^{-1}(s)$ such that $\der_{a} F^{\uparrow}_{\lbar u} = \sn(r)$ for all $a \in \rv^{-1}(t)$. In general, for $k > 1$, we may apply this construction in one of the coordinates and use additive translation in the other coordinates.
%\end{proof}

%\section{Special bijections and contractions}\label{section:spec:contr}

\begin{prop}\label{L:measure:surjective:dag}
Every $\mathbf{A} \in \mVF[k]$ is isomorphic to another object $\bb L \mathbf{U} $ of $\mVF[k]$, where $\mathbf{U} \in \mRV[k]$.
\end{prop}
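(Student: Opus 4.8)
The plan is to imitate the proof of the coarse version, Proposition~\ref{L:sur:c}: reduce fibrewise to the $\ACVF$ situation by means of a $\Gamma$-partition, apply the measure-preserving lifting theorem for $\ACVF$ on each fibre, and reassemble by compactness; the one genuinely new point is keeping track of the volume form across the $\Gamma$-fibres.

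First I would dispose of the degenerate case $\dim_{\VF}(A) < k$, in which $A$ is negligible and $\mathbf{A}$ is $\mVF[k]$-isomorphic to the empty object; so assume $\dim_{\VF}(A) = k$. Since $A \in \VF[k]$ is $\mdl L_{\wt{\bb T}}$-definable it carries a $\Gamma$-partition $\pi : A \fun \Gamma_{\infty}^{l}$, each fibre $A_{\lbar\gamma} = \pi^{-1}(\lbar\gamma)$ being $\csn(\lbar\gamma)$-$\lan{RV}$-definable and contained in a coset of $(\rv^{-1}(\K^{\times}))^{n}\times(\K^{\times})^{m}$. Refining $\pi$ I would arrange, using Lemma~\ref{dim:comp:fiberwise}, that $\dim_{\VF}(A_{\lbar\gamma}) = k$ for every $\lbar\gamma$, and, by a syntactic inspection of a formula defining $\omega$ together with the twistoid condition and an argument as in Corollary~\ref{csn:Gamma:fin} (to get finitely many values of the $\Gamma$-data per fibre), that $\omega\rest A_{\lbar\gamma}$ is $\csn(\lbar\gamma)$-$\lan{RV}$-definable. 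Thus each $(A_{\lbar\gamma}, \omega\rest A_{\lbar\gamma})$ becomes an object of the $\mVF[k]$-category for $\ACVF$ over the $\VF$-generated base $S\cup\{\csn(\lbar\gamma)\}$.

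Next I would invoke the measure-preserving form of Step~1 for $\ACVF$ — surjectivity, up to $\mVF[k]$-isomorphism, of the $\ACVF$-lift, obtained in \cite{Yin:int:acvf, Yin:special:trans} from the contractibility-by-special-bijections results of \cite[Theorem~5.4]{Yin:special:trans} — which for each $\lbar\gamma$ yields a $\csn(\lbar\gamma)$-$\lan{RV}$-definable $\mathbf{U}_{\lbar\gamma}\in\mRV[k]$ and a $\csn(\lbar\gamma)$-$\lan{RV}$-definable measure-preserving bijection $F_{\lbar\gamma} : \bb L\mathbf{U}_{\lbar\gamma} \fun (A_{\lbar\gamma}, \omega\rest A_{\lbar\gamma})$; on an $\lan{RV}$-definable object $\bb L$ is literally the lift of \cite[Definition~4.18]{Yin:special:trans}, so the section $\sn$ plays no role at this stage. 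Since $\csn$ is a symbol of $\mdl L_{\wt{\bb T}}$, compactness then supplies these data uniformly: an $\mdl L_{\wt{\bb T}}$-definable $\mathbf{U}\in\mRV[k]$, carrying $l$ extra $\RV$-coordinates that are pinned to $\rcsn(\lbar\gamma)$ over the $\lbar\gamma$-piece and whose $\lbar\gamma$-section presents $\mathbf{U}_{\lbar\gamma}$, and an $\mdl L_{\wt{\bb T}}$-definable bijection $F : \bb L\mathbf{U}\fun A$ with $F\rest\bb L\mathbf{U}_{\lbar\gamma} = F_{\lbar\gamma}$, so that $\bb L\mathbf{U} = \coprod_{\lbar\gamma}\bb L\mathbf{U}_{\lbar\gamma}$ under the evident identification. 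That $\mathbf{U}\in\mRV[k]$ (finite $\RV$-fibres of the presentation, correct $\RV$-dimension) follows from the fibrewise statements and from $\pi$ taking values in $\Gamma$.

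Finally I would check that $F$ is a $\mVF[k]$-morphism $\bb L\mathbf{U}\fun\mathbf{A}$. The $\K$-part of the measure-preserving condition lives inside a single $\rv$-polydisc, hence inside one fibre $A_{\lbar\gamma}$, where it holds because $F_{\lbar\gamma}$ is measure-preserving. The $\Gamma$-part is verified just as for $\Gamma$-volume forms alone (the $\mgVF$-categories), combining the fibrewise identities with the $\Gamma$-volume bookkeeping that $\bb L$ attaches to the twisted object $\mathbf{U}$ (Lemma~\ref{shift:K:csn:def}, the $\Gamma$-Jacobian conventions, and Remark~\ref{lift:V:iso:R:iso}), and using Lemma~\ref{dim:comp:fiberwise} to fold the fibrewise exceptional sets into a single $\VF$-definable set of $\VF$-dimension $<k$, which is harmless since $\mVF[k]$-morphisms need only be essential bijections. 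The hard part will be exactly this last step: verifying that the $\Gamma$-volume form carried by the glued lift $\bb L\mathbf{U}$ — which mixes the $\Gamma$-data of the twist index $\rcsn(\lbar\gamma)$ with the $\Gamma$-volume form of the fibre $\mathbf{U}_{\lbar\gamma}$ — is transported by $F$ precisely onto $\omega_{\Gamma}$, and that the fibrewise exceptional sets genuinely organize into one definable set of $\VF$-dimension $<k$; the rest is a routine adaptation of the $\ACVF$ argument and of the proof of Proposition~\ref{L:sur:c}.
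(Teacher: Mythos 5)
Your proposal is correct and follows essentially the same route as the paper: the paper's proof also takes a $\Gamma$-partition of $\mathbf{A}$, applies the measure-preserving lifting result for $\ACVF$ (namely \cite[Theorem~10.4]{Yin:special:trans}) to each $\csn(\lbar\gamma)$-$\lan{RV}$-definable fibre to get an isomorphism with some $\bb L\mathbf{U}_{\lbar\gamma}$, and then glues these by Lemma~\ref{dim:comp:fiberwise} and compactness. The volume-form bookkeeping you flag as the hard part is left implicit in the paper's two-line argument.
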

\begin{proof}
Let $\pi$ be a $\Gamma$-partition of $\mathbf{A}$. By \cite[Theorem~10.4]{Yin:special:trans}, there is an $\csn(\lbar \gamma)$-$\lan{RV}$-definable $\mVF[k]$-isomorphism between $\pi^{-1}(\lbar \gamma)$ and an object $\bb L \mathbf{U}_{\lbar \gamma} \in \mVF[k]$, where $\mathbf{U}_{\lbar \gamma} \in \mRV[k]$. By Lemma~\ref{dim:comp:fiberwise} and compactness, these isomorphisms may be glued together to form one isomorphism in $\mVF[k]$.
\end{proof}

\begin{prop}\label{L:semi:gr:dag}
Let $F : (\mathbf{U}, \omega) \fun (\mathbf{U}', \omega')$ be an $\mRV[k]$-isomorphism. Then there exists a measure-preserving lift $F^{\uparrow} : \bb L(\mathbf{U}, \omega) \fun \bb L (\mathbf{U}', \omega')$ of $F$.
\end{prop}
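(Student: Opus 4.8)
The plan is to argue fiberwise over a $\Gamma$-partition and then glue, exactly as in the proof of Proposition~\ref{L:measure:surjective:dag}. First I would choose a $\Gamma$-partition $\pi$ of $F$ (legitimate, since $F$ has only $\RV$-coordinates and hence is $\mdl L_{\wt{\bb T}}$-definable by Corollary~\ref{RV:sub:nosn}). Refining $\pi$ with the help of Corollary~\ref{csn:Gamma:fin} and Lemma~\ref{shift:K:csn:def}, I may assume that for every $\lbar\gamma\in\ran(\pi)$ the restriction $F_{\lbar\gamma}$, its domain $\pr_U(\pi^{-1}(\lbar\gamma))$ and range $\pr_{U'}(\pi^{-1}(\lbar\gamma))$, the restrictions of $\omega$ and $\omega'$, and all the relevant twistbacks are $\csn(\lbar\gamma)$-$\lan{RV}$-definable, and that $\omega_{\Gamma}$, $\omega'_{\Gamma}$ are constant along the fibers. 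Writing $\mathbf U_{\lbar\gamma}$, $\mathbf U'_{\lbar\gamma}$ for the evident restrictions of $(\mathbf U,\omega)$, $(\mathbf U',\omega')$, the map $F_{\lbar\gamma}:\mathbf U_{\lbar\gamma}\fun\mathbf U'_{\lbar\gamma}$ is then an $\mRV[k]$-isomorphism in the sense of \cite{Yin:special:trans, Yin:int:acvf}, over $\ACVF(\csn(\lbar\gamma))$ --- this is the reading of the discussion preceding Remark~\ref{lift:V:iso:R:iso}.

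Next I would invoke the $\ACVF$ analogue of the present proposition --- the measure-preserving version, \cite[\S10]{Yin:special:trans}, resting on \cite[\S6]{hrushovski:kazhdan:integration:vf} (see also \cite[\S6]{Yin:int:acvf}) --- which was advertised above as ``needed below''. It provides, for each $\lbar\gamma$, a $\csn(\lbar\gamma)$-$\lan{RV}$-definable measure-preserving lift $F^{\uparrow}_{\lbar\gamma}:\bb L(\mathbf U_{\lbar\gamma})\fun\bb L(\mathbf U'_{\lbar\gamma})$ of $F_{\lbar\gamma}$. By compactness these can be chosen uniformly, i.e.\ as a single $\mdl L_{\wt{\bb T}}$-definable family indexed by $\ran(\pi)$. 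Since the lifting operator commutes with the disjoint decomposition induced by $\pi$ --- the sets $\bb L(\mathbf U_{\lbar\gamma})$ partition $\bb L(\mathbf U,\omega)$, and, $F$ being a bijection $U\fun U'$, the sets $\bb L(\mathbf U'_{\lbar\gamma})$ partition $\bb L(\mathbf U',\omega')$ --- the union $F^{\uparrow}=\bigcup_{\lbar\gamma}F^{\uparrow}_{\lbar\gamma}$ is an $\mdl L_{\wt{\bb T}}$-definable bijection $\bb L(\mathbf U,\omega)\fun\bb L(\mathbf U',\omega')$, and it is a lift of $F$ because each $F^{\uparrow}_{\lbar\gamma}$ lifts $F_{\lbar\gamma}$ and these assemble to $F$. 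Measure-preservation (Definition~\ref{defn:f:VF:cat}) is a pointwise condition on $\jcb_{\VF}F^{\uparrow}$, which is computed locally and hence agrees on each piece with $\jcb_{\VF}F^{\uparrow}_{\lbar\gamma}$; so the global identities for $\omega_{\K}$ and $\omega_{\Gamma}$ follow from the fiberwise ones, the exceptional subsets of $\VF$-dimension $<k$ remaining of $\VF$-dimension $<k$ globally by Lemma~\ref{dim:comp:fiberwise}.

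The main obstacle is the first step: arranging the $\Gamma$-partition so that all the fiberwise data is genuinely $\csn(\lbar\gamma)$-$\lan{RV}$-definable, and --- more substantively --- checking that the fiberwise notion of measure-preserving morphism of \cite[Definition~10.1]{Yin:special:trans} matches, on a single $\vrv$-fiber, the condition of Definition~\ref{defn:f:VF:cat} here. The $\omega_{\K}$-part is the delicate point: here it is phrased with $(\tbk\circ\rv)(\jcb_{\VF}F^{\uparrow})$, whereas the corresponding $\mRV[k]$-condition involves $\jcb_{\K}$ of a twistback (Definition~\ref{defn:jac}); these coincide on a $\vrv$-fiber precisely because $\tbk$ linearises the $\Gamma$-direction, which is exactly the reason the cross-section $\csn$, rather than the section $\sn$, must be used once volume forms are present. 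Once this translation is in place, the gluing and the verification of measure-preservation are routine, just as in Proposition~\ref{L:measure:surjective:dag}.
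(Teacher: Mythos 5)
Your argument is the same as the paper's: take a $\Gamma$-partition of $F$, observe that each fiber $F_{\lbar\gamma}$ is a $\csn(\lbar\gamma)$-$\lan{RV}$-definable morphism in the sense of \cite[Definition~10.3]{Yin:special:trans}, lift it via \cite[Theorem~10.5]{Yin:special:trans}, and glue by compactness. Your additional remarks on matching the $\omega_{\K}$-condition with the twistback Jacobian only make explicit what the paper compresses into ``may be treated as a morphism as defined in \cite[Definition~10.3]{Yin:special:trans}''.
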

\begin{proof}
Let $\pi$ be a $\Gamma$-partition of $F$. Every $\pi^{-1}(\lbar \gamma) = F_{\lbar \gamma}$ may be treated as a morphism as defined in \cite[Definition~10.3]{Yin:special:trans}. So the assertion follows from \cite[Theorem~10.5]{Yin:special:trans} and compactness.
\end{proof}

\begin{cor}\label{L:sur}
The lifting map $\bb L : \ob \mRV[k] \fun \ob \mVF[k]$ induces a surjective homomorphism, also denoted by $\bb L$, between the Grothendieck semigroups $\bb L : \gsk \mRV[k] \fun \gsk \mVF[k]$.
\end{cor}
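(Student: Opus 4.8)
The plan is to assemble this directly from the two preceding propositions, exactly as the analogous statements are handled in \cite{Yin:int:acvf, Yin:special:trans}. Recall that $\gsk \mRV[k]$ is the Grothendieck semigroup of the category $\mRV[k]$ under disjoint union, so it is generated by the isomorphism classes $[\mathbf{U}]$ of objects $\mathbf{U} \in \mRV[k]$, with $[\mathbf{U}] + [\mathbf{U}'] = [\mathbf{U} \uplus \mathbf{U}']$; likewise for $\gsk \mVF[k]$. One defines $\bb L[\mathbf{U}] = [\bb L \mathbf{U}]$ on generators, and the task is to check that this assignment is well-defined, additive, and surjective.

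For well-definedness, suppose $F : (\mathbf{U}, \omega) \fun (\mathbf{U}', \omega')$ is an $\mRV[k]$-isomorphism. By Proposition~\ref{L:semi:gr:dag} there is a measure-preserving lift $F^{\uparrow} : \bb L(\mathbf{U}, \omega) \fun \bb L(\mathbf{U}', \omega')$, which is by definition an $\mVF[k]$-isomorphism; hence $[\bb L \mathbf{U}] = [\bb L \mathbf{U}']$ in $\gsk \mVF[k]$, and $\bb L$ descends to a map on isomorphism classes. Additivity is immediate from the construction of the lift following \cite[Definition~4.18]{Yin:special:trans}: the lift of a disjoint union $\mathbf{U} \uplus \mathbf{U}'$ is tautologically $\bb L \mathbf{U} \uplus \bb L \mathbf{U}'$, and the attached volume form is the disjoint union of the lifted volume forms, so $\bb L$ respects the semigroup operation and therefore extends uniquely to a semigroup homomorphism $\bb L : \gsk \mRV[k] \fun \gsk \mVF[k]$.

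For surjectivity, let $\mathbf{A} \in \mVF[k]$. By Proposition~\ref{L:measure:surjective:dag}, $\mathbf{A}$ is isomorphic in $\mVF[k]$ to $\bb L \mathbf{U}$ for some $\mathbf{U} \in \mRV[k]$; hence $[\mathbf{A}] = [\bb L \mathbf{U}] = \bb L[\mathbf{U}]$ lies in the image of $\bb L$. Since the classes $[\mathbf{A}]$ generate $\gsk \mVF[k]$, the homomorphism $\bb L$ is surjective.

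There is essentially no obstacle at this stage: all the substance has already been discharged in Propositions~\ref{L:measure:surjective:dag} and~\ref{L:semi:gr:dag}, which in turn rest on \cite[Theorem~10.4, Theorem~10.5]{Yin:special:trans} together with the $\Gamma$-partition and gluing-by-compactness arguments used there. The only point worth making explicit, as above, is that the lifting operation commutes with disjoint union (and with the volume-form data), so that $\bb L$ is genuinely a homomorphism of semigroups and not merely a map of underlying sets.
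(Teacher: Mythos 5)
Your proposal is correct and is exactly the argument the paper intends: the corollary is stated without proof precisely because well-definedness follows from Proposition~\ref{L:semi:gr:dag}, surjectivity from Proposition~\ref{L:measure:surjective:dag}, and compatibility with disjoint union is immediate from the construction of the lift.
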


The inverse of $\bb L$, denoted by $\int_+ : \gsk \mVF[k] \fun \gsk \mRV[k] / \ker(\bb L)$, where $\ker (\bb L)$ is the kernel of $\bb L$, is an isomorphism of semigroups and is in effect the integration we are after. However, to understand the isomorphism $\int_+$ better and to apply it effectively in the future, we need a concrete description of $\ker (\bb L)$. To obtain that, as in~\cite{hrushovski:kazhdan:integration:vf, Yin:int:acvf}, the notion of special bijections in $\VF$-categories plays a key role.

Below we shall refer to a special bijection as defined in~\cite[Defintion~5.1]{Yin:special:trans} as an $\lan{RV}$-definable special bijection.

\begin{defn}
Let $A \sub \VF^n \times \RV^m$ be a definable subset. A bijection $T : A \fun A^{\sharp}$ is a \emph{special bijection on $A$ of length $1$} if for each $\rv$-polydisc $\gp \sub \RVH(A)$ there is an $\sn(\rv(\gp))$-$\lan{RV}$-definable special bijection $T_{\gp}$ on $\gp$ of length at most $1$ such that $T \rest (\gp \cap A) = T_{\gp} \rest (\gp \cap A)$ (all such $T_{\gp}$ of length $1$ target the same $\VF$-coordinate). The subset $C \sub \RVH(A)$ that contains exactly those $\rv$-polydiscs $\gp$ such that $T_{\gp}$ is of length $1$ is called the \emph{locus} of $T$. For each $\rv$-polydisc $\gp \sub C$ let $\lambda_{\gp}$ be the focus map of $T_{\gp} \rest (\gp \cap A)$. The function $\lambda = \bigcup_{\gp} \lambda_{\gp}$ is called the \emph{focus map} of $T$.

Naturally a special bijection $T$ on $A$ of \emph{length $n$}, denoted by $\lh (T) = n$, is a composition of $n$ special bijections $T_i$ of length $1$. Each $T_i$ is a \emph{component} of $T$.

These notions may be formulated in the same way if we work in $\gC^{\wt{\bb T}}$. Of course, in that case, the section $\sn$ is replaced by the cross-section $\csn$ and everything is $\mdl L_{\wt{\bb T}}$-definable.
\end{defn}

\begin{rem}\label{spec:rest:acvf}
Let $A \sub \VF^n \times \RV^m$ and $T : A \fun A^{\sharp}$ be a special bijection with components $T_i$. Clearly if $A$ is an $\RV$-pullback then $A^{\sharp}$ is an $\RV$-pullback. By definition, each $T_i$ is a restriction of a special bijection $R_i$ on $\RVH(\dom(T_i))$ and hence their composition $R$ is a special bijection on $\RVH(A)$. For any $\rv$-polydisc $\gp \sub \RVH(A^{\sharp})$ and any $\sn(\rv(\gp))$-$\lan{RV}$-definable (resp.\ $\csn(\rv(\gp))$-$\lan{RV}$-definable) subset $B \sub \gp$, the restriction $R \rest R^{-1}(B)$ is an $\sn(\rv(\gp))$-$\lan{RV}$-definable (resp.\ $\csn(\rv(\gp))$-$\lan{RV}$-definable) special bijection.
\end{rem}

\begin{lem}\label{special:tran:vol:pre}
For any special bijection $T : A \fun A^{\sharp}$ of length $1$, the Jacobians of $T$ and $T^{-1}$ are equal to $1$ almost everywhere. If $A$ is a nondegenerate $\RV$-pullback then they are equal to $1$ everywhere.
\end{lem}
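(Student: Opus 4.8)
The plan is to compute the $\VF$-Jacobians of $T$ and $T^{-1}$ fiberwise over $\rv$-polydiscs, where $T$ agrees with an $\lan{RV}$-definable special bijection of length at most $1$ whose Jacobian is easy to read off, and then to dispose of the degenerate polydiscs by a dimension count.

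First I would permute the $\VF$-coordinates so that every length-$1$ component $T_{\gp}$ targets the last $\VF$-coordinate $x_{n}$. Fix $(\lbar a, \lbar t) \in A$ and let $\gp \sub \RVH(A)$ be the $\rv$-polydisc through it. If $\gp$ is not in the locus of $T$, then $T_{\gp}$ has length $0$, so $T$ coincides with the identity on $\gp \cap A$ and both Jacobians are $1$ at $(\lbar a, \lbar t)$. If $\gp$ is in the locus and nondegenerate, then $\gp$ is open along its $\VF$-coordinates and $\rv$ is locally constant on $(\VF^{\times})^{n}$, so every point of $A$ that is $\VF$-close to $(\lbar a, \lbar t)$ with its $\RV$-coordinates held fixed again lies in $\gp \cap A$; hence the $\VF$-partial derivatives of $T$ and of $T^{-1}$ at $(\lbar a, \lbar t)$, where they exist, are those of the $\lan{RV}$-definable maps $T_{\gp}$ and $T_{\gp}^{-1}$. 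This reduces the problem to showing $\jcb_{\VF} T_{\gp} = \jcb_{\VF} T_{\gp}^{-1} = 1$ on $\gp \cap A$.

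For this I would use that $T_{\gp}$ sends the $\VF$-coordinates $(x_{1}, \dots, x_{n-1}, x_{n})$ to $(x_{1}, \dots, x_{n-1},\, x_{n} - \lambda_{\gp})$ and inserts the $\RV$-coordinate $\rv(x_{n} - \lambda_{\gp})$, which does not enter the $\VF$-Jacobian; here $\lambda_{\gp}$ depends only on the coordinates other than $x_{n}$, and, being the focus map of an $\lan{RV}$-definable special bijection of length $1$ whose domain is contained in the single $\rv$-polydisc $\pr_{\tilde n}(\gp)$, it is constant on $\gp \cap A$ by \cite[Definition~5.1]{Yin:special:trans}. Therefore all $\VF$-partial derivatives of $\lambda_{\gp}$ vanish, the Jacobian matrix of $T_{\gp}$ at every point of $\gp \cap A$ is the identity matrix, and $\jcb_{\VF} T_{\gp} = 1$; the identical argument for $T_{\gp}^{-1}$, which shifts $x_{n} \mapsto x_{n} + \lambda_{\gp}$, gives $\jcb_{\VF} T_{\gp}^{-1} = 1$. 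Together with the previous paragraph this shows that $\jcb_{\VF} T$ and $\jcb_{\VF} T^{-1}$ are defined and equal to $1$ at every point of $A$ lying in a nondegenerate $\rv$-polydisc of $\RVH(A)$.

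Finally I would control the remaining set $A' = \bigcup \{ \gp \cap A : \gp \sub \RVH(A) \text{ degenerate} \}$. Every such $\gp$ has some $\VF$-coordinate identically $0$, so $A'$ lies in a finite union of coordinate hyperplanes; passing to a presentation of $A$ in which $\pvf \rest A$ is generically finite-to-one (which is harmless, since $\VF$-Jacobians of maps between definable sets are defined through such presentations, cf.\ the discussion following \cite[Corollary~9.9]{Yin:special:trans}) we may assume $\dim_{\VF} A = n$, whence every degenerate polydisc has $\VF$-dimension at most $n-1$ and $\dim_{\VF} A' < \dim_{\VF} A$; this yields the ``almost everywhere'' clause. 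If $A$ is a nondegenerate $\RV$-pullback, then $\RVH(A) = A$ contains no degenerate $\rv$-polydisc, so $A' = \0$ and the Jacobians of $T$ and $T^{-1}$ equal $1$ at every point of $A$. The only step I expect to need care is the interchange of $\VF$-differentiation with the restriction of $T$ to a nondegenerate polydisc, together with the constancy of the focus map on a polydisc; both follow directly once the definitions are unwound, so the proof is essentially bookkeeping.
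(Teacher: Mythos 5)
Your argument is correct and follows essentially the same route as the paper: reduce to each $\rv$-polydisc $\gp$, where $T$ agrees with the $\sn(\rv(\gp))$-$\lan{RV}$-definable special bijection $T_{\gp}$, whose Jacobian matrix is unipotent triangular because the targeted coordinate is translated by a focus map not depending on that coordinate. The paper's proof is simply a citation of \cite[Lemma~9.11]{Yin:special:trans}, which is exactly the statement for $\lan{RV}$-definable special bijections that your second and third paragraphs re-derive (and note that even without the constancy of $\lambda_{\gp}$ on the polydisc, the triangular shape of the Jacobian matrix already forces determinant $1$ wherever the derivative exists).
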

\begin{proof}
This is immediate by~\cite[Lemma~9.11]{Yin:special:trans}.
\end{proof}

\begin{rem}
Many results below hold in both $\gC^{\bb T}$ and $\gC^{\wt{\bb T}}$ and the proofs are essentially identical if the section $\sn$ and the cross-section $\csn$ are interchanged everywhere. We shall quote them in both versions. However, to avoid repetition, whenever this is the case we shall only present the version for $\wt{\bb T}$ and leave the other one for the reader. In particular, we shall work in $\gC^{\wt{\bb T}}$ in the rest of this section.
\end{rem}

We can easily generalize \cite[Theorem~5.4]{Yin:special:trans} if the terms in question do not contain any $\RV$-sort variables:

\begin{lem}\label{spec:bi:poly:cons:noRV}
Let $\tau(\lbar X): \VF^n \fun \VF$ be an $\mdl L_{\wt{\bb T}}$-term, $\lbar t \in \RV^n$, $R : \rv^{-1}(\lbar t) \fun A$ a special bijection, and $f = \tau \circ R^{-1}$. Then there is a special bijection $T$ on $A$ such that the function $f \circ T^{-1}$ is contractible.
\end{lem}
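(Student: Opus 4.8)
The plan is to induct on the $\csn$-complexity $\abs{\tau}$ of $\tau$, defined exactly as in Definition~\ref{defn:complex} but with $\csn$ playing the role of $\sn$, so that $\abs{\tau}$ records the maximal nesting of applications of $\csn$ that involve a $\VF$-variable. In the base case $\abs{\tau}=0$ no $X_i$ occurs inside a $\csn$; hence, as a function of $\lbar X$, $\tau$ coincides with a polynomial over $\VF(S)$ (recall $\csn(\Gamma(S))\sub\VF(S)$, as $S$ is closed under $\csn$), and \cite[Theorem~5.4]{Yin:special:trans} — applied to this polynomial and the given $R$ — produces the required special bijection $T$.

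For the inductive step, let $\abs{\tau}=d>0$ and let $F_1(\lbar X),\dots,F_N(\lbar X)$ enumerate the occurring $\VF$-terms of $\tau$, i.e.\ the $\VF$-subterms $F$ such that $\rv(F)$ (equivalently $\vv(F)$) is a subterm of $\tau$. As every such $\rv(F_k)$ lies inside some subterm of $\tau$ of the form $\csn(\sigma)$, which has $\csn$-complexity $\le d$, we get $\abs{F_k}\le d-1$, so the inductive hypothesis is available for each $F_k$. I would apply it to $F_1$ with the given $R$, obtaining a special bijection on $A$ after which $F_1$ (pre-composed with $R^{-1}$ and this bijection) is $\rv$-contractible; then to $F_2$ with the special bijection on $\rv^{-1}(\lbar t)$ built so far taken as the new $R$; and so on through $F_N$. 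This relies on two routine facts: special bijections are closed under composition (lengths add), and $\rv$-contractibility of a function is preserved when one further post-composes its domain with a special bijection, because the latter carries each $\rv$-polydisc of its range back into a single $\rv$-polydisc of its source (cf.\ \cite[\S 5]{Yin:special:trans}). The resulting composite $S_1$ is then a special bijection on $A$ such that $F_1\circ R^{-1}\circ S_1^{-1},\dots,F_N\circ R^{-1}\circ S_1^{-1}$ are all $\rv$-contractible. Since $\ran(S_1)$ is then an $\RV$-pullback (Remark~\ref{spec:rest:acvf}), on each $\rv$-polydisc $\gp\sub\ran(S_1)$ every $\rv(F_k)$ is constant, hence so is every argument $\sigma$ of a $\csn$ occurring in $\tau$ — it is built from the $\rv(F_k)$'s and $\vv(F_k)=\vrv(\rv(F_k))$ using only $\RV$-sort constants and operations — and therefore so is every $\csn(\sigma)$. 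Consequently $\tau\circ R^{-1}\circ S_1^{-1}$, restricted to any $\gp$, agrees with a polynomial in $\lbar X$ whose coefficients lie in a $\csn(\rv(\gp))$-$\lan{RV}$-definable subfield of $\VF$. Working $\rv$-polydisc by $\rv$-polydisc — which is precisely how definable special bijections are assembled — I apply the relativized (base-$\csn(\rv(\gp))$) form of \cite[Theorem~5.4]{Yin:special:trans} to each such restriction and glue by compactness, obtaining a special bijection $S_2$ on $\ran(S_1)$; then $T=S_2\circ S_1$ is a special bijection on $A$ and $f\circ T^{-1}=\tau\circ R^{-1}\circ T^{-1}$ is contractible.

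The step I expect to be the main obstacle is this final ``polynomial'' reduction: after $S_1$ the residual polynomial genuinely varies with the $\rv$-polydisc $\gp$ (its coefficients are the frozen values of the $\csn(\sigma)$'s, which depend on $\rv(\gp)$), so the base case cannot be quoted verbatim. What rescues it is that a \emph{definable} special bijection is by definition built $\rv$-polydisc by $\rv$-polydisc out of $\csn(\rv(\gp))$-$\lan{RV}$-definable pieces — exactly the output of the relativized \cite[Theorem~5.4]{Yin:special:trans} — with compactness supplying the uniformity; one only needs to verify that this gluing yields a genuine (definable) special bijection. The remaining points — closure under composition, preservation of $\rv$-contractibility, and the harmless degenerate configurations (an $F_k$ vanishing on some $\gp$, so that the corresponding $\csn$-value equals $0$, still constant there) — are routine.
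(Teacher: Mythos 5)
Your proof is correct and follows essentially the same route as the paper's: stratify by $\csn$-complexity, use special bijections to freeze the values of the lower-complexity occurring $\VF$-terms so that on each $\rv$-polydisc $\gp$ the term becomes a polynomial over $\dcl^{\wt{\bb T}}(\rv(\gp))$, then apply the relativized polynomial result of \cite{Yin:special:trans} polydisc by polydisc and glue by compactness (the paper organizes this as an iteration over complexity levels rather than an induction on $\abs{\tau}$, and invokes \cite[Theorem~5.5]{Yin:special:trans} together with Remark~\ref{spec:rest:acvf} for exactly the relativization step you single out as the main obstacle). The only quibble is the citation: the polynomial-contraction statement used here is Theorem~5.5 of \cite{Yin:special:trans}, not Theorem~5.4.
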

\begin{proof}
First observe that if the assertion holds for one such term $\tau$ then it holds simultaneously for any finite number of such terms. Let $F_{ki}(\lbar X)$ enumerate all the occurring $\VF$-terms of $\tau$ such that $| F_{ki}(\lbar X) | = k$. By compactness, it is enough to concentrate on one $\rv$-polydisc $\gp_0 \sub A$. By Remark~\ref{spec:rest:acvf} and \cite[Theorem~5.5]{Yin:special:trans}, there is an $\rv(\gp_0)$-$\lan{RV}$-definable special bijection $T_0$ on $\gp_0$ such that, for every $\rv$-polydisc $\gq \sub T_0(\gp_0)$,
\[
(\rv \circ F_{0i} \circ R^{-1} \circ T_0^{-1})(\gq)
\]
is a singleton $\{ s^{\gq}_{0i} \}$ for all $i$.

By compactness again, it is enough to concentrate on one $\rv$-polydisc $\gp_1 \sub T_0(\gp_0)$. Let $F^{\gp_1}_{1i}(\lbar X)$ be the $\lan{RV}$-term obtained from $F_{1i}(\lbar X)$ by replacing each $\rv(F_{0i}(\lbar X))$ with $s^{\gp_1}_{0i}$. Each $F^{\gp_1}_{1i}(\lbar X)$ may be written as a polynomial $\sum_j a_j \lbar X^j$ with $a_j \in \dcl^{\wt{\bb T}}(\rv(\gp_1))$. By Remark~\ref{spec:rest:acvf} and \cite[Theorem~5.5]{Yin:special:trans} again, there is a $\dcl^{\wt{\bb T}}(\rv(\gp_1))$-definable special bijection $T_1$ on $\gp_1$ such that, for every $\rv$-polydisc $\gq \sub T_1(\gp_1)$,
\[
(\rv \circ F^{\gp_1}_{1i} \circ R^{-1} \circ T_0^{-1} \circ T_1^{-1})(\gq)
\]
is a singleton $\{ s^{\gq}_{1i} \}$ for all $i$.

Repeating this procedure for all $F_{ki}(\lbar X)$ of higher complexity, we see that there is a special bijection $T$ on $A$ as desired.
\end{proof}

The following lemma should be viewed as a joint generalization of~\cite[Lemma~4.10, Lemma~4.12]{Yin:QE:ACVF:min}.

\begin{lem}\label{VF:image:RV:inj}
Let $B \sub \rv^{-1}(\lbar t) \times \RV^m$ be a definable subset such that $\prv \rest B$ is finite-to-one. Then there is a special bijection $T$ on $\rv^{-1}(\lbar t)$ such that $\pvf(T(\pvf(B))) = \{\lbar 0\}$, that is, $T(\pvf(B))$ is a union of $\rv$-polydiscs of the form $(\lbar 0, \lbar \infty, \lbar s)$. Consequently, there is a definable injection $\pvf(B) \fun \RV^l$ for some $l$.
\end{lem}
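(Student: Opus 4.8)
The plan is to pass from $B$ to its $\VF$-projection, whose defining formula involves no $\RV$-variables, and then to invoke Lemma~\ref{spec:bi:poly:cons:noRV}. Working in $\gC^{\wt{\bb T}}$, set $X = \pvf(B) \sub \rv^{-1}(\lbar t) \sub \VF^n$. Since $\prv \rest B$ is finite-to-one, the map $\prv \rest B : B \fun \VF^0 \times \RV^m$ witnesses $\dim_{\VF}(B) = 0$, and as $X$ is a definable image of $B$, the standard properties of $\VF$-dimension (see~\cite[\S 4]{Yin:special:trans}) give $\dim_{\VF}(X) = 0$. So it suffices to show: for every definable $X \sub \rv^{-1}(\lbar t)$ with $\dim_{\VF}(X) = 0$ there is a special bijection $T$ on $\rv^{-1}(\lbar t)$ with $\pvf(T(X)) = \{\lbar 0\}$. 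Granting this, $T(\pvf(B)) = T(X)$ is a union of $\rv$-polydiscs with $\VF$-part constantly $\lbar 0$; the projection onto the $\RV$-coordinates is then injective on $T(X)$, and precomposing with $T$ yields the asserted definable injection $\pvf(B) \fun \RV^l$.

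For the reduced statement, by Proposition~\ref{T:qe} fix a quantifier-free $\mdl L_{\wt{\bb T}}$-formula $\psi(\lbar X)$ defining $X$ and containing no $\VF$-sort equalities, and let $\tau_1(\lbar X), \ldots, \tau_r(\lbar X) : \VF^n \fun \VF$ enumerate its occurring $\VF$-terms. Applying Lemma~\ref{spec:bi:poly:cons:noRV} successively --- first to $\tau_1$ with $R = \id$, then to $\tau_2$ with $R$ the special bijection just produced (so that $f = \tau_2 \circ R^{-1}$), and so on --- one obtains a single special bijection $T$ on $\rv^{-1}(\lbar t)$ such that every $\tau_i \circ T^{-1}$ is contractible. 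Here one uses that a special bijection refines the partition into $\rv$-polydiscs and that the inverse of a special bijection carries each $\rv$-polydisc into a single $\rv$-polydisc, so that contractibility of the terms already treated is not destroyed by the later special bijections. By Remark~\ref{spec:rest:acvf} the set $A := T(\rv^{-1}(\lbar t))$ is an $\RV$-pullback, so this says precisely that $\rv \circ (\tau_i \circ T^{-1})$ is constant on every $\rv$-polydisc $\gq \sub A$.

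The last step is the observation that, because $\psi$ has no $\RV$-sort variables and no $\VF$-sort equalities, the truth value of $\psi$ at a point depends on that point only through the values $\rv(\tau_i(\cdot))$ --- every literal of $\psi$ is an $\RV$-sort condition on terms built from these (possibly also using $\csn$, $\vrv$, and whatever extra $\RV$-structure $\wt{\bb T}$ carries, which causes no trouble here). Hence $\psi \circ T^{-1}$ is constantly true or constantly false on each $\gq \sub A$, so $T(X)$ is the union of those $\gq$ on which it is true. As $T$ is a definable bijection, $\dim_{\VF}(T(X)) = \dim_{\VF}(X) = 0$, which forces $\dim_{\VF}(\gq) = 0$ for every such $\gq$; that is, all $\VF$-coordinates of $\gq$ vanish. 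Therefore $\pvf(T(X)) = \{\lbar 0\}$, as wanted.

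The construction of the special bijection is delegated entirely to Lemma~\ref{spec:bi:poly:cons:noRV}. The real content is the reduction from $B$ to $\pvf(B)$ (so that the formula in play has no $\RV$-variables and Lemma~\ref{spec:bi:poly:cons:noRV} applies), together with the remark that once the $\rv$ of all occurring $\VF$-terms is made locally constant the formula itself becomes locally constant, after which the hypothesis $\dim_{\VF}(X) = 0$ finishes the job. I expect the one point genuinely requiring care to be the verification that successive special bijections preserve the contractibility of the $\VF$-terms handled at earlier stages.
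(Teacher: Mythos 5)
Your proof follows the paper's argument essentially step for step: take a quantifier-free formula defining $\pvf(B)$, apply Lemma~\ref{spec:bi:poly:cons:noRV} to its occurring $\VF$-terms so that the formula becomes constant on $\rv$-polydiscs, and then rule out any non-degenerate polydisc contained in $T(\pvf(B))$. The only divergence is that last step: the paper notes that every point of $\pvf(B)$ lies in a finite $\lbar s$-definable set (finiteness of the fibers of $\prv \rest B$), hence in $\VF(\acl^{\wt{\bb T}}(\RV))$, which contains no open polydisc of $\gC$ by Remark~\ref{rem:monster:imme}, whereas you route through $\dim_{\VF}(\pvf(B)) = 0$. Be careful there: since $\dim_{\VF}$ is defined via the existence of a definable finite-to-one map into $\VF^k \times \RV^l$, and one cannot definably choose an element of $\fib(B, \lbar a) \sub \RV^m$, the claim $\dim_{\VF}(\pvf(B)) = 0$ is essentially the ``consequently'' clause you are trying to prove, so it should be justified via the transcendence-degree characterization of $\VF$-dimension (Lemmas~\ref{VF:dim:conserv} and~\ref{dim:comp:fiberwise}) rather than quoted as a standard property of definable images.
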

\begin{proof}
Let $\phi(\lbar X)$ be a quantifier-free formula that defines $\pvf(B)$. Let $F_i(\lbar X)$ enumerate all the top occurring $\VF$-terms of $\phi(\lbar X)$. By Lemma~\ref{spec:bi:poly:cons:noRV}, there is a special bijection $T$ on $\rv^{-1}(\lbar t)$ such that every function $F_i \circ T^{-1}$ is contractible. Therefore, for every $\rv$-polydisc $\gp \sub T(\rv^{-1}(\lbar t))$, either $T^{-1}(\gp) \sub \pvf(B)$ or $T^{-1}(\gp) \cap \pvf(B) = \0$. Since $\acl^{\wt{\bb T}}(\RV) \models \wt{\bb T}(S)$, we see that if $T^{-1}(\gp) \sub \pvf(B)$ then $\gp$ must be a point, that is, $\gp$ must be of the form $(\lbar 0, \lbar \infty, \lbar s)$.
\end{proof}

\begin{lem}\label{two:spec:contra:condi}
Let $T$, $R$ be two special bijections on $\rv^{-1}(t)$. Let $R_i$ be the components of $R$, $\lambda_i$ the focus map of $R_i$, and $A_i \sub \rv^{-1}(t)$ the image of (the graph of) $\lambda_i$ under $\wh{R}^{-1}_i$, where $\wh{R}_i = R_i \circ \cdots \circ R_1$. If $\pvf(T(A_i)) = \{0\}$ for all $i$ then $R \circ T^{-1}$ is contractible.
\end{lem}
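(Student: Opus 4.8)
The plan is to induct on $n=\lh(R)$. For $n=0$ we have $R=\id$, so $R\circ T^{-1}=T^{-1}$ takes values in the single $\rv$-polydisc $\rv^{-1}(t)\sub\VF^n$; hence $\rv\circ T^{-1}$ is constantly $t$ and $T^{-1}$ is trivially contractible (there are no $A_i$, so the hypothesis is vacuous). For $n\ge 1$ write $R=R_n\circ\wh R_{n-1}$, where $\wh R_{n-1}=R_{n-1}\circ\cdots\circ R_1$ is a special bijection on $\rv^{-1}(t)$ of length $n-1$ with the same first $n-1$ components, focus maps and associated sets $A_1,\dots,A_{n-1}$. As $\pvf(T(A_i))=\{\lbar 0\}$ for $i\le n-1$, the inductive hypothesis yields that $g:=\wh R_{n-1}\circ T^{-1}$ is contractible; moreover, by the definition of $A_n$, $g^{-1}(\text{graph }\lambda_n)=T\bigl(\wh R_{n-1}^{-1}(\text{graph }\lambda_n)\bigr)=T(A_n)$, which has all $\VF$-coordinates equal to $\lbar 0$. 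So it remains to show: if $g$ is contractible and $\pvf(g^{-1}(\text{graph }\lambda_n))=\{\lbar 0\}$, then $R_n\circ g$ is contractible.

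For this I would fix an $\rv$-polydisc $\gp\sub\RVH(\dom g)$ meeting $\dom g$. If every $\VF$-coordinate of $\gp$ is $0$ then $\gp$ is a point and there is nothing to prove. Otherwise $\gp\cap g^{-1}(\text{graph }\lambda_n)=\0$, so $g(\gp\cap\dom g)$ misses $\text{graph }\lambda_n$; by contractibility of $g$ it is also contained in a single $\rv$-polydisc $\gq$. If $\gq$ lies outside the locus of $R_n$ then $R_n\rest\gq$ acts as the identity up to a constant $\RV$-coordinate and we are done; if $\gq$ lies in the locus, then $R_n\rest\gq$ has the form $z\mapsto(z_j-\lambda_n^{\gq}(\hat z),\hat z,\rv(z_j-\lambda_n^{\gq}(\hat z)))$ for an $\sn(\rv(\gq))$-$\lan{RV}$-definable center $\lambda_n^{\gq}$ whose image lies in the single $\rv$-ball $\pr_j(\gq)$. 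Since $g(\gp\cap\dom g)$ avoids $\text{graph }\lambda_n$, the value $\rv(z_j-\lambda_n^{\gq}(\hat z))$ is everywhere defined on it and (using that $\pr_j(\gq)$ is a single $\rv$-ball) equals $\rv(c-\lambda_n^{\gq}(\hat z))$ for a center $c$ of $\pr_j(\gq)$; combined with contractibility of $g$ in the remaining coordinates, the conclusion $(\rv\circ R_n\circ g)(\gp\cap\dom g)$ is a singleton follows once one knows that $\rv(c-\lambda_n^{\gq}(\hat z))$ is constant as $\hat z$ ranges over $\pr_{\hat j}(g(\gp\cap\dom g))$.

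The main obstacle is exactly this last constancy, equivalently that $\rv\bigl(\phi_n\circ\wh R_{n-1}\circ T^{-1}\bigr)$ is constant on $\gp\cap\dom g$, where $\phi_n(z)=z_j-\lambda_n(\hat z)$ is the essential coordinate of $R_n$. This is where the hypothesis must be used for \emph{all} $i$ simultaneously: because $T$ has pushed every $A_i$ into the zero-$\VF$ locus, the set $T^{-1}(\gp)$ is fine enough with respect to all of $\lambda_1,\dots,\lambda_n$ at once. To make this precise I would invoke Remark~\ref{spec:rest:acvf} to present $T^{-1}\rest\gp$ as an $\csn(\rv(\gp))$-$\lan{RV}$-definable special bijection inverse and then run the $\rv$-contractibility bookkeeping of Lemma~\ref{spec:bi:poly:cons:noRV} on the resulting $\csn(\rv(\gp))$-$\lan{RV}$-definable terms inside $\ACVF$, exploiting $\acl^{\wt{\bb T}}(\RV)\models\wt{\bb T}(S)$ as in the proof of Lemma~\ref{VF:image:RV:inj}. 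An alternative, possibly cleaner, route is to use $\pvf(T(A_1))=\{\lbar 0\}$ to factor $T$, up to a contractible modification (which does not affect contractibility of $R\circ T^{-1}$), as $T=T^{\flat}\circ R_1$ for a special bijection $T^{\flat}$ on $R_1(\rv^{-1}(t))$; then $R\circ T^{-1}=(R_n\circ\cdots\circ R_2)\circ(T^{\flat})^{-1}$, the sets $A'_i=R_1(A_i)$ associated with $R_n\circ\cdots\circ R_2$ satisfy $\pvf(T^{\flat}(A'_i))=\pvf(T(A_i))=\{\lbar 0\}$, and the statement follows by the induction on $\lh(R)$.
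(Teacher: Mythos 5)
Your overall strategy---induct on $\lh(R)$, peel off the last component $R_n$, and use $\pvf(T(A_n))=\{\lbar 0\}$ to see that a non-degenerate polydisc $\gp$ has $g(\gp\cap\dom g)$ disjoint from the graph of $\lambda_n$---is sound, and is essentially the contrapositive of the paper's argument, which instead locates the first component of $R$ at which contractibility breaks. But there is a genuine gap at the step you yourself flag as ``the main obstacle'', and it is larger than you indicate. Knowing only that $g(\gp\cap\dom g)$ lies in a single $\rv$-polydisc $\gq$ of the locus of $R_n$ and misses the graph of $\lambda_n$ does \emph{not} yield constancy of $\rv(z_j-\lambda_n(\hat z))$ there: for fixed $\hat z$, as $z_j$ ranges over $\pr_j(\gq)\mi\{\lambda_n(\hat z)\}$ the quantity $\rv(z_j-\lambda_n(\hat z))$ takes \emph{every} value of valuation greater than that of $\pr_j(\gq)$, so for instance $\gq$ minus the graph of $\lambda_n$ satisfies both of your hypotheses and is nevertheless scattered by $R_n$ over infinitely many polydiscs. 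In particular your claim that $\rv(z_j-\lambda_n^{\gq}(\hat z))$ equals $\rv(c-\lambda_n^{\gq}(\hat z))$ for ``a center $c$ of $\pr_j(\gq)$'' is false, since $\lambda_n(\hat z)$ lies in the very same $\rv$-ball $\pr_j(\gq)$ as $z_j$; the $z_j$-dependence is just as problematic as the $\hat z$-dependence you single out.

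What is actually needed---and what the paper's ``it is clear that'' silently invokes---is the finer structural fact that $T^{-1}(\gp)$ is an \emph{open polydisc}, and that each component $R_i$ carries an open polydisc disjoint from the graph of its focus map onto an open polydisc contained in a single $\rv$-polydisc (for an open ball $\gb_j$ with $\lambda_i(\hat z)\notin\gb_j$ one has $\vv(z_j-z_j')$ strictly larger than $\vv(z_j-\lambda_i(\hat z))$, whence $\rv(z_j-\lambda_i(\hat z))$ is constant on $\gb_j$, and the analogous control in $\hat z$ comes from the structure of focus maps). Propagating this through $\wh R_{n-1}$ is exactly what makes $g(\gp\cap\dom g)$ small and properly separated from $\lambda_n$, rather than merely disjoint from its graph. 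Neither of your two repairs supplies this: Lemma~\ref{spec:bi:poly:cons:noRV} constructs a \emph{new} special bijection rendering a given function contractible and says nothing about the given $T$; and the factorization $T=T^{\flat}\circ R_1$ ``up to a contractible modification'' asserts without proof a statement at least as strong as the one being proved (and $T\circ R_1^{-1}$ is not a special bijection in general). To close the gap you should prove, by induction on $i$, that $(\wh R_i\circ T^{-1})(\gp)$ is an open polydisc disjoint from the graph of $\lambda_{i+1}$, using $\pvf(T(A_{i+1}))=\{\lbar 0\}$ at each stage; this is the content behind the paper's one-line contradiction.
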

\begin{proof}
Suppose for contradiction that $R \circ T^{-1}$ is not contractible. Then there is an $\rv$-polydisc $\gp \sub T(\rv^{-1}(t))$ such that $(R \circ T^{-1})(\gp)$ is a union of more than one $\rv$-polydiscs. It is clear that there is an $i$ such that $(\wh{R}_i \circ T^{-1})(\gp)$ is contained in one $\rv$-polydisc and $\lambda_{i+1} \cap (\wh{R}_i \circ T^{-1})(\gp) \neq \0$. Then $\pvf(T(A_{i+1})) \neq \{0\}$, contradiction.
\end{proof}

We are now ready to state a better generalization of~\cite[Theorem~5.4]{Yin:special:trans}:

\begin{thm}\label{spec:bi:term:cons:disc}
Let $\tau(\lbar X, \lbar Y) : \VF^n \times \RV^m \fun \VF$ be an $\mdl L_{\wt{\bb T}}$-term. For each $\lbar s \in \RV^m$ let $\tau_{\lbar s} = \tau(\lbar X, \lbar s)$. Let $\lbar t \in \RV^n$, $R : \rv^{-1}(\lbar t) \fun A$ a special bijection, and $f_{\lbar s} = \tau_{\lbar s} \circ R^{-1}$. Then there is a special bijection $T$ on $A$ such that every function $f_{\lbar s} \circ T^{-1}$ is contractible.
\end{thm}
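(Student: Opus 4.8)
The plan is to run the same scheme as the proof of \cite[Theorem~5.4]{Yin:special:trans}, of which this statement is the $\wt{\bb T}$-counterpart, while importing from the proof of Lemma~\ref{spec:bi:poly:cons:noRV} and from Remark~\ref{spec:rest:acvf} everything that is special to $\wt{\bb T}$ (in particular, that special bijections may be built piecewise over $\rv$-polydiscs and restricted to $\csn(\rv(\gp))$-$\lan{RV}$-definable subsets). First, exactly as in Lemma~\ref{spec:bi:poly:cons:noRV}, it suffices to prove the assertion simultaneously for a finite set of terms, since the resulting special bijections compose; so we work with the finite list consisting of $\tau$ together with its occurring $\VF$-terms, and we induct on their complexity in the $\VF$-variables $\lbar X$. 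At every stage, compactness together with Remark~\ref{spec:rest:acvf} lets us pass to a single $\rv$-polydisc $\gq$ of the set produced so far; the difference with Lemma~\ref{spec:bi:poly:cons:noRV} is that the parameter $\lbar s$ must be carried along throughout. It is convenient to note at the outset that, since the only route from the $\RV$-sort into the $\VF$-sort is through $\csn$, the value $\tau(\lbar a,\lbar s)$ depends on $\lbar s$ only through $\vrv(\lbar s)\in\Gamma_\infty^m$; thus after a finite partition of the range of $\lbar s$ one may replace $\lbar s$ by a parameter $\lbar\delta$ in $\Gamma^m$ and write $\tau_{\lbar s}=\tau''(\lbar X,\csn(\lbar\delta))$ for a fixed $\mdl L_{\wt{\bb T}}$-term $\tau''$ in $\VF$-variables only.

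In the inductive step, suppose the special bijection built so far has already made the $\rv$-value of every occurring $\VF$-term of complexity $<k$ constant on $\gq$, uniformly in the parameter. Substituting these constants into the complexity-$k$ occurring $\VF$-terms $F_{ki}$ and specializing the parameter turns each $F_{ki}$ into a polynomial $\sum_j a_{ij}(\gq,\lbar\delta)\lbar X^j$ in $\lbar X$ whose coefficients are definable over $\rv(\gq)$ and $\csn(\lbar\delta)$. What is needed is a special bijection on $\gq$ that makes the $\rv$-value of each such polynomial constant on sub-polydiscs, and that does so uniformly for all values of the parameter $\lbar\delta$.

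This last requirement is the \emph{main obstacle}: for each fixed $\lbar\delta$, \cite[Theorem~5.5]{Yin:special:trans} (applied through Remark~\ref{spec:rest:acvf}) supplies a special bijection, but a priori one depending on $\lbar\delta$, whereas the conclusion demands a single special bijection on the fixed set $\gq$. To remove the dependence, the plan is to stratify the range of $\lbar\delta$ into finitely many definable pieces over which the combinatorial data controlling the family $\{F_{ki}(\,\cdot\,,\lbar\delta)\}$ — supports, pairwise comparisons of the $\vv(a_{ij}(\gq,\lbar\delta))$, and the coincidence pattern of the near-roots — are constant, so that over each piece \cite[Theorem~5.5]{Yin:special:trans} applies with a bounded length and its effect is realized by one special bijection whose focus maps choose, on each $\rv$-polydisc, centers simultaneously adapted to all $\lbar\delta$ in that piece and iterated the bounded number of times that the constant combinatorial type dictates. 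Since applying a further special bijection never destroys a separation already achieved, the special bijections obtained for the successive pieces may simply be composed into one; and since the complexity of the occurring $\VF$-terms strictly drops at each stage (as in Lemma~\ref{spec:bi:poly:cons:noRV}), finitely many stages produce the desired $T$, with $f_{\lbar s}\circ T^{-1}$ contractible for every $\lbar s$. Lemma~\ref{two:spec:contra:condi} and Lemma~\ref{VF:image:RV:inj} may be invoked to streamline the bookkeeping that verifies contractibility has been attained.
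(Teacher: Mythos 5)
Your overall framing (reduce to finitely many terms, work polydisc by polydisc via Remark~\ref{spec:rest:acvf}, peel off occurring $\VF$-terms by complexity) matches the skeleton of the paper's argument, but the step where you remove the dependence of the special bijection on the parameter is where the real content of the theorem lies, and your proposed mechanism for it does not work. You claim that after stratifying the range of $\lbar\delta$ into finitely many definable pieces on which the ``combinatorial data'' (supports, valuation comparisons, coincidence pattern of near-roots) is constant, a single special bijection per piece does the job, ``whose focus maps choose centers simultaneously adapted to all $\lbar\delta$ in that piece.'' This cannot be right as stated: a focus map must target one specific center in each polydisc of its locus, while the near-roots of $F_{ki}(\,\cdot\,,\lbar\delta)$ move with $\lbar\delta$ and sweep through infinitely many distinct $\rv$-polydiscs even when the combinatorial type is frozen (already for $F(X,\delta)=X-\csn(\delta)$ no fixed finite-length special bijection with prescribed centers separates all the roots). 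The paper's resolution is different and is precisely the content of Lemma~\ref{VF:image:RV:inj} and Lemma~\ref{two:spec:contra:condi}, which you relegate to ``streamlining the bookkeeping'': one takes the parameter-dependent special bijections $T_{1,\lbar s}$ supplied by \cite[Theorem~5.5]{Yin:special:trans}, collects \emph{all} their focus loci into a single definable injection $h=\bigcup_{\lbar s}h_{\lbar s}$ whose $\VF$-image is $\RV$-parametrized, applies Lemma~\ref{VF:image:RV:inj} to get one special bijection $T_1$ pushing $\ran(h)$ into degenerate (point) polydiscs, and then invokes Lemma~\ref{two:spec:contra:condi} to conclude that every $T_{1,\lbar s}\circ T_1^{-1}$ is contractible, so that $T_1$ works uniformly. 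That is the new idea of this theorem relative to Lemma~\ref{spec:bi:poly:cons:noRV}, and it must appear as the engine of the proof, not as an afterthought.

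A second, related gap: you induct only on the complexity of the occurring $\VF$-terms, whereas the paper's proof is an induction on the number $n$ of $\VF$-variables, with the complexity induction living inside the base case $n=1$. This matters because the uniformization device above (Lemma~\ref{two:spec:contra:condi} in particular) is a one-variable statement; for $n>1$ the paper fixes the first coordinate $a$, applies the inductive hypothesis to the remaining coordinates to get special bijections $R_a$, encodes the loci and contractions of the $R_a$ by quantifier-free data in $a$ and further $\RV$-parameters, and then applies the inductive hypothesis again in the single variable $X_1$ to make that data constant on $\rv$-polydiscs before gluing (this is the fiberwise argument of \cite[Theorem~5.4]{Yin:special:trans}). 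Your proposal, after reducing to complexity~$0$, still faces a parametrized family of polynomials in $n$ variables and offers no route to a uniform special bijection there. Finally, a small caveat on your preliminary reduction: replacing $\lbar s$ by $\vrv(\lbar s)$ is harmless but does not simplify anything essential, since the parameter still ranges over an infinite definable set and the difficulty is unchanged.
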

\begin{proof}
As Lemma~\ref{spec:bi:poly:cons:noRV}, if the assertion holds for one such term $\tau$ then it holds simultaneously for any finite number of such terms. We do induction on $n$. In a way the proof here combines those of Lemma~\ref{spec:bi:poly:cons:noRV} and \cite[Theorem~5.4]{Yin:special:trans}. The inductive step below is copied almost verbatim from the proof of~\cite[Theorem~5.4]{Yin:special:trans}.

For the base case $n = 1$, we simply write $X$ for $\lbar X$. Let $F_{ji}(X, \lbar Y)$ enumerate the occurring $\VF$-terms of $\tau$ such that $| F_{ji}(X, \lbar Y) | = j$. Let $\gp_0$, $T_0$, $s^{\gq}_{0i}$, and $\gp_1$ be as in the proof of Lemma~\ref{spec:bi:poly:cons:noRV}. For each $\lbar s \in \RV^m$ let $F^{\gp_1}_{1i, \lbar s}(X)$ be the $\lan{RV}$-term obtained from $F_{1i}(X, \lbar s)$ by replacing each $\rv(F_{0i}(\lbar X))$ with $s^{\gp_1}_{0i}$. Each $F^{\gp_1}_{1i, \lbar s}(X)$ may be written as a polynomial $\sum_j a_j X^j$ with $a_j \in \dcl^{\wt{\bb T}}(\rv(\gp_1), \lbar s)$. As in the proof of Lemma~\ref{spec:bi:poly:cons:noRV} again, there is a $\dcl^{\wt{\bb T}}(\rv(\gp_1), \lbar s)$-$\lan{RV}$-definable special bijection $T_{1, \lbar s}$ on $\gp_1$ such that, for every $\rv$-polydisc $\gq \sub T_{1, \lbar s}(\gp_1)$,
\[
(\rv \circ F^{\gp_1}_{1i, \lbar s} \circ R^{-1} \circ T_0^{-1} \circ T_{1, \lbar s}^{-1})(\gq)
\]
is a singleton for all $i$. Let $\lambda_{1, \lbar s, k}$ be the focus maps of the components of $T_{1, \lbar s}$ and $h_{\lbar s} :\biguplus_k \dom(\lambda_{1, \lbar s, k}) \times \{ \lbar s \} \fun \gp_1$ the injection induced by $T_{1, \lbar s}$. By compactness, $\bigcup_{\lbar s} h_{\lbar s}$ is an $\rv(\gp_1)$-definable injection into $\gp_1$. By Lemma~\ref{VF:image:RV:inj}, there is a special bijection $T_1$ on $\gp_1$ such that $\pvf(T_1(\ran(h))) = \{0\}$. By Lemma~\ref{two:spec:contra:condi}, every function $T_{1, \lbar s} \circ T_1^{-1}$ is contractible. This means that for every $\rv$-polydisc $\gr \sub T_1(\gp_1)$ and every $\lbar s \in \RV^m$ there is an $\rv$-polydisc $\gq_{\gr} \sub T_{1, \lbar s}(\gp_1)$ such that $T_1^{-1}(\gr) \sub T_{1, \lbar s}^{-1}(\gq_{\gr})$ and hence
\[
(\rv \circ F^{\gp_1}_{1i, \lbar s} \circ R^{-1} \circ T_0^{-1} \circ T_1^{-1})(\gr)
\]
is a singleton $\{ s^{\gr}_{1i, \lbar s} \}$ for all $i$. Repeating this procedure for all $F_{ki}(X, \lbar Y)$ of higher complexity, we see that there is a special bijection $T$ on $A$ as desired. This completes the base case of the induction.

We now proceed to the inductive step. As above, we may concentrate on one $\rv$-polydisc $\gp = \rv^{-1}(\lbar u) \times \set{(\lbar u, \lbar r)} \sub A$. Let $\phi(\lbar X, \lbar Y, Z)$ be a quantifier-free formula such that $\phi(\lbar X, \lbar s, Z)$ defines the function $(\rv \circ f_{\lbar s}) \rest \gp$. Let $F_{i}(\lbar X, \lbar Y, Z)$ enumerate the top occurring $\VF$-terms of $\phi$. For every $a \in \rv^{-1}(u_1)$ and every $\lbar s \in \RV^{m+1}$ let
\[
F_{i, \lbar s} = F_{i}(\lbar X, \lbar s), \quad F_{i,a, \lbar s} = F_{i}(a, X_2, \ldots, X_n, \lbar s).
\]
By the inductive hypothesis, there is a special bijection $R_{a}$ on $\rv^{-1}(u_2, \ldots, u_n)$ such that every function $F_{i,a, \lbar s} \circ R_a^{-1}$ is contractible. Let $U_{k, a}$ enumerate the loci of the components of $R_{a}$ and $\lambda_{k, a}$ the corresponding focus maps. By compactness,
\begin{enumerate}
  \item for each $i$ there is a quantifier-free formula $\psi_i$ such that $\psi_i(a, \lbar s)$ defines the contraction of $F_{i,a, \lbar s} \circ R_a^{-1}$,
  \item there is a quantifier-free formula $\theta$ such that $\theta(a)$ determines the sequence $\rv(U_{k, a})$ and the $\VF$-coordinates targeted by $\lambda_{k, a}$.
\end{enumerate}
Let $H_{j}(X_1)$ enumerate the top occurring $\VF$-terms of the formulas $\psi_i$, $\theta$. For every tuple $\lbar t\in \RV$ of the right length, let $H_{j, \lbar t} = H_{j}(X_1, \lbar t)$. Applying the inductive hypothesis again, we obtain a special bijection $T_1$ on $\rv^{-1}(u_1)$ such that every function $H_{j, \lbar t} \circ T_1^{-1}$ is contractible. This means that, for every $\rv$-polydisc $\gq \sub T_1(\rv^{-1}(u_1))$ and every $a_1, a_2 \in T_1^{-1}(\gq)$,
\begin{enumerate}
  \item for every $\lbar s \in \RV^{m+1}$, the formulas $\psi_i(a_1, \lbar s)$, $\psi_i(a_2, \lbar s)$ define the same function,
  \item the special bijections $R_{a_1}$, $R_{a_2}$ may be naturally glued together to form one special bijection on $\{a_1, a_2\} \times \rv^{-1}(u_2, \ldots, u_n)$.
\end{enumerate}
Consequently, $T_1$ and $R_{a}$ naturally induce a special bijection $T$ on $\gp$ such that each function $F_{i, \lbar s} \circ T^{-1}$ is contractible. This implies that each function $f_{\lbar s} \circ T^{-1}$ is contractible and hence $T$ is as required.
\end{proof}

We immediately give a slightly more general version of Theorem~\ref{spec:bi:term:cons:disc}, which is easier to use:

\begin{thm}\label{special:bi:term:constant}
Let $A \sub \VF^n$ and $f : A \fun \RV^m$ be a definable function. Then there is a special bijection $T$ on $A$ such that $T(A)$ is an $\RV$-pullback and the function $f \circ T^{-1}$ is contractible.
\end{thm}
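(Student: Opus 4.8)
The plan is to reduce, $\rv$-polydisc by $\rv$-polydisc of $\RVH(A)$, to Theorem~\ref{spec:bi:term:cons:disc} applied to the finite list of $\VF$-terms occurring in a quantifier-free formula defining the \emph{graph} of $f$, and then to glue by compactness.

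First I would fix, by Proposition~\ref{T:qe}, a quantifier-free $\mdl L_{\wt{\bb T}}$-formula $\psi(\lbar X, \lbar Y)$ defining the graph $G = \{(\lbar a, f(\lbar a)) : \lbar a \in A\} \sub \VF^n \times \RV^m$, with $\lbar X$ the $\VF$-variables and $\lbar Y$ the $\RV$-variables. Let $F_1, \ldots, F_r$ enumerate the occurring $\VF$-terms of $\psi$; each is of the form $F_k(\lbar X, \lbar Y) : \VF^n \times \RV^m \fun \VF$ (a $\VF$-term may depend on $\lbar Y$ only through $\csn$ applied to $\vrv$-terms), and for $\lbar s \in \RV^m$ I write $F_{k, \lbar s} = F_k(\lbar X, \lbar s)$. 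Since all $\VF$-sort equalities have been removed, the point to record is that whether $\psi(\lbar a, \lbar s)$ holds depends only on $\lbar s$ and on the tuple $(\rv(F_k(\lbar a, \lbar s)))_{k \le r}$.

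Next, fix $\lbar t \in \rv(A)$ and apply Theorem~\ref{spec:bi:term:cons:disc} — in the form valid for a finite family of terms, as noted in its proof — with $R = \id_{\rv^{-1}(\lbar t)}$ and the terms $F_1, \ldots, F_r$. This produces a special bijection $T_{\lbar t}$ on $\rv^{-1}(\lbar t)$ such that every $F_{k, \lbar s} \circ T_{\lbar t}^{-1}$ is contractible. Then, for an $\rv$-polydisc $\gq \sub T_{\lbar t}(\rv^{-1}(\lbar t))$ and any $\lbar a, \lbar a' \in \rv^{-1}(\lbar t)$ with $T_{\lbar t}(\lbar a), T_{\lbar t}(\lbar a') \in \gq$, we get $\rv(F_k(\lbar a, \lbar s)) = \rv(F_k(\lbar a', \lbar s))$ for all $k$ and all $\lbar s$, hence $\{\lbar s : (\lbar a, \lbar s) \in G\} = \{\lbar s : (\lbar a', \lbar s) \in G\}$ by the observation above. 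As this set is $\{f(\lbar a)\}$ when $\lbar a \in A$ and empty otherwise, $T_{\lbar t}^{-1}(\gq)$ is either contained in $A$ or disjoint from it, and $f$ is constant on $T_{\lbar t}^{-1}(\gq) \cap A$. Since $T_{\lbar t}(\rv^{-1}(\lbar t))$ is an $\RV$-pullback by Remark~\ref{spec:rest:acvf}, the set $T_{\lbar t}(A \cap \rv^{-1}(\lbar t))$ is a union of $\rv$-polydiscs — an $\RV$-pullback — on which $f \circ T_{\lbar t}^{-1}$ is contractible.

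Finally I would glue. Exactly as in the compactness arguments already used in Lemma~\ref{spec:bi:poly:cons:noRV} and Theorem~\ref{spec:bi:term:cons:disc}, the special bijections $T_{\lbar t}$, $\lbar t \in \rv(A)$, may be taken of uniformly bounded length and complexity, hence constitute an $\mdl L_{\wt{\bb T}}$-definable family; their union, restricted to $A$, is a special bijection $T : A \fun T(A)$ with $T(A) = \bigcup_{\lbar t} T_{\lbar t}(A \cap \rv^{-1}(\lbar t))$ an $\RV$-pullback and $f \circ T^{-1}$ contractible, as required. Since the substantive work is done in Theorem~\ref{spec:bi:term:cons:disc}, the only real obstacle here is the bookkeeping in the first two paragraphs — presenting $f$ via its graph and isolating the right finite set of $\VF$-terms — together with the observation that contracting all of those terms at once forces simultaneously the $\RV$-pullback shape of the image of $A$ \emph{and} the contractibility of $f$; the gluing is routine.
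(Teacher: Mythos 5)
Your proposal is correct and follows essentially the same route as the paper: take a quantifier-free formula for (the graph of) $f$, apply Theorem~\ref{spec:bi:term:cons:disc} to its occurring $\VF$-terms on each $\rv$-polydisc (the paper phrases the polydisc-by-polydisc reduction as an initial compactness step rather than a final gluing step, which is the same mechanism), and observe that contracting those terms simultaneously forces both the $\RV$-pullback shape of $T(A)$ and the contractibility of $f \circ T^{-1}$.
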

\begin{proof}
By compactness, we may assume that $A$ is contained in an $\rv$-polydisc $\gp$. Let $\phi$ be a quantifier-free formula that defines $f$. Let $F_i(\lbar X, \lbar Y)$ enumerate the top occurring $\VF$-terms of $\phi$. For $\lbar s \in \RV^{m}$ let $F_{i, \lbar s} = F_{i}(\lbar X, \lbar s)$. By Theorem~\ref{spec:bi:term:cons:disc} there is a special bijection $T$ on $\gp$ such that each function $F_{i, \lbar s} \circ T^{-1}$ is contractible. This means that, for each $\rv$-polydisc $\gq \sub T(\gp)$,
\begin{enumerate}
  \item either $T^{-1}(\gq) \sub A$ or $T^{-1}(\gq) \cap A = \0$,
  \item if $T^{-1}(\gq) \sub A$ then $(f \circ T^{-1})(\gq)$ is a singleton.
\end{enumerate}
So $T \rest A$ is as required.
\end{proof}

Recall that a subset $A$ is called a \emph{deformed $\RV$-pullback} if there is a special bijection $T$ such that $T(A)$ is an $\RV$-pullback. By Theorem~\ref{special:bi:term:constant} and compactness, we have:

\begin{cor}\label{all:subsets:rvproduct}
Every definable subset $A \sub \VF^n \times \RV^m$ is a deformed $\RV$-pullback.
\end{cor}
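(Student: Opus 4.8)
The plan is to reduce the statement to Theorem~\ref{special:bi:term:constant} by treating the $\RV$-coordinates as parameters and gluing over them, which is exactly the reduction announced in the sentence preceding the statement. First I would decompose $A = \bigcup_{\lbar s \in \prv(A)} A_{\lbar s} \times \{\lbar s\}$, where $A_{\lbar s} = \fib(A, \lbar s) \sub \VF^n$ is an $\lbar s$-definable subset. For each fixed $\lbar s \in \prv(A)$, I would apply Theorem~\ref{special:bi:term:constant} to $A_{\lbar s}$ together with an arbitrary definable function $A_{\lbar s} \fun \RV$ — a constant one suffices, since the contractibility clause of that theorem plays no role here — to obtain an $\lbar s$-definable special bijection $T_{\lbar s}$ on $A_{\lbar s}$ with $T_{\lbar s}(A_{\lbar s})$ an $\RV$-pullback in $\VF^n$.

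Next I would pass to a uniform choice: by compactness the data defining $T_{\lbar s}$ — its length, the loci of its components, and its focus maps — may be chosen to depend definably on $\lbar s \in \prv(A)$, so the family $(T_{\lbar s})_{\lbar s}$ assembles into a single map $T : A \fun A^{\sharp}$ given by $(\lbar a, \lbar s) \efun (T_{\lbar s}(\lbar a), \lbar s)$. One then checks that $T$ is a special bijection on $A$ in the sense of the definition above: every $\rv$-polydisc $\gp \sub \RVH(A)$ is contained in a single fiber $\RVH(A_{\lbar s}) \times \{\lbar s\}$, on which $T$ restricts to the corresponding component of $T_{\lbar s}$, which is of the required form (the data attached to $\rv(\gp)$ in the definition provides in particular the parameter $\lbar s$ and the center information needed for $T_{\lbar s}$). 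Finally $T(A) = \bigcup_{\lbar s \in \prv(A)} T_{\lbar s}(A_{\lbar s}) \times \{\lbar s\}$ is a union of $\rv$-polydiscs, hence an $\RV$-pullback, so $A$ is a deformed $\RV$-pullback.

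Given Theorem~\ref{special:bi:term:constant}, essentially all of the content has already been extracted, and what remains is bookkeeping. The only point requiring a little care is the verification that the fiberwise special bijections $T_{\lbar s}$ really do patch together into one special bijection on $A$ — i.e.\ tracking uniform definability and parameters across the $\rv$-polydiscs of $\RVH(A)$ — but this is routine. As usual, the version in $\gC^{\bb T}$ with $\sn$ in place of $\csn$ is obtained by the identical argument.
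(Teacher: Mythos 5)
Your argument is correct and is exactly the paper's intended route: the paper proves this corollary by citing Theorem~\ref{special:bi:term:constant} together with compactness, i.e.\ precisely the fiberwise application over $\prv(A)$ (with the $\RV$-coordinates as parameters, which the definition of special bijection on subsets of $\VF^n\times\RV^m$ explicitly accommodates via $\sn(\rv(\gp))$, resp.\ $\csn$) followed by a uniform gluing. The only bookkeeping point worth keeping in mind is that, when assembling the $T_{\lbar s}$ into a single composition of length-one special bijections, one must pad with identity components and arrange the targeted $\VF$-coordinates consistently across the finitely many cases produced by compactness, which is routine as you say.
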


\begin{lem}\label{bijection:made:contractible}
Let $A \sub \VF^{n_1} \times \RV^{m_1}$, $B \sub \VF^{n_2} \times \RV^{m_2}$, and $f : A \fun B$ be a definable function. Then there exists a special bijection $T$ on $A$ such that $T(A)$ is an $\RV$-pullback and the function $f \circ T^{-1}$ is contractible.
\end{lem}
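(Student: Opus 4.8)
The plan is to deduce the statement from Theorem~\ref{special:bi:term:constant} by two purely formal reductions, the first eliminating the $\RV$-coordinates of the target $B$ and the second those of the domain $A$. For the first, I would replace $f$ by the $\mdl L_{\wt{\bb T}}$-definable function $g : A \fun \RV^{n_2 + m_2}$ given by $\lbar x \efun \bigl( \rv(\pvf(f(\lbar x))),\ \prv(f(\lbar x)) \bigr)$. Since $\rv$ acts coordinatewise, and as the identity on $\RV$-coordinates, one has $\rv \circ f = g$ as functions on $A$; hence for any special bijection $T$ on $A$ the map $f \circ T^{-1}$ is $\rv$-contractible if and only if $g \circ T^{-1}$ is (for $g \circ T^{-1}$, being $\RV$-valued, $\rv$-contractibility just means its values on $\rv$-polydiscs are singletons). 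So it suffices to prove the lemma with $B = \RV^m$, $m = n_2 + m_2$, and $f$ replaced by $g$.

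For the second reduction I would peel off the $\RV$-coordinates of $A$ by the same compactness argument that opens the proof of Theorem~\ref{special:bi:term:constant}: it is enough to treat the case in which $A$ lies in a single $\rv$-polydisc $\gp_0 = \rv^{-1}(\lbar u) \times \{\lbar r\} \sub \VF^{n_1} \times \RV^{m_1}$, so that $A = A' \times \{\lbar r\}$ with $A' \sub \rv^{-1}(\lbar u) \sub \VF^{n_1}$. Setting $g' : A' \fun \RV^m$ to be $\lbar a \efun g(\lbar a, \lbar r)$, which is $\csn(\rv(\gp_0))$-$\mdl L_{\wt{\bb T}}$-definable, Theorem~\ref{special:bi:term:constant} (applied over the parameters $\csn(\rv(\gp_0))$) produces a special bijection $T'$ on $A'$ with $T'(A')$ an $\RV$-pullback and $g' \circ (T')^{-1}$ contractible. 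Letting $T$ act as $T'$ on the $\VF$-coordinates while fixing the $\RV$-coordinate $\lbar r$ gives a special bijection $T$ on $A$ for which $T(A) = T'(A') \times \{\lbar r\}$ is an $\RV$-pullback and $g \circ T^{-1}$ is contractible, its values on $\rv$-polydiscs being precisely those of $g' \circ (T')^{-1}$. Reassembling over all $\rv$-polydiscs of $\RVH(A)$ by compactness, with the lengths of the $T'$ uniformly bounded, then yields the required special bijection on $A$; the two conclusions are conditions on the individual $\rv$-polydiscs of the image and so pass to the glued map.

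I do not expect a serious obstacle here: all the content is already in Theorem~\ref{special:bi:term:constant}, hence ultimately in Theorem~\ref{spec:bi:term:cons:disc}, and the two reductions above are formal. The only points that need a little care are the coordinatewise bookkeeping in the first reduction and the parameter- and length-uniformity bookkeeping in the compactness gluing of the second, both of which are routine and are carried out in the standard way used repeatedly in \S\ref{section:dim}.
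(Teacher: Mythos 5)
Your proposal is correct and matches the paper's proof: the paper likewise reduces by compactness to $A$ contained in a single $\rv$-polydisc and then applies Theorem~\ref{special:bi:term:constant} to $\prv \circ f$ after substituting $\can(B)$ for $B$, which is exactly your function $g$. The extra bookkeeping you spell out (peeling off the $\RV$-coordinate $\lbar r$ and gluing over polydiscs) is left implicit in the paper but is the same argument.
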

\begin{proof}
By compactness, we may assume that $A$ is contained in an $\rv$-polydisc. Then this is immediate by applying Theorem~\ref{special:bi:term:constant} to the function $\prv \circ f$ (recall that $\can(B)$ is substituted for $B$).
\end{proof}

Recall the definition of the open-to-open property (see \cite[Proposition~3.19]{Yin:int:acvf} for subsets of $\VF$ and \cite[Definition~3.20]{Yin:int:acvf} for the general case). It is obviously still true that for functions between subsets that have only one $\VF$-coordinate, composing with special bijections on the right and inverses of special bijections on the left preserves the open-to-open property.

\begin{lem}\label{simul:special:dim:1}
Let $A \sub \VF \times \RV^{m_1}$, $B \sub \VF \times \RV^{m_2}$, and $f : A \fun B$ be a definable bijection. Then there exist special bijections $T_A : A \fun A^{\sharp}$ and $T_B : B \fun B^{\sharp}$ such that $A^{\sharp}$, $B^{\sharp}$ are $\RV$-pullbacks and, in
the commutative diagram
\[
\bfig
  \square(0,0)/->`->`->`->/<600,400>[A`A^{\sharp}`B`B^{\sharp};
  T_A`f``T_B]
 \square(600,0)/->`->`->`->/<600,400>[A^{\sharp}`\rv(A^{\sharp})`B^{\sharp} `\rv(B^{\sharp});  \rv`f^{\sharp}`f^{\sharp}_{\downarrow}`\rv]
 \efig
\]
$f^{\sharp}_{\downarrow}$ is bijective and hence $f^{\sharp}$ is a lift of it.
\end{lem}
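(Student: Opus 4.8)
The plan is to reduce, via Lemma~\ref{bijection:made:contractible}, to a bijection between $\RV$-pullbacks, and then to arrange that \emph{both} that bijection and its inverse are contractible. This suffices: if $A^{\sharp}, B^{\sharp}$ are $\RV$-pullbacks and $f^{\sharp} : A^{\sharp}\fun B^{\sharp}$ is a bijection such that $f^{\sharp}$ and $(f^{\sharp})^{-1}$ are both contractible, then, since distinct $\rv$-polydiscs of an $\RV$-pullback are disjoint, a chase shows $f^{\sharp}_{\downarrow}$ and $(f^{\sharp})^{-1}_{\downarrow}$ are mutually inverse; as $f^{\sharp}$ is a bijection covering the bijective contraction $f^{\sharp}_{\downarrow}$, it sends each $\rv$-polydisc onto the full $\rv$-polydisc prescribed by $f^{\sharp}_{\downarrow}$, i.e.\ it is a lift of $f^{\sharp}_{\downarrow}$ in the sense of~\cite[Definition~7.3]{Yin:special:trans}.

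So first I would apply Lemma~\ref{bijection:made:contractible} to $f$ to get a special bijection $T_A^{0} : A \fun A_0$ with $A_0$ an $\RV$-pullback and $g := f\circ (T_A^{0})^{-1} : A_0 \fun B$ contractible, then apply Lemma~\ref{bijection:made:contractible} to $g^{-1}$ to get $T_B^{0} : B \fun B_0$ with $B_0$ an $\RV$-pullback and $h := g^{-1}\circ (T_B^{0})^{-1} : B_0 \fun A_0$ contractible; writing $\phi := T_B^{0}\circ g = h^{-1} : A_0 \fun B_0$, this is a bijection between $\RV$-pullbacks whose inverse is contractible. Because each $\rv$-polydisc $\gp$ of $A_0$ is either contained in or disjoint from $\phi^{-1}(\gq)$ for every $\rv$-polydisc $\gq$ of $B_0$, contractibility of $\phi^{-1}$ forces $\phi$ to send each $\gp$ onto a union of whole $\rv$-polydiscs of $B_0$, and dually $\phi^{-1}(\gq)$ is a union of whole $\rv$-polydiscs of $A_0$, so each $\gq$ is partitioned into the definable pieces $\phi(\gp)$ with $\phi(\gp)\subseteq\gq$. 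Using the open-to-open property (preserved, as noted above, under composition with special bijections and their inverses in the situation of subsets with a single $\VF$-coordinate), I would also arrange that $\phi$ and $\phi^{-1}$ are open-to-open, so that each piece $\phi(\gp)$ is relatively open in $\gq$.

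The main work is then to make $\phi$ contractible without destroying contractibility of $\phi^{-1}$. I would apply Lemma~\ref{bijection:made:contractible} (equivalently Theorem~\ref{special:bi:term:constant}) to the definable function $\sigma : A_0 \fun \RV^{l}$ sending $\lbar x$ to the $\rv$-datum of the $B_0$-polydisc containing $\phi(\lbar x)$, obtaining $T : A_0 \fun A^{\sharp}$ with $A^{\sharp}$ an $\RV$-pullback and $\sigma\circ T^{-1}$ contractible; then $\psi := \phi\circ T^{-1} : A^{\sharp}\fun B_0$ is contractible. Symmetrically, applying Lemma~\ref{bijection:made:contractible} to the function $\tau : B_0 \fun \RV^{l'}$ recording the $A^{\sharp}$-polydisc containing $\psi^{-1}(\lbar y)$ produces $T' : B_0 \fun B^{\sharp}$ with $B^{\sharp}$ an $\RV$-pullback and $\psi^{-1}\circ T'^{-1}$ contractible; the $\tau$-fibres over an $\rv$-polydisc of $B_0$ are exactly the pieces $\psi(\gp)$, and one checks that $T'$ then carries each such piece onto a union of whole $\rv$-polydiscs of $B^{\sharp}$. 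Setting $T_A := T\circ T_A^{0}$, $T_B := T'\circ T_B^{0}$ and $f^{\sharp} := T_B\circ f\circ T_A^{-1}$, this already gives $(f^{\sharp})^{-1}$ contractible; for $f^{\sharp}$ itself to remain contractible one must, when choosing $T'$, avoid subdividing any piece $\psi(\gp)$, and this is exactly where I would use the single-$\VF$-coordinate hypothesis: each $\rv$-polydisc of $B_0$ is (up to $\RV$-tags) a single ball, and an open subset of a ball that avoids the focus of a length-$1$ special bijection lies inside one image polydisc, so by choosing the foci of $T'$ off the relevant pieces — which is where the open-to-open property of $\psi$ is needed — the pieces $\psi(\gp)$ are not split. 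Then both $f^{\sharp}$ and $(f^{\sharp})^{-1}$ are contractible, so $f^{\sharp}_{\downarrow}$ is bijective and $f^{\sharp}$ is a lift of it; a final appeal to compactness assembles the choices made over the individual $\rv$-polydiscs into one pair of special bijections $T_A$, $T_B$.

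I expect the coordination in the last paragraph to be the real obstacle: producing one special bijection making $f^{\sharp}$ contractible, or one making $(f^{\sharp})^{-1}$ contractible, is immediate from Lemma~\ref{bijection:made:contractible}, but a careless second application undoes the first, and the only thing that prevents an endless back-and-forth is the rigidity of $\rv$-polydiscs in a single $\VF$-variable together with the open-to-open property — which is precisely why this lemma is restricted to subsets with one $\VF$-coordinate and has no analogue in higher $\VF$-dimension.
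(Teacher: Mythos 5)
Your overall strategy agrees with the paper's (reduce to $\RV$-pullbacks, make $f$ contractible and open-to-open, then repair the inverse), and you correctly locate the difficulty in the last coordination step. But the way you resolve that step has a genuine gap. You claim that, when choosing $T'$ on $B_0$, one can place the foci ``off the relevant pieces'' so that no piece $\psi(\gp)$ is split. This cannot work. Since $\psi$ is contractible, for each $\rv$-polydisc $\gq \sub B_0$ the pieces $\psi(\gp)$ with $\gp \sub \psi^{-1}(\gq)$ \emph{partition} $\gq$, so every point of $\gq$ --- in particular every candidate focus --- lies in some piece. Worse, foci must be placed inside exactly those $\gq$ containing more than one piece, because separating those pieces into distinct $\rv$-polydiscs of $B^{\sharp}$ is precisely what making $\psi^{-1}$ contractible requires. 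So splitting some piece $\psi(\gp_0)$ is unavoidable, and after the split $\gp_0$ is a single $\rv$-polydisc of $A^{\sharp}$ whose image meets several $\rv$-polydiscs of $B^{\sharp}$: contractibility of $f^{\sharp}$ is destroyed. (A secondary problem is that Lemma~\ref{bijection:made:contractible} is a black box and gives you no control over where its foci land in the first place.)

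The correct repair --- and what the paper actually does --- is not to avoid splitting pieces but to split the corresponding polydisc on the $A$-side at the same moment: whenever a component of $T_B$ has a focus inside $\psi(\gp_0)$, one applies a matching component of $T_A$ on $\gp_0$ whose focus is the $\psi$-preimage of that point, using the open-to-open property to see that the preimages of the resulting sub-balls are again balls. This is why the paper first fixes $T_B$ with components $T_{B,1},\ldots,T_{B,n}$ and then builds $T_A = T_{A,n}\circ\cdots\circ T_{A,1}$ in lockstep so that both $\wh T_{B,i}\circ f\circ(\wh T_{A,i})^{-1}$ and $\wh T_{A,i}\circ(T_B\circ f)^{-1}$ stay contractible at every stage; the process terminates because the two special bijections are constructed with equal length, not because a back-and-forth happens to stabilize. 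That construction is exactly \cite[Lemma~5.2]{Yin:int:acvf}, which the paper imports verbatim. A smaller omission: the open-to-open property of $f$ is not free; it requires the preliminary finite partition of \cite[Proposition~3.21]{Yin:int:acvf} applied over a $\Gamma$-partition, followed by a special bijection making each piece an $\RV$-pullback, whereas you only say you ``would arrange'' it.
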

\begin{proof}
By Corollary~\ref{all:subsets:rvproduct} we may assume that $A$, $B$ are $\RV$-pullbacks. Let $\pi$ be a $\Gamma$-partition of $f$. For each $\lbar \gamma \in \ran(\pi)$, set $\dom(f_{\lbar \gamma}) = A_{\lbar \gamma}$ and $\ran(f_{\lbar \gamma}) = B_{\lbar \gamma}$. By~\cite[Proposition~3.21]{Yin:int:acvf}, there is a finite partition of $A_{\lbar \gamma}$ into definable subsets $A_{i, \lbar \gamma}$ such that each $f_{\lbar \gamma} \rest A_{i, \lbar \gamma}$ has the open-to-open property. For $(a, \lbar t) \in A_{i, \lbar \gamma}$ let $h(a, \lbar t) = (\rcsn(\lbar \gamma), i)$. Applying Lemma~\ref{bijection:made:contractible} to the function $h$, we obtain a special bijection $T$ on $A$ such that each $A_{i, \lbar \gamma}$ is an $\RV$-pullback. Applying it again to $f \circ T$, we may assume that $f$ is contractible and has the open-to-open property. In particular, for each $\rv$-polydisc $\gp \sub A$, $f(\gp)$ is an
open polydisc contained in an $\rv$-polydisc.

By Lemma~\ref{bijection:made:contractible} again, there is a special bijection $T_B: B \fun B^{\sharp}$ such that $(T_B \circ f)^{-1}$ is contractible. Let $T_B = T_{B, n} \circ \ldots \circ T_{B, 1}$. It is enough to construct a special bijection $T_A = T_{A, n} \circ \ldots \circ T_{A, 1}$ on $A$ such that, for each $i$, both $\wh{T}_{B, i} \circ f \circ (\wh{T}_{A, i})^{-1}$ and $\wh{T}_{A, i} \circ (T_B \circ f)^{-1}$ are contractible, where
\[
\wh{T}_{B, i} = T_{B, i} \circ \ldots \circ T_{B, 1},\quad
\wh{T}_{A, i} = T_{A, i} \circ \ldots \circ T_{A, 1}.
\]
Now we may simply use the construction in the proof of~\cite[Lemma~5.2]{Yin:int:acvf}, since it only depends on the contractibility and the open-to-open property of $f$.
\end{proof}

Recall from \cite[Definition~5.4]{Yin:int:acvf} the notion of a (special) relatively unary bijection.

\begin{lem}\label{bijection:partitioned:unary}
Let $A \sub \VF^{n} \times \RV^{m_1}$, $B \sub \VF^{n} \times
\RV^{m_2}$, and $f : A \fun B$ a definable bijection. Then there is a finite partition of $A$ into definable subsets $A_i$ such that each $f \rest A_i$ is a composition of definable relatively unary bijections.
\end{lem}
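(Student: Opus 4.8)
The plan is to induct on $n$, the number of $\VF$-coordinates, reducing the general case to a single $\VF$-variable at a time via a fibering argument combined with the special-bijection machinery already developed, in exact parallel to \cite[Lemma~5.5]{Yin:int:acvf} but now in $\gC^{\wt{\bb T}}$. For $n = 1$ the statement is essentially Lemma~\ref{simul:special:dim:1} together with the definition of relatively unary bijection: after applying $T_A$, $T_B$ as in Lemma~\ref{simul:special:dim:1}, $f$ becomes a lift of a bijection between $\RV$-pullbacks, and such a lift decomposes (after a further finite partition, using Corollary~\ref{all:subsets:rvproduct} and the contractibility supplied by Lemma~\ref{bijection:made:contractible}) into special relatively unary bijections in the sense of \cite[Definition~5.4]{Yin:int:acvf}; a special bijection is itself a composition of relatively unary bijections by its very definition.

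For the inductive step, first I would invoke Corollary~\ref{all:subsets:rvproduct} to assume $A$ is an $\RV$-pullback and, after applying Lemma~\ref{bijection:made:contractible} to $f$, that $f$ is contractible with $B$ an $\RV$-pullback as well. Let $\pr_{\tilde n}$ denote the projection forgetting the last $\VF$-coordinate $X_n$, and consider $\pr_{\tilde n}(A)$. The idea is to work fiberwise over $\pr_{\tilde n}(A)$: for a fixed value $\lbar a$ of the first $n-1$ $\VF$-coordinates (and the $\RV$-coordinates), the fiber $\fib(A,\lbar a)\sub\VF\times\RV^{m_1}$ maps under $f$ into $B$, but in general not into a single fiber of $\pr_{\tilde n}$ over $\pr_{\tilde n}(B)$. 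However, composing $f$ with $\pr_{\tilde n}$ on the target gives a definable function, and applying Theorem~\ref{special:bi:term:constant}/Lemma~\ref{bijection:made:contractible} in the last $\VF$-variable — uniformly over $\pr_{\tilde n}(A)$ via compactness — one arranges that the first $n-1$ $\VF$-coordinates of $f(\lbar x)$ depend only on $\pr_{\tilde n}(\lbar x)$ and the $\RV$-data. This makes $f$ a composition of a bijection that is "relatively unary in $X_n$" (moving only the last $\VF$-coordinate, fiberwise over $\pr_{\tilde n}$) followed by one that moves only the first $n-1$ $\VF$-coordinates; to the latter we apply the inductive hypothesis (treating the $X_n$-coordinate as part of the $\RV$-ambient data, legitimate since it has been made constant on the relevant fibers), and to the former we apply the base case $n=1$ fiberwise plus compactness.

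The main obstacle I anticipate is the bookkeeping in the fibering step: ensuring that after the special bijection straightening out the "horizontal" component of $f$, the fibers $\fib(A,\lbar a)$ really do map into fibers of $\pr_{\tilde n}$ on the $B$-side, and that the resulting two pieces are genuinely relatively unary in the technical sense of \cite[Definition~5.4]{Yin:int:acvf} — in particular that the partitioning into the $A_i$ can be done definably and compatibly on both sides. This is precisely the point where one must cite \cite[Proposition~3.21]{Yin:int:acvf} (open-to-open decomposition) and Lemma~\ref{simul:special:dim:1} to handle the one-variable pieces, and where the contractibility guaranteed by Lemma~\ref{bijection:made:contractible} is used to pass between $\rv$-polydiscs on the two sides. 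Since the proof of \cite[Lemma~5.5]{Yin:int:acvf} depends only on these structural inputs — all of which have now been re-established in the present setting (Corollary~\ref{all:subsets:rvproduct}, Lemmas~\ref{bijection:made:contractible}, \ref{simul:special:dim:1}, and the open-to-open property) with $\sn$ replaced by $\csn$ — the argument there transfers essentially verbatim, and I would simply adapt it rather than reprove every combinatorial detail.
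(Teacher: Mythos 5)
Your proposal is workable but takes a noticeably more laborious route than the paper. The paper does not re-run the induction on the number of $\VF$-coordinates at all: it takes a $\Gamma$-partition $\pi$ of (the graph of) $f$, observes that each fiber $f_{\lbar\gamma}=\pi^{-1}(\lbar\gamma)$ is a $\csn(\lbar\gamma)$-$\lan{RV}$-definable bijection, applies the already-established $\ACVF$ result \cite[Lemma~5.6]{Yin:int:acvf} to each such fiber over the parameters $\csn(\lbar\gamma)$, and then glues by compactness, using the fact that there are only finitely many $\VF$-coordinates (hence finitely many possible patterns of targeted coordinates) to extract a single definable finite partition of $A$. This is exactly the ``new idea'' of the paper --- reduce to $\ACVF$ fiberwise over an $\RV$- or $\Gamma$-partition --- and it buys a two-line proof. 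Your plan instead re-derives the $\ACVF$ lemma from scratch in the expanded setting. That can be made to work since the structural inputs have been re-established, but one step of your sketch as written is not right: you cannot, by a special bijection on the source, ``arrange that the first $n-1$ $\VF$-coordinates of $f(\lbar x)$ depend only on $\pr_{\tilde n}(\lbar x)$ and the $\RV$-data'' --- a special bijection changes the presentation of $A$, not the bijection $f$, and a genuinely coordinate-mixing $f$ (e.g.\ a linear map) admits no such straightening. The actual decomposition in \cite{Yin:int:acvf} instead finds, piecewise, a target coordinate $j$ such that $\lbar x\efun(\pr_{\tilde n}(\lbar x),\pr_j(f(\lbar x)))$ is injective, which is what produces the first relatively unary factor; your closing appeal to ``adapting the argument verbatim'' would have to go through that step rather than the straightening you describe. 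Given that the fiberwise-plus-compactness reduction is available and strictly easier, you should use it here.
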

\begin{proof}
Since there are finitely many $\VF$-coordinates to choose from, this is immediate by applying \cite[Lemma~5.6]{Yin:int:acvf} over a $\Gamma$-partition of $f$ and then compactness.
\end{proof}

Let $A \sub \VF^n \times \RV^m$ be a definable subset and $\sigma$ a permutation of $I_n = \{1, \ldots, n\}$. We define a \emph{standard contraction} $\wh T_{\sigma}$ of $A$ exactly as in \cite[Definition~5.5]{Yin:int:acvf}. By Corollary~\ref{all:subsets:rvproduct}, there are abundant standard contractions of $A$ in stock.

\begin{lem}\label{subset:partitioned:2:unit:contracted}
Let $12$, $21$ denote the permutations of $I_2$. Let $A \sub \VF^2 \times \RV^m$ be a definable subset. Then there is a definable injection $f : A \fun \VF^2 \times \RV^l$ such that
\begin{enumerate}
  \item $f$ is unary relative to both coordinates,
  \item there are standard contractions $\wh T_{12}$, $\wh R_{21}$ of $f(A)$ such that $(\wh T_{12}(f(A)), \pr_{\leq 2})$, $(\wh R_{21}(f(A)), \pr_{\leq 2})$ are $\RV[2, \cdot]$-isomorphic and, if $\dim_{\RV}^{\fib}(A) = 0$, then they are $\RV[2]$-isomorphic.
\end{enumerate}
\end{lem}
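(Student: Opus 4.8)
The argument parallels the two-dimensional lemma of the $\ACVF$ case in \cite{Yin:int:acvf}, run over a $\Gamma$-partition in the now-standard way; the section $\sn$ (or the cross-section $\csn$) even simplifies matters, since it prescribes a centre for every $\rv$-ball and hence makes the standard contraction $\wh T_{\sigma}$ of a subset essentially canonical (cf.\ Proposition~\ref{L:sur:c}). We may assume $A = \can(A)$. If $\dim_{\VF}(A) \le 1$ then, composing with a definable injection $A \fun \VF \times \RV^{l'}$ (\cite[Lemma~4.2]{Yin:special:trans}) and a coordinate permutation, we may take $A \sub \VF \times \{0\} \times \RV^{l'}$; the second $\VF$-coordinate is then constantly $0$, so the standard contractions targeting this degenerate coordinate are trivial and the two orders produce literally the same object. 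So assume $\dim_{\VF}(A) = 2$.

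Construction of $f$. Fibring $A$ over its second $\VF$-coordinate and applying Corollary~\ref{all:subsets:rvproduct} fibrewise — a special bijection on a set with a single $\VF$-coordinate is tautologically relatively unary — together with compactness as in the proof of Lemma~\ref{partial:exist:sec}, we obtain a definable bijection $f_1$, relatively unary with respect to the first coordinate, such that each second-coordinate fibre of $f_1(A)$ is an $\RV$-pullback in the first coordinate. We then perform the analogous straightening of the second coordinate, fibring over the first; the delicate point is that here we use Theorem~\ref{spec:bi:term:cons:disc} (with the first $\VF$-coordinate in the role of $\lbar X$ and everything else as parameters) to keep the terms defining this second straightening contractible as functions of the first coordinate, so that the order-$(1,2)$ standard contraction of the resulting set descends the second straightening coherently. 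This yields a bijection $f_2$, relatively unary with respect to the second coordinate, and we set $f = f_2 \circ f_1$, a definable injection $A \fun \VF^2 \times \RV^l$ which, being a composition of a map unary relative to the first coordinate and one unary relative to the second, is unary relative to both coordinates.

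The two contractions. Let $\wh T_{12}$ and $\wh R_{21}$ be the standard contractions of $f(A)$ in the orders $1,2$ and $2,1$; they exist by Corollary~\ref{all:subsets:rvproduct}. Because $f(A)$ has been prepared so that contracting the first coordinate does not disturb the fan obtained by later contracting the second, and symmetrically, the underlying $\RV$-pullbacks $\wh T_{12}(f(A))$ and $\wh R_{21}(f(A))$ compute the same data: the original $\RV$-coordinates of $f(A)$ together with one $\RV$-coordinate recording the position inside each of the two contracted balls, the two presentations $\pr_{\le 2}$ picking out exactly these last two ``fan'' coordinates. Hence $\wh R_{21} \circ \wh T_{12}^{-1}$ (or the evident reordering) is a definable bijection between the two objects, so it is an $\RV[2, \cdot]$-isomorphism $(\wh T_{12}(f(A)), \pr_{\le 2}) \cong (\wh R_{21}(f(A)), \pr_{\le 2})$; when $\dim_{\RV}^{\fib}(A) = 0$ the map $\prv \rest f(A)$ is finite-to-one, hence so are both presentations, and the same bijection is then an $\RV[2]$-isomorphism.

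The crux is the construction of $f_2$: one must straighten the second $\VF$-coordinate without ruining the control gained over the first, which is precisely what forces the use of the refined Theorem~\ref{spec:bi:term:cons:disc} in place of its $\ACVF$ predecessor. Once that is in hand, all the remaining bookkeeping — focus maps depending on the other coordinate, the passage from fibrewise data to a single relatively unary map, and the re-gluing of the finitely many pieces produced by compactness — goes through exactly as in \cite{Yin:int:acvf}.
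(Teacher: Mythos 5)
Your proof takes a genuinely different route from the paper's, and it has a gap. The paper's argument is a two-line reduction: pick a $\Gamma$-partition $\pi$ of $A$; the tagging map $\lbar x \efun (\lbar x, \rcsn(\pi(\lbar x)))$ fixes both $\VF$-coordinates (hence is unary relative to both), each fiber $\pi^{-1}(\lbar \gamma)$ is $\csn(\lbar \gamma)$-$\lan{RV}$-definable, so the already-established $\ACVF$ statement \cite[Corollary~5.8]{Yin:int:acvf} applies fiberwise and compactness glues the resulting injections and contractions. You instead try to rebuild that corollary from scratch, and two things go wrong.

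First, your $f = f_2 \circ f_1$ does not satisfy condition (1). Being ``unary relative to both coordinates'' means being relatively unary with respect to coordinate $1$ \emph{and} with respect to coordinate $2$ simultaneously, i.e.\ $f$ fixes both $\VF$-coordinates and only adds $\RV$-data; that is why the paper calls its tagging map ``obviously'' unary relative to both. A composition of a map that moves the first $\VF$-coordinate (your $f_1$, built from fiberwise special bijections, which translate by focus maps) with one that moves the second is strictly weaker, and the stronger condition is exactly what is needed when this lemma is fed into Lemma~\ref{contraction:perm:pair:isp:coa} and Proposition~\ref{kernel:L:dag:coa}, where Corollary~\ref{contraction:same:perm:isp:coa} must be applicable to $f$ with \emph{either} coordinate as the first contracted one. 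Second, your closing claim that after the two straightenings the two orders of contraction ``compute the same data'' is asserted, not proved: a map that only adds $\RV$-tags can never turn $A$ into a joint $\RV$-pullback, and making each coordinate fiberwise an $\RV$-pullback does not make the $12$- and $21$-contractions agree up to isomorphism. That exchange property is precisely the nontrivial content of \cite[Lemma~5.7, Corollary~5.8]{Yin:int:acvf}, which the paper does not reprove but imports fiberwise over the $\Gamma$-partition. If you want to avoid citing it, you must reproduce its actual combinatorial argument (the injection there records the data of \emph{both} contractions as extra $\RV$-coordinates before either is performed), not just the preparatory straightenings.
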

\begin{proof}
Let $\pi$ be a $\Gamma$-partition of $A$. Since the bijection on $A$ given by $\lbar x \efun (\lbar x, \rcsn(\pi(\lbar x)))$ is obviously unary relative to both coordinates, it is easily seen that the assertion simply follows from ~\cite[Corollary~5.8]{Yin:int:acvf} and compactness.
\end{proof}

\section{The kernel of $\bb L$ and integration}\label{section:ker}

To understand the kernels of the semigroup homomorphisms $\bb L$ constructed above, we shall produce analogues of \cite[Proposition~6.17,  Proposition~7.8]{Yin:int:acvf}. The key notion is still that of a blowup. This is defined in almost exactly the same way as in~\cite[Definition~6.1, Definition~7.1]{Yin:int:acvf}.

We shall first work in $\gC^{\bb T}$ and discuss the coarse $\VF$- and $\RV$-categories. However, as mentioned above, we shall concentrate on the categories without $\Gamma$-volume forms and the auxiliary results will only be stated for them. For the categories with $\Gamma$-volume forms the proofs are very similar and the extra computational work involving $\Gamma$-volume forms is always straightforward.

\begin{defn}\label{defn:blowup:coa}
Suppose $k > 0$ and let $\mathbf{U} = (U, f) \in \RV[k, \cdot]$. An \emph{elementary blowup} of $\mathbf{U}$ is an object $\mathbf{U}^{\sharp} = (U^{\sharp}, f^{\sharp}) \in \RV[\leq k,\cdot]$ such that $U^{\sharp} = U \times \RV^{>1}$ and, for some $1 \leq j \leq k$ and any $(\lbar t, s) \in U^{\sharp}$,
\[
f^{\sharp}_{i}(\lbar t, s) = f_{i}(\lbar t) \text{ for } i \neq j, \quad
f^{\sharp}_{j}(\lbar t, s) = s f_{j}(\lbar t).
\]
Note that $\mathbf{U}^{\sharp}$ is an object in $\RV[\leq k,\cdot]$ (actually in $\RV[k-1,\cdot] \amalg \RV[k,\cdot]$ ) but in general not an object in $\RV[k,\cdot]$ because $f^{\sharp}_{j}(\lbar t, \infty) = \infty$.

Let $\mathbf{V} = (V, g) \in \RV[k, \cdot]$, $C \sub V$, and $\mathbf{C} = (C, g \rest C) \in \RV[k, \cdot]$. Let $F : \mathbf{U} \fun \mathbf{C}$ be an $\RV[k, \cdot]$-morphism. Then $\mathbf{U}^{\sharp} \uplus (V \mi C, g \rest (V \mi C))$ is a \emph{blowup of $(V, g)$ via $F$}, written as $\mathbf{V}^{\sharp}_F$. The subscript $F$ may be dropped in context if there is no danger of confusion. The object $\mathbf{C}$ (or the subset $C$) is called the \emph{locus} of the blowup $\mathbf{V}^{\sharp}_F$. A \emph{blowup of length $n$} is a composition of $n$ blowups.
\end{defn}

\begin{lem}\label{blowup:equi:class:coa}
Let $\mathbf{U}, \mathbf{V} \in \RV[\leq k, \cdot]$ and $\mathbf{U}^{\sharp}$, $\mathbf{V}^{\sharp}$ be two blowups. In $\gsk \RV[\leq k, \cdot]$, if $[\mathbf{U}] = [\mathbf{V}]$ then there are blowups $\mathbf{U}^{\sharp\sharp}$, $\mathbf{V}^{\sharp\sharp}$ of $\mathbf{U}^{\sharp}$, $\mathbf{V}^{\sharp}$ such that $[\mathbf{U}^{\sharp\sharp}] = [\mathbf{V}^{\sharp\sharp}]$. Therefore, if $[\mathbf{U}] = [\mathbf{U}']$, $[\mathbf{V}] = [\mathbf{V}']$ and there are isomorphic blowups of $\mathbf{U}$, $\mathbf{V}$ then there are isomorphic blowups of $\mathbf{U}'$, $\mathbf{V}'$.
\end{lem}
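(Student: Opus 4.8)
The plan is to reduce the lemma to a statement about resolving two blowups of the \emph{same} object $\mathbf U$ (a ``common refinement of blowups'' lemma) together with the transport of blowups along isomorphisms; these two ingredients are standard and are proved exactly as in~\cite[\S 6]{Yin:int:acvf}, but for completeness I would spell out the shape of the argument. First I would establish the transport statement: given an $\RV[\leq k, \cdot]$-isomorphism $H : \mathbf U \fun \mathbf V$ and a blowup $\mathbf V^{\sharp}_F$ of $\mathbf V$ with locus $\mathbf C \sub \mathbf V$, one can pull $\mathbf C$ and $F$ back along $H$ to obtain a blowup $\mathbf U^{\sharp}$ of $\mathbf U$ with locus $H^{-1}(\mathbf C)$, together with a canonical isomorphism $\mathbf U^{\sharp} \cong \mathbf V^{\sharp}_F$ lifting $H$ (on the elementary-blowup part one uses $H \times \id_{\RV^{>1}}$ and adjusts the distinguished coordinate $j$ by the relabeling induced by $H$; on the complement one uses $H \rest (U \mi H^{-1}(C))$). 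For blowups of length $n$ this is iterated. So without loss of generality we may assume $\mathbf U = \mathbf V$ from the start.

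Next, the core step: given two blowups $\mathbf U^{\sharp}_{F_1}$ and $\mathbf U^{\sharp}_{F_2}$ of the \emph{same} object $\mathbf U = (U,f)$, with loci $\mathbf C_1, \mathbf C_2 \sub U$, I would produce a common blowup. The idea is the usual ``make the loci disjoint or nested'' manoeuvre: partition $U$ according to the trace of $C_1$ and $C_2$, so that over each piece one sees $C_1 \mi C_2$, $C_2 \mi C_1$, $C_1 \cap C_2$, or neither; over the piece where only $C_1$ is active, $\mathbf U^{\sharp}_{F_1}$ is already a blowup and one further blows up along (the preimage of) $C_2$, and symmetrically; over $C_1 \cap C_2$ one performs both elementary blowups in the two coordinates $j_1, j_2$ and checks that the two orders of blowing up give isomorphic results when $j_1 \neq j_2$ (they commute, up to swapping the two $\RV^{>1}$ factors), while if $j_1 = j_2$ one uses that a double elementary blowup in the same coordinate, $s f_j(\lbar t)$ followed by $s' s f_j(\lbar t)$, is symmetric in $(s,s')$ up to the bijection $(s,s') \mapsto (s', s s')$ of $(\RV^{>1})^2$ (or of $(\RV^{\times})^{>1}$ on the non-degenerate part). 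Glueing these local isomorphisms gives blowups $\mathbf U^{\sharp\sharp}_{F_1}$, $\mathbf U^{\sharp\sharp}_{F_2}$ of $\mathbf U^{\sharp}_{F_1}$, $\mathbf U^{\sharp}_{F_2}$ with $[\mathbf U^{\sharp\sharp}_{F_1}] = [\mathbf U^{\sharp\sharp}_{F_2}]$ in $\gsk \RV[\leq k, \cdot]$. Combining this with the transport statement (applied to a witnessing isomorphism $\mathbf U \cong \mathbf V$) yields the first assertion of the lemma.

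The second assertion is then a formal consequence: suppose $[\mathbf U] = [\mathbf U']$, $[\mathbf V] = [\mathbf V']$, and $G : \mathbf U^{\sharp} \fun \mathbf V^{\sharp}$ is an isomorphism of blowups of $\mathbf U$, $\mathbf V$. By transport, the blowup $\mathbf U^{\sharp}$ of $\mathbf U$ gives a blowup $\widetilde{\mathbf U}$ of $\mathbf U'$ with $[\widetilde{\mathbf U}] = [\mathbf U^{\sharp}]$, and likewise $\mathbf V^{\sharp}$ gives $\widetilde{\mathbf V}$ of $\mathbf V'$ with $[\widetilde{\mathbf V}] = [\mathbf V^{\sharp}] = [\mathbf U^{\sharp}] = [\widetilde{\mathbf U}]$; now apply the first assertion to $\widetilde{\mathbf U}$ (a blowup of $\mathbf U'$) and $\widetilde{\mathbf V}$ (a blowup of $\mathbf V'$), which have equal classes, to get isomorphic further blowups of $\widetilde{\mathbf U}$ and $\widetilde{\mathbf V}$, hence isomorphic blowups of $\mathbf U'$, $\mathbf V'$. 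The main obstacle is the bookkeeping in the core step: keeping track of which $\VF$-\,$\RV$ coordinate each elementary blowup targets across a partition and after relabeling by isomorphisms, and checking that the commuting/symmetry isomorphisms of the iterated elementary blowups are genuinely $\RV[\leq k, \cdot]$-morphisms (i.e.\ respect the presentations $f^{\sharp\sharp}$); but this is entirely parallel to the corresponding verification in~\cite[Lemma~6.2, Proposition~6.17]{Yin:int:acvf} and requires no new idea in the present setting, since the $\Gamma$-partition machinery (Corollary~\ref{all:subsets:rvproduct} and its consequences) is not needed for objects already living in the $\RV$-categories.
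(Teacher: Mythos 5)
Your proposal is correct and follows essentially the same route as the paper, whose proof of the first assertion simply defers to \cite[Lemma~6.5]{Yin:int:acvf}: transporting a blowup along an isomorphism of the base objects, establishing confluence of two blowups of the same object by the case analysis on loci and targeted coordinates, and then deducing the second assertion formally. The only thing left implicit in your plan is the routine induction on the lengths of the two blowups in the confluence step (your core case is length one), but that is exactly the bookkeeping the cited proof carries out.
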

\begin{proof}
For the first assertion, the proof of \cite[Lemma~6.5]{Yin:int:acvf} works. The second assertion is a corollary.
\end{proof}

\begin{defn}
Let $\isp[k, \cdot]$ be the subclass of $\ob \RV[\leq k, \cdot] \times \ob \RV[\leq k, \cdot]$ of pairs $(\mathbf{U}, \mathbf{V})$ such that there exist isomorphic blowups $\mathbf{U}^{\sharp}$, $\mathbf{V}^{\sharp}$. Let $\isp[*, \cdot] = \bigcup_{k} \isp[k, \cdot]$.
\end{defn}

We will just write $\isp$ for all these classes if there is no danger of confusion. By Lemma~\ref{blowup:equi:class:coa}, $\isp$ may be regarded as a binary relation on isomorphism classes.

\begin{lem}\label{isp:congruence:vol}
$\isp[k, \cdot]$ is a semigroup congruence relation and $\isp[*, \cdot]$ is a semiring congruence relation.
\end{lem}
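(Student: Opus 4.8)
The statement asserts that $\isp[k,\cdot]$ is a semigroup congruence on $\gsk\RV[\leq k,\cdot]$ (with respect to $\uplus$) and that $\isp[*,\cdot]$ is moreover compatible with the product, hence a semiring congruence. I would prove this exactly along the lines of the corresponding result in the $\ACVF$ case (the analogue of \cite[Lemma~6.7]{Yin:int:acvf}), since all the ingredients needed have already been set up: blowups are defined fibrewise over $\rv$-polydiscs in the same way, and Lemma~\ref{blowup:equi:class:coa} already licenses us to treat $\isp$ as a relation on isomorphism classes. So the whole argument takes place at the level of objects up to isomorphism.

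\emph{Equivalence relation.} Reflexivity is trivial (the empty blowup, $\mathbf{U}^\sharp=\mathbf{U}$). Symmetry is built into the definition, which is symmetric in $\mathbf U,\mathbf V$. For transitivity, suppose $(\mathbf U,\mathbf V),(\mathbf V,\mathbf W)\in\isp$, witnessed by isomorphic blowups $\mathbf U^\sharp\cong\mathbf V^\sharp_1$ and $\mathbf V^\sharp_2\cong\mathbf W^\sharp$. By Lemma~\ref{blowup:equi:class:coa} applied to $[\mathbf V^\sharp_1]=[\mathbf V^\sharp_2]$ (same object $\mathbf V$), there are further blowups $\mathbf V^{\sharp\sharp}_1$ of $\mathbf V^\sharp_1$ and $\mathbf V^{\sharp\sharp}_2$ of $\mathbf V^\sharp_2$ with $[\mathbf V^{\sharp\sharp}_1]=[\mathbf V^{\sharp\sharp}_2]$. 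Pulling the blowup $\mathbf V^\sharp_1\leadsto\mathbf V^{\sharp\sharp}_1$ back through the isomorphism $\mathbf U^\sharp\cong\mathbf V^\sharp_1$ gives a blowup $\mathbf U^{\sharp\sharp}$ of $\mathbf U^\sharp$ (hence of $\mathbf U$) with $[\mathbf U^{\sharp\sharp}]=[\mathbf V^{\sharp\sharp}_1]$; symmetrically one gets a blowup $\mathbf W^{\sharp\sharp}$ of $\mathbf W$ with $[\mathbf W^{\sharp\sharp}]=[\mathbf V^{\sharp\sharp}_2]$. Chaining these isomorphisms yields $[\mathbf U^{\sharp\sharp}]=[\mathbf W^{\sharp\sharp}]$, so $(\mathbf U,\mathbf W)\in\isp$.

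\emph{Compatibility with $\uplus$ and with $\times$.} For the additive part it suffices to show that if $(\mathbf U,\mathbf V)\in\isp$ then $(\mathbf U\uplus\mathbf W,\mathbf V\uplus\mathbf W)\in\isp$ for every $\mathbf W$: one simply performs the same blowup on the $\mathbf U$- (resp.\ $\mathbf V$-)summand and leaves $\mathbf W$ untouched; the locus of the blowup being a subobject of $\mathbf U$ (resp.\ $\mathbf V$), this is literally a blowup of $\mathbf U\uplus\mathbf W$ (resp.\ $\mathbf V\uplus\mathbf W$), and the witnessing isomorphism extends by the identity on $\mathbf W$. For the multiplicative statement one checks that an elementary blowup of $\mathbf U$ induces, after multiplying by a fixed $\mathbf W=(W,g)$, a blowup of $\mathbf U\times\mathbf W$: if the elementary blowup of $\mathbf U=(U,f)$ has locus all of $U$ and replaces the $j$th coordinate by $sf_j(\lbar t)$ over $U\times\RV^{>1}$, then the product object $(U\times W)\times\RV^{>1}$ with the $j$th coordinate of the $\mathbf U$-part scaled by $s$ is exactly an elementary blowup of $\mathbf U\times\mathbf W$ with locus $U\times W$; for a general blowup (with a morphism $F:\mathbf U\to\mathbf C$ onto a subobject) one takes $F\times\id_{\mathbf W}$. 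Iterating over the length of the blowup and combining with the already-established additive compatibility (to handle the $\uplus$ occurring in the definition of a blowup via $F$) gives, for $(\mathbf U,\mathbf V)\in\isp$, that $(\mathbf U\times\mathbf W,\mathbf V\times\mathbf W)\in\isp$, and by symmetry of $\times$ also $(\mathbf W\times\mathbf U,\mathbf W\times\mathbf V)\in\isp$. Hence $\isp[*,\cdot]$ is a semiring congruence.

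\emph{Main obstacle.} Everything reduces to Lemma~\ref{blowup:equi:class:coa} plus the bookkeeping of transporting blowups along isomorphisms and along products; the only genuinely delicate point is checking that the product of a blowup with a fixed object is again a blowup in the precise sense of Definition~\ref{defn:blowup:coa} — in particular that the locus and the defining morphism $F\times\id_{\mathbf W}$ satisfy the required form, including the grading ($\mathbf U^\sharp\times\mathbf W$ landing in $\RV[\leq k,\cdot]$). This is routine but must be verified coordinatewise; it is the same verification as in the proof of \cite[Lemma~6.7]{Yin:int:acvf}, which goes through unchanged here.
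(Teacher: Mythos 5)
Your proposal is correct and follows essentially the same route as the paper, whose entire proof is a citation of the corresponding lemma in \cite{Yin:int:acvf} (Lemma~6.8 there); your argument is just that argument written out, with transitivity via Lemma~\ref{blowup:equi:class:coa} and the product compatibility checked by forming $F\times\id_{\mathbf W}$ exactly as in the cited proof.
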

\begin{proof}
The proof of~\cite[Lemma~6.8]{Yin:int:acvf} works.
\end{proof}

Let $\mathbf{U}_i = (U_i, f_i) \in \RV[i, \cdot]$, $\mathbf{U} =  \coprod_{i \leq k} \mathbf{U}_i  \in \RV[\leq k, \cdot]$, and $T$ a special bijection on $\bb L \mathbf{U}$. We write $U_{i,T}$ for the subset $(\prv \circ T)(\mathbb{L} \mathbf{U}_i)$, $\mathbf{U}_{i,T}$ for the object $(U_{i,T}, \pr_{\leq i}) \in \RV[i, \cdot]$, and $\mathbf{U}_{T}$ for the object $\coprod_{i \leq k} \mathbf{U}_{i,T} \in \RV[\leq k, \cdot]$. Recall from \cite[Notation~2.37]{Yin:int:acvf} the shorthand $[U_{i,T}]_{\leq i}$ for $[(U_{i,T}, \pr_{\leq i})] \in \gsk \RV[i, \cdot]$.

\begin{lem}\label{special:to:blowup:coa}
The object $\mathbf{U}_T$ is isomorphic to a blowup of $\mathbf{U}$ of the same length as $T$.
\end{lem}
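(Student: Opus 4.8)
The plan is to reduce to the case where $T$ is a special bijection of length $1$ and then induct on the length, mirroring the proof of \cite[Lemma~6.10]{Yin:int:acvf} (the analogous statement in $\ACVF$) while keeping track of the extra bookkeeping imposed by working fiberwise over an $\RV$-partition. So first I would treat a length-$1$ special bijection $T$ on $\bb L\mathbf{U}$ with locus $C\sub\RVH(\bb L\mathbf{U})$ and focus map $\lambda$. By the definition of a special bijection of length $1$, on each $\rv$-polydisc $\gp\sub C$ the restriction $T_{\gp}$ is an $\sn(\rv(\gp))$-$\lan{RV}$-definable special bijection of length $1$, hence of exactly the shape analyzed in $\ACVF$; outside $C$, $T$ is a bijection onto a union of $\rv$-polydiscs and contributes nothing to the blowup.

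The key step is to identify $\mathbf{U}_T$ with a blowup via an explicit morphism $F$. Concretely, for each $i$ the subset $U_{i,T}=(\prv\circ T)(\bb L\mathbf{U}_i)$ decomposes according to whether a given $\rv$-polydisc of $\bb L\mathbf{U}_i$ lies in the locus $C$ or not. On the part outside the locus, $\prv\circ T$ is essentially the identity on the $\RV$-coordinates, so that part contributes $(U_i\mi D_i,f_i\rest(U_i\mi D_i))$ where $D_i=\prv(\bb L\mathbf{U}_i\cap C)$. On the part inside the locus, the $\sn$-definable focus map $\lambda_{\gp}$ introduces exactly one new $\RV$-coordinate ranging in $\RV^{>1}$, and $\prv\circ T$ sends the corresponding $\rv$-ball to a family of $\rv$-polydiscs indexed by that coordinate, with the targeted $\VF$-coordinate scaled multiplicatively — this is precisely the shape of an elementary blowup $f^{\sharp}_j(\lbar t,s)=s f_j(\lbar t)$. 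Thus, letting $\mathbf{C}=(D,f\rest D)$ with $D=\biguplus_i D_i$, one obtains a canonical $\RV[\leq k,\cdot]$-morphism $F$ from the relevant elementary blowup of $(D,f\rest D)$ onto $\mathbf{C}$, and $\mathbf{U}_T$ is by construction the blowup of $\mathbf{U}$ via $F$. The verification that $F$ is a well-defined definable bijection respecting the $\RV[\leq k,\cdot]$-structure is where the argument of \cite[Lemma~6.10]{Yin:int:acvf} is invoked, applied $\rv$-polydisc by $\rv$-polydisc (equivalently, piecewise over an $\RV$-partition of $\bb L\mathbf{U}$) and then glued by compactness.

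For the inductive step, write $T=T_\ell\circ\cdots\circ T_1$ with each $T_j$ of length $1$. By the base case applied to $T_1$ on $\bb L\mathbf{U}$, the object $\mathbf{U}_{T_1}$ is a blowup of $\mathbf{U}$ of length $1$. Now $T_2\circ\cdots\circ T_\ell$ restricts to a special bijection on $\bb L\mathbf{U}_{T_1}$ (using that a blowup's lift is again the domain of the relevant special bijection, as in the $\ACVF$ setting), and the associated $(\mathbf{U}_{T_1})_{T_2\circ\cdots\circ T_\ell}$ is canonically $\mathbf{U}_T$; by the inductive hypothesis it is a blowup of $\mathbf{U}_{T_1}$ of length $\ell-1$, hence a blowup of $\mathbf{U}$ of length $\ell$, as desired.

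The main obstacle I expect is not the combinatorial skeleton — that is essentially transcribed from \cite[Lemma~6.10]{Yin:int:acvf} — but rather checking carefully that the presence of the section $\sn$ (or cross-section $\csn$) does not disturb the identification of the focus-map data with an elementary blowup: one must confirm that the $\sn(\rv(\gp))$-definability of $T_{\gp}$, together with the prescribed centers of $\rv$-balls, makes the new $\RV$-coordinate genuinely range over $\RV^{>1}$ and makes the scaling $f^{\sharp}_j(\lbar t,s)=sf_j(\lbar t)$ hold on the nose rather than merely up to an $\RV$-definable error. Remark~\ref{spec:rest:acvf} is exactly what is needed to control this, so the argument should go through without new ideas, only with attention to the $\RV$-sort bookkeeping across the $\RV$-partition.
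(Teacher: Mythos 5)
Your proposal is correct and follows essentially the same route as the paper: the paper's proof is exactly ``induct on length, reduce to $\lh T = 1$ via Lemma~\ref{blowup:equi:class:coa}, then reuse the explicit isomorphism from the $\ACVF$ case,'' which is the skeleton you describe, with Remark~\ref{spec:rest:acvf} supplying the $\sn$-definability control you correctly flag as the only point needing care. The one small thing worth making explicit in your inductive step is that $\mathbf{U}_{T_1}$ is only \emph{isomorphic} to a blowup of $\mathbf{U}$, so passing from ``blowup of $\mathbf{U}_{T_1}$'' to ``blowup of $\mathbf{U}$'' requires the compatibility statement of Lemma~\ref{blowup:equi:class:coa}, which the paper invokes by name.
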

\begin{proof}
By induction on the length $\lh T$ of $T$ and Lemma~\ref{blowup:equi:class:coa}, this is immediately reduced to the case $\lh T = 1$. Then we may use the isomorphism constructed in the proof of~\cite[Lemma~6.9]{Yin:int:acvf}.\end{proof}

\begin{lem}\label{kernel:dim:1:coa}
Suppose that $[A] = [B]$ in $\VF[1, \cdot]$ and $\mathbf{U}, \mathbf{V} \in \RV[\leq 1, \cdot]$ are two standard contractions of $A$, $B$. Then $([\mathbf{U}], [\mathbf{V}]) \in \isp$.
\end{lem}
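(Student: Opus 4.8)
The plan is to reduce, via Lemma~\ref{simul:special:dim:1}, to the situation of a contractible bijection with bijective contraction, and then to extract the required isomorphism of standard contractions by a crude cardinality count. Everything else is the usual bookkeeping with blowups, already packaged in Lemma~\ref{special:to:blowup:coa} and Lemma~\ref{isp:congruence:vol}.

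First I would lift the given standard contractions: since $\mathbf{U}, \mathbf{V} \in \RV[\leq 1, \cdot]$, the objects $\bb L\mathbf{U}$, $\bb L\mathbf{V}$ are $\RV$-pullbacks with a single $\VF$-coordinate, and by the very construction of a standard contraction $\bb L\mathbf{U}$ is $\VF[1,\cdot]$-isomorphic to $A$ and $\bb L\mathbf{V}$ to $B$. Hence $[\bb L\mathbf{U}] = [A] = [B] = [\bb L\mathbf{V}]$ in $\VF[1,\cdot]$, say witnessed by a definable bijection $g : \bb L\mathbf{U} \fun \bb L\mathbf{V}$. I would then apply Lemma~\ref{simul:special:dim:1} to $g$, obtaining special bijections $T_1$ on $\bb L\mathbf{U}$ and $T_2$ on $\bb L\mathbf{V}$ such that $P = T_1(\bb L\mathbf{U})$ and $Q = T_2(\bb L\mathbf{V})$ are $\RV$-pullbacks, the conjugated bijection $g^{\sharp} = T_2 \circ g \circ T_1^{-1} : P \fun Q$ is contractible, its contraction $g^{\sharp}_{\downarrow} : \rv(P) \fun \rv(Q)$ is bijective, and $g^{\sharp}$ is a lift of $g^{\sharp}_{\downarrow}$.

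Let $\mathbf{P}$, $\mathbf{Q} \in \RV[\leq 1, \cdot]$ be the contractions of $P$, $Q$. By Lemma~\ref{special:to:blowup:coa} (applied to $\mathbf{U}$ with the special bijection $T_1$, and to $\mathbf{V}$ with $T_2$), $\mathbf{P}$ is isomorphic to a blowup of $\mathbf{U}$ and $\mathbf{Q}$ to a blowup of $\mathbf{V}$, so that $([\mathbf{U}], [\mathbf{P}]) \in \isp$ and $([\mathbf{V}], [\mathbf{Q}]) \in \isp$; as $\isp$ is an equivalence relation by Lemma~\ref{isp:congruence:vol}, it then suffices to show $[\mathbf{P}] = [\mathbf{Q}]$ in $\gsk \RV[\leq 1, \cdot]$. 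For this I would use $g^{\sharp}_{\downarrow}$ directly: since $g^{\sharp}$ is a bijection and a lift of the bijection $g^{\sharp}_{\downarrow}$, for every $\rv$-polydisc $\gp \sub P$ the restriction $g^{\sharp} \rest \gp$ is a bijection of $\gp$ onto the $\rv$-polydisc of $Q$ lying over $g^{\sharp}_{\downarrow}(\rv(\gp))$. Comparing cardinalities, $\gp$ is degenerate (hence a single point) if and only if its image is. Therefore $g^{\sharp}_{\downarrow}$ carries the degenerate part of $\rv(P)$ bijectively onto that of $\rv(Q)$ and the nondegenerate part onto the nondegenerate part, i.e.\ it restricts to bijections between the $\RV[0,\cdot]$-components and between the $\RV[1,\cdot]$-components of the standard contractions $\mathbf{P}$, $\mathbf{Q}$; since morphisms of the coarse $\RV$-categories are just definable bijections, these are isomorphisms, and $[\mathbf{P}] = [\mathbf{Q}]$ follows. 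Transitivity of $\isp$ then gives $([\mathbf{U}], [\mathbf{V}]) \in \isp$.

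The only substantive step is the appeal to Lemma~\ref{simul:special:dim:1}: manufacturing the simultaneous special bijections that turn $g$ into a contractible map with a bijective contraction. Once that is in hand, grading-compatibility of $g^{\sharp}_{\downarrow}$ is forced by the cardinality argument, and the remaining manipulations are the standard ones with blowups. The one point I would want to check carefully is that, in the one-dimensional case, the ``standard contraction'' of \cite[Definition~5.5]{Yin:int:acvf} really is the coproduct of the nondegenerate $\rv$-polydiscs (presented by the $\rv$-value of their $\VF$-coordinate) and the degenerate ones, so that $g^{\sharp}_{\downarrow}$ is literally a morphism $\mathbf{P} \fun \mathbf{Q}$; the case $\dim_{\VF}(A) = 0$, where the $\RV[1,\cdot]$-component is empty, is of course covered as well.
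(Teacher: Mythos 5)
Your proof is correct and is essentially the paper's own argument: lift to a bijection $\bb L\mathbf{U} \fun \bb L\mathbf{V}$, apply Lemma~\ref{simul:special:dim:1} to get simultaneous special bijections whose contractions are isomorphic, and convert those special bijections into blowups via Lemma~\ref{special:to:blowup:coa}. The cardinality argument you add at the end (a degenerate $\rv$-polydisc is a single point, so the bijective contraction must respect the grading of $\RV[\leq 1,\cdot]$) is a detail the paper's two-line proof leaves implicit, and it is right.
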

\begin{proof}
By Lemma~\ref{simul:special:dim:1}, there are special bijections $T$, $R$ on $\bb L \mathbf{U}$, $\bb L \mathbf{V}$ such that $\mathbf{U}_{T}$, $\mathbf{V}_{R}$ are isomorphic. So the assertion follows from Lemma~\ref{special:to:blowup:coa}.
\end{proof}

\begin{lem}\label{blowup:same:RV:coa}
Let $\mathbf{U}^{\sharp}$ be a blowup of $\mathbf{U} \in \RV[\leq k, \cdot]$ of length $l$. Then $\bb L \mathbf{U}^{\sharp}$ and $\bb L \mathbf{U}$ are isomorphic.
\end{lem}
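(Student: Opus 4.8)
The plan is to peel the length-$l$ blowup off one step at a time, reducing to a single elementary blowup, for which one writes down an explicit $\mdl L_{\bb T}$-definable bijection between the two lifts that is, fibrewise, just a translation by a section value. I argue by induction on $l$; the case $l=0$ is trivial, so assume $l\ge 1$. A blowup of $\mathbf U$ of length $l$ is a blowup of length $1$ of a blowup $\mathbf U'$ of $\mathbf U$ of length $l-1$, so by the inductive hypothesis it suffices to prove the case $l=1$; and a blowup $\mathbf V^{\sharp}_F$ of $\mathbf V=(V,g)$ of length $1$ has the shape $\mathbf W^{\sharp}\uplus(V\mi C,g\rest(V\mi C))$ with $\mathbf W^{\sharp}$ an elementary blowup of some $\mathbf W$ of positive grading and $F:\mathbf W\fun\mathbf C=(C,g\rest C)$ an isomorphism onto a subobject of $\mathbf V$. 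Now $\bb L$ commutes with finite disjoint unions up to canonical isomorphism and, since $\sn$ is present, the $\RV$-isomorphism $F$ lifts to a definable bijection $\bb L\mathbf W\fun\bb L\mathbf C$, while $\mathbf V\cong\mathbf C\uplus(V\mi C,g\rest(V\mi C))$; chaining these gives $\bb L\mathbf V^{\sharp}_F\cong\bb L\mathbf W^{\sharp}\uplus\bb L(V\mi C,g\rest(V\mi C))$ and $\bb L\mathbf V\cong\bb L\mathbf C\uplus\bb L(V\mi C,g\rest(V\mi C))$, so everything is reduced to the claim $\bb L\mathbf W^{\sharp}\cong\bb L\mathbf W$, i.e. to the case of an elementary blowup.

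For the elementary case, let $\mathbf U^{\sharp}=(U^{\sharp},f^{\sharp})$ be the elementary blowup of $\mathbf U=(U,f)\in\RV[k,\cdot]$ at the $j$th coordinate, $U^{\sharp}=U\times\RV^{>1}$, and put $b_{\lbar t}=\sn(f_j(\lbar t))$, a point of $\rv^{-1}(f_j(\lbar t))$. By definition of the lift, $\bb L\mathbf U=\{(\lbar x,\lbar t):\lbar t\in U,\ \rv(x_i)=f_i(\lbar t)\ (1\le i\le k)\}$, while $\bb L\mathbf U^{\sharp}$ is the set of tuples $(\lbar y,\lbar t,s)$ with $\lbar t\in U$, $s\in\RV^{>1}$, $\rv(y_i)=f_i(\lbar t)$ for $i\ne j$ and $\rv(y_j)=s\,f_j(\lbar t)$; note $s$ is determined by $(\lbar y,\lbar t)$, being $\rv(y_j)/f_j(\lbar t)$ or $\infty$ when $y_j=0$. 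I would take $\Theta:\bb L\mathbf U\fun\bb L\mathbf U^{\sharp}$ sending $(\lbar x,\lbar t)$ to $(\lbar y,\lbar t,s)$ with $y_i=x_i$ for $i\ne j$, $y_j=x_j-b_{\lbar t}$ and $s=\rv(y_j)/f_j(\lbar t)$, with inverse $(\lbar y,\lbar t,s)\efun(\lbar x,\lbar t)$, $x_i=y_i$ for $i\ne j$, $x_j=y_j+b_{\lbar t}$. The only verification is that $x_j\efun x_j-b_{\lbar t}$ maps $\rv^{-1}(f_j(\lbar t))=b_{\lbar t}(1+\MM)$ bijectively onto $\{z:\vv(z)>\vv(f_j(\lbar t))\}=b_{\lbar t}\MM$, and that under $\rv$ the latter matches $\{u\in\RV:u/f_j(\lbar t)\in\RV^{>1}\}$: this is the elementary identity $\rv(a)=\rv(c)\liff\vv(a-c)>\vv(a)$, together with $\vv(b_{\lbar t})=\vv(f_j(\lbar t))$ and $\rv(b_{\lbar t})=f_j(\lbar t)$. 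Thus $\Theta$ is an $\mdl L_{\bb T}$-definable bijection, hence an isomorphism in $\VF[k,\cdot]$. (In the presence of $\Gamma$-volume forms one checks in addition that $\Theta$ preserves the $\Gamma$-form, which is immediate since $\Theta$ is fibrewise a translation and so has $\VF$-Jacobian $1$; in fact $\Theta$ is a special bijection of length $1$ with focus map $\lbar t\efun\sn(f_j(\lbar t))$, cf. Lemma~\ref{special:to:blowup:coa}.)

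I expect no essential obstacle: the geometric content coincides with the corresponding lemma of~\cite{Yin:int:acvf}, and the section only shortens the argument, since the centre $b_{\lbar t}$ is handed to us rather than having to be produced by the choice-of-centre work of~\cite[\S 6]{hrushovski:kazhdan:integration:vf}. The one thing to watch is the grading bookkeeping: an elementary blowup genuinely splits as an object of $\RV[k-1,\cdot]\amalg\RV[k,\cdot]$, the fibre $s=\infty$ contributing to the $(k-1)$-graded piece, so the point $y_j=0$ in the description of $\bb L\mathbf U^{\sharp}$ above is to be read as living in that lower-dimensional part; and one must make sure the chain of isomorphisms in the reduction step yields an honest isomorphism $\bb L\mathbf V^{\sharp}_F\cong\bb L\mathbf V$ in $\VF[k,\cdot]$, not merely an equality of classes in $\gsk\VF[k,\cdot]$.
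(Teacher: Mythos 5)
Your proof is correct and follows essentially the same route as the paper: reduce by induction to a single (elementary) blowup, then observe that the presence of the section $\sn$ hands you the focus map $\lbar t \efun \sn(f_j(\lbar t))$, so the required isomorphism is exactly the length-one special bijection that the paper says ``can be (quite trivially) constructed'' following \cite[Lemma~6.12]{Yin:int:acvf}. You have merely written out explicitly the translation $x_j \efun x_j - \sn(f_j(\lbar t))$ and its inverse, which is what the paper leaves implicit.
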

\begin{proof}
By induction this is immediately reduced to the case $l=1$. Observe that, using the section $\sn$, the special bijection $T$ on $\bb L \mathbf{U}$ as described in the proof of \cite[Lemma~6.12]{Yin:int:acvf} can be (quite trivially) constructed.
\end{proof}

\begin{lem}\label{isp:VF:fiberwise:contract:isp:coa}
Let $A_1, A_2 \in \VF[k, \cdot]$ such that $\pvf (A_1) = \pvf (A_2) = A$. Suppose that there is a common subset $E$ of the indices of the $\RV$-coordinates of $A_1$, $A_2$ such that, for every $\lbar a \in A$,
\[
([\fib(A_1, \lbar a)]_{E}, [\fib(A_2, \lbar a)]_{E}) \in \isp.
\]
Let $\wh T_{\sigma}$, $\wh R_{\sigma}$ be two standard contractions of $A_1$, $A_2$. Set $E' = E \cup I_k$. Then
\[
([\wh T_{\sigma}(A_1)]_{E'}, [\wh R_{\sigma}(A_2)]_{E'}) \in \isp.
\]
\end{lem}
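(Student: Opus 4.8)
The plan is to reduce to $\sigma=\mathrm{id}$ by relabelling the $\VF$-coordinates of $A_1,A_2$ simultaneously, and then to induct on $k$. When $k=0$ the set $A$ is a point, $\wh T_\sigma$, $\wh R_\sigma$ are empty, $E'=E$, and the assertion is the hypothesis itself. For $k>0$ I shall peel off the first $\VF$-coordinate; the substantial point is the case $k=1$, which I treat first.

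So suppose $k=1$, i.e.\ $A_1\sub\VF\times\RV^{m_1}$, $A_2\sub\VF\times\RV^{m_2}$ and $\pvf(A_1)=\pvf(A_2)=A\sub\VF$. By compactness together with the second part of Lemma~\ref{blowup:equi:class:coa}, the fibrewise hypothesis can be made into a definable family: there is a definable $A_1^\sharp\sub\VF\times\RV^{m_1+l}$ with $\pvf(A_1^\sharp)=A$ such that, for every $a\in A$, the fibre $\fib(A_1^\sharp,a)$ with a designated presentation is a blowup of $(\fib(A_1,a),\pr_E)$; a like $A_2^\sharp$ for $A_2$; and a definable bijection $G\colon A_1^\sharp\fun A_2^\sharp$ with $\pvf\circ G=\pvf$ restricting to the chosen isomorphism of blowups on each fibre. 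Since only $\RV$-coordinates were added, $A_1^\sharp,A_2^\sharp\in\VF[1,\cdot]$. Now Lemma~\ref{simul:special:dim:1}, applied to $G$, yields special bijections on $A_1^\sharp,A_2^\sharp$ with $\RV$-pullback targets and, in the resulting commutative square, a bijection $G^\sharp_\downarrow$ between the standard contractions $\wh T(A_1^\sharp)$ and $\wh R(A_2^\sharp)$; this bijection keeps track of all the $\RV$-coordinates, so $[\wh T(A_1^\sharp)]_{E'}$ and $[\wh R(A_2^\sharp)]_{E'}$ are isomorphic, where $E'$ records the $E$-coordinates and the one contracted coordinate.

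It then remains to identify $\wh T(A_1^\sharp)$ with a blowup of a standard contraction of $A_1$ (and similarly for $A_2$). Lengthening the special bijection on $A_1^\sharp$ if necessary so that the family $a\mapsto\fib(A_1^\sharp,a)$ is constant on each $\rv$-polydisc of the contracting $\VF$-coordinate — which a standard contraction permits, as such bijections refine $\rv$-polydiscs arbitrarily — one sees that over each such polydisc the set is the corresponding polydisc of $T_0(A_1)$ (for the induced special bijection $T_0$ on $A_1$) with the blowup coordinates adjoined on a definable locus; contracting the $\VF$-coordinate therefore presents $\wh T(A_1^\sharp)$ as a blowup, along a definable locus of the contracted coordinate, of the standard contraction $\wh T_0(A_1)$, the blowup affecting only $E$-coordinates. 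By Lemma~\ref{blowup:equi:class:coa} one may partition the locus and normalise blowup lengths so that this is a blowup in the sense of Definition~\ref{defn:blowup:coa}, and the same holds for $\wh R(A_2^\sharp)$ and some $\wh R_0(A_2)$. Since, by Lemma~\ref{kernel:dim:1:coa}, any two standard contractions of $A_1$ (resp.\ $A_2$) for the fixed $\sigma$ are $\isp$-equivalent, chaining these facts with transitivity of $\isp$ (Lemma~\ref{isp:congruence:vol}) gives $([\wh T_\sigma(A_1)]_{E'},[\wh R_\sigma(A_2)]_{E'})\in\isp$ for the given $\wh T_\sigma,\wh R_\sigma$, finishing the case $k=1$. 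I expect this identification — that contracting the $\VF$-coordinate turns a $\VF$-fibrewise blowup into an honest blowup of the contraction, together with the bookkeeping of presentations and coordinate indices — to be the main obstacle.

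For $k>1$ write $\wh T_\sigma=\wh{T'}\circ T_1$ and $\wh R_\sigma=\wh{R'}\circ R_1$, where $T_1,R_1$ are the first components (a special bijection on the first $\VF$-coordinate followed by its contraction) and $A_1'=T_1(A_1)$, $A_2'=R_1(A_2)$ lie in $\VF[k-1,\cdot]$ with base in $\VF^{k-1}$. For each $\lbar a''\in\VF^{k-1}$ the slices $A_i^{[\lbar a'']}=\{(a_1,\lbar t):(a_1,\lbar a'',\lbar t)\in A_i\}\in\VF[1,\cdot]$ are fibrewise $\isp$ over $\{a_1:(a_1,\lbar a'')\in A\}$ by the hypothesis, and $\fib(A_i',\lbar a'')$ is a standard contraction of $A_i^{[\lbar a'']}$; so the case $k=1$, applied uniformly in $\lbar a''$ by compactness, shows that $A_1',A_2'$ satisfy the fibrewise hypothesis over $\VF^{k-1}$ with presentation $E\cup\{1\}$. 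The inductive hypothesis then gives $([\wh{T'}(A_1')]_{(E\cup\{1\})\cup I_{k-1}},[\wh{R'}(A_2')]_{(E\cup\{1\})\cup I_{k-1}})\in\isp$, and since $\wh T_\sigma(A_1)=\wh{T'}(A_1')$, $\wh R_\sigma(A_2)=\wh{R'}(A_2')$ and $(E\cup\{1\})\cup I_{k-1}=E'$ after relabelling, the conclusion follows.
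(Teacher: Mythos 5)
Your overall strategy is the one the paper intends: its own proof is essentially a pointer to the proof of \cite[Lemma~6.14]{Yin:int:acvf}, with the remark that the special bijection obtained there from \cite[Theorem~5.5]{Yin:special:trans} must now come from Theorem~\ref{special:bi:term:constant}. Your reconstruction (uniformize the fibrewise blowups and the isomorphism by compactness, settle $k=1$ by comparing standard contractions of the blown-up families via Lemma~\ref{simul:special:dim:1} and Lemma~\ref{kernel:dim:1:coa}, then slice over the first contracted coordinate and induct on $k$) is that argument, and the inductive step and the $E'$ bookkeeping are fine.

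The one step that does not work as written is the parenthetical ``for the induced special bijection $T_0$ on $A_1$''. A special bijection on $A_1^{\sharp}$ has focus maps defined on subsets of $\RVH(A_1^{\sharp})$, so they may genuinely depend on the adjoined blowup coordinates in $(\RV^{>1})^l$; two points of $A_1^{\sharp}$ lying over the same point of $A_1$ can then have their $\VF$-coordinate transformed differently, and no special bijection on $A_1$ is induced. (Likewise, special bijections do not ``refine $\rv$-polydiscs arbitrarily''; what one may do is make a prescribed definable $\RV$-valued function contractible.) The construction must go in the other direction: first apply Theorem~\ref{special:bi:term:constant} to $A_1$ itself, to the $\RV$-valued function coding $\fib(A_1,a)$ together with the loci and blowup data, obtaining a special bijection $T_0$ on $A_1$ after which this data is constant on each $\rv$-polydisc; then \emph{lift} $T_0$ to a special bijection on $A_1^{\sharp}$ whose focus maps ignore the blowup coordinates. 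For that lift, contracting the $\VF$-coordinate visibly yields a blowup of $\rv(T_0(A_1))$ in the sense of Definition~\ref{defn:blowup:coa} (affecting only $E$-coordinates), which is exactly the substitution the paper highlights. Your closing appeal to Lemma~\ref{kernel:dim:1:coa} --- applied also to $A_1^{\sharp}$ and $A_2^{\sharp}$, not only to $A_1$ and $A_2$ --- then reconciles this particular standard contraction with the one produced by Lemma~\ref{simul:special:dim:1} and with the given $\wh T_{\sigma}$, $\wh R_{\sigma}$, so the rest of your chain stands.
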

\begin{proof}
In the proof of~\cite[Lemma~6.14]{Yin:int:acvf}, the special bijection $T_A$ is achieved by applying \cite[Theorem~5.5]{Yin:special:trans} to the occurring polynomials of a suitable quantifier-free formula, as in \cite[Lemma~5.1]{Yin:int:acvf}. This procedure may be reproduced   here by applying Theorem \ref{special:bi:term:constant} to the top occurring $\VF$-terms of a suitable quantifier-free formula, as in Lemma~\ref{bijection:made:contractible}. For the rest of the proof, we may simply follow the proof of~\cite[Lemma~6.14]{Yin:int:acvf}.
\end{proof}

\begin{cor}\label{contraction:same:perm:isp:coa}
Let $A_1, A_2 \in \VF[k, \cdot]$ and $f : A_1 \fun A_2$ a unary bijection relative to the coordinate $i \in I_k$. Then for any permutation $\sigma$ of $I_k$ with $\sigma(1) = i$ and any standard contractions $\wh T_{\sigma}$, $\wh R_{\sigma}$ of $A_1$, $A_2$,
\[
([\wh T_{\sigma}(A_1)]_{\leq k}, [\wh R_{\sigma}(A_2)]_{\leq k}) \in \isp.
\]
\end{cor}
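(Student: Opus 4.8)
The plan is to contract the distinguished coordinate $i$ first --- which lowers the $\VF$-dimension by one --- and then to combine Lemma~\ref{kernel:dim:1:coa}, applied fiberwise, with Lemma~\ref{isp:VF:fiberwise:contract:isp:coa}. Write $\tilde i$ for the set of all coordinate indices other than the $i$th $\VF$-coordinate. Since $f$ is unary relative to $i$, it fixes every coordinate indexed by $\tilde i$, so $\pr_{\tilde i}(A_1) = \pr_{\tilde i}(A_2)$, and for each $\lbar d$ in this common projection $f$ restricts to a bijection $f_{\lbar d} : \fib(A_1, \lbar d) \fun \fib(A_2, \lbar d)$ between subsets of $\VF$ (the $i$th coordinate), that is, between objects of $\VF[1, \cdot]_{\lbar d}$. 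Because $\sigma(1) = i$, a standard contraction of $A_1$ may be written as $\wh T_\sigma = \wh T_{\sigma'} \circ T_1$, where $T_1$ is the first stage --- a special bijection on $A_1$ targeting coordinate $i$, followed by $\rv$ on that coordinate --- and $\sigma'$ is the permutation of $I_{k-1}$ that $\sigma$ induces on $\{2, \ldots, k\}$; likewise $\wh R_\sigma = \wh R_{\sigma'} \circ R_1$.

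First I would perform $T_1$ and $R_1$ uniformly over $\lbar d$. A special bijection targeting coordinate $i$ is built up $\rv$-polydisc by $\rv$-polydisc, the remaining coordinates occurring only as parameters; so by compactness $T_1$ and $R_1$ may be chosen so that for every $\lbar d$ the restriction $T_1 \rest \fib(A_1, \lbar d)$ (resp.\ $R_1 \rest \fib(A_2, \lbar d)$) is an $\lbar d$-definable standard contraction of $\fib(A_1, \lbar d)$ (resp.\ of $\fib(A_2, \lbar d)$) in which the $\rv$-image of coordinate $i$ sits at one fixed $\RV$-index. Put $A_1^{(1)} = T_1(A_1)$ and $A_2^{(1)} = R_1(A_2)$. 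These lie in $\VF[k-1, \cdot]$, have the same $\VF$-projection $\pvf(A_1^{(1)}) = \pvf(A_2^{(1)})$ (it is the projection of $A_1$, equivalently of $A_2$, to its $\VF$-coordinates other than $i$, which $f$ preserves), and their $\RV$-fibers over that common $\VF$-projection are precisely the standard contractions just described.

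By Lemma~\ref{kernel:dim:1:coa}, for each $\lbar d$ the bijection $f_{\lbar d}$ witnesses $[\fib(A_1, \lbar d)] = [\fib(A_2, \lbar d)]$ in $\VF[1, \cdot]_{\lbar d}$, whence $([\fib(A_1^{(1)}, \lbar d)]_E, [\fib(A_2^{(1)}, \lbar d)]_E) \in \isp$, where $E$ is the (common, at most one-element) $\RV$-index carrying the $\rv$-image of coordinate $i$. Feeding this into Lemma~\ref{isp:VF:fiberwise:contract:isp:coa}, applied to $A_1^{(1)}, A_2^{(1)} \in \VF[k-1, \cdot]$ with the permutation $\sigma'$ and the index set $E$, yields $([\wh T_{\sigma'}(A_1^{(1)})]_{E'}, [\wh R_{\sigma'}(A_2^{(1)})]_{E'}) \in \isp$ with $E' = E \cup I_{k-1}$. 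Now $\wh T_{\sigma'}(A_1^{(1)}) = \wh T_\sigma(A_1)$ and $\wh R_{\sigma'}(A_2^{(1)}) = \wh R_\sigma(A_2)$, while $E'$ is exactly the set of the $k$ $\RV$-coordinates that a standard contraction produces out of $\VF$-coordinates; since a permutation of the marked coordinates is an isomorphism, this is the asserted $([\wh T_\sigma(A_1)]_{\le k}, [\wh R_\sigma(A_2)]_{\le k}) \in \isp$.

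I expect the only delicate point to be the uniformity used in the second paragraph --- that a single pair $T_1, R_1$ of special bijections can serve all fibers at once, with the new $\rv$-coordinate placed at a common index. This is the same compactness-with-parameters device already employed in the proof of Lemma~\ref{isp:VF:fiberwise:contract:isp:coa} (through Theorem~\ref{special:bi:term:constant} and Lemma~\ref{bijection:made:contractible}), so it brings in nothing new; everything else is formal.
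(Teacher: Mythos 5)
Your proposal is correct and is exactly the argument the paper intends: contract the distinguished coordinate $i$ first, apply Lemma~\ref{kernel:dim:1:coa} fiberwise over the common projection to the remaining coordinates (using that the relatively unary $f$ restricts to bijections of the one-dimensional fibers), and then feed the resulting fiberwise $\isp$-congruence into Lemma~\ref{isp:VF:fiberwise:contract:isp:coa} with the induced permutation $\sigma'$. The paper states this as ``immediate'' from those two lemmas; you have merely spelled out the (correct) details, including the routine compactness point about performing the first contraction stage uniformly in the parameters.
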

\begin{proof}
This is immediate by Lemma \ref{kernel:dim:1:coa} and Lemma \ref{isp:VF:fiberwise:contract:isp:coa}.
\end{proof}

\begin{lem}\label{contraction:perm:pair:isp:coa}
Let $A \in \VF[k, \cdot]$. Let $i, j \in I_k$ be distinct and $\sigma_1$, $\sigma_2$ two permutations of $I_k$ such that
\[
\sigma_1(1) = \sigma_2(2) = i, \quad \sigma_1(2) = \sigma_2(1) = j, \quad \sigma_1
\rest \set{3, \ldots, k} = \sigma_2 \rest \set{3, \ldots, k}.
\]
Then, for any standard contractions $\wh T_{\sigma_1}$, $\wh T_{\sigma_2}$ of $A$,
\[
([\wh T_{\sigma_1}(A)]_{\leq k}, [\wh T_{\sigma_2}(A)]_{\leq k}) \in \isp.
\]
\end{lem}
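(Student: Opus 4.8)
The plan is to reduce the statement to the two-variable case that has already been prepared, namely Lemma~\ref{subset:partitioned:2:unit:contracted}. The only essential difference between $\sigma_1$ and $\sigma_2$ is that they swap the roles of the coordinates $i$ and $j$ while agreeing on the remaining coordinates $\{3, \ldots, k\}$; so the problem is genuinely about two distinguished $\VF$-coordinates, with all the others simply carried along. After relabelling by an $\mgl$-transformation (or just by permuting the ambient coordinates, which does not affect $\isp$-classes by Lemma~\ref{blowup:equi:class:coa}), I may assume $i = 1$, $j = 2$, and that $\sigma_1$, $\sigma_2$ are the two transpositions $12$, $21$ of $I_2$ extended by the identity on $\{3, \ldots, k\}$.

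First I would fiberize over the last $k-2$ $\VF$-coordinates together with all the $\RV$-coordinates: write $A \sub \VF^2 \times (\VF^{k-2} \times \RV^m)$ and, for $\lbar a$ ranging over $\pr_{> 2}(A)$, consider the fibers $A_{\lbar a} = \fib(A, \lbar a) \sub \VF^2 \times \RV^{\text{(trivial)}}$. Apply Lemma~\ref{subset:partitioned:2:unit:contracted} to each $A_{\lbar a}$: after a definable injection $f_{\lbar a} : A_{\lbar a} \fun \VF^2 \times \RV^{l}$ that is unary relative to both coordinates, there are standard contractions $\wh T_{12}$, $\wh R_{21}$ of $f_{\lbar a}(A_{\lbar a})$ whose images are $\RV[2, \cdot]$-isomorphic. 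By compactness (over a $\Gamma$-partition of $A$, so that the choices of $f_{\lbar a}$, of the special bijections realizing the standard contractions, and of the witnessing isomorphism are all made uniformly), these glue into a single definable injection $f$ on $A$, unary relative to the coordinates $1$ and $2$, and into standard contractions of $f(A)$ in the orders $12$ and $21$ whose images are $\isp$-related fiberwise over $\pr_{> 2}$. Now Lemma~\ref{isp:VF:fiberwise:contract:isp:coa} (applied with the $k-2$ remaining $\VF$-coordinates playing the role of the base, i.e.\ with $k$ there equal to $k-2$ here and $\sigma$ the identity on those coordinates) propagates the fiberwise $\isp$-relation to an $\isp$-relation between standard contractions of $f(A)$ in the two orders $\sigma_1$, $\sigma_2$.

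It remains to get rid of the auxiliary injection $f$ and to compare with \emph{arbitrary} standard contractions $\wh T_{\sigma_1}$, $\wh T_{\sigma_2}$ of $A$ itself. For the first point: $f$ is a composition of definable relatively unary bijections by Lemma~\ref{bijection:partitioned:unary} (or directly, since it is unary relative to coordinates $1$ and $2$ and the identity elsewhere), so by Corollary~\ref{contraction:same:perm:isp:coa} — applied once for each of the two coordinates in turn, with the appropriate permutation — the standard contractions of $A$ and of $f(A)$ in a fixed order are $\isp$-related. For the second point, I would invoke that any two standard contractions of a given subset in the same order differ by special bijections, hence by Lemma~\ref{special:to:blowup:coa} yield $\isp$-related $\RV$-objects (this is the content already used in the proof of Lemma~\ref{kernel:dim:1:coa}; the general-dimension version follows by the same argument combined with Lemma~\ref{isp:VF:fiberwise:contract:isp:coa}). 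Chaining these $\isp$-equivalences — $\wh T_{\sigma_1}(A) \sim \wh T_{\sigma_1}(f(A)) \sim \wh T_{12}(f(\cdot)) \sim \wh R_{21}(f(\cdot)) \sim \wh T_{\sigma_2}(f(A)) \sim \wh T_{\sigma_2}(A)$ — and using that $\isp$ is a congruence relation (Lemma~\ref{isp:congruence:vol}), so in particular transitive on isomorphism classes, gives the desired conclusion.

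The main obstacle I anticipate is the bookkeeping in the gluing step: Lemma~\ref{subset:partitioned:2:unit:contracted} is a statement about a single two-$\VF$-variable set, and to stitch the fiberwise data into honest definable objects over the base one must be careful that the special bijections realizing the standard contractions in orders $12$ and $21$, the injection $f_{\lbar a}$, and the witnessing $\RV[2, \cdot]$-isomorphism are all chosen by a \emph{single} formula with parameters $\csn(\lbar \gamma)$ as $\lbar \gamma$ runs over a $\Gamma$-partition — otherwise compactness does not apply. Once this uniformity is arranged the rest is a formal chase through the congruence properties of $\isp$ and the already-established Lemmas~\ref{isp:VF:fiberwise:contract:isp:coa} and~\ref{special:to:blowup:coa}; no new geometric input is needed.
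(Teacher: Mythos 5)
Your argument is correct and is essentially the paper's own proof: the paper simply quotes the proof of Lemma~6.16 of \cite{Yin:int:acvf}, which is exactly the reduction you describe --- the two-coordinate swap via Lemma~\ref{subset:partitioned:2:unit:contracted}, fiberwise propagation over the remaining coordinates via Lemma~\ref{isp:VF:fiberwise:contract:isp:coa}, and removal of the auxiliary injection and of the dependence on the particular contractions via Corollary~\ref{contraction:same:perm:isp:coa}. One cosmetic slip: in your application of Lemma~\ref{isp:VF:fiberwise:contract:isp:coa} the residual permutation is $\sigma_1 \rest \set{3, \ldots, k}$, not necessarily the identity, but that lemma holds for an arbitrary common permutation, so nothing changes.
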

\begin{proof}
We have developed analogues of the results used in the proof of \cite[Lemma~6.16]{Yin:int:acvf}. Therefore its proof may be quoted here with virtually no changes.
\end{proof}

Now we have reproduced for $\VF[k, \cdot]$, $\RV[\leq k, \cdot]$ all the results that the proof of \cite[Proposition~6.17]{Yin:int:acvf} formally depends on, so the following crucial description of the kernel of $\bb L: \gsk \RV[\leq k, \cdot] \fun \gsk \VF[k, \cdot]$ may be obtained by more or less the same proof, which is reproduced in its entirety below.\footnote{In fact the wording of the proof of \cite[Proposition~6.17]{Yin:int:acvf} is somewhat terse and hence confusing. We take this opportunity to improve it.}

\begin{prop}\label{kernel:L:dag:coa}
For $\mathbf{U}, \mathbf{V} \in \RV[\leq k, \cdot]$, $[\bb L \mathbf{U}] = [\bb L \mathbf{V}]$ if and only if $([\mathbf{U}], [\mathbf{V}]) \in \isp$.
\end{prop}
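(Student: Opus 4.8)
The plan is to prove the two directions separately, exactly along the lines of \cite[Proposition~6.17]{Yin:int:acvf}, now that all the auxiliary ingredients have been reproduced in the present setting. For the ``only if'' direction, suppose $F : \bb L \mathbf{U} \fun \bb L \mathbf{V}$ is a definable bijection. First I would invoke Lemma~\ref{bijection:partitioned:unary} to obtain a finite partition of $\bb L \mathbf{U}$ into definable subsets on each of which $F$ is a composition of definable relatively unary bijections; refining on both sides and using that $\isp$ is a semigroup congruence relation (Lemma~\ref{isp:congruence:vol}), this reduces the problem to the case where $F$ itself is relatively unary. Then, over a $\Gamma$-partition of $F$ and by compactness, one reduces further to a single relatively unary bijection in a fixed $\VF$-coordinate. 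At this point Corollary~\ref{contraction:same:perm:isp:coa} applies directly: choosing a permutation $\sigma$ with $\sigma(1) = i$ (the pivot coordinate) and standard contractions $\wh T_\sigma$, $\wh R_\sigma$ of the two subsets, we get $([\wh T_\sigma(\cdot)]_{\leq k}, [\wh R_\sigma(\cdot)]_{\leq k}) \in \isp$. The standard contractions produce objects in $\RV[\leq k, \cdot]$ whose classes are, by Lemma~\ref{special:to:blowup:coa}, related to $[\mathbf{U}]$ and $[\mathbf{V}]$ through blowups; combining with Lemma~\ref{blowup:equi:class:coa} (so that the particular choice of contraction does not matter up to $\isp$) and Lemma~\ref{contraction:perm:pair:isp:coa} (to move between different permutations as one composes the relatively unary bijections) yields $([\mathbf{U}], [\mathbf{V}]) \in \isp$.

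For the ``if'' direction, suppose $\mathbf{U}^{\sharp}$, $\mathbf{V}^{\sharp}$ are isomorphic blowups of $\mathbf{U}$, $\mathbf{V}$. By Lemma~\ref{blowup:same:RV:coa}, $\bb L \mathbf{U}^{\sharp} \cong \bb L \mathbf{U}$ and $\bb L \mathbf{V}^{\sharp} \cong \bb L \mathbf{V}$ in $\VF[k, \cdot]$, and an isomorphism $\mathbf{U}^{\sharp} \cong \mathbf{V}^{\sharp}$ lifts (via the section $\sn$, using Proposition~\ref{L:sur:c} and the liftability of $\RV[k,\cdot]$-morphisms recorded before it) to an isomorphism $\bb L \mathbf{U}^{\sharp} \cong \bb L \mathbf{V}^{\sharp}$. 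Chaining these gives $[\bb L \mathbf{U}] = [\bb L \mathbf{V}]$. This direction is essentially formal once Lemma~\ref{blowup:same:RV:coa} is in hand.

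The main obstacle I anticipate is the bookkeeping in the ``only if'' direction: one must track how the various standard contractions, special bijections, and blowups interact as $F$ is decomposed into relatively unary pieces in possibly different $\VF$-coordinates, and verify that each recombination step stays inside the congruence $\isp$. This is precisely where Lemma~\ref{kernel:dim:1:coa}, Lemma~\ref{isp:VF:fiberwise:contract:isp:coa}, Corollary~\ref{contraction:same:perm:isp:coa}, and Lemma~\ref{contraction:perm:pair:isp:coa} are used in concert, mirroring the original argument. Since every one of those lemmas has now been established in the current context with proofs ``quoted with virtually no changes,'' the remaining task is to transcribe the proof of \cite[Proposition~6.17]{Yin:int:acvf} verbatim, substituting the present lemma numbers; no genuinely new idea is required, only care that the $\Gamma$-partition reductions (replacing the $\RV$-partition reductions used for $\ACVF$) are inserted at the right places.
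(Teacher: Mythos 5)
Your overall strategy coincides with the paper's: the ``if'' direction is exactly Lemma~\ref{blowup:same:RV:coa} together with Proposition~\ref{L:sur:c}, and the ``only if'' direction is intended as a transcription of the proof of \cite[Proposition~6.17]{Yin:int:acvf} using the analogues established here. However, your sketch of the ``only if'' direction contains one step that fails as stated and omits the structural device that actually closes the argument. After invoking Lemma~\ref{bijection:partitioned:unary} you claim to ``reduce further to a single relatively unary bijection in a fixed $\VF$-coordinate''; no such reduction exists. Each piece $A_i$ only gives $F \rest A_i$ as a \emph{composition} $G_l \circ \cdots \circ G_1$ of relatively unary bijections targeting possibly different coordinates, and neither refining the partition nor passing to a $\Gamma$-partition turns such a composition into a single relatively unary map. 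So Corollary~\ref{contraction:same:perm:isp:coa} does not ``apply directly''; the entire difficulty is to compare standard contractions across the whole chain. (One also needs, before contracting, to apply Theorem~\ref{special:bi:term:constant} as in Lemma~\ref{bijection:made:contractible} to special bijections $T$, $R$ making each $T(A_i)$ and $(R\circ F)(A_i)$ an $\RV$-pullback, and then Lemma~\ref{special:to:blowup:coa} to relate the pieces back to $\mathbf U$, $\mathbf V$.)

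The second omission is the induction on $k$. The paper proves the stronger claim that if $[A]=[B]$ in $\VF[k,\cdot]$ then any two standard contractions of $A$, $B$ are $\isp$-congruent, by induction on $k$ with base case Lemma~\ref{kernel:dim:1:coa}. Along the chain $B_1 \to \cdots \to B_{l+1}$ one inserts, for each $j$, five standard contractions whose permutations are arranged so that adjacent pairs are handled by Corollary~\ref{contraction:same:perm:isp:coa} and Lemma~\ref{contraction:perm:pair:isp:coa} --- except for one pair $([U'_{j+1}]_{\leq k}, [U''_{j+1}]_{\leq k})$ whose permutations agree only in the first targeted coordinate. That pair is disposed of by arranging that both contractions begin with the same contraction in that coordinate, so that the comparison drops to objects of $\VF[k-1,\cdot]$, where the \emph{inductive hypothesis} applies. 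Your sketch never sets up this induction, and without it the recombination you describe via Lemma~\ref{blowup:equi:class:coa} and Lemma~\ref{contraction:perm:pair:isp:coa} does not yield $([\mathbf U],[\mathbf V]) \in \isp$. Since you explicitly defer to transcribing the original proof, both defects are repairable, but as written the sketch would not assemble into a proof.
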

\begin{proof}
The ``if'' direction simply follows from Lemma~\ref{blowup:same:RV:coa} and
Proposition~\ref{L:sur:c}.

For the ``only if'' direction, we show a stronger claim: if $[A] = [B]$ in $\VF[k, \cdot]$ and $\mathbf{U}, \mathbf{V} \in \RV[\leq k, \cdot]$ are two standard contractions of $A$, $B$ then $([\mathbf{U}], [\mathbf{V}]) \in \isp$. We do induction on $k$. The base case $k = 1$ is of course Lemma~\ref{kernel:dim:1:coa}. For the inductive step, suppose that $F : \bb L \mathbf{U} \fun \bb L \mathbf{V}$ is a definable bijection. By Lemma~\ref{bijection:partitioned:unary}, there is a partition of $\bb L \mathbf{U}$ into definable subsets $A_1, \ldots, A_n$  such that each $F_i = F \rest A_i$ is a composition of relatively unary bijections. Applying Theorem~\ref{special:bi:term:constant} as in Lemma~\ref{bijection:made:contractible}, we obtain special bijections
$T$, $R$ on $\bb L \mathbf{U}$, $\bb L \mathbf{V}$ such that $T(A_i)$, $(R \circ F)(A_i)$ are $\RV$-pullbacks for each $i$. By Lemma~\ref{special:to:blowup:coa}, it is enough to show that there are standard contractions $\wh T_{\sigma}$, $\wh R_{\tau}$ of $T(A_i)$, $(R \circ F)(A_i)$ for each $i$ such that
\[
([(\wh T_{\sigma} \circ T)(A_i)]_{\leq k}, [(\wh R_{\tau} \circ R \circ F)(A_i)]_{\leq k}) \in \isp.
\]
To that end, first note that each $(R \circ F \circ T^{-1}) \rest T(A_i)$ is a composition of relatively unary bijections, say
\[
T(A_i) = B_1 \to^{G_1} B_2 \cdots B_l \to^{G_l} B_{l+1} = (R \circ F)(A_i).
\]
For each $j \leq l - 2$ we may choose five contractions $[U_j]_{\leq k}$, $[U_{j+1}]_{\leq k}$,  $[U'_{j+1}]_{\leq k}$, $[U''_{j+1}]_{\leq k}$, and $[U_{j+2}]_{\leq k}$ with the permutations $\sigma_{j}$, $\sigma_{j+1}$, $\sigma'_{j+1}$, $\sigma''_{j+1}$, and $\sigma_{j+2}$ of $I_k$ such that
\begin{enumerate}
  \item $\sigma_{j+1}(1)$ and $\sigma_{j+1}(2)$ are the $\VF$-coordinates targeted by $G_{j}$ and $G_{j+1}$, respectively,
  \item $\sigma''_{j+1}(1)$ and $\sigma''_{j+1}(2)$ are the $\VF$-coordinates targeted by $G_{j+1}$ and $G_{j+2}$, respectively,
  \item $\sigma_{j} = \sigma_{j+1}$, $\sigma''_{j+1} =  \sigma_{j+2}$,  and $\sigma'_{j+1}(1) = \sigma''_{j+1}(1)$,
  \item the relation between $\sigma_{j+1}$ and $\sigma'_{j+1}$ is as described in Lemma~\ref{contraction:perm:pair:isp:coa}.
\end{enumerate}
Then, by Corollary~\ref{contraction:same:perm:isp:coa} and Lemma~\ref{contraction:perm:pair:isp:coa}, all the adjacent pairs of these contractions are $\isp$-congruent, except $([U'_{j+1}]_{\leq k}, [U''_{j+1}]_{\leq k})$.  Since, without loss of generality, we may assume that $[U'_{j+1}]_{\leq k}$ and $[U''_{j+1}]_{\leq k}$ start with the same contraction on the first targeted $\VF$-coordinate of $B_{j+1}$,  the resulting objects in $\VF[k-1, \cdot]$ are the same. So, by the inductive hypothesis, this last pair is also $\isp$-congruent. This completes the ``only if'' direction.
\end{proof}

We now move on to work in $\gC^{\wt{\bb T}}$ and discuss the fine $\VF$- and $\RV$-categories. The definition of a blowup needs to be slightly modified and the results in \cite[\S 7]{Yin:int:acvf} are needed. However, applying $\Gamma$-partitions and compactness, analogues of the results above may be obtained by essentially the same proofs.

\begin{defn}\label{defn:blowup:vol}
Suppose $k > 0$. Let $1 \leq j \leq k$ and $\mathbf{U} = (U, f) \in \RV[k]$. Suppose that there is a $\Gamma$-partition $\pi$ of (the graph of) $f$ such that
\[
\pi^{-1}(\lbar \gamma)_{j}(\lbar t) \in \acl(\pi^{-1}(\lbar \gamma)_{\wt j}(\lbar t), \rcsn(\lbar \gamma))
\]
for all $\lbar \gamma \in \ran(\pi)$ and all $\lbar t \in \dom(\pi^{-1}(\lbar \gamma))$. An \emph{elementary blowup} of $\mathbf{U}$ is an object $\mathbf{U}^{\sharp} = (U^{\sharp}, f^{\sharp}) \in \RV[k]$ such that $U^{\sharp} = U \times (\RV^{\times})^{>1}$ and, for any $(\lbar t, s) \in U^{\sharp}$,
\[
f^{\sharp}_{i}(\lbar t, s) = f_{i}(\lbar t) \text{ for } i \neq j, \quad
f^{\sharp}_{j}(\lbar t, s) = s f_{j}(\lbar t).
\]
Let $\omega$ be a volume form on $U$. An \emph{elementary blowup} of $(\mathbf{U}, \omega)$ is an object $(\mathbf{U}^{\sharp}, \omega^{\sharp}) \in \mRV[k]$, where $\mathbf{U}^{\sharp}$ is an elementary blowup of $\mathbf{U}$ and $\omega^{\sharp}$ is the volume form on $U^{\sharp}$ given by $\omega^{\sharp}(\lbar t, s) = \omega(\lbar t)$.

Other related notions are defined as in Definition~\ref{defn:blowup:coa}.
\end{defn}

\begin{lem}\label{elementary:blowups:preserves:iso:vol}
Let $\mathbf{U}, \mathbf{V} \in \mRV[k]$ and $\mathbf{U}^{\sharp}$, $\mathbf{V}^{\sharp}$ be two elementary blowups. If $[\mathbf{U}] = [\mathbf{V}]$ then $[\mathbf{U}^{\sharp}] = [\mathbf{V}^{\sharp}]$.
\end{lem}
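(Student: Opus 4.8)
The plan is to argue directly, in the spirit of \cite[\S7]{Yin:int:acvf} but without the ``blow up further'' step needed in the coarse Lemma~\ref{blowup:equi:class:coa}: the point is that in the fine setting an elementary blowup adjoins an $(\RV^{\times})^{>1}$-factor rather than an $\RV^{>1}$-factor, so there is no dimension drop and both $\mathbf{U}^{\sharp}$ and $\mathbf{V}^{\sharp}$ stay in $\RV[k]$. I read the statement, as in Lemma~\ref{blowup:equi:class:coa}, with $\mathbf{U}^{\sharp}$ an elementary blowup of $\mathbf{U}$ along some coordinate $j_1$ and $\mathbf{V}^{\sharp}$ one of $\mathbf{V}$ along some $j_2$; writing $\mathbf{U}=(U,f,\omega)$, $\mathbf{V}=(V,g,\sigma)$, we have $\mathbf{U}^{\sharp}=(U\times(\RV^{\times})^{>1},f^{\sharp},\omega^{\sharp})$ with $f^{\sharp}_{j_1}(\lbar t,s)=sf_{j_1}(\lbar t)$, $f^{\sharp}_{i}=f_{i}$ otherwise, $\omega^{\sharp}(\lbar t,s)=\omega(\lbar t)$, and similarly for $\mathbf{V}^{\sharp}$ with $g$, $\sigma$, $j_2$. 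Let $h:\mathbf{U}\fun\mathbf{V}$ be an $\mRV[k]$-isomorphism witnessing $[\mathbf{U}]=[\mathbf{V}]$. The claim I would establish is that $h^{\sharp}:=h\times\id_{(\RV^{\times})^{>1}}$ is an $\mRV[k]$-isomorphism $\mathbf{U}^{\sharp}\fun\mathbf{V}^{\sharp}$; as it is visibly a bijection of the right shape and lands in $\RV[k]$ because both blowups do by hypothesis, all that remains is to verify the two volume-form conditions of Definition~\ref{defn:f:RV:cat}.

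For the $\Gamma$-part: the correspondence $(h^{\sharp})^{\rightleftharpoons}$ relates $f^{\sharp}(\lbar t,s)$ to $g^{\sharp}(h(\lbar t),s)$, and since $f^{\sharp}$ differs from $\lbar t\mapsto f(\lbar t)$ only by the factor $s$ inserted in slot $j_1$ (and $g^{\sharp}$ from $h(\lbar t)\mapsto g(h(\lbar t))$ only by $s$ in slot $j_2$), one gets $\jcb_{\Gamma}(h^{\sharp})^{\rightleftharpoons}=-\Sigma\vrv f^{\sharp}(\lbar t,s)+\Sigma\vrv g^{\sharp}(h(\lbar t),s)=-\Sigma\vrv f(\lbar t)+\Sigma\vrv g(h(\lbar t))=\jcb_{\Gamma}h^{\rightleftharpoons}$, the two contributions $\pm\vrv(s)$ cancelling. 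As $\omega^{\sharp}_{\Gamma}(\lbar t,s)=\omega_{\Gamma}(\lbar t)$ and $\sigma^{\sharp}_{\Gamma}(h(\lbar t),s)=\sigma_{\Gamma}(h(\lbar t))$, the $\Gamma$-condition for $h^{\sharp}$ reduces to the one already known for $h$. For the $\K$-part: over a pair of twist classes the twistback of $(h^{\sharp})^{\rightleftharpoons}$ is obtained from the twistback of $h^{\rightleftharpoons}$ by scaling the $j_1$-th source coordinate and the $j_2$-th target coordinate both by the same factor $\tbk(s)$; expressing the target twistback point as a function of the source one, the Jacobian matrix of $h^{\rightleftharpoons}$ is multiplied on one side by $\mathrm{diag}(\dots,\tbk(s),\dots)$ (in slot $j_2$) and on the other by $\mathrm{diag}(\dots,\tbk(s)^{-1},\dots)$ (in slot $j_1$), so its determinant is unchanged and $\jcb_{\K}(h^{\sharp})^{\rightleftharpoons}=\jcb_{\K}h^{\rightleftharpoons}$ almost everywhere. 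Since $\omega^{\sharp}_{\K}(\lbar t,s)=\omega_{\K}(\lbar t)$ and $\sigma^{\sharp}_{\K}(h(\lbar t),s)=\sigma_{\K}(h(\lbar t))$, the $\K$-condition for $h^{\sharp}$ likewise reduces to that for $h$. Hence $h^{\sharp}$ is a morphism, so an isomorphism, and $[\mathbf{U}^{\sharp}]=[\mathbf{V}^{\sharp}]$.

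The one step I expect to need care — and would write out fully — is this last Jacobian bookkeeping: that over a fixed pair of twist classes the extra scaling really does enter the twistback correspondence as a two-sided diagonal ``conjugation'', using that $\tbk(s)$ enters exactly one coordinate on each side and that $(h^{\sharp})^{\rightleftharpoons}$ is finite-to-finite on twist classes so that the Jacobian of Definition~\ref{defn:jac} applies; everything else is cancellation of the $\pm\vrv(s)$ terms. This is exactly the computation underlying the corresponding lemma in \cite[\S7]{Yin:int:acvf}, the parallel (and trivial) computation for the $\Gamma$-volume form being the only genuinely new ingredient; in particular, in contrast with the surrounding propositions, no $\Gamma$-partition or compactness argument is needed here. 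Finally, should $[\mathbf{U}]=[\mathbf{V}]$ be intended as equality in $\gsk\mRV[k]$ rather than as an isomorphism, the reduction to the isomorphism case is purely formal, exactly as in the proof of \cite[Lemma~6.5]{Yin:int:acvf} quoted for Lemma~\ref{blowup:equi:class:coa}.
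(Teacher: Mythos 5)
The $\Gamma$-part of your argument is fine, but the $\K$-part has a genuine gap: $h^{\sharp}=h\times\id$ is in general \emph{not} an $\mRV[k]$-morphism. The point you are missing is an asymmetry built into Definition~\ref{defn:blowup:vol}: for $\mathbf{U}=(U,f,\omega)$ to admit an elementary blowup along $j_1$, the coordinate $f_{j_1}$ must be algebraic over the remaining ones (piecewise over a $\Gamma$-partition), so $\dim_{\RV}(f(U))\leq k-1$. By the observation following Definition~\ref{defn:f:RV:cat}, the $\K$-volume-form condition for $h$ is then vacuous --- indeed $\jcb_{\K}h^{\rightleftharpoons}$ is not even defined, since $h^{\rightleftharpoons}$ lives on sets of $\RV$-dimension $<k$ --- so the hypothesis $[\mathbf{U}]=[\mathbf{V}]$ carries no Jacobian information at all. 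After blowing up, $\dim_{\RV}(f^{\sharp}(U^{\sharp}))$ can equal $k$ and the $\K$-condition becomes a real constraint, so ``reducing it to the condition for $h$'' reduces a nontrivial identity to a vacuous one. Concretely, your two-sided diagonal conjugation treats $\tbk(s)$ as a constant on a twist class of $(h^{\sharp})^{\rightleftharpoons}$, whereas there it is a free $\K^{\times}$-parameter (this is exactly where the dimension rises by one); doing the local computation correctly, with $\tbk(f_{j_1}(\lbar t))$ expressed algebraically in the other source coordinates, gives $\jcb_{\K}(h^{\sharp})^{\rightleftharpoons}=J\cdot\tbk(g_{j_2}(h(\lbar t)))/\tbk(f_{j_1}(\lbar t))$, where $J$ is a $(k-1)$-dimensional Jacobian of the induced correspondence on the remaining coordinates --- nothing the hypothesis controls. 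A minimal counterexample: $k=1$, $U=V=\{1\}$, $f=g=\id$, $\omega=(1,0)$, $\sigma=(c,0)$ with $c\neq 1$. Then $[\mathbf{U}]=[\mathbf{V}]$ (the $\K$-condition is vacuous in dimension $0$), but $\id$ on the blowups would require $1=c\cdot 1$; the blowups are isomorphic only via $(1,s)\efun(1,c^{-1}s)$.

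That example also shows the repair: the new $(\RV^{\times})^{>1}$-coordinate must be rescaled pointwise, i.e., one takes $(\lbar t,s)\efun(h(\lbar t),\epsilon(\lbar t)s)$ for a definable $\epsilon:U\fun\RV^{\times}$ with $\vrv\circ\epsilon=0$ chosen so that the $\K$-Jacobian comes out right; the freedom in the added coordinate is precisely what absorbs the discrepancy between $\omega_{\K}$ and $\sigma_{\K}$ that the hypothesis leaves uncontrolled. This is the content of \cite[Lemma~7.2]{Yin:int:acvf}, and the paper's proof simply applies that lemma fiberwise over a $\Gamma$-partition of $h$ and glues --- so, contrary to your closing remark, a $\Gamma$-partition is used, and it is needed in any case to make sense of the algebraicity condition and of the piecewise definition of $\epsilon$.
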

\begin{proof}
This is immediate by applying \cite[Lemma~7.2]{Yin:int:acvf} over a $\Gamma$-partition of an isomorphism between $\mathbf{U}$ and $ \mathbf{V}$.
\end{proof}

\begin{defn}
Let $\misp[k]$ be the subclass of $\ob \mRV[k] \times \ob \mRV[k]$ of pairs $(\mathbf{U}, \mathbf{V})$ such that there exist isomorphic blowups $\mathbf{U}^{\sharp}$, $\mathbf{V}^{\sharp}$. Let $\misp[*] = \coprod_{k} \misp[k]$.
\end{defn}

\begin{lem}\label{isp:congruence:vol}
As a binary relation on isomorphism classes, $\misp[k]$ is a semigroup congruence relation and $\misp[*]$ is a semiring congruence relation.
\end{lem}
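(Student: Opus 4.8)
The plan is to follow the proof of the coarse analogue stated above (which itself runs parallel to \cite[Lemma~6.8]{Yin:int:acvf}), replacing the coarse blowup by the fine blowup of Definition~\ref{defn:blowup:vol} and using Lemma~\ref{elementary:blowups:preserves:iso:vol} in place of its coarse counterpart; every auxiliary fact I need will be reduced fibrewise over a $\Gamma$-partition to the corresponding statement of \cite[\S 7]{Yin:int:acvf}. First I check that $\misp[k]$ is an equivalence relation on isomorphism classes. Reflexivity is witnessed by the length-$0$ blowup and symmetry is built into the definition; that the relation descends to isomorphism classes follows from Lemma~\ref{elementary:blowups:preserves:iso:vol}, applied along the components of a blowup (lifting each locus across the isomorphism in hand, as in Lemma~\ref{blowup:equi:class:coa}). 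The substance is transitivity: given isomorphic blowups $\mathbf{U}^{\sharp}\cong\mathbf{V}^{\sharp}$ and $\mathbf{V}^{\sharp\sharp}\cong\mathbf{W}^{\sharp\sharp}$, I first produce a common further blowup $\mathbf{V}^{\sharp\sharp\sharp}$ of $\mathbf{V}^{\sharp}$ and $\mathbf{V}^{\sharp\sharp}$. By induction on lengths this reduces to amalgamating two elementary blowups of a single object: two elementary blowups in the same coordinate are canonically isomorphic over the object (via an isomorphism of the extra factor $(\RV^{\times})^{>1}$), while two in distinct coordinates $j\neq j'$ become isomorphic after blowing each up once more in the other coordinate, since the two elementary blowups commute, exactly as in \cite[\S 7]{Yin:int:acvf}. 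Then Lemma~\ref{elementary:blowups:preserves:iso:vol}, applied to the chain of elementary blowups comprising $\mathbf{V}^{\sharp\sharp\sharp}$, lifts the isomorphism $\mathbf{U}^{\sharp}\cong\mathbf{V}^{\sharp}$ to an isomorphism between a blowup of $\mathbf{U}$ and $\mathbf{V}^{\sharp\sharp\sharp}$, and likewise on the $\mathbf{W}$ side; composing yields isomorphic blowups of $\mathbf{U}$ and $\mathbf{W}$.

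For the congruence property, compatibility with the semigroup operation $\amalg$ is formal: if $\mathbf{U}_i^{\sharp}\cong\mathbf{V}_i^{\sharp}$ for $i=1,2$, then $\mathbf{U}_1^{\sharp}\amalg\mathbf{U}_2^{\sharp}$ is a blowup of $\mathbf{U}_1\amalg\mathbf{U}_2$ (perform the blowups on the two summands, which are admissible loci) and is isomorphic to the blowup $\mathbf{V}_1^{\sharp}\amalg\mathbf{V}_2^{\sharp}$ of $\mathbf{V}_1\amalg\mathbf{V}_2$. For the semiring congruence on $\misp[*]$ I also need compatibility with the graded product: an elementary blowup of $\mathbf{U}\in\mRV[k]$ in coordinate $j$ induces an elementary blowup of $\mathbf{U}\times\mathbf{W}$ in the same coordinate, with locus $C\times W$ and the same multiplicative scaling, and the genericity hypothesis of Definition~\ref{defn:blowup:vol} together with the volume-form assignment are inherited; hence a blowup witnessing $(\mathbf{U},\mathbf{V})\in\misp$ yields a blowup witnessing $(\mathbf{U}\times\mathbf{W},\mathbf{V}\times\mathbf{W})\in\misp$, and applying this to each factor in turn, combined with transitivity, finishes the proof.

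The main obstacle is the commutation of elementary blowups in distinct coordinates in the fine setting: one must check that the iterated blowups remain in $\mRV[k]$ (the $\RV$-fibre dimension stays $0$), that both satisfy the $\acl$-genericity condition of Definition~\ref{defn:blowup:vol}, and that the volume forms are transported correctly. Passing to a $\Gamma$-partition trivialises the reduced cross-section on each piece and reduces all of this to the bookkeeping already carried out in \cite[\S 7]{Yin:int:acvf}, after which compactness reassembles the pieces. Everything else is routine and parallels the coarse case verbatim.
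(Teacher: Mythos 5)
Your proposal is correct and follows essentially the same route as the paper, which in fact omits the proof entirely and relies on the preceding remark that the coarse case (itself reduced to \cite[Lemma~6.8]{Yin:int:acvf}) carries over to the fine categories via $\Gamma$-partitions and compactness. Your amalgamation-of-blowups argument for transitivity and the fibrewise reduction of the $\acl$-genericity and volume-form checks are exactly the intended adaptation.
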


Let $(A, \omega) \in \mVF[k]$ and $T$ a special bijection on $A$. Set $\omega_T = \omega \circ T^{-1}$. Then $T$ is also understood as a special bijection from $(A, \omega)$ to $T(A, \omega) = (T(A), \omega_T)$. By Lemma~\ref{special:tran:vol:pre}, $T$ is indeed a $\mVF[k]$-isomorphism. Set $A_{\omega} = \set{(\lbar x, \omega(\lbar x)) : \lbar x \in A)}$. For simplicity the volume form on $A_{\omega}$ that is naturally induced by $\omega$ is still denoted by $\omega$. Clearly $(A, \omega)$ and $(A_{\omega}, \omega)$ are isomorphic. If $\wh T_{\sigma}$ is a standard contraction of $A_{\omega}$ then $\omega$ naturally induces a volume form $\omega_{\wh T_{\sigma}}$ on $(\wh T_{\sigma}(A_{\omega}), \pr_{\leq k})$. The function $\wh T_{\sigma}$ (or the object $(\wh T_{\sigma}(A_{\omega}), \pr_{\leq k}, \omega_{\wh T_{\sigma}}) \in \mRV[k]$, which is completely determined by $\wh T_{\sigma}$) is understood as a \emph{standard contraction} of $(A, \omega)$.

For $(\mathbf{U}, \omega) = (U, f, \omega) \in \mRV[k]$ and a special bijection $T$ on $\bb L (\mathbf{U}, \omega)$, we write $\omega_T$ for the volume form on $U_T$ that is naturally induced by $(\bb L \omega)_T$ and $(\mathbf{U}, \omega)_T$ for the object $(\mathbf{U}_T, \omega_T) \in \mRV[k]$.

It is straightforward to state and prove the analogues of the results from Lemma~\ref{special:to:blowup:coa} to Lemma~\ref{blowup:same:RV:coa} (note that Remark~\ref{lift:V:iso:R:iso} is needed for the analogues of Lemma~\ref{special:to:blowup:coa} and Lemma~\ref{kernel:dim:1:coa}). For the analogues of Lemma~\ref{isp:VF:fiberwise:contract:isp:coa} and Lemma~\ref{contraction:perm:pair:isp:coa}, the proofs of \cite[Lemma~7.5, Lemma~7.7]{Yin:int:acvf} can be easily adapted. From these we can deduce:

\begin{prop}\label{kernel:L:vol:dag}
For $\mathbf{U}, \mathbf{V} \in \mRV[k]$, $[\bb L \mathbf{U}] = [\bb L \mathbf{V}]$ if and only if $([\mathbf{U}], [\mathbf{V}]) \in \misp$.
\end{prop}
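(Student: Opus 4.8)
The plan is to follow the proof of Proposition~\ref{kernel:L:dag:coa} essentially verbatim, replacing each auxiliary result used there by its volume-form counterpart. As announced just before the statement, each of those counterparts is obtained by running the corresponding $\lan{RV}$-definable argument from~\cite[\S 7]{Yin:int:acvf} fiberwise over a $\Gamma$-partition and then invoking compactness, so the only genuinely new input is the bookkeeping of volume forms. For the ``if'' direction: given $([\mathbf{U}], [\mathbf{V}]) \in \misp$, choose isomorphic blowups $\mathbf{U}^{\sharp}$, $\mathbf{V}^{\sharp}$; by the $\mRV[k]$-analogue of Lemma~\ref{blowup:same:RV:coa} (whose proof, as already noted, needs Remark~\ref{lift:V:iso:R:iso} to produce the witnessing special bijection) one has measure-preserving isomorphisms $\bb L \mathbf{U}^{\sharp} \cong \bb L \mathbf{U}$ and $\bb L \mathbf{V}^{\sharp} \cong \bb L \mathbf{V}$, and since $\bb L$ descends to a semigroup homomorphism (Proposition~\ref{L:semi:gr:dag}, Corollary~\ref{L:sur}) we get $[\bb L \mathbf{U}] = [\bb L \mathbf{U}^{\sharp}] = [\bb L \mathbf{V}^{\sharp}] = [\bb L \mathbf{V}]$.

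For the ``only if'' direction I would prove the same stronger claim used in Proposition~\ref{kernel:L:dag:coa}: if $[(A, \omega)] = [(B, \sigma)]$ in $\mVF[k]$ and $(\mathbf{U}, \omega')$, $(\mathbf{V}, \sigma')$ are standard contractions of $(A, \omega)$, $(B, \sigma)$, then $([\mathbf{U}], [\mathbf{V}]) \in \misp$, by induction on $k$. The base case $k=1$ is the $\mRV[1]$-analogue of Lemma~\ref{kernel:dim:1:coa}: the simultaneous special bijections furnished by Lemma~\ref{simul:special:dim:1} have Jacobian $1$ by Lemma~\ref{special:tran:vol:pre}, hence are automatically $\mVF[1]$-isomorphisms, and the induced $\RV$-level bijection is a bona fide $\mRV[1]$-morphism by Remark~\ref{lift:V:iso:R:iso}; combined with the $\mRV[1]$-analogue of Lemma~\ref{special:to:blowup:coa} this gives the $\misp$-congruence. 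The inductive step copies the combinatorial skeleton of Proposition~\ref{kernel:L:dag:coa}: decompose a measure-preserving bijection $\bb L \mathbf{U} \fun \bb L \mathbf{V}$ into pieces $F_i$, each a composition of relatively unary bijections (the $\mVF$-analogue of Lemma~\ref{bijection:partitioned:unary}, again obtained over a $\Gamma$-partition); apply Theorem~\ref{special:bi:term:constant} through Lemma~\ref{bijection:made:contractible} to turn all relevant subsets into $\RV$-pullbacks; then use the $\mRV[k]$-analogues of Corollary~\ref{contraction:same:perm:isp:coa} and Lemma~\ref{contraction:perm:pair:isp:coa} to swap permutations one transposition at a time, peel off the first targeted $\VF$-coordinate, and drop into $\mRV[k-1]$, where the inductive hypothesis finishes the job.

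The main obstacle — indeed the only step that is not a routine translation — is propagating the volume form correctly through all of these operations, on two fronts. First, one must verify that special bijections and standard contractions act on volume forms exactly as recorded in the discussion preceding Lemma~\ref{elementary:blowups:preserves:iso:vol}, so that at every stage the contracted objects carry well-defined volume forms and the $\isp$-congruences of Proposition~\ref{kernel:L:dag:coa} genuinely upgrade to $\misp$-congruences (this is where Lemma~\ref{special:tran:vol:pre} does the heavy lifting, ensuring special bijections are measure-neutral). Second, whenever a $\VF$-level measure-preserving bijection is contracted to an $\RV$-level bijection, one must know that the latter is a $\mRV$-morphism and not merely a $\mRV$-pseudo-morphism; this is precisely what Remark~\ref{lift:V:iso:R:iso} — that is, Hypothesis~\ref{hyp:jac} together with the adaptation of~\cite[Lemma~9.15]{Yin:special:trans} — is designed to supply. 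Once these two bookkeeping points are checked, the argument of Proposition~\ref{kernel:L:dag:coa} transfers without further change.
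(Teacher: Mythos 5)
Your proposal is correct and follows essentially the same route as the paper, which likewise deduces Proposition~\ref{kernel:L:vol:dag} by establishing the volume-form analogues of the chain of lemmas from Lemma~\ref{special:to:blowup:coa} through Lemma~\ref{contraction:perm:pair:isp:coa} (via $\Gamma$-partitions, compactness, Lemma~\ref{special:tran:vol:pre}, and Remark~\ref{lift:V:iso:R:iso}) and then rerunning the argument of Proposition~\ref{kernel:L:dag:coa}. The two bookkeeping points you isolate are exactly the ones the paper flags.
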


%\section{Integration}\label{section:int}

The dependence on \cite[Proposition~6.17, Proposition~7.8]{Yin:int:acvf} of the results concerning Grothendieck homomorphisms in \cite[\S 6, \S 7]{Yin:int:acvf} is of a formal nature. Therefore, using the results above, their analogues may be derived in more or less the same way. The (obvious) proofs are again omitted.

We emphasize here that the statements below concern two situations: in $\gC^{\bb T}$ without volumes (or with $\Gamma$-volume forms) and in $\gC^{\wt{\bb T}}$ with volume forms. It is a matter of restriction to transfer results from $\gC^{\bb T}$ to $\gC^{\wt{\bb T}}$. But it is not clear if we can successfully incorporate volume forms in the categories associated to $\gC^{\bb T}$. The difficulty is that if we simply work with an analogue of Definition~\ref{defn:f:VF:cat} then special bijections are not guaranteed to be morphisms, in particular, the inductive step of Theorem~\ref{spec:bi:term:cons:disc} seems to fail without an easy remedy.

\begin{thm}\label{main:prop:k:vol:dag}
For each $k \geq 0$ there are canonical isomorphisms of Grothendieck semigroups
\[
\int_{+} : \gsk  \VF[k, \cdot] \fun \gsk  \RV[\leq k, \cdot] /  \isp \quad \text{and} \quad \int_{+} \gsk  \mVF[k] \fun \gsk  \mRV[k] /  \misp
\]
such that
\[
\int_{+} [\mathbf{A}] = [\mathbf{U}]/  \isp \quad \text{if and only if} \quad  [\mathbf{A}] = [\bb L \mathbf{U}] \quad \text{and} \quad \int_{+} [\mathbf{A}] = [\mathbf{U}]/  \misp \quad \text{if and only if} \quad  [\mathbf{A}] = [\bb L \mathbf{U}].
\]
Putting these together, we obtain canonical isomorphisms of Grothendieck semirings
\[
\int_{+} : \gsk \VF_*[\cdot] \fun \gsk  \RV[*, \cdot] /  \isp \quad \text{and} \quad \int_{+} \gsk  \mVF[*] \fun \gsk  \mRV[*] /  \misp.
\]
\end{thm}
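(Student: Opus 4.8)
The plan is to assemble Theorem~\ref{main:prop:k:vol:dag} directly from the surjectivity of $\bb L$ together with the kernel descriptions. First I would treat the ungraded statement for a fixed $k$. By Proposition~\ref{L:sur:c} (resp.\ Corollary~\ref{L:sur}), the lifting map $\bb L : \gsk \RV[\leq k, \cdot] \fun \gsk \VF[k, \cdot]$ (resp.\ $\bb L : \gsk \mRV[k] \fun \gsk \mVF[k]$) is a \emph{surjective} semigroup homomorphism. By Proposition~\ref{kernel:L:dag:coa} (resp.\ Proposition~\ref{kernel:L:vol:dag}), for $\mathbf{U}, \mathbf{V} \in \RV[\leq k, \cdot]$ (resp.\ $\mRV[k]$), we have $[\bb L \mathbf{U}] = [\bb L \mathbf{V}]$ if and only if $([\mathbf{U}], [\mathbf{V}]) \in \isp$ (resp.\ $\misp$). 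By Lemma~\ref{isp:congruence:vol} (in both versions) the relation $\isp$ (resp.\ $\misp$) is a semigroup congruence, so the quotient $\gsk \RV[\leq k, \cdot]/\isp$ (resp.\ $\gsk \mRV[k]/\misp$) is a well-defined semigroup. These three facts together say exactly that $\bb L$ factors as a surjection followed by an isomorphism: the induced map $\gsk \RV[\leq k, \cdot]/\isp \fun \gsk \VF[k, \cdot]$ sending $[\mathbf{U}]/\isp \efun [\bb L \mathbf{U}]$ is a well-defined semigroup isomorphism, and I define $\int_+$ to be its inverse. The characterizing property $\int_+[\mathbf{A}] = [\mathbf{U}]/\isp \iff [\mathbf{A}] = [\bb L \mathbf{U}]$ is then immediate from the definition together with surjectivity of $\bb L$ (every $[\mathbf{A}]$ is of the form $[\bb L\mathbf{U}]$). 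The argument for $\mVF[k]$ is verbatim the same with $\isp$ replaced by $\misp$.

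Next I would promote this to the graded statement. We have $\gsk \VF_*[\cdot] = \bigoplus_k \gsk \VF[k, \cdot]$ as semirings (and likewise $\gsk \RV[*, \cdot] = \bigoplus_k \gsk \RV[k, \cdot]$, noting $\RV[\leq k,\cdot] = \coprod_{i\leq k}\RV[i,\cdot]$ so that the colimit over $k$ of $\gsk\RV[\leq k,\cdot]$ is $\gsk\RV[*,\cdot]$), and similarly in the volume-form case $\gsk \mVF[*] = \bigoplus_k \gsk \mVF[k]$, $\gsk \mRV[*] = \bigoplus_k \gsk \mRV[k]$. Since $\isp[*,\cdot] = \bigcup_k \isp[k,\cdot]$ is a \emph{semiring} congruence (Lemma~\ref{isp:congruence:vol}), the graded pieces $\int_+ : \gsk\VF[k,\cdot] \fun \gsk\RV[\leq k,\cdot]/\isp$ are compatible with the transition maps and assemble into $\int_+ : \gsk \VF_*[\cdot] \fun \gsk \RV[*, \cdot]/\isp$. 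The only point to check is that this assembled map respects multiplication, i.e.\ that $\bb L$ (hence its inverse) is a semiring homomorphism; this is part of the content already recorded in Proposition~\ref{L:sur:c} and Corollary~\ref{L:sur} read together with the fact that the Cartesian product of lifts is a lift of the product (the lift $\bb L$ is defined fibre-by-fibre and commutes with products, and blowups of a product decompose as products of blowups, which is exactly what makes $\isp[*,\cdot]$ a semiring congruence). Being a colimit of semigroup isomorphisms over a filtered system, the assembled map is a semiring isomorphism.

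There is no serious obstacle here: the theorem is a formal bookkeeping consequence of the two hard inputs, namely the surjectivity of $\bb L$ (Propositions~\ref{L:measure:surjective:dag}, \ref{L:semi:gr:dag} and their coarse analogue Proposition~\ref{L:sur:c}) and the kernel identifications (Propositions~\ref{kernel:L:dag:coa}, \ref{kernel:L:vol:dag}), both already established. If I had to name the one point deserving care, it is the verification that multiplication is respected at the graded level — that $[\bb L\mathbf U] \cdot [\bb L\mathbf V] = [\bb L(\mathbf U \times \mathbf V)]$ and that $\isp$ (resp.\ $\misp$) is closed under the product so that the quotient is a semiring — but this is precisely Lemma~\ref{isp:congruence:vol} in its semiring form together with the product-compatibility of the lifting construction, so it costs nothing beyond citing these. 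I would therefore present the proof as: (i) fix $k$, invoke surjectivity and the kernel description to get $\int_+$ as the inverse of the induced isomorphism out of the quotient; (ii) observe the characterizing equivalence follows by unwinding definitions; (iii) pass to the colimit over $k$ using that $\isp[*,\cdot]$, $\misp[*]$ are semiring congruences and $\bb L$ is multiplicative, obtaining the graded semiring isomorphisms; (iv) note the volume-form case is identical with $\isp \rightsquigarrow \misp$.
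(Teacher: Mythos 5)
Your proposal is correct and matches the paper's (omitted) argument exactly: the paper explicitly declares this theorem a formal consequence of the surjectivity of $\bb L$ (Proposition~\ref{L:sur:c}, Corollary~\ref{L:sur}) and the kernel descriptions (Propositions~\ref{kernel:L:dag:coa}, \ref{kernel:L:vol:dag}) together with the congruence property of $\isp$ and $\misp$, and your assembly of these inputs is the intended proof. The only cosmetic slip is writing $\gsk \VF_*[\cdot]$ as a direct sum --- since $\VF_*[\cdot]=\bigcup_k \VF[k,\cdot]$ is filtered rather than graded, the coarse case is a filtered colimit, which you in any case handle correctly a few lines later.
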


Recall \cite[Notation~3.16]{Yin:special:trans}. Let $A \sub \VF^n$ and $f : A \fun \mdl P(\RV^m)$ be a definable function such that every $f(\lbar a)$ codes an object in $\RV[\leq k, \cdot]_{\lbar a}$ (note that, by compactness, $k$ is bounded). We think of $f$, or rather the graph of $f$, as a representative of an equivalence class of definable functions induced by $\isp$ and the equivalence class as a \emph{definable function} $ A \fun \gsk  \RV[*, \cdot] /  \isp$, which, for simplicity, is also denoted by $f$. The set of all such functions, as in Definition~\ref{def:fn:ring}, is denoted by
\[
\fn(A, \gsk  \RV[*, \cdot] /  \isp) = \bigoplus_i \fn(A, \gsk  \RV[i, \cdot] /  \isp),
\]
which is a semimodule. Using Notation~\ref{nota:par:exp}, $f$ and $g$ represent the same function if $[f(\lbar a)] =_{\isp, \lbar a} [g(\lbar a)]$ for every $\lbar a \in A$. Let $\bb L f = \bigcup_{\lbar a \in A}\{\lbar a\} \times \bb L(f(\lbar a))$. Set
\[
\int_{+A} f = \int_{+} [f] = \int_+ [\bb L f],
\]
which, by Proposition~\ref{kernel:L:dag:coa} and compactness, does not depend on the representative $f$. Consequently we have a homomorphism of semimodules:
\[
\int_{+A} : \fn(A, \gsk  \RV[*, \cdot] /  \isp) \fun \gsk \RV[*, \cdot] /  \isp.
\]

Similarly, if each $f(\lbar a)$ codes an object in $\mRV[*]$ then $f$ represents a definable function, sometimes denoted by $(f, \omega)$, in the semimodule $\fn(A, \gsk  \mRV[*] /  \misp)$, where $\omega$ is the volume form on the graph of $f$, that is, $\omega \rest f(\lbar a)$ is the volume form carried in $f(\lbar a)$. Let $\bb L \omega$ be the volume form on $\bb L f$ naturally induced by $\omega$. Setting
\[
\int_{+A} (f, \omega) = \int_{+} [(f, \omega)] = \int_+ [(\bb L f, \bb L \omega)],
\]
we obtain a homomorphism of semimodules:
\[
\int_{+A} : \fn(A, \gsk  \mRV[*] /  \misp) \fun \gsk  \mRV[*] /  \misp.
\]

\begin{prop}\label{semi:fubini}
For any nonempty subsets $E_1, E_2 \sub I_n = \{1, \ldots, n\}$,
\[
\int_{+\lbar a \in \pr_{E_1}(A)} \int_{+ \fib(A, \lbar a)} f = \int_{+\lbar a \in \pr_{E_2}(A)} \int_{+ \fib(A, \lbar a)} f.
\]
Similarly for the case with volume forms.
\end{prop}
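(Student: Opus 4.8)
The plan is to prove the single‑projection case and observe that the stated identity is an immediate consequence. Concretely, I would first establish the following lemma: \emph{for any nonempty $E \sub I_n$ one has}
\[
\int_{+\lbar a \in \pr_E(A)} \int_{+ \fib(A, \lbar a)} f = \int_{+A} f ,
\]
\emph{and similarly with volume forms}. Here $\fib(A,\lbar a)$ and $f \rest \fib(A,\lbar a)$ are regarded, via the tacit identifications of Notation~\ref{indexing}, as an object of the coarse $\VF$‑category over $\lbar a$ (of $\VF$‑dimension at most $n - |E|$) and a definable function on it, and $\int_{+A}f$ is the well‑defined value of the integration map (Proposition~\ref{kernel:L:dag:coa}). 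Granting the lemma, applying it once with $E = E_1$ and once with $E = E_2$ shows that both iterated integrals in Proposition~\ref{semi:fubini} equal $\int_{+A}f$, hence each other; note that no hypothesis relating $E_1$ and $E_2$ (such as being complementary) is needed.

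To prove the lemma I would unwind the inner integral. For each $\lbar a \in \pr_E(A)$ the restriction $f \rest \fib(A,\lbar a)$ is an $\lbar a$‑definable function into $\gsk \RV[*,\cdot]/\isp$, and $\int_{+\fib(A,\lbar a)} f = \int_+ [\bb L (f \rest \fib(A,\lbar a))]$. Since $\lbar a$ consists of $\VF$‑elements, the substructure $S \cup \{\lbar a\}$ is still $\VF$‑generated (Convention~\ref{conv:s}), so the whole apparatus of this section — in particular surjectivity of $\bb L$ (Proposition~\ref{L:sur:c}) and the description of $\int_+$ as its inverse (Theorem~\ref{main:prop:k:vol:dag}) — applies over it, and it does so uniformly in $\lbar a$. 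Hence by compactness there is a definable function $\lbar a \efun \mathbf{U}_{\lbar a}$, with each $\mathbf{U}_{\lbar a}$ an object of $\RV[\leq K,\cdot]$ over $\lbar a$ for a fixed bound $K$ (determined by $n$, $|E|$ and the bound on the graded pieces of the values of $f$), which represents the definable function $g = \bigl(\lbar a \efun \int_{+\fib(A,\lbar a)} f\bigr) \in \fn(\pr_E(A), \gsk \RV[*,\cdot]/\isp)$ and satisfies $[\bb L \mathbf{U}_{\lbar a}] =_{\lbar a} [\bb L (f \rest \fib(A,\lbar a))]$ in the coarse $\VF$‑category over $\lbar a$; with volume forms one chooses the $\mathbf{U}_{\lbar a}$ in $\mRV$ and reads this as an $\mVF$‑isomorphism.

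Now $\int_{+\lbar a \in \pr_E(A)} g = \int_+ [\bb L g]$, where $\bb L g = \bigcup_{\lbar a \in \pr_E(A)} \{\lbar a\} \times \bb L \mathbf{U}_{\lbar a}$. For each $\lbar a$ fix an $\lbar a$‑definable bijection $h_{\lbar a} : \bb L \mathbf{U}_{\lbar a} \fun \bb L (f \rest \fib(A,\lbar a))$ fixing $\lbar a$ pointwise (measure‑preserving, in the volume‑form case); by compactness $h = \bigcup_{\lbar a} \{\lbar a\} \times h_{\lbar a}$ is a definable bijection of $\bb L g$ onto $\bigcup_{\lbar a \in \pr_E(A)} \{\lbar a\} \times \bb L (f \rest \fib(A,\lbar a))$. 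But this last set is, up to the evident permutation of coordinates, precisely $\bb L f = \bigcup_{\lbar c \in A} \{\lbar c\} \times \bb L (f(\lbar c))$: writing $\lbar c = (\lbar a, \lbar b)$ with $\lbar a \in \pr_E(A)$ and $\lbar b \in \fib(A,\lbar a)$, this is just associativity of disjoint unions. Therefore $[\bb L g] = [\bb L f]$ in the Grothendieck semigroup, so $\int_+ [\bb L g] = \int_+ [\bb L f]$, that is, $\int_{+\lbar a \in \pr_E(A)} \int_{+\fib(A,\lbar a)} f = \int_{+A} f$. With volume forms the glued $h$ is measure‑preserving because it is the identity on the $\pr_E$‑coordinates, so the Jacobian factor contributed by those is $1$, while on the remaining coordinates it coincides with the measure‑preserving $h_{\lbar a}$; and the analogous statements for the fine categories follow the same pattern using Theorem~\ref{main:prop:k:vol:dag} in its $\mRV$ form.

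The only step requiring genuine care, and hence the expected main obstacle, is the uniformity invoked above: that the inverse $\int_+$ of $\bb L$, together with the witnessing $\RV$‑objects $\mathbf{U}_{\lbar a}$ and the isomorphisms $h_{\lbar a}$, can be chosen definably in the parameter $\lbar a$. This is routine rather than deep: the entire construction of this section carried out over the $\VF$‑generated base $S \cup \{\lbar a\}$ is governed by first‑order conditions in $\lbar a$, so a compactness argument of exactly the sort already used (for instance in Proposition~\ref{L:measure:surjective:dag}) supplies the required definable families, and no new geometric input is needed.
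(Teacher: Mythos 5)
Your proposal is correct and is essentially the paper's own argument written out in full: the paper simply declares the result ``immediate by Proposition~\ref{kernel:L:dag:coa}, Proposition~\ref{kernel:L:vol:dag}, and the definition of iterated integrals,'' and what that terse citation encodes is exactly your reduction — independence of the representative of the inner integral (the kernel propositions plus compactness) together with the observation that $\bigcup_{\lbar a}\{\lbar a\}\times \bb L(f\rest\fib(A,\lbar a))$ is $\bb L f$ up to a coordinate permutation, so each iterated integral equals $\int_{+A}f$. Your remark that $E_1$ and $E_2$ need not be complementary, and the block-triangular Jacobian observation in the volume-form case, are both accurate.
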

\begin{proof}
This is immediate by Proposition~\ref{kernel:L:dag:coa}, Proposition~\ref{kernel:L:vol:dag}, and the definition of iterated integrals.
\end{proof}

Let $B \sub \VF^n$ and assume that the $\VF$-dimensions of $A$, $B$ are $n$. For any definable bijection $\phi : A \fun B$, we can define the \emph{Jacobian transformation}
\[
\phi^{\jcb} :  \fn(A, \gsk  \mRV[*] /  \misp) \fun \fn(B, \gsk  \mRV[*] /  \misp)
\]
exactly as in~\cite[Section~7]{Yin:int:acvf} and, as~\cite[Proposition~7.12]{Yin:int:acvf}, obtain

\begin{prop}\label{semi:change:variables}
$\int_{+A} ( f,  \omega) = \int_{+B} \phi^{\jcb}( f,  \omega)$.
\end{prop}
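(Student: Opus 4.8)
The plan is to exploit that $\int_{+}$ is an isomorphism of Grothendieck semigroups (Theorem~\ref{main:prop:k:vol:dag}), so that the asserted identity $\int_{+A}(f,\omega)=\int_{+B}\phi^{\jcb}(f,\omega)$ is equivalent to $[\bb L f]=[\bb L(\phi^{\jcb}(f,\omega))]$ in the relevant $\gsk\mVF$. Thus it suffices to produce a measure-preserving bijection, in the sense of Definition~\ref{defn:f:VF:cat}, between the two lifts; as always for $\mVF$-morphisms it is enough that it be defined and measure-preserving away from definable subsets of $\VF$-dimension strictly below the top.

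The candidate is the obvious one. For $\lbar a\in A$ the fiber of $\bb L f$ over $\lbar a$ is $\bb L(f(\lbar a))$, and, since $\phi^{\jcb}$ alters only volume forms, the fiber of $\bb L(\phi^{\jcb}(f,\omega))$ over $\phi(\lbar a)$ has the very same underlying set, carrying the volume form obtained from that of $\bb L(f(\lbar a))$ by the twist that $\phi^{\jcb}$ builds in from $\jcb_{\VF}\phi(\lbar a)$. First I would set $\Phi(\lbar a,\lbar x)=(\phi(\lbar a),\lbar x)$. This is a definable bijection $\bb L f\fun\bb L(\phi^{\jcb}(f,\omega))$, and since $\Phi$ acts through $\phi$ on the coordinates of $A$ and through the identity on all remaining coordinates, its Jacobian matrix is block diagonal with blocks the Jacobian matrix of $\phi$ and an identity block, whence $\jcb_{\VF}\Phi(\lbar a,\lbar x)=\jcb_{\VF}\phi(\lbar a)$ wherever the right-hand side is defined. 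Comparing this with the twist built into $\phi^{\jcb}$, one checks that $\Phi$ satisfies the $\omega_{\K}$- and $\omega_{\Gamma}$-conditions of a measure-preserving map at every point over the locus where $\jcb_{\VF}\phi$ exists, which by Lemma~\ref{partial:exist:sec} and the discussion following it is all of $A$ outside a definable set of $\VF$-dimension $<n$; the preimage of that set in $\bb L f$ has $\VF$-dimension strictly below the top and is therefore negligible.

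To turn this into a proof one proceeds exactly as for \cite[Proposition~7.12]{Yin:int:acvf}. Passing to a $\Gamma$-partition of $\phi$, each piece $\phi_{\lbar\gamma}$ is a $\csn(\lbar\gamma)$-$\lan{RV}$-definable $\mdl L_{\wt{\bb T}}$-morphism, to which the corresponding statement of \cite[Section~7]{Yin:int:acvf} applies verbatim; by Lemma~\ref{bijection:partitioned:unary} one further decomposes each such piece into a composition of relatively unary bijections, and by Theorem~\ref{special:bi:term:constant} and Lemma~\ref{bijection:made:contractible} one replaces it by a corresponding map between deformed $\RV$-pullbacks, over which the computation above is entirely transparent. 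The special bijections introduced in this reduction have Jacobian equal to $1$ almost everywhere (Lemma~\ref{special:tran:vol:pre}), so they leave the volume-form bookkeeping undisturbed, and the chain rule (Lemma~\ref{jcb:chain}) ensures that the successive twists compose correctly along the relatively unary factorization. Finally compactness glues the pieces and Proposition~\ref{semi:fubini} reconciles the various orders of iteration, yielding the identity.

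The main obstacle, such as it is, is precisely this bookkeeping: making sure the twist by $\jcb_{\VF}\phi$ remains consistent after $\phi$ has been cut into relatively unary factors and composed with auxiliary special bijections. Since all the needed ingredients — the chain rule, the triviality of Jacobians of special bijections, and the almost-everywhere existence of partial derivatives — are already in place, no new difficulty arises, and the argument is essentially a transcription of \cite[Proposition~7.12]{Yin:int:acvf}.
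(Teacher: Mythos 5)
Your proposal is correct and is essentially the argument the paper intends by its citation of \cite[Proposition~7.12]{Yin:int:acvf}: the Jacobian transformation is defined precisely so that $(\lbar a,\lbar x)\mapsto(\phi(\lbar a),\lbar x)$ is a measure-preserving essential bijection between the two lifts (its Jacobian being $\jcb_{\VF}\phi(\lbar a)$ by block-diagonality, defined off a set of smaller $\VF$-dimension), and injectivity of $\int_+$ then yields the identity. The machinery in your final paragraph (relatively unary decompositions via Lemma~\ref{bijection:partitioned:unary}, the special-bijection and kernel lemmas) is not needed for this statement --- it belongs to the proof of Proposition~\ref{kernel:L:vol:dag} --- so your first two paragraphs already constitute the whole proof.
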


Let $\mathbf{I}$, $\mu \mathbf{I}$ be the ideals of the groupifications of $\gsk  \RV[*, \cdot] /  \isp$, $\gsk  \mRV[*] /  \misp$. By the same calculations as in \cite[\S 6, \S 7]{Yin:int:acvf},  we see that, the ideal $\mathbf{I}$ is generated by $[1]_0 + \mathbf{j}$ and the ideal $\mu \mathbf{I}$ is generated by $\mathbf{j}_{\mu}$ (see Notation~\ref{nota:RV:ele}). Note that $\mathbf{j}$ is equal to $- [1]_0 = -1$ in $\ggk  \RV[*, \cdot] / \mathbf{I}$ and hence is not a zero-divisor in $\ggk  \RV[*, \cdot]$ (for otherwise $[1]_0 + \mathbf{j}$ would be a zero-divisor in $\ggk  \RV[*, \cdot]$, which is clearly impossible).

\begin{thm}
The Grothendieck semiring isomorphism $\int_+$ induces canonically an injective homomorphism
\[
\int :  \ggk \VF_*[\cdot] \fun \ggk  \RV[*, \cdot] [\mathbf{j}^{-1}],
\]
whose range is the entire zeroth graded piece, and two homomorphisms
\[
\Xint{\textup{R}}^g : \ggk \VF_*[\cdot] \fun  \ggk \RES[*, \cdot][\mathbf{A}^{-1}] \quad \text{and} \quad \Xint{\textup{R}}^b: \ggk \VF_*[\cdot] \fun \ggk \RES[*, \cdot][[1]^{-1}_1].
\]
\end{thm}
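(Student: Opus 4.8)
The plan is to deduce this final theorem formally from the Grothendieck semiring isomorphism $\int_+$ of Theorem~\ref{main:prop:k:vol:dag} together with the Euler-characteristic retractions of Proposition~\ref{prop:eu:retr}, by the same standard localization-and-groupification bookkeeping used in \cite[\S6,\S7]{Yin:int:acvf}. First I would recall that $\int_+ : \gsk \VF_*[\cdot] \fun \gsk \RV[*,\cdot]/\isp$ is an isomorphism of graded semirings. Groupifying both sides gives a ring isomorphism $\ggk \VF_*[\cdot] \cong \ggk(\gsk \RV[*,\cdot]/\isp)$; and since $\isp$ is the semiring congruence generated (after groupification) by the single relation $[1]_0 + \mathbf{j} = 0$ (this is the ideal $\mathbf{I}$ identified just above the theorem), the right-hand side is $\ggk \RV[*,\cdot]/\mathbf{I}$. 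So the first step is to identify $\ggk(\gsk \RV[*,\cdot]/\isp)$ with $\ggk \RV[*,\cdot]/\mathbf{I}$ and record that, in this quotient, $\mathbf{j} = -[1]_0 = -1$, hence $\mathbf{j}$ is a unit there.

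The second step is the localization argument for $\int$. Since $\mathbf{j}$ is not a zero-divisor in $\ggk \RV[*,\cdot]$ (argued in the paragraph preceding the theorem: were it one, $[1]_0+\mathbf{j}$ would be a zero-divisor, impossible), the localization map $\ggk \RV[*,\cdot] \fun \ggk \RV[*,\cdot][\mathbf{j}^{-1}]$ is injective, and its kernel-quotient description shows $\ggk \RV[*,\cdot][\mathbf{j}^{-1}]$ has zeroth graded piece isomorphic to $\ggk \RV[*,\cdot]/\mathbf{I}$ — because inverting $\mathbf{j}$ collapses the grading exactly by the relation $\mathbf{j}\cdot x = -x$ shifting degrees, and a homogeneous element of degree $d$ becomes, after multiplying by $\mathbf{j}^{-d}$, congruent mod $\mathbf{I}$ to a degree-zero element. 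Composing the ring isomorphism $\ggk \VF_*[\cdot] \cong \ggk \RV[*,\cdot]/\mathbf{I}$ with the inverse of the isomorphism $\ggk \RV[*,\cdot][\mathbf{j}^{-1}]_0 \cong \ggk \RV[*,\cdot]/\mathbf{I}$ and the inclusion of the zeroth piece into $\ggk \RV[*,\cdot][\mathbf{j}^{-1}]$ yields the injective homomorphism $\int$ with range the entire zeroth graded piece, as claimed. This is precisely the mechanism of \cite[\S6]{Yin:int:acvf}, and since all the category-level inputs (Proposition~\ref{kernel:L:dag:coa}, Proposition~\ref{kernel:L:vol:dag}, Theorem~\ref{main:prop:k:vol:dag}) have been reproduced in the present setting, the argument transfers verbatim.

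The third step produces $\Xint{\textup{R}}^g$ and $\Xint{\textup{R}}^b$ by post-composing $\int$ with the Euler-characteristic homomorphisms of Proposition~\ref{prop:eu:retr}. Explicitly, set $\Xint{\textup{R}}^g = \bb E_g \circ \int$ and $\Xint{\textup{R}}^b = \bb E_b \circ \int$, regarding $\int$ as landing in $\ggk \RV[*,\cdot]$ (or rather its localization, on which $\bb E_g$, $\bb E_b$ are already defined — one checks $\bb E_g$ and $\bb E_b$ send the unit $\mathbf{j}$ to a unit, namely $-\mathbf{A}^{-1}$ times a unit for $\bb E_g$ and $-[1]_1^{-1}$ times a unit for $\bb E_b$, so they extend to the localization). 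Since $\bb E_g$ has target $\ggk \RES[*,\cdot][\mathbf{A}^{-1}]$ and $\bb E_b$ has target $\ggk \RES[*,\cdot][[1]_1^{-1}]$, the composites have exactly the asserted codomains. The only thing requiring care is the interaction between inverting $\mathbf{j}$ and applying $\bb E_g$, $\bb E_b$: by property~(2) of Proposition~\ref{prop:eu:retr}, $\bb E_g([1]_0 + \mathbf{j}) = 0$, so $\bb E_g(\mathbf{j}) = -\bb E_g([1]_0) = -\mathbf{A}^{-1}\cdot(\text{the image of }[1]_0)$, which is indeed invertible in $\ggk \RES[*,\cdot][\mathbf{A}^{-1}]$; similarly for $\bb E_b$. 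Hence $\bb E_g$, $\bb E_b$ factor through $\ggk \RV[*,\cdot][\mathbf{j}^{-1}]$ and the composites are well-defined ring homomorphisms.

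**Main obstacle.** I do not expect a genuine obstruction here, since the theorem is essentially a formal corollary packaging Theorem~\ref{main:prop:k:vol:dag}, Proposition~\ref{prop:eu:retr}, and the ideal computations preceding the statement. The one point deserving verification — and the reason the proof is "obvious" but not quite automatic — is the compatibility of the two localizations: one must confirm that inverting $\mathbf{j}$ on the $\VF$-side (via $\int$) and the localizations at $\mathbf{A}$ or $[1]_1$ on the $\RES$-side (via $\bb E_g$, $\bb E_b$) are linked by the relations $\bb E_g([1]_0+\mathbf{j}) = \bb E_b([1]_0+\mathbf{j}) = 0$ in just the right way to make the composite land in the stated graded pieces with the stated ranges. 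This is the same computation carried out in \cite[Theorem~10.5, Theorem~10.11]{hrushovski:kazhdan:integration:vf} (with the slight differences noted after Proposition~\ref{prop:eu:retr}), so I would simply reference it and omit the routine diagram chase, consistent with the paper's stated policy of omitting proofs that are of a formal nature.
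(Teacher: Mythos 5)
Your proposal is correct and follows essentially the same route as the paper, which disposes of the theorem in two sentences by citing the analogue of \cite[Theorem~6.22]{Yin:int:acvf} for $\int$ and Proposition~\ref{prop:eu:retr} for the two retractions; your localization/groupification bookkeeping (identifying $\ggk \RV[*,\cdot]/\mathbf{I}$ with the zeroth graded piece of $\ggk \RV[*,\cdot][\mathbf{j}^{-1}]$ via the non-zero-divisor property of $\mathbf{j}$, then post-composing with $\bb E_g$, $\bb E_b$) is exactly the standard argument being invoked. The only, harmless, slip is the value of $\bb E_g(\mathbf{j})$: since $[1]_0$ lies in degree $0$, one has $\bb E_g(\mathbf{j}) = -\bb E_g([1]_0) = -1$ rather than $-\mathbf{A}^{-1}$ times a unit, but either way it is invertible, so the factorization through the localization goes through.
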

\begin{proof}
The first homomorphism is similar to the one in \cite[Theorem~6.22]{Yin:int:acvf}. The other two come from Proposition~\ref{prop:eu:retr}.
\end{proof}

Similarly, since $\mu \mathbf{I}$ is a homogeneous ideal, setting $\mu \mathbf{I}_k = \ggk \mRV[k] \cap \mu \bf I$, we have:

\begin{thm}\label{theorem:integration:1:vol:dag}
The Grothendieck semiring isomorphism $\int_{+}$ induces canonically a graded ring isomorphism
\[
\int \ggk  \mVF[*] \fun \ggk  \mRV[*] /  \mu \mathbf{I} = \bigoplus_{k} \ggk \mRV[k] / \mu \mathbf{I}_k
\]
and two graded ring homomorphisms
\[
\Xint{\textup{e}}^g : \ggk \mVF[*] \fun  \ggk \mRES[*] / (\mathbf{A}_{\mu}) \quad \text{and} \quad \Xint{\textup{e}}^b: \ggk \mVF[*] \fun \ggk \mRES[*] / ([1_{\mu}]_1).
\]
\end{thm}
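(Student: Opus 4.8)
The plan is to obtain all three maps by groupifying the Grothendieck semiring isomorphism $\int_{+}$ of Theorem~\ref{main:prop:k:vol:dag} and then composing with the Euler characteristic retractions of Proposition~\ref{prop:eu:retr}. Since groupification is a functor, the semiring isomorphism $\int_{+} : \gsk \mVF[*] \fun \gsk \mRV[*]/\misp$ induces an isomorphism of rings $\ggk \mVF[*] \fun \ggk(\gsk \mRV[*]/\misp)$, and this is an isomorphism of \emph{graded} rings because $\int_{+}$ respects the gradings and $\misp = \coprod_k \misp[k]$ is compatible with them.

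The next step is to identify $\ggk(\gsk \mRV[*]/\misp)$ with $\ggk \mRV[*]/\mu\mathbf{I}$. The semiring surjection $\gsk \mRV[*] \twoheadrightarrow \gsk \mRV[*]/\misp$ induces a surjection $\ggk \mRV[*] \twoheadrightarrow \ggk(\gsk \mRV[*]/\misp)$, and its kernel is the ideal generated by the classes $[\mathbf{U}^{\sharp}] - [\mathbf{U}]$ for elementary blowups $\mathbf{U}^{\sharp}$ of $\mathbf{U}$ (using that $\misp$ is the relation of having isomorphic blowups and that blowups of length $n$ are iterated). By the fine-category blowup computation transported from \cite[\S 7]{Yin:int:acvf} --- the same calculations already invoked in the excerpt to pin down $\mu\mathbf{I}$ --- one has $[\mathbf{U}^{\sharp}] = [\mathbf{U}] + [\mathbf{U}] \cdot \mathbf{j}_{\mu}$ in $\ggk \mRV[*]$, and conversely $\mathbf{j}_{\mu}$ itself is realized as such a difference; hence this kernel is exactly $\mu\mathbf{I}$. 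Since $\mathbf{j}_{\mu} \in \ggk \mRV[1]$, the ideal $\mu\mathbf{I}$ is homogeneous, so $\mu\mathbf{I} = \bigoplus_k \mu\mathbf{I}_k$ and the quotient splits as $\bigoplus_k \ggk \mRV[k]/\mu\mathbf{I}_k$. Composing, we obtain the graded ring isomorphism $\int : \ggk \mVF[*] \fun \ggk \mRV[*]/\mu\mathbf{I}$, characterized by $\int [\bb L \mathbf{U}] = [\mathbf{U}]/\mu\mathbf{I}$.

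For the two homomorphisms into the $\RES$-categories, recall from Proposition~\ref{prop:eu:retr} that $\mu\bb E_g : \ggk \mRV[*] \fun \ggk \mRES[*]/(\mathbf{A}_{\mu})$ and $\mu\bb E_b : \ggk \mRV[*] \fun \ggk \mRES[*]/([1_{\mu}]_1)$ are graded ring homomorphisms with $\mu\bb E_g(\mathbf{j}_{\mu}) = \mu\bb E_b(\mathbf{j}_{\mu}) = 0$. Because $\mu\mathbf{I}$ is generated by $\mathbf{j}_{\mu}$, each of these factors through the quotient $\ggk \mRV[*]/\mu\mathbf{I}$; precomposing the resulting maps with the isomorphism $\int$ gives the desired
\[
\Xint{\textup{e}}^g : \ggk \mVF[*] \fun \ggk \mRES[*]/(\mathbf{A}_{\mu}) \quad \text{and} \quad \Xint{\textup{e}}^b : \ggk \mVF[*] \fun \ggk \mRES[*]/([1_{\mu}]_1),
\]
each a graded ring homomorphism, being a composite of such.

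The main obstacle is the second step: verifying that, after groupification, the congruence $\misp$ becomes exactly the principal ideal $\mu\mathbf{I} = (\mathbf{j}_{\mu})$. This rests on the fine-category blowup analysis (Definition~\ref{defn:blowup:vol} and the analogues of \cite[Lemma~7.5, Lemma~7.7]{Yin:int:acvf}), in particular the twisted-presentation bookkeeping showing $[\mathbf{U}^{\sharp}] = [\mathbf{U}] + [\mathbf{U}] \cdot \mathbf{j}_{\mu}$; once that identity and its converse are in hand, everything else is formal.
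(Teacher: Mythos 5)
Your proposal is correct and follows exactly the route the paper (implicitly) takes: groupify the semiring isomorphism of Theorem~\ref{main:prop:k:vol:dag}, use that the kernel ideal $\mu\mathbf{I}$ is the homogeneous principal ideal $(\mathbf{j}_{\mu})$ (asserted in the paper via the calculations of \cite[\S 6, \S 7]{Yin:int:acvf}), and factor the retractions $\mu\bb E_g$, $\mu\bb E_b$ of Proposition~\ref{prop:eu:retr} through the quotient since they kill $\mathbf{j}_{\mu}$. One minor bookkeeping slip: the identity $[\mathbf{U}^{\sharp}] = [\mathbf{U}] + [\mathbf{U}]\cdot\mathbf{j}_{\mu}$ is degree-inconsistent in the graded ring; the correct form is $[\mathbf{U}^{\sharp}] - [\mathbf{U}] = [\mathbf{U}']\cdot\mathbf{j}_{\mu}$ with $\mathbf{U}'$ the degree-$(k-1)$ object obtained by dropping the blown-up presentation coordinate (this is where the algebraicity condition in Definition~\ref{defn:blowup:vol} enters), but this does not affect the conclusion that the kernel is $(\mathbf{j}_{\mu})$.
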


In future applications, we will often need to modify the target Grothendieck (semi)rings of the integration maps through some standard algebraic manipulations. The following procedure is an example.

Let us abbreviate $\gsk \mRV[k] / \misp$, $\gsk  \mRV[*] /  \misp$ as $\gsk \RV_k$, $\gsk \RV$, respectively. Their groupifications are abbreviated accordingly. Clearly the groupification of the completion $\wh{\gsk \RV}$ of $\gsk \RV$ is the completion $\wh{\ggk \RV}$ of $\ggk \RV$. So there is a canonical semiring homomorphism from the former into the latter. Recall \cite[Notation~2.9]{Yin:special:trans}. For any $\lbar \gamma \in \Gamma$, we write $\bm o_{\lbar \gamma}$ and $\bm c_{\lbar \gamma}$ for the canonical images of the elements $\int [(\go(0, \lbar \gamma), (1, 0))]$ and $\int [(\gc(0, \lbar \gamma), (1, 0))]$ in $\wh{\ggk \RV}$. We localize $\wh{\ggk \RV}$ at $\bm o_{0}$, $\bm c_{0}$ and obtain the ring $\wh{\ggk \RV}[\bm o_{0}^{-1}, \bm c^{-1}_{0}]$, which is abbreviated as $\KRC$.

An (ind-)definable function $A \fun \wh{\gsk \RV}$ is a sequence $\bm f = ( f_i)_{i \in \N}$, where the $i$th component $f_i$ is a function in $\fn(A, \gsk \RV_i)$. In other words, the set of all such functions is given by
\[
\fn(A, \wh{\gsk \RV}) = \sum_i \fn(A, \gsk \RV_i).
\]
For $\bm r = (r_i)_{i \in \N} \in \wh{\gsk \RV}$, let $\bm r \cdot \bm{f}$ be the function in $\fn(A, \wh{\gsk \RV})$ such that its $k$th component $(\bm r \cdot \bm f)_k \in \fn(A, \gsk \RV_k)$ is given by $(\bm r \cdot \bm f)_k (\lbar a) = \sum_{i+j=k} r_i \cdot f_j(\lbar a)$. This operation turns $\fn(A, \wh{\gsk \RV})$ into a natural $\wh{\gsk \RV}$-semimodule. Now we may integrate $\bm f$ componentwise: $\int_{+A} \bm f = \sum_i \int_{+A} f_i$, where, since $\int_{+A}f_i \in \gsk \RV_{i+n}$, the first $n$ terms of the right-hand side are $0$. A simple computation shows that
\[
\int_{+A} : \fn(A, \wh{\gsk \RV}) \fun \wh{\gsk \RV}
\]
is indeed a homomorphism of semimodules. Using the canonical semiring homomorphism
\[
\wh{\gsk \RV} \fun \wh{\ggk \RV} \fun  \KRC,
\]
we define the set of (ind-)definable functions $A \fun \KRC$ by
\[
\fn(A, \KRC) = \fn(A, \wh{\gsk \RV}) \otimes_{\wh{\gsk \RV}} \KRC.
\]
Then $\int_{+A}$ induces a canonical homomorphism of $\KRC$-modules
\[
\int_A : \fn(A, \KRC) \fun \KRC.
\]
For any definable bijection $\phi : A \fun B \sub \VF^n$ we also have the (componentwise) Jacobian transformation:
\[
\phi^{\jcb} : \fn(A, \KRC) \fun  \fn(B, \KRC).
\]
By Proposition~\ref{semi:fubini} and Proposition~\ref{semi:change:variables}, for any $\bm f \in \fn(A, \KRC)$, we obtain the following theorems:

\begin{thm}[Fubini theorem]\label{fubini}
For any nonempty subsets $E_1, E_2 \sub I_n = \{1, \ldots, n\}$,
\[
\int_{\lbar a \in \pr_{E_1}(A)} \int_{\fib(A, \lbar a)} \bm f = \int_{\lbar a \in \pr_{E_2}(A)} \int_{\fib(A, \lbar a)} \bm f.
\]
\end{thm}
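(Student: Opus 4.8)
The plan is to deduce the identity from the Fubini statement already established at the level of Grothendieck semirings, Proposition~\ref{semi:fubini}, by checking that each of the formal operations used to pass from $\gsk \mRV[*] / \misp$ to $\KRC$ --- completion, groupification, localization at $\bm o_{0}$ and $\bm c_{0}$, and extension of scalars along $\wh{\gsk \RV} \fun \KRC$ --- is compatible with the integration maps $\int_{+}$ and with the formation of iterated integrals. Since Proposition~\ref{semi:fubini} already holds for arbitrary nonempty $E_1, E_2 \sub I_n$, no new geometric input is required; the argument is bookkeeping.

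First I would treat the completed semiring $\wh{\gsk \RV}$. For $\bm f = (f_i)_{i \in \N} \in \fn(A, \wh{\gsk \RV})$ the integral $\int_{+A} \bm f = \sum_i \int_{+A} f_i$ is computed componentwise, and so is the iterated integral: the inner integral $\lbar a \efun \int_{+ \fib(A, \lbar a)} \bm f$ has $i$th component $\lbar a \efun \int_{+ \fib(A, \lbar a)} f_i$, which --- after a finite definable partition of $\pr_{E_1}(A)$ on which the fibre dimension is constant, and by the uniformity built into the construction of $\int_{+}$ via Proposition~\ref{kernel:L:dag:coa}, Proposition~\ref{kernel:L:vol:dag} and compactness --- lies in $\fn(\pr_{E_1}(A), \gsk \RV_{j})$ for the appropriate $j$; summing over $i$ then gives a genuine element of $\fn(\pr_{E_1}(A), \wh{\gsk \RV})$, so the outer integral makes sense. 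Applying Proposition~\ref{semi:fubini} to each $f_i$ and summing, the Fubini identity holds for $\int_{+A} : \fn(A, \wh{\gsk \RV}) \fun \wh{\gsk \RV}$.

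Next I would push this forward through the canonical semiring homomorphism $\wh{\gsk \RV} \fun \wh{\ggk \RV} \fun \KRC$. By definition $\fn(A, \KRC) = \fn(A, \wh{\gsk \RV}) \otimes_{\wh{\gsk \RV}} \KRC$, and $\int_A$ together with the inner and outer integrations are all $\KRC$-linear, so it suffices to verify the identity on elementary tensors $\bm g \otimes 1$ with $\bm g \in \fn(A, \wh{\gsk \RV})$. For such a $\bm g$ both iterated integrals are the images, under the ring homomorphism $\wh{\gsk \RV} \fun \KRC$, of the corresponding $\wh{\gsk \RV}$-valued iterated integrals of $\bm g$, which coincide by the previous paragraph; hence they coincide in $\KRC$. (The change-of-variables formula Proposition~\ref{semi:change:variables} is not used in this proof, but the Jacobian transformation $\phi^{\jcb}$ passes through exactly the same sequence of operations in the same way.)

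The one delicate point --- the nearest thing to an obstacle --- is the claim just used that the inner integral defines an honest (ind-)definable $\wh{\gsk \RV}$-valued function of the outer variable, with the correct grading shift in each component; this is precisely the fibrewise uniformity packaged into Proposition~\ref{kernel:L:dag:coa} and Proposition~\ref{kernel:L:vol:dag} together with compactness, applied over $\pr_{E_1}(A)$. Granting this, the remainder is the formal linear algebra of completion, groupification, localization and tensoring, and the theorem follows.
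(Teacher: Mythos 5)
Your proposal is correct and follows exactly the route the paper takes: the paper derives Theorem~\ref{fubini} directly from Proposition~\ref{semi:fubini} by applying it componentwise over the grading of $\wh{\gsk \RV}$ and then passing through the formal operations of completion, groupification, localization and extension of scalars to $\KRC$. Your write-up simply makes explicit the bookkeeping (well-definedness of the inner integral as an ind-definable function, $\KRC$-linearity on elementary tensors) that the paper leaves implicit.
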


\begin{thm}[Change of variables]\label{change:variables}
$\int_{A} \bm f = \int_{B} \phi^{\jcb} (\bm f)$.
\end{thm}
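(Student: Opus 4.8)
The plan is to reduce everything to the semiring-level change of variables formula, Proposition~\ref{semi:change:variables}, by unwinding the formal construction that turns $\int_{+B}$ into $\int_B$. Recall that $\fn(A, \KRC) = \fn(A, \wh{\gsk \RV}) \otimes_{\wh{\gsk \RV}} \KRC$ and that both $\int_A$ and $\phi^{\jcb}$ on $\fn(A, \KRC)$ are, by construction, the $\KRC$-linear extensions of the maps $\int_{+A}$ and $\phi^{\jcb}$ on $\fn(A, \wh{\gsk \RV})$ along the canonical semiring homomorphism $\wh{\gsk \RV} \fun \wh{\ggk \RV} \fun \KRC$. Since this homomorphism is a ring map, it preserves identities; hence it suffices to prove $\int_{+A} \bm g = \int_{+B} \phi^{\jcb}(\bm g)$ for every $\bm g \in \fn(A, \wh{\gsk \RV})$, the general case following by tensoring and bilinearity of both sides over $\KRC$.

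Writing $\bm g = (g_i)_{i \in \N}$ with $g_i \in \fn(A, \gsk \mRV[i]/\misp)$, the integral $\int_{+A}$ is by definition componentwise, $\int_{+A} \bm g = \sum_i \int_{+A} g_i$, and likewise the Jacobian transformation is componentwise, $\phi^{\jcb}(\bm g) = (\phi^{\jcb} g_i)_i$, so $\int_{+B} \phi^{\jcb}(\bm g) = \sum_i \int_{+B} \phi^{\jcb} g_i$. Thus the claim reduces to the equality $\int_{+A} g_i = \int_{+B} \phi^{\jcb} g_i$ for each single graded component $g_i$, viewed together with its volume form as an element of $\fn(A, \gsk \mRV[*]/\misp)$. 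That equality is precisely Proposition~\ref{semi:change:variables}. Summing over $i$ (in each fixed total degree only finitely many summands are nonzero) and pushing the result through $\wh{\gsk \RV} \fun \KRC$ completes the argument.

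There is essentially no obstacle here beyond bookkeeping: the real work has already been done upstream. The nontrivial inputs are that $\phi^{\jcb}$ is well defined on $\misp$-classes and is additive and $\wh{\gsk \RV}$-linear, so that it descends to the completion and to the tensor product with $\KRC$ --- these are checked exactly as in~\cite[\S 7]{Yin:int:acvf} --- together with the geometric heart, namely that a measure-preserving lift of an $\mRV[k]$-isomorphism transforms the volume form by the $\VF$-Jacobian, which is encoded in Proposition~\ref{kernel:L:vol:dag} and Theorem~\ref{theorem:integration:1:vol:dag} and powers Proposition~\ref{semi:change:variables}. With those in hand nothing new is required; the passage to $\KRC$ via completion, groupification, and localization at $\bm o_0, \bm c_0$ is purely formal and compatible with the maps involved.
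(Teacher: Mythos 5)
Your argument is correct and is exactly the route the paper takes: the theorem is stated as an immediate consequence of Proposition~\ref{semi:change:variables}, applied componentwise to $\bm f = (f_i)_i$ and then pushed through the canonical homomorphism $\wh{\gsk \RV} \fun \wh{\ggk \RV} \fun \KRC$ and the tensor-product construction of $\fn(A, \KRC)$. The paper leaves this bookkeeping implicit, so your write-up merely makes explicit what is already intended.
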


We remind the reader that similar results are available for the coarse categories with $\Gamma$-volume forms (in $\gC^{\bb T}$).

\section{The uniform rationality of certain Igusa local zeta functions}

The integration theory developed so far is quite effective in showing uniform rationality of Igusa local zeta functions. In this last section we shall discuss such an application. The general idea is to specialize from the sufficiently saturated model to non-archimedean local fields. This is usually done in two steps: descent to an arbitrary henselian substructure and then specialization to all non-archimedean local fields of sufficiently large residue characteristic.

\begin{lem}\label{cut:to:hensel:substru}
Let $M \supseteq S$ be a henselian substructure of $\gC$. This means that $(\VF(M), \OO(M))$ is a nontrivially valued field and is henselian. If $M$ is a substructure of $\gC^1$ then $M = \dcl^{1}(M)$. If $M$ is a $\VF$-generated substructure of $\gC^2$ (resp.\ $\gC^3$) and $\Gamma(M)$ is divisible then $M = \dcl^{2}(M)$ (resp.\ $M = \dcl^{3}(M)$).
\end{lem}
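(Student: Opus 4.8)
The plan is to show that $M$ is closed under the relevant section/cross-section operators, so that it coincides with its definable closure in the larger language. The first claim, about $\gC^1$, is essentially immediate: a substructure of $\gC^1$ is by definition closed under $\sn$ (and all the $\lan{RV}^1$-operations), so there is nothing to add, and $M = \dcl^1(M)$ follows at once from Proposition~\ref{T:qe} (quantifier elimination for $\bb T$ and its reducts) — any element of $\dcl^1(M)$ is the value of an $\lan{RV}^1$-term with parameters in $M$, hence already lies in $M$. So the substance is in the $\gC^2$ (resp.\ $\gC^3$) case.

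For the $\gC^2$ case, I would argue as follows. We are given a $\VF$-generated henselian $M$ with $\Gamma(M)$ divisible, and we must show $M$ is closed under $\csn : \Gamma \fun \VF^{\times}$. Fix $\gamma \in \Gamma(M)$; we need $\csn(\gamma) \in \VF(M)$. Since $\csn$ is a group homomorphism splitting $\vv$, the element $\csn(\gamma)$ is some fixed element $a \in \VF(\gC)$ with $\vv(a) = \gamma$. Pick any $b \in \VF(M)$ with $\vv(b) = \gamma$ (possible since $M$ is $\VF$-generated and $\Gamma(M) = \vv(\VF(M))$). Then $\csn(\gamma)/b = a/b$ has value $0$, i.e.\ lies in $\OO^{\times}(\gC)$; the point is to pin it down inside $M$. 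The homomorphism property gives $\csn(n\gamma) = \csn(\gamma)^n$ for all $n$, and divisibility of $\Gamma(M)$ lets us take $n$th roots of $\gamma$ inside $M$; combined with henselianity (which supplies $n$th roots of units close to $1$, and more generally controls the structure of $\VF(M)^{\times}$ modulo $\OO^{\times}$), one shows that the splitting $\csn$ restricted to $\Gamma(M)$ is forced to land in $\VF(M)$ — there is no room for the choice of section to escape $M$ because $\VF(M)^{\times}/\OO^{\times}(\VF(M)) \cong \Gamma(M)$ is already divisible and the residue field contributes nothing new. More precisely, I would invoke (or recapitulate) the extension argument from the proof of \cite[Theorem~3.14]{Yin:QE:ACVF:min} together with Proposition~\ref{T:qe}: any automorphism of $\gC^2$ fixing $M$ pointwise in the $\VF$-sort must fix $\csn(\gamma)$, because $\csn(\gamma)$ is the unique element realizing the (algebraic, by divisibility + henselianity) type it satisfies over $\VF(M)$; hence $\csn(\gamma) \in \dcl^{2}(\VF(M)) \subseteq$ the underlying set, and since $M$ is already closed under the $\lan{RV}$-structure, $M = \dcl^2(M)$.

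For the $\gC^3$ case one adds that $M$ must also be closed under $\rcsn = \rv \circ \csn$, but this is automatic once it is closed under $\csn$ and under $\rv$ (which it is, being an $\lan{RV}$-substructure). So the $\gC^3$ statement reduces to the $\gC^2$ statement. Throughout, the hypothesis that $M$ is $\VF$-generated is what guarantees $\Gamma(M) = \vv(\VF(M))$ and $\K(M) = \res(\OO(M))$, so that no new $\RV$-elements sneak in, and henselianity plus divisibility of $\Gamma(M)$ is exactly what makes $\acl^{1}(M)$ (equivalently $\dcl^{1}(M)$, since $M$ is already a model-ish structure) an immediate-type extension with nothing to adjoin — cf.\ the reductions performed in the proof of Proposition~\ref{T:qe}, where one successively arranges $\K$ algebraically closed and $\Gamma$ divisible and then the residual structure is rigid.

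The main obstacle I expect is the $\gC^2$ step: verifying carefully that the cross-section value $\csn(\gamma)$ for $\gamma \in \Gamma(M)$ genuinely lies in $\VF(M)$ rather than merely in some henselization or immediate extension. The delicate point is that $\csn$ is extra structure, not canonically determined by the valued field alone, so one cannot simply say ``it is in the henselization''; one must use that on the \emph{divisible} group $\Gamma(M)$ the splitting is rigid over $\VF(M)$ — concretely, that $\csn(\gamma) = \csn(\gamma/n)^n$ and $\csn(\gamma/n)$ differs from an $M$-element by a unit which henselianity forces into $\OO^{\times}(\VF(M))$ — and iterate/take a limit appropriately, or equivalently run the back-and-forth of \cite[Theorem~3.10, Theorem~3.14]{Yin:QE:ACVF:min} to see the relevant type is realized already in $M$. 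Once that is in hand, everything else is bookkeeping with quantifier elimination (Theorem~\ref{thm:dag:qe}, Proposition~\ref{T:qe}).
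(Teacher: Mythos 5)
There is a genuine gap, and it runs through both halves of your argument: you conflate \emph{definable closure} with \emph{closure under the function symbols of the language}. For the $\gC^1$ case, quantifier elimination (Proposition~\ref{T:qe}) does \emph{not} imply that every element of $\dcl^1(M)$ is the value of an $\lan{RV}^1$-term over $M$; an element can be the unique realization of a quantifier-free formula without being a term value. Indeed, for a $\VF$-generated substructure the definable closure in $\ACVF$ already contains the henselization, which is why the hypothesis that $M$ is henselian appears in the statement --- and your first paragraph never uses it. The paper's route is the automorphism argument: by \cite[Lemma~6.2]{Yin:special:trans}, henselianity gives $\dcl(M)=M$ in the reduct $\gC$, i.e.\ every $a\notin M$ is moved by some valued-field automorphism over $M$; the only new content is that (as one extracts from the proof of \cite[Theorem~3.14]{Yin:QE:ACVF:min}) such automorphisms can be arranged to respect $\sn$ (resp.\ $\csn$, $\rcsn$), so that the automorphism group over $M$ does not shrink in the expanded language and the same conclusion holds for $\dcl^1$, $\dcl^2$, $\dcl^3$.

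For the $\gC^2$ case you prove the wrong (and essentially trivial) inclusion. That $\csn(\gamma)\in\VF(M)$ for $\gamma\in\Gamma(M)$ is automatic: a substructure of $\gC^2$ is by definition closed under the function symbol $\csn$, and $\csn(\gamma)$ is a term, hence certainly in $\dcl^2(M)$ and fixed by every automorphism over $M$. What has to be shown is the converse inclusion $\dcl^2(M)\subseteq M$: that the richer language does not make any \emph{new} element of $\gC$ definable over $M$. Your closing step ``since $M$ is already closed under the $\lan{RV}$-structure, $M=\dcl^2(M)$'' is a non sequitur --- closure under the operations gives only that $M$ is a substructure. The missing step is exactly the construction, for each $a\in\gC\smallsetminus M$, of an automorphism of $\gC^2$ over $M$ moving $a$; this is where henselianity, divisibility of $\Gamma(M)$, and the $\csn$-compatible extension lemmas behind Proposition~\ref{T:qe} are actually consumed. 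Your remark about ``running the back-and-forth of \cite[Theorem~3.14]{Yin:QE:ACVF:min}'' points at the right mechanism, but it is not what your written argument does. (Also note that $\lan{RV}^3$ expands $\lan{RV}^1$, not $\lan{RV}^2$, so the third case does not simply reduce to the second.)
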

\begin{proof}
From the proof of \cite[Theorem~3.14]{Yin:QE:ACVF:min} it is clear that any valued field automorphism of $\gC^1$ over $M$ is an $\lan{RV}^1$-automorphism of $\gC$ over $M$ and hence the proof of \cite[Lemma~6.2]{Yin:special:trans} may be easily adapted here. The other cases are similar.
\end{proof}

\begin{prop}
In all the three cases above, $M$ admits elimination of $\VF$-quantifiers.
\end{prop}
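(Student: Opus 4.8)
The plan is to deduce the proposition from the standard back-and-forth criterion for relative quantifier elimination. By Lemma~\ref{cut:to:hensel:substru}, in each of the three cases $M$ equals its own definable closure in the relevant language ($\lan{RV}^1$, $\lan{RV}^2$, or $\lan{RV}^3$), so $M$ is a genuine substructure of $\gC^1$, $\gC^2$, or $\gC^3$ there; thus it suffices to show that the complete theory $\Th(M)$ in that language eliminates $\VF$-quantifiers. For this I would verify the criterion: for all $M_1, M_2 \models \Th(M)$ with $M_2$ being $\norm{M_1}^+$-saturated, every substructure $A \subseteq M_1$, and every embedding $f : A \fun M_2$ whose restriction to the $\RV$-sort is partial elementary, $f$ extends to an embedding $M_1 \fun M_2$.

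The first moves are reductions. Using $\norm{M_1}^+$-saturation of $M_2$ together with the partial elementarity hypothesis, extend $f \rest \RV(A)$ to a partial elementary map $g : \RV(M_1) \fun \RV(M_2)$; from now on all further extensions of $f$ are required to agree with $g$ on the $\RV$-sort. Since the henselization of $A$ inside $M_1$ is an immediate valued-field extension, it has the same $\RV$-sort, and since henselianity is expressed by a first-order scheme (so $M_2 \models \Th(M)$ is henselian), $f$ extends uniquely to this henselization, compatibly with $g$; hence we may assume $A$ is henselian. This is the point at which the argument diverges from the proof of Proposition~\ref{T:qe}: there one would next enlarge $A$ so that its residue field becomes algebraically closed and its value group divisible, but those steps are unavailable now, since they would alter the $\RV$-sort over which we are eliminating.

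The main step is to extend $\VF(A)$ one element at a time, preserving henselianity and compatibility with $g$. Given $a \in \VF(M_1) \setminus \VF(A)$, I would run the case analysis of \cite[Lemma~3.13, Theorem~3.14]{Yin:QE:ACVF:min}, adapted to the henselian, non-algebraically-closed setting: (i) if $a$ is transcendental and $\rv$-generic over $\VF(A)$ --- meaning $\rv$ of each polynomial over $A$ evaluated at $a$ is the one prescribed by the $\RV$-type --- then saturation of $M_2$ furnishes a matching element realizing the $g$-image type; (ii) if $a$ is a pseudo-limit of a pseudo-Cauchy sequence from $\VF(A)$, one transports the sequence through $f$ and $g$ and, using the henselianity of $M_2$ and $\cha(\K) = 0$, selects a pseudo-limit of the right type; (iii) whenever a new $\RV$- or $\Gamma$-element is thereby generated, its value under the section (respectively under the cross-section and reduced cross-section, where present) is defined from $g$ by $\sn(t) \efun \sn(g(t))$, $\csn(\gamma) \efun \csn(g(\gamma))$, $\rcsn(\gamma) \efun g(\rcsn(\gamma))$, the accompanying field-degree and ramification identities from the proof of Proposition~\ref{T:qe} (e.g.\ $[\VF(A)(\sn(t)) : \VF(A)] = [\K(A)(\res(t)) : \K(A)]$) going through verbatim because they depend only on valuation theory. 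Iterating along a well-ordering of $\VF(M_1)$ produces the required embedding $M_1 \fun M_2$.

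I expect the genuine difficulty to be case (ii): one must check that a pseudo-Cauchy sequence of the relevant transcendental or algebraic kind in $\VF(A)$ is carried by $f$ and $g$ to one in $M_2$ admitting a pseudo-limit of the correct type over $f(A)$, compatibly with $g$ and with the section/cross-section. In the $\ACVF$-based proof of Proposition~\ref{T:qe} this was sidestepped by passing to the algebraic closure (cf.\ Remark~\ref{rem:monster:imme}), which is not permitted here because $\K(M)$ and $\Gamma(M)$ must be kept fixed; so this step is exactly the Ax--Kochen--Ershov/Pas-style relative quantifier elimination for equicharacteristic-$0$ henselian valued fields. The remaining, arbitrary structure on the $\RV$-sort is then carried along at no cost, since it lives entirely over $\RV$ and is never disturbed by any of the above.
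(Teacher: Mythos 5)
Your route is fundamentally different from the paper's, and it has a genuine gap at its core. The paper's proof is a short reduction to machinery already built: it imitates \cite[Lemma~6.3]{Yin:special:trans} by applying Theorem~\ref{special:bi:term:constant} to the top occurring $\VF$-terms of the formula in question, so that the set defined by the matrix of $\exists X\, \phi$ is carried by a special bijection onto an $\RV$-pullback; since the focus maps of that special bijection are definable, Lemma~\ref{cut:to:hensel:substru} ($M = \dcl(M)$) guarantees they take values in $M$, hence the bijection restricts to $M$-points, and the existential $\VF$-quantifier is traded for a quantifier over $\RV(M)$ (every nondegenerate $\rv$-polydisc over $\RV(M)$ meets $\VF(M)$). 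You instead propose to show that $\Th(M)$ eliminates $\VF$-quantifiers via a back-and-forth test, i.e., to establish an Ax--Kochen--Ershov/Pas-type relative quantifier elimination for equicharacteristic-zero henselian fields carrying a section (resp.\ cross-section) and arbitrary extra $\RV$-structure. Note that in your argument Lemma~\ref{cut:to:hensel:substru} plays essentially no role --- you invoke it only to say $M$ is a substructure, which is the hypothesis rather than the lemma's content ($\dcl$-closedness) --- and that is a sign the proposal is not engaging with the intended mechanism.

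The gap is that your case (ii), which you yourself call ``the genuine difficulty,'' is not proved but only identified with ``the AKE/Pas-style relative quantifier elimination.'' That theorem is not available off the shelf in the language with $\sn$ or $\csn$: the section maps $\RV$ back into $\VF$, so the substructure generated by $A \cup \{a\}$ must be closed under the infinite iteration $a \rightsquigarrow \rv(P(a)) \rightsquigarrow \sn(\rv(P(a))) \rightsquigarrow \cdots$, and verifying that this closure can be matched with $g$ over a residue field that is not algebraically closed and a value group that is not divisible is precisely the content that would have to be supplied, not cited. (In Proposition~\ref{T:qe} this iteration is tamed by first making $\K$ algebraically closed and $\Gamma$ divisible --- the step you correctly observe is unavailable here.) Since essentially all of the content of the proposition sits inside that unproved step, the proposal does not constitute a proof; the intended argument sidesteps the issue entirely because the special-bijection technology was built exactly so that such back-and-forth arguments need not be redone in each expansion.
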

\begin{proof}
It suffices to reproduce~\cite[Lemma~6.3]{Yin:special:trans} for the current setting. The key of its proof is to apply \cite[Theorem~5.5]{Yin:special:trans} to the occurring polynomials in question and then apply \cite[Lemma~6.2]{Yin:special:trans}. To imitate this argument, we may obviously apply Theorem~\ref{special:bi:term:constant} to the top occurring $\VF$-terms in question and then apply Lemma~\ref{cut:to:hensel:substru}. The details are left to the reader.
\end{proof}

%Recall from~\cite[Definition~6.1]{Yin:special:trans} that a substructure $M$ is \emph{functionally closed} if, for any definable subset $A$ and any definable function $f$ on $A$, $f(A \cap M) \sub M$.
%
%Let $\mdl L_{\Gamma}$ be the language used by the $\Gamma$-sort of $\gC$. From now on we assume that the $\mdl L_{\bb T}$-structure $\gC$ expands the $\lan{RV}^{1}$-structure $\gC$ in the $\Gamma$-sort only and $\bb T = \ACVF^{1}_S(0, 0)$.
%
%\begin{cor}
%In $M$, any definable subset in the $\Gamma$-sort is $\mdl L_{\Gamma}$-definable.
%\end{cor}

From now on we shall work in $\gC^2$ and assume that the substructure $S = \dcl^2(S)$ is generated by a ``universal uniformizer'' $\varpi \in \csn(\Gamma)$, where $\Gamma(S)$ is identified with (the additive group of) $\Q$. A subset in the $\Gamma$-sort is a \emph{rational polyhedron} if it is defined by a finite system of linear inequalities.

We shall only work with non-archimedean local fields. Let $L$ range over all local fields and denote its residue characteristic, residue degree, and ramification degree by $\epsilon_L$, $\delta_L$, and $\rho_L$, respectively. Set $q_L =  \epsilon_L^{\delta_L}$. We consider $L$ as an $\mdl L_{\RV}^2$-structure with $\varpi = p$ if $L$ is an extension of $\Q_p$ and $\varpi = t$ if $L$ is an extension of $\F_p \dlbr t \drbr$. This is why we do not normalize the value group of $L$ to be $\Z$ if $\cha(L) > 0$. The Haar measure $|\der \lbar X|_{L}$ on $L$ is normalized so that the maximal ideal has measure $1$. For any subset $A$ in the $\VF$-sort of $L$, the volume $\int_A |\der \lbar X|_{L} $ of $A$, if defined, is denoted by $\vol(A)$. For example, $\vol(\OO(L)) = q_L$, $\vol(\vv^{-1}(\gamma)) = (q_L -1)q_L^{- \rho_L \gamma }$ for $\gamma \in \Gamma$, and, for $t \in \RV(L)$ with $\vrv(t) = \gamma$, $\vol(\rv^{-1}(t)) = q_L^{- \rho_L \gamma}$. We adopt the convention $q_L^{- \infty} = 0$. For any $a \in \VF(L)$, the norm of $a$, denoted by $|a|_{L}$, is by definition the number $q_L^{- \rho_L \vv(a)}$.

Let $F$ be either $\Q_p$ or $\F_p \dlbr t \drbr$ and $b_F \in \VF(F)$ such that $\vv(b_F) \in \Z$ is constant as $F$ varies. Let $(L_i)$ be an infinite sequence of local fields such that $(\epsilon_{L_i})$ is unbounded. Any ultraproduct $M$ of $(L_i)$ can be regarded as a substructure of $\gC^2$ and the $\lan{RV}$-reduct of $M$ as a substructure of $\gC$. Let $b \in \VF(M)$ be the element that corresponds to the sequence $(b_F)$. Suppose that $\Gamma(\dcl^2(b)) = \Q$. Let $A \sub \VF^{n}$ be a $b$-definable subset such that $\vv(A(M))$ is bounded from below in $\Gamma(M)^n$. Let $\lbar f = (f_1, \ldots, f_k)$ be a sequence of $b$-definable functions $A \fun \Gamma$ such that $\lbar f(A(M)) \sub \Gamma(M)^k$ is a Presburger subset that is independent of $b$, that is, $\lbar f(A(M))$ is definable in the $\Gamma$-sort of $M$ in the Presburger language without parameters. For each $\lbar \gamma \in \Gamma^k$ let $\pi_{\lbar \gamma}$ be a $\Gamma$-partition of the subset $\{\lbar a \in A : \lbar f(\lbar a) = \lbar \gamma\}$. By Lemma~\ref{gam:same} and Remark~\ref{rem:gam:stab}, the image of $\pi_{\lbar \gamma}$ is $\lbar \gamma$-$\lan{RV}$-definable (independent of $b$). Let $\pi$ be the function on $A$ given by $\lbar a \efun (\lbar f(\lbar a), \pi_{\lbar f(\lbar a)}(\lbar a))$ and $A_{\lbar \gamma} = \pi^{-1}(\lbar \gamma)$. We write $J$ for $\pi(A(M))$. It is easy to see that $\pi$ may be chosen so that $J \sub \pi(A)(M)$, where the inclusion may be proper. Consequently, $J$ is also a Presburger subset that is independent of $b$. Now, for every $A_{\lbar \gamma}$, by Corollary~\ref{all:subsets:rvproduct} (in fact we only need the special case \cite[Corollary~5.6]{Yin:special:trans}), there is an $\RV$-pullback $A^{\sharp}_{\lbar \gamma} \sub \VF^n \times \RV^m$ and a $\csn(\lbar \gamma)$-$\lan{RV}$-definable special bijection $T_{\lbar \gamma}$ between $A_{\lbar \gamma}$ and $A^{\sharp}_{\lbar \gamma}$. By Lemma~\ref{cut:to:hensel:substru}, $T_{\lbar \gamma}(M)$ is a bijection between $A(M)$ and $A^{\sharp}_{\lbar \gamma} (M)$. In particular, this implies that if $\lbar \gamma \in \pi(A)(M) \mi J$ then $A^{\sharp}_{\lbar \gamma} (M) = \0$.

Let $\lbar \kappa = (\kappa_1, \ldots, \kappa_k)$ be a sequence of positive real numbers. Consider the (generalized) Igusa local zeta function
\[
\zeta(A, L, \lbar \kappa) = \int_{A}  q_L^{- \rho_L \sum _i \kappa_i f_{i}(\lbar X)}  |\der \lbar X|_{L}.
\]
The evaluation of $\zeta(A, L, \lbar \kappa)$ may be reduced to computing the volume of each $A_{\lbar \gamma}$, which is $\csn(\lbar \gamma)$-$\lan{RV}$-definable. In \cite{Pa89, Pas:1990}, it is shown that, if $\cha(L_i) = 0$ for all $i$ and, among the three sequences $(\epsilon_{L_i})$, $(\delta_{L_i})$, and $(\rho_{L_i})$, if the first or the second is the only unbounded one (here and below ``unbounded'' means ``going to infinity'') then $\zeta(A, L_i, \lbar \kappa)$ is uniformly rational (see \cite{cluckers:loeser:constructible:motivic:functions, cluckers:loeser:mixchar} for a motivic interpretation of these results). In fact, it is easy to see that the results in \cite{Pa89, Pas:1990} imply that $\zeta(A, L_i, \lbar \kappa)$ is uniformly rational as long as $(\rho_{L_i})$ is bounded. We shall generalize these results such that local fields of positive characteristic are included and all three sequences are unbounded.

By Lemma~\ref{cut:to:hensel:substru}, we may simultaneously work in all but finitely many $L_i$ as far as $\mdl L_{\RV}$-formulas are concerned. In other words, from now on, by an $\lan{RV}$-definable subset $A$ we mean a uniformly $\lan{RV}$-definable subset in $(L_i)$, which in turn means a sequence of subsets $(A_i)_{i \geq k}$ for some sufficiently large $k$ such that every $A_i$ is a subset of the $\mdl L_{\RV}$-reduct of $L_i$ defined by a fixed quantifier-free $\mdl L_{\RV}$-formula $\phi$. To reason about such an $A$, or rather about all $A_i$ for $i \geq k$, we can (and shall tacitly) work with the subset defined by $\phi$ in $\gC$ and then state the results with respect to each $L_i$ uniformly. The reader should note that in this process the number $k$ may increase. With this understanding, for example, we may talk about the size of a definable subset in the $\K$-sort and infinite summation over a definable subset in the $\Gamma$-sort (since it may be identified with a union of rational polyhedrons). For simplicity, we shall drop ``$L_i$'' from some of the notations below. For example, $\VF(L_i)$, $\rho_{L_i}$, etc.\ will simply be written as $\VF$, $\rho$, etc.

\begin{prop}\label{zeta:mult:k}
The integral $\zeta(A, \lbar \kappa)$ may be written as a finite sum of $\mu$ terms of the form $e \sum_{d = 1}^{\rho^{\nu}} \tau_d(q)$, where $e$, $\nu$ are natural numbers,
\[
\tau_d(q) = \sum_{(\lbar n, \lbar m, \lbar l) \in \Delta_d} q^{- \lbar \kappa \cdot \lbar l - \Sigma \lbar n},
\]
and $\Delta_1, \ldots, \Delta_{\rho^{\nu}}$ are pairwise disjoint Presburger subsets. This expression of $\zeta(A, \lbar \kappa)$ is uniform for the sequence $(L_i)$ in the following sense:
\begin{enumerate}
  \item $\mu$ and $\nu$ do not depend on $L_i$,
  \item if $\rho_{L_i} = c \rho_{L_j}$ then every $\Delta_d$ that occurs in $\tau_d(q_{L_j})$ also occurs in $\tau_d(q_{L_i})$ as $c \Delta_d$.
\end{enumerate}
\end{prop}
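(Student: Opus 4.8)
The plan is to unwind the integral $\zeta(A,\lbar\kappa)$ through the partition $\pi$ into the pieces $A_{\lbar\gamma}$, compute the volume of each piece using the $\ACVF$-integration machinery (Corollary~\ref{all:subsets:rvproduct} and the change of variables formula), and then reassemble the result as a summation over the Presburger index set $J$. First I would recall that $A = \bigsqcup_{\lbar\gamma\in J}A_{\lbar\gamma}$ over $M$, with $\lbar f$ constant equal to $\lbar\gamma$ on $A_{\lbar\gamma}$, so that
\[
\zeta(A,L,\lbar\kappa) \;=\; \sum_{\lbar\gamma\in J} q_L^{-\rho_L\,\lbar\kappa\cdot\lbar\gamma}\,\vol(A_{\lbar\gamma}).
\]
By Corollary~\ref{all:subsets:rvproduct} (indeed only the special case \cite[Corollary~5.6]{Yin:special:trans} is needed), there is a $\csn(\lbar\gamma)$-$\lan{RV}$-definable special bijection $T_{\lbar\gamma}$ carrying $A_{\lbar\gamma}$ to an $\RV$-pullback $A^\sharp_{\lbar\gamma}\sub\VF^n\times\RV^m$; by Lemma~\ref{special:tran:vol:pre} special bijections have Jacobian $1$ almost everywhere, so they are measure-preserving and $\vol(A_{\lbar\gamma}) = \vol(A^\sharp_{\lbar\gamma})$. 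Being an $\RV$-pullback, $A^\sharp_{\lbar\gamma}$ is determined by a subset $B_{\lbar\gamma}\sub\RV^{n+m}$ that, after applying Lemma~\ref{shift:K:csn:def} (the twistoid condition), decomposes over a finite $\Gamma$-partition into twistoids whose twistbacks are $\lan{RV}$-definable; the volume of each resulting $\rv$-polydisc of $\VF$-value-tuple $\lbar\eta\in\Gamma^n$ is $q_L^{-\rho_L\Sigma\lbar\eta}$ times the size (in $\K$) of the corresponding residue-field fibre.

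Next I would make the uniformity bookkeeping explicit. Working in $\gC$ and then specializing (Lemma~\ref{cut:to:hensel:substru} lets us pass to all but finitely many $L_i$ simultaneously), the whole construction above is carried out by a fixed quantifier-free $\lan{RV}$-formula, so $\mu$ (the number of pieces of the outer partition/disjunction) and $\nu$ (the bound on how the residue-field fibres and the $\Gamma$-data ramify, i.e.\ the exponent controlling the range $\rho^\nu$ of the inner index $d$) do not depend on $L_i$. The key point for item (2) is that when $\rho_{L_i} = c\,\rho_{L_j}$, the value group of $L_i$ is $c$ times finer; a $\Gamma$-definable (Presburger) subset describing the $\vv$-values of the twistoid decomposition in $L_j$ reappears in $L_i$ with every coordinate scaled by $c$, because $|a|_{L_i} = q_{L_i}^{-\rho_{L_i}\vv(a)}$ and the normalizing exponent $\rho_L\gamma$ is what enters. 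The finitely many residue-field sizes contribute factors that are polynomials in $q$ (by the $\RV$-minimality/Zariski-dimension hypothesis the relevant $\K$-fibres have cardinality a $\Z$-polynomial in $q$), and absorbing these into the exponent vectors $\lbar n$ of the monomials $q^{-\lbar\kappa\cdot\lbar l - \Sigma\lbar n}$ (with $\lbar l$ recording the $\lbar f$-values $\lbar\gamma$, hence the $\lbar\kappa$-dependence) gives the stated shape $\tau_d(q) = \sum_{(\lbar n,\lbar m,\lbar l)\in\Delta_d} q^{-\lbar\kappa\cdot\lbar l-\Sigma\lbar n}$, with the $\Delta_d$ pairwise disjoint Presburger sets obtained by refining the $\Gamma$-partitions. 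The prefactor $e$ collects constant residue-field multiplicities.

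The main obstacle, I expect, is controlling the residue-characteristic-dependent behaviour of the $\K$-fibres and, more subtly, ensuring that the index set $J$ (and the finer sets $\Delta_d$) are genuinely Presburger and scale correctly under $\rho_{L_i} = c\,\rho_{L_j}$: one must check that the special bijections $T_{\lbar\gamma}$ can be chosen uniformly in $\lbar\gamma$ (by compactness, as in the paragraph preceding the proposition), that the passage $\gC\rightsquigarrow L_i$ does not introduce spurious ramification beyond the fixed bound $\rho^\nu$, and that the summation $\sum_{\lbar\gamma\in J}$ interacts with the inner twistoid sum so that everything is a \emph{finite} sum of the advertised $\tau_d$'s. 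Once the twistoid condition and Lemma~\ref{cut:to:hensel:substru} are in place this is essentially the specialization argument of \cite[\S\S 4–5]{hrushovski:kazhdan:integration:vf} combined with the Presburger-cell machinery for $\Gamma$; the new feature here is simply that allowing $\varpi$ as a universal uniformizer and keeping $\rho_L$ as a free parameter forces us to track the scaling factor $c$, which is precisely what item (2) records.
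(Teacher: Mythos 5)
Your overall route matches the paper's: partition $A$ over the Presburger index set $J$, pass to $\RV$-pullbacks via $\csn(\lbar \gamma)$-$\lan{RV}$-definable special bijections, apply the twistoid decomposition (Lemma~\ref{shift:K:csn:def}) so that $\vol(A_{\lbar \gamma}) = \sharp(U_{\lbar \gamma})\sum_{(\lbar \alpha, \lbar \beta)\in I_{\lbar \gamma}} q^{-\rho\Sigma\lbar \alpha}$, and sum over $J$. But there is a genuine gap at the step that actually produces the stated form: you never explain where the inner decomposition $\sum_{d=1}^{\rho^{\nu}}\tau_d(q)$ comes from, nor how the factor $\rho$ disappears from the exponents. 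At the point your argument reaches, the exponents are $-\rho\,\lbar \kappa\cdot\lbar \gamma - \rho\,\Sigma\lbar \alpha$ with $(\lbar \alpha,\lbar \beta,\lbar \gamma)$ ranging over a Presburger subset $\Delta$ of $\Gamma^{\nu} = ((1/\rho)\Z)^{\nu}$, whereas the statement requires exponents $-\lbar \kappa\cdot\lbar l - \Sigma\lbar n$ with $(\lbar n,\lbar m,\lbar l)$ in Presburger subsets $\Delta_d\sub\Z^{\nu}$. The paper's decisive move is to identify $\Gamma$ with $\Z$ via $\gamma\efun\rho\gamma$ (turning $\Delta$ into $\Delta_*\sub\Z^{\nu}$ and eliminating $\rho$ from the exponents) and then to split $\Delta_*$ into the $\rho^{\nu}$ congruence classes modulo $\rho$ in each of the $\nu$ coordinates; these classes are the $\Delta_d$, and item (2) follows because a class defined by residues $(d_1,\ldots,d_{\nu})$ with $\gcd(d_1,\ldots,d_{\nu})=c$ can be redefined using $\rho/c$ and $d_i/c$, together with the inclusion $\Gamma(L_j)=(1/\rho_{L_j})\Z\sub(1/\rho_{L_i})\Z=\Gamma(L_i)$. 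Your remark that a $\Gamma$-definable subset ``reappears scaled by $c$'' gestures at this but does not produce the $\rho^{\nu}$-indexed decomposition that the statement asserts, nor a precise verification of (2).

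A second, smaller problem: you propose to absorb the residue-field fibre cardinalities into the exponent vectors $\lbar n$ as ``polynomials in $q$.'' This is both unnecessary and problematic: a cardinality such as $q^2-q$ cannot be written as a sum of monomials $q^{-\Sigma\lbar n}$ indexed by a Presburger set without introducing signs. The paper avoids this entirely by arranging, after a further finite partition of $J$ (which is what the outer index $\mu$ counts), that the twistback $U_{\lbar \gamma}$ is one fixed finite set for all $\lbar \gamma$ in each piece, and keeping its cardinality as the multiplicative prefactor $e$; note that the proposition only requires $e$ to be a natural number (allowed to depend on $L$), not a polynomial in $q$.
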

\begin{proof}
To compute $\zeta(A, \lbar \kappa)$, we may assume that, for all $\lbar \gamma \in J$, the $\csn(\lbar \gamma)$-$\lan{RV}$-definable $\RV$-pullback $A^{\sharp}_{\lbar \gamma}$ is nondegenerate. Let $B_{\lbar \gamma} = \prv(A^{\sharp}_{\lbar \gamma})$. Then we have
\[
\vol(A_{\lbar \gamma}) = \sum_{(\lbar t, \lbar s) \in B_{\lbar \gamma} } q^{- \rho \sum \vrv(\lbar t)}.
\]
By Lemma~\ref{shift:K:csn:def}, without loss of generality, we may assume that $B_{\lbar \gamma}$ is a twistoid. Let $U_{\lbar \gamma}$ be the twistback of $B_{\lbar \gamma}$ and $I_{\lbar \gamma} = \vrv(B_{\lbar \gamma})$. Note that $I_{\lbar \gamma}$ is $\lbar \gamma$-$\lan{RV}$-definable (independent of $b$). Let $\sharp(U_{\lbar \gamma})$ be the size of $U_{\lbar \gamma}$. Then
\[
\vol(A_{\lbar \gamma}) = \sharp(U_{\lbar \gamma}) \sum_{(\lbar \alpha, \lbar \beta) \in I_{\lbar \gamma}} q^{- \rho \Sigma \lbar \alpha}.
\]
We may assume that $U_{\lbar \gamma}$ is the same and $\sharp(U_{\lbar \gamma}) = e$ for all $\lbar \gamma \in J$. Let $\Delta = \bigcup_{\lbar \gamma \in J} I_{\lbar \gamma} \times \{\lbar \gamma\} \sub \Gamma^{\nu}$, which is of course a Presburger subset that is independent of $b$. Then
\[
\zeta(A, \kappa) = \sum_{\lbar \gamma \in J} q^{- \rho \lbar \kappa \cdot \lbar \gamma} \vol(A_{\lbar \gamma}) = e \sum_{(\lbar \alpha, \lbar \beta, \lbar \gamma) \in \Delta} q^{- \rho \lbar \kappa \cdot \lbar \gamma - \rho \Sigma \lbar \alpha}.
\]

For the desired uniformity, it remains to dispose of $\rho$ in this expression. To that end, we identify $\Gamma$ with $\Z$ via the canonical homomorphism $\gamma \efun \rho \gamma$. Then $\Delta$ is identified with a Presburger subset $\Delta_* \sub \Z^{\nu}$. Obviously $\Delta_*$ may be decomposed into $\rho^{\nu}$ Presburger subsets $\Delta_d$ for $1 \leq d \leq \rho^{\nu}$ such that the $i$th coordinate of $\Delta_d$ equals $d_i$ modulo $\rho$ for some $1 \leq d_i \leq \rho$. Note that if $\gcd(d_1, \ldots, d_{\nu}) = c$ then $\Delta_d$ may also be defined using $\rho / c$, $d_1 /c, \ldots, d_{\nu} /c$. Now the uniformity condition is clear since $\Gamma(L_j) = (1/ \rho_{L_j}) \Z$ is a subgroup of $\Gamma(L_i) = (1/ \rho_{L_i}) \Z$.
\end{proof}

\begin{defn}\label{def:uni}
Let $\gL$ be a set of local fields. We say that $\zeta (A, \lbar \kappa)$ is \emph{uniformly rational} for $\gL$ if  there is a finite set of rational functions $R_{m,i}(T_0, T_1, \ldots, T_k)$ for each $m \in \N$ such that, for \emph{every} $L \in \gL$,
\[
\zeta (A, L, \lbar \kappa) = \sum_{m | \rho_L} \sum_i e_{m, i, L} R_{m,i}(q^{\rho_L / m}_L, q_L^{- \rho_L \kappa_1 / m}, \ldots, q_L^{- \rho_L \kappa_k / m}),
\]
where the coefficients $e_{m, i, L} \in \N$ only depend on $q_L$.
\end{defn}

Note that our notion of uniformity is different from but implies that in \cite{Pa89, Pas:1990}. It is clear from the proofs of the main results of \cite{Pa89, Pas:1990} that they may be reformulated using our notion.

We can always make $\zeta (A, \lbar \kappa)$ uniformly rational for $(L_i)$ by deleting finitely many entries from it. If local fields of positive characteristic are included then this cannot be improved at the moment, since rationality of Igusa zeta function, with or without cross-section, is not known in general for local fields of small positive characteristic. On the other hand, if we concentrate on \p-adic fields then, using results in \cite{Denef1984, denef:85, Pa89, Pas:1990},  we can deduce very general results about uniform rationality.

\begin{thm}\label{uni:tho}
If each $L_i$ is a local field of characteristic $0$ then $\zeta (A, \lbar \kappa)$ is uniformly rational for $(L_i)$.
\end{thm}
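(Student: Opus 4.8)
The plan is to read the statement off Proposition~\ref{zeta:mult:k}, using the classical rationality of generating series over Presburger sets to sum up the terms, together with the $p$-adic rationality theorems of Denef and Pas to absorb the finitely many local fields not reached by the compactness reduction.

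First I would fix, by the uniform-definability discussion preceding the theorem (Lemma~\ref{cut:to:hensel:substru} and compactness), an index $k_0$ such that Proposition~\ref{zeta:mult:k} applies to every $L_i$ with $i\geq k_0$. For such $i$ it writes $\zeta(A, L_i, \lbar\kappa)$ as a sum of at most $\mu$ expressions $e\sum_{d=1}^{\rho_{L_i}^{\nu}}\tau_d(q_{L_i})$, where $\tau_d(q)=\sum_{(\lbar n,\lbar m,\lbar l)\in\Delta_d}q^{-\lbar\kappa\cdot\lbar l-\Sigma\lbar n}$, the $\Delta_d$ are pairwise disjoint Presburger subsets, and the two uniformity clauses of Proposition~\ref{zeta:mult:k} hold. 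The next step is to invoke the standard fact that for a Presburger subset $\Delta\subseteq\Z^{\nu}$ and integer linear forms, a series of the shape $\sum_{x\in\Delta}q^{-\lbar\kappa\cdot\lbar l(x)-\Sigma\lbar n(x)}$ (convergent here since $\vv(A)$ is bounded from below and the $\kappa_a$ are positive) equals a rational function $R_{\Delta}(q, q^{-\kappa_1},\dots,q^{-\kappa_k})$ depending only on $\Delta$; this is the combinatorial core of the rationality arguments of~\cite{Denef1984, Pa89, Pas:1990}. Applying this to each $\tau_d$, and using clause~(2) of Proposition~\ref{zeta:mult:k} --- the replacement $\Delta_d\rightsquigarrow c\,\Delta_d$ when $\rho_{L_i}=c\,\rho_{L_j}$, which amounts to the substitution $T_a\mapsto T_a^{c}$ in $R_{\Delta_d}$ --- I would regroup the sum over $d$ into a sum over the divisors $m\mid\rho_{L_i}$ of rational functions evaluated at $q_{L_i}^{\rho_{L_i}/m}$ and $q_{L_i}^{-\rho_{L_i}\kappa_a/m}$, with the natural-number coefficients built from the $e$'s and the cardinalities $\sharp(U_{\lbar\gamma})$ occurring in the proof of Proposition~\ref{zeta:mult:k}, which depend only on $q_{L_i}$. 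This is exactly the format of Definition~\ref{def:uni}, with one finite family $\{R_{m,i}\}$ serving all $i\geq k_0$.

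It remains to treat the finitely many indices $i<k_0$, and this is where the characteristic-$0$ hypothesis is used: each such $L_i$ is a finite extension of some $\Q_p$, so by~\cite{Denef1984, denef:85, Pa89, Pas:1990} the integral $\zeta(A, L_i, \lbar\kappa)$ is individually a rational function of $q_{L_i}$ and the $q_{L_i}^{-\kappa_a}$. I would adjoin these finitely many rational functions to the family $\{R_{m,i}\}$ (taking $m=\rho_{L_i}$ and a trivial integer coefficient), which keeps the family finite; with this enlargement the expansion of Definition~\ref{def:uni} holds for every $i$, proving that $\zeta(A,\lbar\kappa)$ is uniformly rational for $(L_i)$.

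The main obstacle I anticipate is the bookkeeping in the middle step: carefully matching the $\rho^{\nu}$-fold splitting and the $\gcd$-reduction of Proposition~\ref{zeta:mult:k} with the precise indexing by divisors $m\mid\rho_L$ and the precise arguments $q_L^{\rho_L/m}$, $q_L^{-\rho_L\kappa_a/m}$ of Definition~\ref{def:uni}, and verifying that the resulting rational functions depend only on the combinatorial data while the coefficients depend only on $q_L$. Note that the characteristic-$0$ assumption intervenes \emph{only} in the final step; for sequences including local fields of small positive characteristic the cofinite part goes through verbatim, but the exceptional fields cannot be handled at present, which is precisely why the theorem is stated over characteristic $0$ (cf.\ the remark preceding it).
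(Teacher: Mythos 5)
Your proposal is correct and follows essentially the same route as the paper's (very terse) proof, which simply cites Denef's rationality theorem, the proof of Proposition~\ref{zeta:mult:k}, and the Cluckers--Loeser rationality of Presburger generating series --- exactly the three ingredients you spell out, including the use of the characteristic-$0$ hypothesis only to absorb the finitely many local fields not covered by the compactness reduction.
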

\begin{proof}
This follows immediately from \cite[Theorem~4.3]{denef:85}, (the proof of) Proposition~\ref{zeta:mult:k}, and \cite[Theorem~4.4.1]{cluckers:loeser:constructible:motivic:functions}. There are (potentially) infinitely many rational functions because $(\rho_{L_i})$ may be unbounded, which gives rise to infinitely many Presburger subsets.
\end{proof}

For each $n \in \N$ let $\gL_{n}$ be the set of all local fields $L$ of characteristic $0$ such that $\rho_L \leq n$.

\begin{thm}\label{igusa:uni}
For all local fields of characteristic $0$, $\zeta (A, \lbar \kappa)$ is uniformly rational.
\end{thm}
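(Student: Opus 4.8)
The statement is, to a large extent, a corollary of Theorem~\ref{uni:tho}; the plan is to apply that theorem to a single well-chosen enumeration of \emph{all} characteristic-$0$ local fields and then clean up finitely many exceptions. Concretely, let $(L_{i})_{i \in \N}$ enumerate (up to isomorphism) the class $\gL$ of all characteristic-$0$ local fields, arranged so that (i) the residue characteristics $\epsilon_{L_{i}}$ are unbounded --- automatic, since these are exactly the primes --- and (ii) for every $b \in \N$ the set $\{i : b \mid \rho_{L_{i}}\}$ is infinite; (ii) is arranged by including, for each $b$ and cofinally many primes $p$, a totally ramified extension of $\Q_{p}$ of degree $b$. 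One then fixes a non-principal ultrafilter $\mathcal{U}$ on $\N$ containing all the sets in (ii), which exists by the finite-intersection property; the associated ultraproduct $M$ has divisible value group, so $M$ contains the base $S$ (with $\Gamma(S) = \Q$) and satisfies the hypotheses of Lemma~\ref{cut:to:hensel:substru}. This is precisely the setting in which Proposition~\ref{zeta:mult:k} and Theorem~\ref{uni:tho} were established.

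Applying Theorem~\ref{uni:tho} to this sequence produces a family of rational functions $\{R_{m,i}\}$ --- finitely many for each modulus $m$, by the organisation of the Presburger sets $\Delta_{d}$ in Proposition~\ref{zeta:mult:k} according to residue classes and $\gcd$-reduction --- such that the identity of Definition~\ref{def:uni} holds for $\zeta(A, L_{i}, \lbar \kappa)$ for every $i \geq k$, with some fixed $k$. The finitely many remaining fields $L_{1}, \ldots, L_{k-1}$ are disposed of individually: for each of them $\zeta(A, L_{j}, \lbar \kappa)$ is a rational function of $q_{L_{j}}$ and the $q_{L_{j}}^{- \rho_{L_{j}} \kappa_{t}}$ by the classical results of \cite{Denef1984, denef:85, Pa89, Pas:1990}, and adjoining these finitely many rational functions to the family (as new members of modulus $\rho_{L_{j}}$) still leaves only finitely many members of each given modulus. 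The enlarged family then witnesses uniform rationality of $\zeta(A, \lbar \kappa)$ for all of $\gL$, which is Theorem~\ref{igusa:uni}. Equivalently, one may organise the same argument along the filtration $\gL = \bigcup_{n} \gL_{n}$: uniform rationality over each $\gL_{n}$ of bounded ramification being classical, while the unbounded-ramification phenomenon is exactly what Theorem~\ref{uni:tho} contributes.

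The one point that genuinely needs care --- and what I expect to be the main obstacle --- is checking that the output of Theorem~\ref{uni:tho} for this sequence really conforms to Definition~\ref{def:uni}: that the infinitely many Presburger sets arising from unbounded $\rho_{L_{i}}$ do collapse, via the $\gcd$-reduction of the last paragraph of the proof of Proposition~\ref{zeta:mult:k}, to a family with only finitely many rational functions of each modulus, and that the ``all but finitely many $L_{i}$'' clause in the proof of Proposition~\ref{zeta:mult:k} is honest in this global setting, i.e.\ the exceptional set is an initial segment of the enumeration rather than an infinite set. Granting these --- both implicit in the proof of Theorem~\ref{uni:tho} --- the argument is complete.
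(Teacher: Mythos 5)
Your primary argument --- enumerate all of $\gL$ as a single sequence, apply Theorem~\ref{uni:tho}, and repair finitely many exceptions by hand --- founders on exactly the point you flag and then grant. The model-theoretic transfer underlying Proposition~\ref{zeta:mult:k} and Theorem~\ref{uni:tho} passes through an ultraproduct that must embed into $\gC^{2}$, which has residue characteristic $0$; so the ultrafilter must concentrate on indices with $\epsilon_{L_i} \to \infty$, and the resulting statements hold only for fields of sufficiently large residue characteristic (the threshold depending on the formulas defining $A$ and $\lbar f$). For your enumeration of \emph{all} characteristic-$0$ local fields, the set left uncovered therefore contains every finite extension of $\Q_{p}$ for every $p$ below the threshold --- an infinite set, not an initial segment and not finite. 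You cannot run the usual ``suppose the bad set is infinite and choose an ultrafilter concentrating on it'' argument to shrink it, because an ultraproduct along such an ultrafilter has positive residue characteristic and does not embed into $\gC^{2}$. Consequently your last step (dispose of the exceptions ``individually'' by Denef's rationality) does not apply: there are infinitely many of them, and adjoining one rational function per exceptional field would violate the finiteness requirement in Definition~\ref{def:uni}.

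This is precisely why the paper's proof leans on \cite[Theorem~7.1]{Pa89} and \cite[Theorem~5.1]{Pas:1990}: those results give uniform rationality on each bounded-ramification stratum $\gL_{n}$ up to a finite exceptional subset $\gL'_{n}$ --- in particular they cover the infinitely many small-residue-characteristic fields of bounded ramification (unbounded residue degree over a fixed $p$), where the transfer is silent --- and only then is Theorem~\ref{uni:tho} invoked to handle the remaining, unbounded directions. Your closing sentence, that one may ``organise the same argument along the filtration $\gL = \bigcup_n \gL_n$, uniform rationality over each $\gL_n$ being classical,'' is in fact the paper's proof rather than an equivalent reorganisation of yours: it is the necessary fix, and it requires the specific Denef--Pas uniformity theorems for bounded ramification, not merely the rationality of each individual $\zeta(A, L, \lbar\kappa)$.
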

\begin{proof}
By \cite[Theorem~7.1]{Pa89} and \cite[Theorem~5.1]{Pas:1990}, we only need to work with a finite subset $\gL'_n \sub \gL_n$ for each $n$. Then the assertion follows from Theorem~\ref{uni:tho}.
\end{proof}

\begin{rem}
There are variations of these theorems. For example, they hold if we work in $\gC$ instead of $\gC^2$ (see \cite[Theorem~1.3]{hrushovski:kazhdan:integration:vf}). There are also analogues if we work in $\gC^3$. However, in that case local fields of characteristic $0$ have to be excluded (see Remark~\ref{rem:loc}) and hence there always are finitely many exceptions about which we can say nothing at the moment.

Suppose that $(\epsilon_{L_i})$, $(\delta_{L_i})$ are bounded and $(\rho_{L_i})$ is unbounded in $(L_i)$. What can one say about these infinitely many rational functions? Is it true that $\zeta (A, \lbar \kappa)$ is uniform for $(L_i)$ with respect to finitely many rational functions? These are difficult and deep questions, and are related to the asymptotic behavior of the poles of $\zeta(A, \lbar \kappa)$ and its rationality in a local field of small positive characteristic. We hope that future development of the present theory will be able to offer some clues.
\end{rem}

\end{document}